\theoremstyle{plain}
\newtheorem{thm}{Theorem}[section]
\newtheorem*{thm*}{Theorem}
\newtheorem{thmi}{Theorem} 
\newtheorem{prop}[thm]{Proposition}
\newtheorem*{prop*}{Proposition}
\newtheorem{lemma}[thm]{Lemma}
\newtheorem*{lemma*}{Lemma}
\newtheorem{corollary}[thm]{Corollary}
\newtheorem{conjecture}[thm]{Conjecture}
\theoremstyle{definition}
\newtheorem{definition}[thm]{Definition}
\newtheorem{example}[thm]{Example}
\newtheorem{question}{Question}
\newtheorem{notation}[thm]{Notation}
\theoremstyle{remark}
\newtheorem{remark}[thm]{Remark}
\newcommand{\cC}{\mathcal{C}}
\newcommand{\cA}{\mathcal{A}}
\newcommand{\A}{\mathbb{A}}
\newcommand{\R}{\mathbb{R}}
\newcommand{\Z}{\mathbb{Z}}
\newcommand{\N}{\mathbb{N}}
\newcommand{\Q}{\mathbb{Q}}
\newcommand{\F}{\mathbb{F}}
\newcommand{\C}{\mathbb{C}}
\newcommand{\define}{\mathrel{\mathop:}=}
\newcommand{\ov}{\overline}
\newcommand{\cO}{\mathcal O}
\newcommand{\id}{\mathrm{1}} 
\newcommand{\App}{\mathcal{A}} 
\newcommand{\cham}{\mathrm{Cham}} 
\newcommand{\sW}{W_0} 
\newcommand{\aW}{W} 
\newcommand{\eW}{\widetilde W} 
\newcommand{\Cf}{\mathcal{{C}}_{f}} 
\newcommand{\fa}{{\bf{c}_f}} 
\newcommand{\Cfs}{\widetilde{\mathcal{{C}}}_{f}} 
\newcommand{\Cw}{\mathcal{{C}}} 
\newcommand{\Cu}{\mathcal{{C}}_u} 
\newcommand{\Cws}{\widetilde{\mathcal{{C}}}} 
\newcommand{\Cus}{\widetilde{\mathcal{{C}}}_u} 
\newcommand{\Cpos}{\operatorname{C}^+} 
\newcommand{\Cneg}{\operatorname{C}^-} 
\newcommand{\aH}{\mathcal{H}} 
\newcommand{\phipartial}{{\phi^{\mbox{\tiny{$\partial$}}}}} 
\newcommand{\phipartialg}{{\phi_g^{\mbox{\tiny{$\partial$}}}}}
\newcommand{\phipartialw}{{\phi_w^{\mbox{\tiny{$\partial$}}}}}
\newcommand{\phipartialvtlw}{{\phi_{v,t^\lambda w}^{\mbox{\tiny{$\partial$}}}}}
\newcommand{\phipartialvow}{{\phi_{v_0,w}^{\mbox{\tiny{$\partial$}}}}}
\newcommand{\phipartialwp}{{\phi_{w'}^{\mbox{\tiny{$\partial$}}}}}
\newcommand{\phipartialo}{{\phi_0^{\mbox{\tiny{$\partial$}}}}}
\newcommand{\phipartialu}{{\phi_{u}^{\mbox{\tiny{$\partial$}}}}}
\newcommand{\phipartialuw}{{\phi_{uw}^{\mbox{\tiny{$\partial$}}}}}
\newcommand{\phipartialsw}{{\phi_{sw}^{\mbox{\tiny{$\partial$}}}}}
\newcommand{\phipartialwow}{{\phi_{w_0w}^{\mbox{\tiny{$\partial$}}}}}
\newcommand{\phipartialwou}{{\phi_{w_0u}^{\mbox{\tiny{$\partial$}}}}}
\newcommand{\phipartialswow}{{\phi_{sw_0w}^{\mbox{\tiny{$\partial$}}}}}
\newcommand{\phipartialwowp}{{\phi_{w_0w'}^{\mbox{\tiny{$\partial$}}}}}
\newcommand{\load}{\operatorname{load}}
\newcommand{\type}{\mathrm{type}} 
\newcommand{\Inv}{\operatorname{Inv}} 
\newcommand{\conv}{\mathrm{conv}} 
\newcommand{\x}{\mathbf{x}}
\newcommand{\y}{\mathbf{y}}
\newcommand{\z}{\mathbf{z}}
\newcommand{\w}{\mathbf{w}}
\newcommand{\bb}{\mathbf{b}}
\newcommand{\Aut}{\operatorname{Aut}} 
\numberwithin{equation}{subsection}
\definecolor{amethyst}{rgb}{0.6, 0.4, 0.8}
\begin{document}

\hypersetup{pdfauthor={Milicevic, Schwer, Thomas},pdftitle={Dimensions of affine Deligne--Lusztig varieties}}

\title[Affine Deligne--Lusztig varieties and labeled folded alcove walks]{Dimensions of affine Deligne--Lusztig varieties: \\ a new approach via labeled folded alcove walks and root operators}

\author{Elizabeth Mili\'{c}evi\'{c}, Petra Schwer, and Anne Thomas}
\address{Elizabeth Mili\'{c}evi\'{c}, Department of Mathematics \& Statistics, Haverford College, 370 Lancaster Avenue, Haverford, PA, USA
\newline Petra Schwer, Department of Mathematics, Karlsruhe Institute of Technology, Englerstraße 2, 76133 Karlsruhe, Germany
\newline Anne Thomas, School of Mathematics \& Statistics, Carslaw Building F07,  University of Sydney NSW 2006, Australia}
\email{emilicevic@haverford.edu, petra.schwer@kit.edu, anne.thomas@sydney.edu.au}

\thanks{The first author was partially supported by Simons Collaboration Grant 318716 and NSF Grant 1600982.}
\thanks{The second author was supported by the DFG Project SCHW 1550/2-1.}
\thanks{This research of the third author was supported by ARC Grant No.\ DP110100440.  The third author was partially supported by an Australian Postdoctoral Fellowship.} 
\thanks{The first and third authors were also partially supported by DFG Grant SCHW 1550/2-1.}
\date{ \today }

\subjclass[2010]{Primary 20G25; Secondary 05E10, 20F55, 51E24.}

\begin{abstract}
Let $G$ be a reductive group over the field $F=k((t))$, where $k$ is an algebraic closure of a finite field, and let $W$ be the (extended) affine Weyl group of $G$.  The associated affine Deligne--Lusztig varieties $X_x(b)$, which are indexed by elements $b \in G(F)$ and $x \in W$, were introduced by Rapoport \cite{RapSatake}.  
Basic questions about the varieties $X_x(b)$ which have remained largely open include when they are nonempty, and if nonempty, their dimension.  We use techniques inspired by geometric group theory and combinatorial representation theory to address these questions in the case that $b$ is a pure translation, and so prove much of a sharpened version of Conjecture 9.5.1 of G\"ortz, Haines, Kottwitz, and Reuman~\cite{GHKRadlvs}.  
Our approach is constructive and type-free, sheds new light on the reasons for existing results in the case that $b$ is basic, and reveals new patterns.  Since we work only in the standard apartment of the building for $G(F)$, our results also hold in the $p$-adic context, where we formulate a definition of the dimension of a $p$-adic Deligne--Lusztig set.  We present two immediate applications of our main results, to class polynomials of affine Hecke algebras and to affine reflection length. 
\end{abstract}

\maketitle

\tableofcontents


\section{Introduction}\label{sec:Intro}

Affine Deligne--Lusztig varieties were first defined by Rapoport in \cite{RapSatake}
as an application of certain positivity properties of the Satake isomorphism to proving the converse of Mazur's theorem from \cite{RR}.  They also arose implicitly beforehand in connection with the twisted orbital integrals appearing in the Fundamental Lemma, a crucial component of the Langlands program relating the study of Galois representations to automorphic forms. Affine Deligne--Lusztig varieties also appear in Dieudonn\'{e} theory, which classifies $p$-divisible groups over a perfect field in terms of isocrystals; see \cite{Fontaine}. As the name suggests, affine Deligne--Lusztig varieties can be thought of as generalizations to the affine setting of (classical) Deligne--Lusztig varieties, which were constructed by Deligne and Lusztig in order to study the representation theory of finite Chevalley groups; see \cite{DL} and \cite{LuszChev}. The terminology is motivated by the fact that one of the parameters which indexes an affine Deligne--Lusztig variety is an element of the affine analog of the Weyl group of a reductive group over a finite field.

Denote by $\widehat{\Q}_p^{\mbox{\tiny{ur}}}$ the maximal unramified extension of the field of $p$-adic numbers. In this $p$-adic setting one can define affine Deligne--Lusztig sets analogously to the actual affine Deligne--Lusztig varietes, although these sets fail to be varieties themselves.  Affine Deligne--Lusztig sets are closely related  to the reduction modulo the prime $p$ of Shimura varieties for connected reductive groups over $\Q_p$. The main tool to study the local structure of these Shimura varieties is the so-called local model, which describes the singularities of the special fiber in terms of linear algebra. In the special fiber of a moduli space of Rapoport--Zink spaces, the affine Deligne--Lusztig sets relate the intersections of two kinds of stratifications: the Newton stratification, whose strata are indexed by Frobenius-twisted conjugacy classes, and the Kottwitz--Rapoport stratification, whose strata are indexed by elements in the extended affine Weyl group.  See the surveys by Rapoport \cite{RapShimura} or Haines \cite{HainesSurvey} for a more detailed discussion of this theory of Shimura varieties.

\subsection{History of the problem}

Let $k = \ov{\mathbb{F}}_q$ be an algebraic closure of the finite field with $q$ elements. Consider the nonarchimedean local field $F = k((t))$ with ring of integers $\mathcal{O} = k[[t]]$. Denote by $\sigma$ the Frobenius automorphism on $k$, which can be extended to an automorphism on $F$ by acting on coefficients. Let $G$ be a split connected reductive group over $\mathbb{F}_q$, and fix a Borel subgroup $B$ of $G$ and a maximal torus $T$ in $B$. The Iwahori subgroup $I$ of $G(F)$ is the inverse image of $B$ under the projection $G(\cO) \rightarrow G(k)$. For an element $x$ in the extended affine Weyl group $ \eW \cong I\backslash G(F) /I$ and a fixed group element $b \in G(F)$, the associated affine Deligne--Lusztig variety is defined as \begin{equation*} X_x(b) = \{ g \in G(F)/I \mid g^{-1}b\sigma(g) \in IxI \}. \end{equation*}

In the classical context,  the Lang map $G(\ov{\mathbb{F}}_q) \rightarrow G(\ov{\mathbb{F}}_q)$ given by $x\mapsto x^{-1}\sigma (x)$ is surjective.  Together with the Bruhat decomposition $G(\ov{\mathbb{F}}_q) = B\sW B$, where $\sW$ is the finite Weyl group of $G$, the surjectivity of the Lang map implies that for every $w \in \sW$, the classical Deligne--Lusztig variety
 \begin{equation*} X_w = \{ g \in G(\ov{\mathbb{\F}}_q)/B \mid g^{-1}\sigma(g) \in BwB \} \end{equation*}  
 is non-empty.  In fact, Deligne--Lusztig varieties share many properties with Schubert varieties in $G(\C)/B$. For example, classical Deligne--Lusztig varieties are equidimensional of dimension $\ell(w)$, and their closures are determined by the Bruhat order: $\ov{X_w} = \bigsqcup\limits_{v\leq w} X_v$.
 
 By contrast, affine Deligne--Lusztig varieties inside the affine flag variety $G(F)/I$ frequently tend to be empty and are not necessarily equidimensional. There are thus two  foundational questions in the study of affine Deligne--Lusztig varieties:
\begin{enumerate}
\item Characterize the pairs $(x,b) \in \eW \times G(F)$ for which $X_x(b)$ is nonempty; and 
\item If $X_x(b)$ is nonempty, compute its dimension.
\end{enumerate}
Each of these has proven to be a deep and delicate combinatorial and algebro-geometric problem, and we outline the history of progress toward each of these problems below.

The first affine setting in which these questions were completely answered was that of the affine Grassmannian $G(F)/K$, where $K=G(\mathcal{O})$.  Here, affine Deligne--Lusztig varieties are indexed by an element $b \in G(F)$ and a dominant coweight $\mu \in X_*(T)$, and then defined using the Cartan decomposition of $G(F)$ by
\begin{equation*}
X_{\mu}(b) = \{ g \in G(F)/K \mid g^{-1}b\sigma(g) \in Kt^{\mu}K \}.
\end{equation*}
The nonemptiness question for $X_{\mu}(b)$ has been settled by Kottwitz and Rapoport \cite{KRFcrystals}, Lucarelli \cite{Luc}, and Gashi \cite{Gas,GasGLn}.  In the affine Grassmannian, the characterization for nonemptiness is phrased in terms of \emph{Mazur's inequality}, which is a group-theoretic generalization of the inequality between the Hodge and Newton vectors in crystalline cohomology; see \cite{Kat} and \cite{Maz}. Mazur's inequality relates the coweight $\mu$ and the \emph{Newton point} $\nu_b$ of the element $b$, stating that $\nu_b \leq \mu$ in dominance order.  In $G(F)/K$, Mazur's inequality is both a necessary and sufficient condition for guaranteeing nonemptiness of the affine Deligne--Lusztig variety $X_{\mu}(b)$.  In addition, there is a simple dimension formula originally conjectured by Rapoport, which has been proved by G\"{o}rtz, Haines, Kottwitz, and Reuman \cite{GHKR}, Viehmann \cite{VieDim}, and Hamacher \cite{Ham}:
\begin{equation*}
\dim X_{\mu}(b) = \langle \rho, \mu-\nu_b \rangle - \frac{1}{2}\operatorname{def}_G(b).
\end{equation*}
One key step in proving this dimension formula relies upon a calculation by Mirkovi\'{c} and Vilonen \cite{MV} for the dimensions of their MV--cycles, which form a basis for the cohomology of the standard sheaves on the Schubert varieties in $G(F)/K$.

For quite some time, these same foundational questions about nonemptiness and dimensions for affine Deligne--Lusztig varieties inside the affine flag variety remained largely open. In $G(F)/I$, there is no simple inequality that characterizes the nonemptiness pattern. If $x = t^{\lambda}w \in \eW$ where $\lambda$ is the coweight associated to the translation part of $x$, then Mazur's inequality only provides a necessary condition for nonemptiness.  Namely,  if $X_x(b) \neq \emptyset$, then $\nu_b \leq \lambda^+$, where $\lambda^+$ denotes the unique dominant coweight in the $\sW$-orbit of $\lambda$.  However, for most pairs $(x,b)$ this inequality does not suffice to yield nonemptiness. 

Complete answers for both the nonemptiness and dimension questions in $G(F)/I$ are now known in the special case in which the group element $b$ is \emph{basic}, such as when $b=1$.  Recall that two elements $b$ and $b'$ in $G(F)$ are \emph{$\sigma$-conjugate} if there exists $g \in G(F)$ such that $b' = gb\sigma(g)^{-1}$.  The $\sigma$-conjugacy classes in $G(F)$ are often denoted by $B(G)$, and this set was originally studied in \cite{KotIsoI} and \cite{KotIsoII}.  An element $b \in G(F)$ is basic if it is $\sigma$-conjugate to an element of length zero in the extended affine Weyl group.  Figure~\ref{fig:BasicElementsConjugacyA2} shows some basic elements $b \in \aW$ in type $\tilde A_2$.  (The online version of this paper has color figures, and we recommend viewing our figures in color.)

\begin{figure}[ht]
\centering
\includegraphics[width=0.6\textwidth]{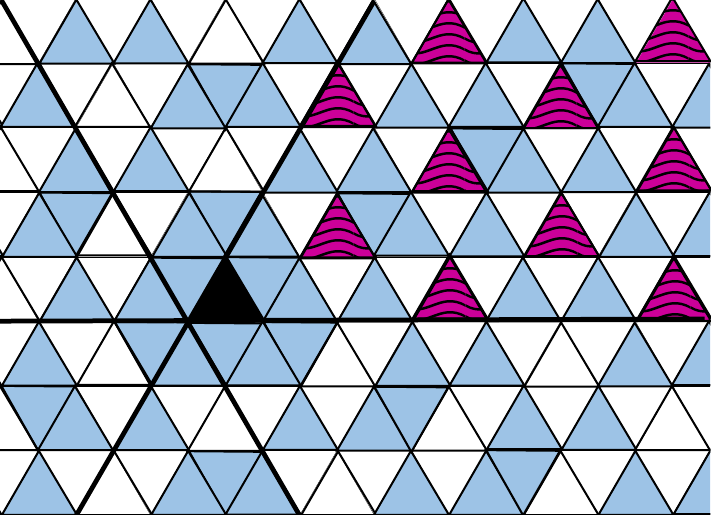}
\label{fig:BasicElementsConjugacyA2}
\caption{Some basic elements in $\aW$ of type $\tilde{A}_2$ are shaded (in blue), the identity $1$ is the black alcove, and the patterned (pink) alcoves are dominant pure translations. 
}
\end{figure}

For basic elements $b \in G(F)$, G\"{o}rtz, Haines, Kottwitz, and Reuman~\cite{GHKRadlvs} conjectured a complete characterization of the nonemptiness pattern and provided a conjectural dimension formula.  They also proved in~\cite{GHKRadlvs} that emptiness holds whenever their conjecture predicted it to do so.  A series of several papers throughout the last decade established the converse for $b$ basic.  In \cite{HeNoteOn}, He proved a nonemptiness pattern for $X_x(1)$ if the translation part of $x$ is quasi-regular, and in \cite{Be2} the first author proved a nonemptiness statement under a length additivity hypothesis on the pair of finite Weyl group elements associated to $x$. In \cite{GoertzHeDim}, G\"{o}rtz and He then proved the nonemptiness conjecture in \cite{GHKRadlvs}, although still under Reuman's original hypothesis that the alcoves lie in the \emph{shrunken} Weyl chambers; \textit{i.e.} far enough from the walls of the chambers. It remained a hard problem to characterize nonemptiness for alcoves which lie outside the shrunken Weyl chambers.  The initial insights into this problem occurred in the work of Reuman \cite{Reu} and the first author \cite{Be1} for groups of low rank. More recently, G\"{o}rtz, He, and Nie proved the full nonemptiness conjecture from \cite{GHKRadlvs} for all basic $b$ in \cite{GHN}.  A proof of the dimension formula for all basic $b$ was then provided by He in \cite{HeAnnals}.  

For non-basic elements $b \in G(F)$, comparatively little is known about the nonemptiness and dimension of $X_x(b)$ inside the affine flag variety.  Both Reuman \cite{Reu} and the first author \cite{Be1} have considered groups of low rank.  In \cite{HeAnnals}, He relates $\dim X_x(b)$ to the dimension of affine Deligne--Lusztig varieties inside the affine Grassmannian, obtaining an equality concerning these dimensions when $x=w_0t^\lambda$ with $\lambda$ dominant, and an upper bound on $\dim X_x(b)$ for any~$x$.  Yang \cite{Yang} has recently applied the results of~\cite{HeAnnals} to answer the questions of nonemptiness and dimension in the case that $\eW$ has type~$\tilde{A}_2$. 

The general picture for the varieties $X_x(b)$ inside the affine flag variety is predicted by Conjecture~9.5.1 in \cite{GHKRadlvs}, which generalizes Conjecture 7.5.1 in \cite{GHKR}.  This conjecture states that if the length of~$x$ is large enough, then emptiness of $X_x(b)$ is equivalent to  emptiness of $X_x(b_{\text{b}})$, where $b_{\text{b}}$ is a basic element naturally associated to $b$.  To be precise, $b_{\text{b}}$ is an element in the unique basic $\sigma$-conjugacy class with the same image as $b$ under the projection onto the quotient $\Lambda_G$ of $X_*(T)$ by the coroot lattice.  In addition, \cite{GHKRadlvs}  provides a conjectural dimension formula for $X_x(b)$ in terms of the dimension of $X_x(b_{\text{b}})$ and the \emph{defect} of $b$.  If we denote by $F^{\sigma}$ the Frobenius-fixed subfield of $F=\mathbb{F}_q((t))$, then the defect $\operatorname{def}_G(b)$ is defined as the $F^{\sigma}$-rank of $G$ minus the $F^{\sigma}$-rank of the Frobenius-twisted centralizer $J_b = \{ g \in G(F) \mid g^{-1}b\sigma(g) = b\}$. 

\begin{conjecture}[Conjecture 9.5.1 in \cite{GHKRadlvs}]\label{Conj_GHKR}
Let $b \in G(F)$.  Then there exists $N_b \in \N$ such that for all $x\in \eW$ with $\ell(x)>N_b$ we have
\begin{equation}X_x(b)\neq\emptyset \Longleftrightarrow X_x(b_{\operatorname{b}})\neq\emptyset.\end{equation}
Moreover, if these affine Deligne--Lusztig varieties are both nonempty, then
\begin{equation}\label{E:ConjDim}
\dim X_x(b) = \dim X_x(b_{\operatorname{b}}) - \frac{1}{2}\left( \langle 2\rho, \nu_b \rangle + \operatorname{def}_G(b) - \operatorname{def}_G(b_{\operatorname{b}}) \right).\end{equation}  
\end{conjecture}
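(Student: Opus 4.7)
The plan is to restrict to the case where $b = t^\mu$ is a pure translation and translate the problem into a combinatorial one in the standard apartment $\App$ of the affine building for $G(F)$. The key observation is that the condition $g^{-1} b \sigma(g) \in IxI$ defining the Deligne--Lusztig variety can be reformulated, via the Iwahori--Bruhat decomposition and the identification of $G(F)/I$ with the set of alcoves, as the existence of certain \emph{labeled folded alcove walks} of type determined by a fixed reduced expression for $x$, whose endpoints encode the translation $t^\mu$. Folds in such walks correspond to positions where the gallery ``bounces back'' off a hyperplane, and the number of \emph{positive} folds will control the dimension of the corresponding stratum of $X_x(b)$.

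First I would make this translation precise by stratifying $X_x(b)$ according to gallery type, and show that each stratum has dimension equal to the number of positive folds of the associated walk. Since $\sigma$ acts trivially on $\App$ for pure translations, the entire analysis can be carried out inside $\App$ alone, a dramatic simplification over the general case and the source of the paper's $p$-adic applicability. The nonemptiness part of Conjecture~\ref{Conj_GHKR} then reduces to a combinatorial statement: for $\ell(x) > N_b$, a labeled folded alcove walk of type $x$ with endpoints prescribed by $t^\mu$ exists if and only if one exists with endpoints prescribed by $b_{\operatorname{b}}$. Because $b$ and $b_{\operatorname{b}}$ project to the same class in $\Lambda_G$, I would pass between their walks by applying \emph{root operators} in the sense of Littelmann, which modify a walk while preserving its type and can be iterated provided enough room is available to perform the requisite folds --- this is exactly where the length hypothesis enters.

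For the dimension formula, I would count the maximum number of positive folds achievable among all walks of type $x$ with the required endpoints. The contribution $\langle 2\rho, \nu_b \rangle$ arises because any gallery realizing $t^\mu$ must travel sufficiently far in the direction of $\nu_b$, and each such step consumes a potential fold. The defect correction $\operatorname{def}_G(b) - \operatorname{def}_G(b_{\operatorname{b}})$ reflects the difference in the $F^\sigma$-rank of the Frobenius-twisted centralizer $J_b$, which acts on $X_x(b)$ on the left and thereby identifies walks that differ by its symmetry.

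The hard part will be handling alcoves outside the shrunken Weyl chambers, where standard folding constructions can degenerate and root operators may push paths out of the allowed region. A sharpened statement in the pure translation case will require a careful analysis of which reduced expressions for $x$ admit the desired folded walks, together with an accounting of extremal configurations that are invisible for generic $x$. Producing the sharp lower bound for $\dim X_x(b)$ --- exhibiting enough positive folds to match the conjectured formula --- is typically substantially more delicate than the matching upper bound, and is where I would expect the bulk of the technical work to lie.
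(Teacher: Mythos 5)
You have attempted to prove a statement that the paper itself does not prove: Conjecture~\ref{Conj_GHKR} is quoted from \cite{GHKRadlvs} as motivation, and the paper only establishes a sharpened \emph{partial} version of it when $b$ is a pure translation — with explicit convexity conditions replacing the unspecified bound $N_b$, with nonemptiness obtained only in the direction $X_x(b_{\operatorname{b}})\neq\emptyset \implies X_x(b)\neq\emptyset$, and with dimension \emph{inequalities} in general; the equality \eqref{E:ConjDim} is proved only for $x$ in the shrunken dominant Weyl chamber under those hypotheses (Theorems~\ref{Shrunken}--\ref{OtherWeyl}). Your outline does follow the paper's framework — reduce via the G\"ortz--Haines--Kottwitz--Reuman formula to intersections of unipotent and Iwahori orbits, enumerate these by the labeled folded alcove walks of Parkinson--Ram--Schwer, and move between walks with Gaussent--Littelmann root operators — but two of your specific mechanisms are wrong. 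First, the dimension of the stratum indexed by a walk is the number of \emph{positive crossings plus folds} (Proposition~\ref{OrbitDims}), not the number of positive folds; folds alone are bounded by $\ell(w_0)$ and cannot produce the dimension formula. Second, your account of the defect term is spurious: for pure translations $\operatorname{def}_G(b)=\operatorname{def}_G(b_{\operatorname{b}})=0$, and in the relevant formula (Theorem~\ref{T:GHKRDim}) the correction is $\langle \rho_{B^-},\lambda+\lambda_{B^-}\rangle$; no identification of walks under a $J_b$-action enters anywhere.

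More seriously, the steps you defer — ``passing between walks by root operators, with the length hypothesis providing room,'' and the treatment of alcoves outside the shrunken chambers — are exactly where the approach breaks down and where the paper cannot complete the program either. The explicit root-operator constructions (Section~\ref{sec:Construction}) work only for spherical direction $w_0$ in the shrunken dominant chamber, because one must control the first and last \emph{alcoves} (not just end-vertices) of the gallery, and for other spherical directions it is not even known which orientation at infinity supports a DL-gallery (Question~\ref{Q:Orientation}); the paper instead imports the $b=1$ results of G\"ortz--He, G\"ortz--He--Nie, and He ($P$-alcoves, virtual dimension) to transfer information, which your self-contained combinatorial plan does not replace. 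Finally, your plan gives no route to the reverse implication $X_x(b)\neq\emptyset \implies X_x(b_{\operatorname{b}})\neq\emptyset$, nor to the matching upper bound on $\dim X_x(b)$ away from the shrunken dominant chamber, so even in the pure-translation case the full equivalence and the exact equality \eqref{E:ConjDim} remain out of reach of the argument as proposed.
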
 

\noindent At the time Conjecture \ref{Conj_GHKR} was formulated, the primary evidence for it consisted of computer experiments.  It has since only been fully verified in type $\tilde A_2$ in the work of Yang~\cite{Yang}.  

One common approach in many of the aforementioned papers treating the basic case is a generalization of the classical Deligne--Lusztig theory of \cite{DL}. This permits a reduction to combinatorics on minimal length elements in conjugacy classes inside the extended affine Weyl group.  Although fruitful in the basic case, these methods have so far been less successful for proving nonemptiness or dimension formulas when $b$ is not basic.
To prove Conjecture~\ref{Conj_GHKR} in general, it seems that a new approach is required.  

Our primary results, which are described in detail in Section~\ref{Results}, represent a sharpening of much of Conjecture~\ref{Conj_GHKR} for the case in which $b$ is a pure translation in the affine Weyl group $\aW$.  We prove both nonemptiness statements and dimension formulas for affine Deligne--Lusztig varieties of the form $X_x(t^\mu)$.  The results in this paper represent some of the first concrete progress towards these foundational questions beyond the basic case in the affine flag variety, apart from several low rank examples and special cases.  


\subsection{Key ideas in this approach}\label{Tools}

Before reviewing the precise statements of our main results in Section~\ref{Results}, we provide an overview of the key ideas which inspired our approach.  A more technical overview of the proof techniques is provided in Section~\ref{Outline}, in which we outline both our strategy and the overall structure of the paper.

The springboard for the proof of our main theorems is a result of G\"{o}rtz, Haines, Kottwitz, and Reuman in \cite{GHKR} which expresses the dimensions of affine Deligne--Lusztig varieties associated to translation elements in terms of dimensions of the intersection of unipotent and Iwahori orbits inside the affine flag variety; we recall this result as Theorem~\ref{T:GHKRDim}. Although an exhaustive search algorithm for computing the dimensions of these orbits in terms of matrices is described in \cite{GHKR}, this algebraic algorithm is typically only feasible for verifying specific examples. 

The first key observation is that the dimensions of precisely these intersections of unipotent and Iwahori orbits can be calculated using the combinatorial model of the \emph{labeled folded alcoves walks} introduced by Parkinson, Ram, and C. Schwer in \cite{PRS}. Similarly, we could have used the folded galleries of  \cite{GaussentLittelmann}. Generally speaking, the dimension of the intersection of these orbits is calculated by counting the number of folds and positive crossings in a gallery which has been reflected according to rules governed by a choice of periodic orientation on the affine hyperplanes.  As such, the algebraic algorithm discussed in \cite{GHKR} can be replaced by an algorithm involving the geometry and combinatorics of alcove walks in the standard apartment of the associated Bruhat--Tits building.   In practice, however, as for the algorithm discussed in \cite{GHKR}, using labeled folded alcove walks to compute dimensions of affine Deligne--Lusztig varieties grows rapidly in combinatorial complexity as the length of $x \in \aW$ and/or the rank of the group increases.

The second crucial insight, which enables one to simultaneously efficiently calculate dimensions for infinite families of affine Deligne--Lusztig varieties, is to apply the \emph{root operators} of Gaussent and Littelmann \cite{GaussentLittelmann}.  The key to relating the dimensions of $X_x(b)$ and $X_x(b_{\operatorname{b}})$ as in Conjecture \ref{Conj_GHKR} is that one can track precisely how these root operators affect the dimension of a labeled folded alcove walk.  This work of Gaussent and Littelmann on \emph{combinatorial galleries} takes place inside the affine Grassmannian, and so one additional task performed in the present paper is the adaptation of results in \cite{GaussentLittelmann} to the more refined context of the affine flag variety.  This adaptation is quite delicate in places.

At the heart of the paper are many arguments which are inspired by work in the fields of geometric group theory and combinatorial representation theory, even though our primary applications are algebro-geometric.  For the sake of clarity, we reserve the term ``geometric'' for our arguments involving the affine building, while we use ``algebro-geometric'' to refer to applications to the associated family of varieties.  Our geometric approach is inspired by the work of Kapovich and Millson~\cite{KapMillson}, Littelmann \cite{LittelmannLR}, and also McCammond and Petersen~\cite{McCammondPetersen}, among others, in which affine Weyl groups are viewed as groups of isometries of Euclidean space, and affine buildings are regarded as metric spaces of nonpositive curvature.  For example, we make critical and repeated use of geometric arguments such as tracking the effect of translations, changing the identification of the alcoves in an apartment with the elements of the affine Weyl group $\aW$, and acting on galleries by elements of $\aW$.  

In particular, the work of Kapovich and Millson in \cite{KapMillson} on Hecke paths uses the framework of projections of geodesics in affine  buildings to Weyl chambers to prove the saturation conjecture for complex semisimple Lie groups. 
Such projections arise as images of retractions in the affine building centered at either an alcove or the boundary. Related methods using images of galleries under retractions appear in the convexity results of the second author \cite{Hitzel, Hitzel2}, the first of which makes use of the work of Gaussent and Littelmann \cite{GaussentLittelmann}.  

The driving force behind many of these geometric ideas is an underlying connection to the representation theory of complex symmetrizable Kac--Moody algebras, as established in the work on Littlewood--Richardson rules in terms of the Littelmann path model in \cite{LittelmannLR}.


\subsection{Summary of main results}\label{Results}

This section provides an overview of the main results  of the paper, which are stated as Theorems~\ref{Shrunken}--\ref{Padic} below, and are proved in Sections~\ref{sec:Xx1ShrunkenDominant}--\ref{sec:SymmetriesLabelings}.

Given $y \in G(F)$, the isomorphism $g \mapsto yg$ on $G(F)$ yields an isomorphism between $X_x(b)$ and $X_x(yb\sigma(y)^{-1})$.  It therefore suffices to consider affine Deligne--Lusztig varieties associated to $\sigma$-conjugacy classes $[b]$, rather than simply elements $b \in G(F)$.  In fact, every element in $G(F)$ is $\sigma$-conjugate to an element in the extended affine Weyl group $\widetilde{W}$; see Section~7.2 in \cite{GHKRadlvs}.  Thus it is enough to study affine Deligne--Lusztig varieties $X_x(b)$ where $b \in \widetilde{W}$.  In addition, since our arguments rely almost exclusively on combinatorics in the Bruhat--Tits building, in this paper we focus on elements $b \in \aW$ in a single connected component, since one can easily adapt our results to the context of extended alcoves. Note that every basic element $b \in \aW$ in the affine Weyl group belongs to the same $\sigma$-conjugacy class as $\id$, while the pure translations in $\eW$ represent a large proportion of $\sigma$-conjugacy classes in $G(F)$, namely those corresponding to integral Newton points.  In particular, the dominant pure translations in $\aW$ represent infinitely many pairwise distinct $\sigma$-conjugacy classes in $G(F)$; see the patterned (pink) alcoves in Figure~\ref{fig:BasicElementsConjugacyA2}.

For $b = t^\mu$ a pure translation in $\aW$, the natural associated basic element $b_{\text{b}}$ is the identity.  As such, Conjecture~\ref{Conj_GHKR} in the case of pure translations relates nonemptiness of the affine Deligne--Lusztig varieties $X_x(t^{\mu})$ and $X_x(1)$.   Now, for both the identity element and pure translations, the defect terms equal zero.  In addition, for $b = t^\mu$ the Newton point $\nu_b$ equals the unique dominant element in the $\sW$-orbit of $\mu$, which we denote by $\mu^+$.  In particular, if $\mu$ is dominant then $\nu_b = \mu$.  Thus, Conjecture~\ref{Conj_GHKR} predicts that the dimension of $X_x(t^\mu)$ equals $\dim X_x(1) - \langle \rho, \mu^+ \rangle$.

We first consider the alcoves $x$ which lie in the shrunken dominant Weyl chamber.  The following statement  combines Theorems~\ref{idDomDimw0},~\ref{idDomDim},~\ref{bDomDimw0}, and~\ref{bDomDim} in the body of the paper.

\begin{thmi}\label{Shrunken} Let $x = t^\lambda w \in W$ be such that every alcove at the vertex $\lambda$ lies in the shrunken dominant Weyl chamber. Let $a = t^{2\rho}w$.  Let $b = t^\mu$ be a dominant pure translation such that the alcove $b$ lies in the convex hull of $x$ and the base alcove, the alcove $t^{-\mu}x$ lies in the shrunken dominant Weyl chamber, and the vertex $\mu$ lies in the negative cone based at the dominant vertex $\lambda - 2\rho$.  Then
\begin{equation}
X_a(1) \neq \emptyset \iff X_x(1) \neq \emptyset 
\end{equation}
and 
\begin{equation}\label{E:ShrunkenDim}
X_x(1) \neq \emptyset \implies X_x(b) \neq \emptyset.
\end{equation}
If $w = w_0$ then $X_a(1) \neq \emptyset$, $X_x(1) \neq \emptyset$, and $X_x(b) \neq \emptyset$.

Moreover if the affine Deligne--Lusztig varieties $X_a(1)$, $X_x(1)$, and $X_x(b)$ are nonempty, then we have
\begin{equation}
\dim X_x(1) = \dim X_a(1) + \frac{1}{2}\ell(t^{\lambda - 2\rho}) = \frac{1}{2}\ell(t^\lambda)
\end{equation}
and
\begin{equation}\label{E:MainThmDimIneq}
\dim X_x(b) = \dim X_x(1) - \langle \rho, \mu \rangle = \langle \rho, \lambda - \mu \rangle. 
\end{equation}
\end{thmi}

For arbitrary $x \in \aW$, the following two theorems establish nonemptiness implications and dimension inequalities.   In these statements, $\rho_{B^-}$ is the half-sum of the roots which are positive for the opposite Borel $B^-$ and $\mu_{B^-}$ is the unique element in the $\sW$-orbit of $\mu$ which is dominant for $B^-$.  The next result is proved as Theorem~\ref{thm:relabel shift}.

\begin{thmi}\label{ForwardShift}  
Let $x\in\aW$. Let $b = t^\mu$ be a pure translation such that the alcove corresponding to~$b$ lies in the convex hull of $t^\mu x$ and the base alcove. Then 
\begin{equation}\label{E:ForwardShiftNonempty} 
X_x(1) \neq \emptyset \implies X_{t^\mu x}(t^\mu) \neq  \emptyset. 
\end{equation}
Moreover, if these affine Deligne--Lusztig varieties are both nonempty, then 
\begin{equation}\label{E:ForwardShiftDim} 
\dim  X_{t^\mu x}(t^\mu) \geq \dim X_x(1) - \langle \rho_{B^-}, \mu + \mu_{B^-} \rangle
\end{equation}
and if $\mu$ is dominant then 
\begin{equation}\label{E:ForwardShiftDimDom}
\dim  X_{t^\mu x}(t^\mu) \geq \dim X_x(1).
\end{equation}
\end{thmi}

\noindent In notes based on his ICCM 2013 talk, He announced a result which has some overlap with Theorem~\ref{ForwardShift} and thus provided some more evidence for Conjecture~\ref{Conj_GHKR} in the case of split $b$; compare Theorem 6.3 in \cite{HeNoteOn}.

We prove the following result as Theorem~\ref{ArbitraryNonemptyDim}.

\begin{thmi}\label{NonemptyDim} 
Let $x \in \aW$. Let $b = t^\mu$ be a pure translation and assume that the alcove corresponding to $b$ lies in the convex hull of $x$ and the base alcove, the alcoves $x$ and $t^{-\mu}x$ lie in the same Weyl chamber, and if $x$ is in a shrunken Weyl chamber then $t^{-\mu}x$ is in the same shrunken Weyl chamber.  Then
\begin{equation}\label{E:Nonempty}
X_x(1) \neq \emptyset \implies X_x(b) \neq \emptyset.
\end{equation}
Moreover,  if these affine Deligne--Lusztig varieties are both nonempty, then 
\begin{equation}\label{E:Dim}
\dim X_x(b) \geq \dim X_x(1) - \langle \rho, \mu^+ \rangle - \langle \rho_{B^-}, \mu + \mu_{B^-} \rangle
\end{equation}
and if $\mu = \mu^+$ is dominant then 
\begin{equation}\label{E:DimDom}
\dim  X_{x}(b) \geq \dim X_x(1)  - \langle \rho, \mu \rangle.
\end{equation}
\end{thmi}

The next theorem provides sharper results for alcoves outside of the dominant Weyl chamber, by conjugating $x$ in the shrunken dominant Weyl chamber by elements $u \in \sW$ to obtain information about the varieties $X_{u^{-1}xu}(t^\mu)$.  The following is proved as Theorem~\ref{T:Conjugation}.
 
\begin{thmi}\label{OtherWeyl}
Let $x = t^\lambda w \in \aW$ be such that every alcove at the vertex $\lambda$ lies in the shrunken dominant Weyl chamber.  Let $b = t^\mu$ be a dominant pure translation and $u$ any element of $\sW$. Then
\begin{equation}
X_x(b) \neq \emptyset \implies X_{u^{-1}xu}(b) \neq \emptyset.
\end{equation}
Moreover, if these affine Deligne--Lusztig varieties are both nonempty, then
\begin{equation}\label{E:correction}
\dim X_{u^{-1}xu}(b) \geq \dim X_x(b) + \frac{1}{2}(\ell(u^{-1}xu) - \ell(x)).
\end{equation}
\end{thmi}

\noindent In Theorem~\ref{T:Conjugationb=1} we consider the special case $b = 1$, and under similar hypotheses to Theorem~\ref{OtherWeyl}, we obtain that $X_x(1) \neq \emptyset \iff X_{u^{-1}xu}(1) \neq \emptyset$ and an equality concerning the dimensions of these varieties.

We emphasize that our Theorems~\ref{Shrunken}--\ref{OtherWeyl} apply to alcoves $x$ which lie close to the walls of one of the finite Weyl chamber, including alcoves $x$ which are close to the origin and (for Theorems~\ref{ForwardShift} and~\ref{NonemptyDim}) alcoves $x$ which are not in shrunken Weyl chambers.  Figure~\ref{fig:x_close_to_walls} gives some examples.

\begin{figure}[ht]
\begin{center}
\begin{overpic}[width=0.8\textwidth]{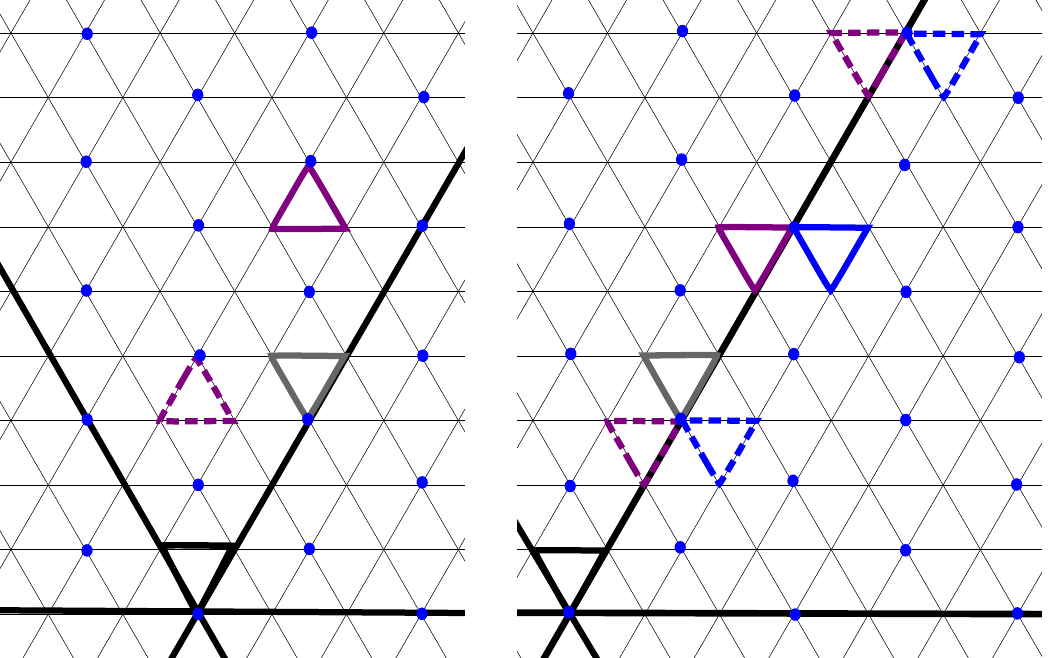}
\put(28.5,26){\footnotesize{$\color{black} \bb$}}
\put(29,20){\footnotesize{$\color{black} \mu$}}
\put(29,42.5){\footnotesize{$\color{magenta} \x$}}
\put(11,26){\footnotesize{$\color{magenta} t^{-\mu}\x$}}
\put(64,26){\footnotesize{$\color{black} \bb$}}
\put(64,19){\footnotesize{$\color{black} \mu$}}
\put(71,38){\footnotesize{$\color{magenta}  \x'$}}
\put(78,38){\footnotesize{$\color{blue} \x''$}}
\put(52,19){\footnotesize{$\color{magenta} t^{-\mu} \x'$}}
\put(72,19){\footnotesize{$\color{blue} t^{-\mu} \x''$}}
\put(75,55){\footnotesize{$\color{magenta} t^\mu \x'$}}
\put(92,54){\footnotesize{$\color{blue} t^\mu \x''$}}
\end{overpic}

\caption{We illustrate some alcoves $x$ and $b = t^\mu$ for which Theorems~\ref{Shrunken}--\ref{OtherWeyl} hold, where $x$ is close to a wall of a finite Weyl chamber.  Theorems~\ref{Shrunken} and~\ref{OtherWeyl} both hold for the pair $(x,b)$ as on the left, and Theorems~\ref{ForwardShift} and~\ref{NonemptyDim} both hold for the pairs $(x',b)$ and $(x'',b)$ on the right.}
\label{fig:x_close_to_walls}
\end{center}
\end{figure}

We also consider the effect of diagram automorphisms $g:\aW \to \aW$.  We prove geometrically that $X_x(b) \neq \emptyset$ if and only if $X_{g(x)}(g(b)) \neq \emptyset$, and that if both are nonempty then their dimensions agree.  This statement will not surprise experts, but since we are able to provide a short constructive proof, we include it in the body of the paper as Theorem~\ref{DiagramSymmetry}.

All of our techniques rely upon geometric and combinatorial algorithms in the standard apartment of the Bruhat--Tits building associated to $G(F)$.  This implies that our analysis extends beyond the function field context to the $p$-adic case, which is also sometimes called the mixed characteristic case (compare~\cite{GHN}).  Denote by $\widehat{\Q}_p^{\mbox{\tiny{ur}}}$ the maximal unramified extension of the field of $p$-adic numbers.  In this $p$-adic setting affine Deligne--Lusztig sets in $G(\widehat{\Q}_p^{\mbox{\tiny{ur}}})/I$ are defined analogously to the varieties $X_x(b)$, and are denoted by $X_x(b)_{\Q_p}$.  Recall Corollary 11.3.5 in \cite{GHKRadlvs} which says that $X_x(b) \neq \emptyset$ if and only if $X_x(b)_{\Q_p} \neq \emptyset$.  As predicted in \cite{GHKRadlvs}, our results prove that, subject to the formulation of a reasonable notion of the dimension of $X_x(b)_{\Q_p}$, the dimension of this affine Deligne--Lusztig set agrees with that of the variety $X_x(b)$ (for the $x, b \in \aW$ that we consider).  The following result summarizes Definition \ref{def:padic} and Theorem~\ref{T:padic} in the body of the paper, and implies that Theorems~\ref{Shrunken}--\ref{OtherWeyl} all hold in the $p$-adic setting.

\begin{thmi}\label{Padic}
Let $x \in\aW$, and let $b \in \aW$ be a pure translation.  There is a reasonable definition of the dimension of $X_x(b)_{\Q_p}$ in terms of combinatorics in the Bruhat--Tits building associated to $G(\widehat{\Q}_p^{\mbox{\tiny{ur}}})$, and using this definition
\begin{equation}
\dim X_x(b) = \dim X_x(b)_{\Q_p}. 
\end{equation}
\end{thmi}

Having stated our main results, we highlight that in Theorems~\ref{Shrunken}--\ref{OtherWeyl} we have replaced the existence of an (unspecified) lower bound on the length of $x$ in the original statement of Conjecture~\ref{Conj_GHKR} with precise convexity conditions on the relationship between $x$ and $b$.  In the discussion of our methods in Section~\ref{Outline}, we explain why these convexity conditions are natural in the context of our proofs. On the other hand, the experiments carried out in~\cite{GHKR} and other examples we have considered illustrate that our hypotheses in Theorems~\ref{Shrunken}--\ref{OtherWeyl} are not sharp. 

We also remark that the nonemptiness implications~\eqref{E:ShrunkenDim},~\eqref{E:ForwardShiftNonempty}, and~\eqref{E:Nonempty}, as well as all of our  dimension results, transfer information about the varieties $X_x(1)$, where nonemptiness and dimensions are known from the basic case, to the varieties $X_x(t^\mu)$.  In fact, as discussed further in Section~\ref{Outline}, we prove that for $x$ in the shrunken dominant Weyl chamber, it suffices to consider the questions of nonemptiness and dimensions for the finitely many varieties $X_a(1)$, where $a = t^{2\rho}w$, in order to obtain information about the infinite family $X_x(t^{\mu})$.  

The methods developed in this work also recover some nonemptiness and dimension results for the basic case in a manner which sheds new light on why those results are true.  The papers which adapt the classical arguments of Deligne and Lusztig from \cite{DL} to prove results in the basic case typically use an inductive argument on the length of an affine Weyl group element inside its conjugacy class.  Our constructive treatment of the $b=1$ case gives a new perspective on the nonemptiness criteria appearing in the original conjecture of Reuman \cite{Reu} which was generalized in \cite{GHKR}; see Section \ref{sec:Reuman} for more discussion of this comparison.  In fact, our approach also reveals previously unknown relationships between affine Deligne--Lusztig varieties in the basic case.

Our methods could feasibly be carried out to complete the picture for any specific $x$ and all pure translations $b$ with $\ell(b) \leq \ell(x)$; it is easy to show that this inequality is a necessary condition for $X_x(b)$ to be nonempty.  Theorems~\ref{ForwardShift},~\ref{NonemptyDim}, and~\ref{OtherWeyl} also provide simple and elegant means for treating the alcoves $x$ which lie outside the shrunken Weyl chambers, which is traditionally the most difficult situation to handle.  Our approach also naturally establishes both nonemptiness and bounds on dimension at the same time.  Finally, we point out that our methods are rank- and type-free.


\subsection{Outline of proof and organization of the paper}\label{Outline}

We now outline our strategy in more detail and describe the structure of the paper. In this section, $b=t^\mu$ always denotes a pure translation in the affine Weyl group $\aW$. 

\subsubsection*{Key steps in the proof.}\label{KeySteps}
The key steps of our approach are:
\begin{itemize}
\item[(1)] We relate nonemptiness of the variety $X_x(b)$ to the existence of certain positively folded galleries, such as those appearing in Figure \ref{fig:Xa1toXxbPropFigIntro}, and we relate $\dim X_x(b)$ to the number of folds and positive crossings of these galleries. 
\item[(2)] We then construct and manipulate such galleries using root operators, combinatorics in the associated Coxeter complex, and geometric transformations.
\end{itemize}

\begin{figure}[ht]
\centering
\includegraphics[width=0.5\textwidth]{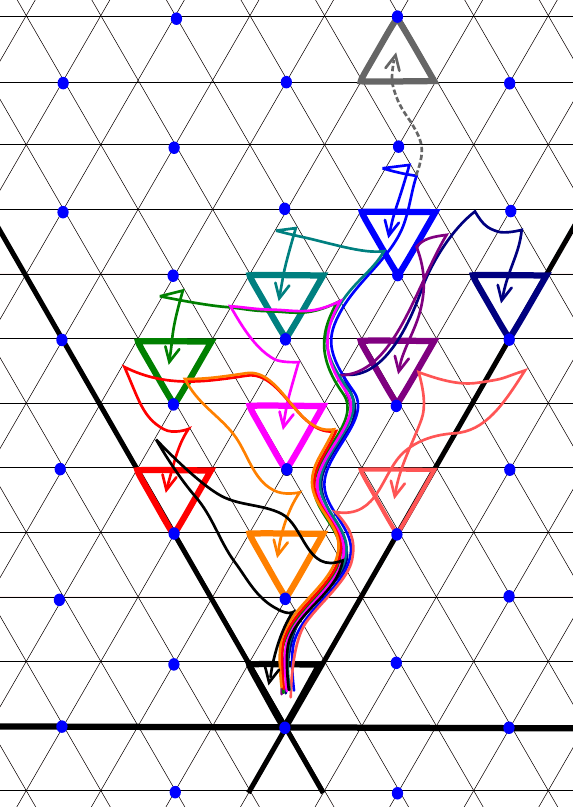}
\caption{An example of the families of galleries constructed in Section~\ref{sec:Construction}.}
\label{fig:Xa1toXxbPropFigIntro}
\end{figure}

Step (1) is essentially covered in Sections~\ref{sec:labelings}--\ref{sec:ADLVGalleries}. These two relationships (nonemptiness and dimension) go hand-in-hand, and they are established using results of  G\"{o}rtz, Haines, Kottwitz and Reuman \cite{GHKR}, and modified versions of the work of Gaussent and Littelmann \cite{GaussentLittelmann} and Parkinson, Ram, and C. Schwer \cite{PRS}. The key observation is the fact that the labeled folded alcove walks of \cite{PRS}, respectively the positively folded galleries of \cite{GaussentLittelmann},  index intersections of unipotent and Iwahori orbits in the affine flag variety; see Figure \ref{fig:Xa1toXxbPropFigIntro} for an illustration of these galleries.  In \cite{GHKR}, the dimensions of affine Deligne--Lusztig varieties corresponding to pure translations are expressed in terms of the intersections of intersections of Iwahori and unipotent orbits, which allows us to compute the dimensions of affine Deligne--Lusztig varieties using dimensions of certain folded galleries. The dimension of such galleries is computed by counting folds and positive crossings.

Having established (1), the problem is reduced to a problem of a combinatorial and geometric flavor in the affine Coxeter complex of the associated affine Weyl group: one needs to explicitly construct and manipulate positively folded galleries while keeping track of their type, their start and end alcoves, as well as their folds and crossings. Although we do use on the machinery Gaussent and Littelmann developed for the context of the affine Grassmannian, the adaptation of the corresponding results from \cite{GaussentLittelmann} to the context of the affine flag variety is quite delicate in places and thus comprises the main body of the paper; see in particular Sections~\ref{sec:Construction}--\ref{sec:Conjugation}.  

\subsubsection*{Organization of the paper}
We now provide more detailed comments on the organization of the paper. In Section~\ref{sec:BuildingsAlcoveWalks} we briefly recall standard definitions and results on reductive groups over local fields and their associated buildings, and proceed to discuss several key geometric tools in Section~\ref{sec:labelings}.  Of particular importance in Section~\ref{sec:labelings} is the notion of a periodic orientation on the affine hyperplanes which comes from a choice of labeling ``at infinity''; \textit{i.e.} at the boundary of the standard apartment of the building.  Section~\ref{sec:labelings} also reviews and connects the notions of the combinatorial galleries introduced by Gaussent and Littelmann \cite{GaussentLittelmann} and the labeled folded alcove walks defined by Parkinson, Ram, and C. Schwer \cite{PRS}.  This connection is critical for extending results in \cite{GaussentLittelmann} from the context of the affine Grassmannian to the affine flag variety.

To each labeled folded alcove walk, we associate a statistic which counts the number of folds and positive crossings, and we show in Section~\ref{sec:GalleryDim} that in certain situations this statistic coincides with Gaussent and Littelmann's definition of the dimension of a combinatorial gallery.  We then recall the root operators introduced by Gaussent and Littelmann~\cite{GaussentLittelmann} and discuss their effect on dimension.  The ability to relate dimensions of pairs of varieties in this paper stems from the application of the root operators of \cite{GaussentLittelmann} to labeled folded alcove walks, as illustrated in Figure~\ref{fig:Xa1toXxbPropFigIntro}.

Section~\ref{sec:GalleryDim} also includes closed formulas and inequalities concerning the dimension of a gallery, which may be of independent interest.   Each labeled folded alcove walk in the sense of \cite{PRS} starts by choosing a minimal gallery from the fundamental alcove to another alcove in the standard apartment.  In many places in our proofs, it is crucial that our notion of dimension is independent of this initial choice of minimal gallery, and we prove this independence (which is implicit in~\cite{PRS}) in Section~\ref{sec:GalleryDim} as well. 

The critical observation which permits the use of the combinatorics of labeled folded alcove walks to answer questions about the nonemptiness and dimensions of affine Deligne--Lusztig varieties is the fact that the labeled folded alcove walks of \cite{PRS} naturally index intersections of unipotent and Iwahori orbits in the affine flag variety.  In Section~\ref{sec:ADLVGalleries} we review a result of G\"{o}rtz, Haines, Kottwitz, and Reuman from \cite{GHKR} expressing the dimensions of affine Deligne--Lusztig varieties corresponding to pure translations in terms of the intersections of such orbits.  We are then able to recast the dimensions of these intersections as dimensions of certain folded galleries, completing Step (1) from page~\pageref{KeySteps}. 

We then proceed with Step (2), and thus the proofs of our main results.  Section~\ref{sec:Construction} provides explicit constructions of infinite families of positively folded galleries, which are crucial to the proof of Theorem~\ref{Shrunken}.  We begin with an explicit construction of a single gallery $\sigma_{a_0}$ which ends at the identity element $\id$.  We then apply geometric transformations and root operators to $\sigma_{a_0}$ to construct families of galleries ending at pure translations, as depicted in Figure~\ref{fig:Xa1toXxbPropFigIntro}.

Next, Section~\ref{sec:Xx1ShrunkenDominant} relates the varieties $X_x(1)$ and $X_a(1)$, where $x$ is in a shrunken Weyl chamber and $a$ is a single alcove based at a specific vertex close to the origin, as in the statement of Theorem~\ref{Shrunken}.  We remark that the results in Section~\ref{sec:Xx1ShrunkenDominant} can be deduced from the existing literature on the basic case, but that our approach uses the constructive machinery established in Section~\ref{sec:Construction}, revealing connections between pairs of varieties which were not previously known to be related.  

In Section~\ref{sec:RelabelingSymmetry}, we establish Theorems~\ref{ForwardShift} and~\ref{NonemptyDim}, and we then complete the proof  of Theorem~\ref{Shrunken}.  To prove Theorem~\ref{ForwardShift}, we transform positively folded galleries by applying translations $t^\mu$, which yields a relationship between the varieties $X_x(1)$ and $X_{t^\mu x}(t^\mu)$ for any $x$ under a mild convexity hypothesis.  This process is illustrated in Figure~\ref{fig: intro relabel shift}.  The reason for the convexity hypothesis is that our proof requires the existence of a minimal gallery from $\id$ to $t^\mu x$ which passes through the alcove $t^\mu$.  The proof of Theorem~\ref{NonemptyDim} then combines results of~\cite{GoertzHeDim},~\cite{GHN}, and~\cite{HeAnnals} for the case $b = 1$ with Theorem~\ref{ForwardShift}.

\begin{figure}[ht]
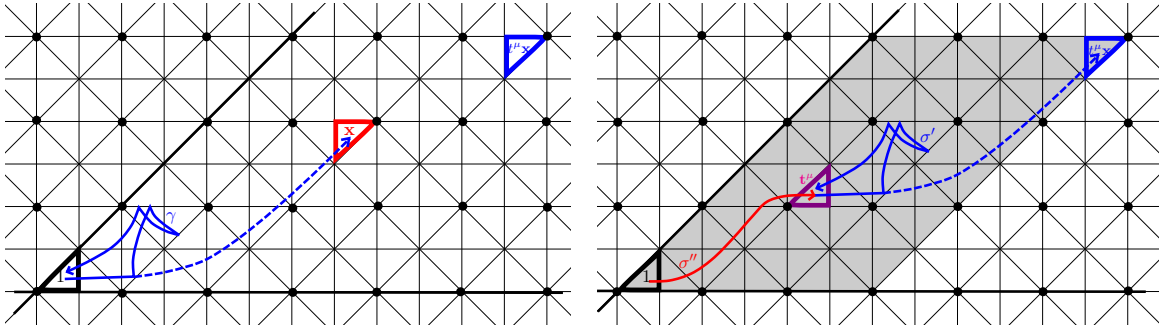

\begin{center}
\resizebox{\textwidth}{!}
{
\begin{overpic}{Xxb_implies_Xx-b-left}
\put(28.5,18.5){\bf $\color{blue} \gamma$}
\put(9,7.5){$\color{black} \id$}
\put(60,33.5){$\color{red} \x$}
\put(89,48){\footnotesize{$\color{blue} t^{\mu} \x$}}
\end{overpic}
\hspace{2ex}
\begin{overpic}{Xxb_implies_Xx-b-right}
\put(14.5,10){$\color{red} \sigma''$}
\put(57.5,32.5){\bf $\color{blue} \sigma'$}
\put(8, 7.5){$\color{black} \id$}
\put(36,25){\footnotesize{$\color{magenta} \mathbf{t}^\mu$}}
\put(87.5,48){\footnotesize{$\color{blue} t^{\mu} \x$}}
\end{overpic}

}
\caption{We illustrate the proof of Theorem~\ref{ForwardShift}. The positively folded gallery $\gamma$ on the left shows that $X_x(1)$ is nonempty and can be used to compute $\dim X_x(1)$.  We translate $\gamma$ by $t^\mu$ to obtain the gallery $\sigma'$ on the right.  The gallery obtained by concatenating $\sigma'$ with the gallery $\sigma''$ from $\id$ to $t^\mu$ shows that $X_{t^\mu x}(t^\mu)$ is nonempty and gives a lower bound on $\dim X_{t^\mu x}(t^\mu)$. The alcoves in the convex hull of $t^\mu x$ and the base alcove are shaded light gray on the right.}
\label{fig: intro relabel shift}
\end{center}
\end{figure}

Section~\ref{sec:RelabelingSymmetry} also contains a key step for the proof of Theorem~\ref{Shrunken}, which is to show that the galleries constructed in Section~\ref{sec:Construction} attain the dimension of $X_{x}(t^\mu)$ when $x = t^\lambda w_0$ is in the shrunken dominant Weyl chamber, $\mu$ is dominant, and suitable convexity hypotheses hold.  The argument here uses the LS-galleries of Gaussent and Littelmann~\cite{GaussentLittelmann}.  This completes the proof of Theorem~\ref{Shrunken} for $x$ with spherical direction $w_0$.  As with Theorem~\ref{ForwardShift}, our convexity hypotheses ensure the existence of minimal galleries which are required for our proofs.  

The work of Gaussent and Littelmann \cite{GaussentLittelmann} mostly takes place in the affine Grassmannian $G(F)/K$, which translates to questions about end-vertices of galleries, and the methods in \cite{GaussentLittelmann} are tailored to tracking images of such vertices under root operators and retractions.  We are interested in intersections of double cosets in the affine flag variety $G(F)/ I$, which translates to questions about end-alcoves.  Since the tracking of alcoves is more subtle, we are only able to give explicit constructions when $x = t^\lambda w_0$ and is in the shrunken dominant Weyl chamber.  We complete the proof of Theorem~\ref{Shrunken} for the other cases using the results of Sections~\ref{sec:Construction} and~\ref{sec:Xx1ShrunkenDominant}, as well as Theorem~\ref{NonemptyDim}.  Section~\ref{sec:RelabelingSymmetry} also contains a discussion of obstructions to extending our constructions, including the formulation of a question whose answer would represent significant progress, and a discussion of further expected relations between  \cite{GaussentLittelmann} and the present work, pertaining to crystals and LS-galleries.

In Section~\ref{sec:Conjugation}, we demonstrate how to take a gallery for the variety $X_x(t^\mu)$, ``conjugate" it by a simple reflection $s$, and extend it at the beginning and end to obtain a gallery which can be used to study the variety $X_{sxs}(t^{\mu})$.  This process is illustrated by Figure~\ref{fig:ConjugatedGalleryIntro}.  The proof of Theorem~\ref{OtherWeyl}  then follows by applying a sequence of such ``conjugations".  The arguments in Section~\ref{sec:Conjugation} represent constructive geometric methods which illuminate the inductive algebraic and combinatorial arguments that arise in the existing literature on the basic case.  In Section~\ref{sec:SymmetriesLabelings}, we prove Theorem~\ref{DiagramSymmetry}, which considers the effect of diagram automorphisms.  

\begin{figure}[ht]
\begin{center}
\begin{overpic}[width=0.8\textwidth]{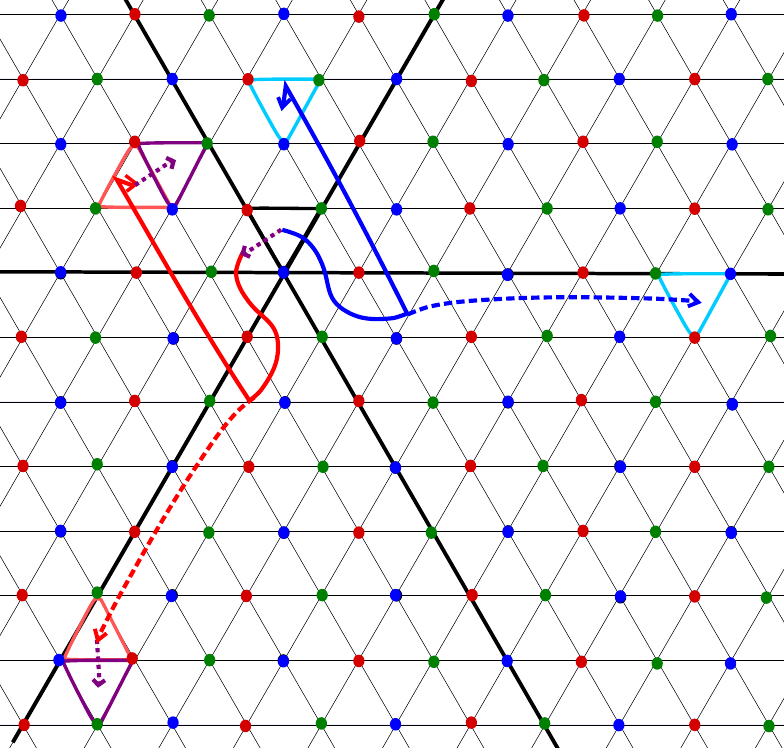}
\put(89,58){\footnotesize{$\color{blue} \x$}}
\put(45,70){\footnotesize{$\color{blue} \sigma$}}
\put(33,80){\footnotesize{$\color{blue} \bb$}}
\put(16.5,13){\footnotesize{$\color{red} s\x$}}
\put(14.9,4.5){\footnotesize{$\color{purple} \x^s$}}
\put(14.5,62){\footnotesize{$\color{red} \sigma^s$}}
\put(24.6,70){\footnotesize{$\color{purple} \bb^s$}}
\put(10,73){\footnotesize{$\color{red} s\bb$}}
\end{overpic}

\caption{We illustrate the proof of Theorem~\ref{OtherWeyl}.  This figure shows the effect of ``conjugating" a gallery $\sigma$ by a simple reflection $s$.}
\label{fig:ConjugatedGalleryIntro}
\end{center}
\end{figure}


\subsection{Applications}\label{Applications}

We finish this work by providing two applications in Section~\ref{sec:Applications}.  We obtain two immediate applications, to class polynomials of the affine Hecke algebra and to affine reflection length.  The statement on class polynomials is a consequence of applying our main results to the work in \cite{HeAnnals}, while the application to reflection length directly involves our methods.  

\subsubsection{Class polynomials of the affine Hecke algebra}

A recent breakthrough of He establishes a connection between the dimensions of affine Deligne--Lusztig varieties and the degrees of class polynomials of affine Hecke algebras \cite{HeAnnals}.  Class polynomials for affine Hecke algebras were introduced by He and Nie in~\cite{HeNie12}, where the authors prove results analogous to those obtained by Geck and Pfeiffer for class polynomials of Hecke algebras associated to finite Weyl groups~\cite{GeckPfeiffer}.

The Hecke algebra $H$ for the affine Weyl group $\aW$ is a $\Z[v,v^{-1}]$-algebra with basis $\{ T_x : x \in \aW \}$ and certain defining relations.  Given $x \in \aW$ and a $\sigma$-conjugacy class $[b]$ where $b \in G(F)$, the class polynomial $f_{x,[b]}$ lies in $\Z[v,v^{-1}]$ and is a polynomial in $\Z[v- v^{-1}]$.  We review the precise definition in Section~\ref{sec:Applications}. Class polynomials can be constructed inductively, but in a way which makes it difficult to determine their degree.   

Yang~\cite{Yang} has recently computed the class polynomials in type $\tilde A_2$ and hence used He's results from~\cite{HeAnnals} to prove Conjecture~\ref{Conj_GHKR} in this case.  These computations are difficult even in type $\tilde A_2$, and it is well-known that the combinatorial complexity for such computations increases considerably as rank grows. 
However, by combining our main theorems with the work of He~\cite{HeAnnals}, we easily obtain the following conclusions about the degrees of certain class polynomials.  The next result is proved as Theorem~\ref{T:ClassShrunken}, and uses Theorem~\ref{Shrunken}.  

\begin{thmi}\label{Class}   Let $x = t^\lambda w \in W$ be such that every alcove at the vertex $\lambda$ lies in the shrunken dominant Weyl chamber $\Cfs$.  Let $b = t^\mu$ be a regular dominant pure translation.  Suppose that the alcove $b$ lies in the convex hull of the base alcove and $x$, the alcove $t^{-\mu}x$ lies in $\Cfs$, and $\mu$ lies in the negative cone based at $\lambda - 2\rho$.  Then if $X_x(1) \neq \emptyset$ we have 
\begin{equation} \deg(f_{x,[b]}) = \ell(w). \end{equation}
\end{thmi}

\noindent Recall that the emptiness pattern for $X_x(1)$ is completely known from the basic case.  We also obtain information about degrees of class polynomials from our Theorems~\ref{ForwardShift},~\ref{NonemptyDim}, and~\ref{OtherWeyl}.

\subsubsection{Reflection length in affine Weyl groups}

The affine Weyl group $\aW$, like any Coxeter group, has two natural generating sets: its Coxeter generating set, which we denote by $\tilde S$, and its set of reflections, denoted $\tilde R$.  By definition, $\tilde R$ consists of all $\aW$-conjugates of elements of~$\tilde S$.  We write $\ell_{\tilde R}(x)$ for the \emph{reflection length} of an element $x \in \aW$; that is, the smallest integer~$k$ so that $x = r_1 r_2 \cdots r_k$ with each $r_i \in \tilde R$.   

For general Coxeter groups, Dyer~\cite{Dyer} has shown that the reflection length of an element $x$ is equal to both the minimal number of simple reflections that must be deleted from a fixed reduced word for $x$ in order to get a trivial word, and to the minimal length of a path from the identity 1 to $x$ in the (directed) Bruhat graph.  In the same work, Dyer also related the reflection length of $x$ to the Kazhdan--Lusztig polynomial $R_{1,x}$.  

Reflection length for finite Weyl groups is well-understood; see, for example,~\cite{Carter}.  However, in the affine setting surprisingly little is known.  The results found in McCammond and Petersen's work \cite{McCammondPetersen} represent the state of the art.  For $\aW$ an irreducible affine Weyl group of rank $n$, they compute the exact reflection length of pure translations $t^\lambda \in \aW$, and show that $\ell_{\tilde R}(t^\lambda)$ can take any value $2k$ with $1 \leq k \leq n$.  They also prove that for $x = t^\lambda w$ with $\ell_{\tilde R}(t^\lambda) = 2k$ the following inequalities hold: $$k \leq \ell_{\tilde R}(x) \leq k + n.$$ Thus in particular, the reflection length of any element in $\aW$ is bounded above by $2n$.

 We prove the following result as Theorem~\ref{CoxeterElements}.  This statement includes the first exact calculation of reflection length in affine Weyl groups for elements other than pure translations. 

\begin{thmi}\label{Reflection}  Let $\aW$ be an irreducible affine Weyl group of rank $n$, with set of reflections~$\tilde R$, and let $\sW$ be the associated finite Weyl group, with set of reflections $R$.  Let $x = t^\lambda w \in \aW$, where $\lambda$ is a coroot and $w \in \sW$, and suppose that the alcove $x$ lies in the shrunken Weyl chamber corresponding to $u \in \sW$.  Assume that
\begin{equation}
u^{-1}wu \in \sW \backslash \bigcup\limits_{T \subsetneq S} W_T.
\end{equation}
Then for all $g \in \Aut(\aW)$ 
\begin{equation} \ell_R(w) \leq \ell_{\tilde R}(g(x)) \leq \ell(u^{-1}wu). \end{equation}
Moreover if $w$ is a Coxeter element of $\sW$, then for all $g \in \Aut(W)$ \begin{equation} \ell_{\tilde R}(g(x)) = n. \end{equation} 
\end{thmi}

After observing that reflection length is $\Aut(W)$-invariant, so that it suffices to consider $\ell_{\tilde R}(x)$, the proof of Theorem~\ref{Reflection} proceeds by considering a gallery which attains the dimension of the variety $X_x(1)$.  We then use a theorem of G\"ortz and He~\cite{GoertzHeDim} together with results from Sections~\ref{sec:GalleryDim} and~\ref{sec:ADLVGalleries} to conclude the proof.  We also obtain inequalities concerning reflection length when $x$ lies outside of the shrunken Weyl chambers.


\subsection{Acknowledgements}

We are grateful to Arun Ram for suggesting that the first and third authors would have mathematics to talk about, and for many helpful conversations throughout this collaboration.  In particular, the observation that the labeled folded alcove walk model would be applicable to this problem grew out of discussions with Arun Ram at the Spring 2013 Semester Program on ``Automorphic Forms, Combinatorial Representation Theory, and Multiple Dirichlet Series'' at the Institute for Computational and Experimental Research in Mathematics.  We also wish to thank the organizers of the June 2013 Oberwolfach workshop ``Geometric Structures in Group Theory'', during which the second and third authors realized that Gaussent and Littelmann's work on root operators was relevant to the problem.   We thank the University of Glasgow and DFG Project SCHW 1550/2-1 for travel and housing support, and Michael Cowling and the University of New South Wales for hosting a visit by the third author in August 2014.

We also thank Stephane Gaussent for illuminating remarks on root operators and  dimensions of folded galleries, Ulrich G\"ortz and Xuhua He for helpful email exchanges about their work in \cite{GoertzHeDim}, Robert Kottwitz for a conversation about the defect, Jon McCammond for discussions of reflection length, and James Parkinson for discussions of connections between labeled folded alcove walks and affine Weyl groups.

We are truly indebted to the authors of \cite{GHKR} and Ulrich G\"{o}rtz in particular for the beautiful pictures generated from the experimental evidence in support of their conjecture, and for keeping all of the pictures in the arXiv version of their paper.
 
Finally, we are grateful to Dan Bump, Benson Farb, Robert Kottwitz, Peter Littelmann, Michael Rapoport, and an anonymous referee for helpful comments on an earlier version of this manuscript.


\section{Preliminaries on Weyl groups, affine buildings, and related notions}\label{sec:BuildingsAlcoveWalks}

We begin by fixing notation and recalling standard definitions and results that will be used throughout the paper.  Section~\ref{sec:WeylGroupsRootSystems} discusses Weyl groups and root systems and Section~\ref{sec:HyperplanesAlcovesWeylChambers} recalls important features of affine buildings including hyperplanes, alcoves, and Weyl chambers.  We essentially follow \cite{Bourbaki4-6} and \cite{BruhatTits}.


\subsection{Weyl groups and root systems}\label{sec:WeylGroupsRootSystems}

Recall that $k$ is an algebraic closure of the finite field $\mathbb{F}_q$ with $q$ elements, and that $F$ is the field of Laurent series over $k$. Let $G$ be a split connected reductive group over $k$, and $T$ a split maximal torus of $G$. Fix a Borel subgroup $B = TU$, where $U$ is the unipotent radical.
We write $\sW$ for the (finite or spherical) Weyl group of $T$ in $G$, which equals $\sW = N_G(T)/T$.  Denote by $\eW = X_*(T)\rtimes \sW$ the extended affine Weyl group. For a coweight
$\lambda \in X_*(T)$, we will write $t^{\lambda}$ for the element in $T(F)$ that is the image of $t$ under the homomorphism $\lambda: \mathbb{G}_m \rightarrow T$.  We can uniquely express an element $x\in \eW$ as $x = t^{\lambda}w$, where $w \in \sW$ is the spherical direction of $x$ and $\lambda \in X_*(T)$.  Typically, elements of the spherical Weyl group will be denoted by the letters $u$, $v$, and $w$, and elements of the affine Weyl group by $x$, $y$, and $z$.  

Let $\Delta = \{\alpha_i \}_{i=1}^n$ 
be a basis of simple roots in the group of characters $X^*(T)$. Denote by $\Phi$ the set of roots of $T$ in $G$ and by $\Phi^+$ and $\Phi^-$ the corresponding set of positive and negative roots, respectively.  Denote by $\rho$ the half-sum of the positive roots.   We remark that the notion of positive and negative roots here depends upon the choice of $B$.  Throughout this paper, we assume that $\Phi$ is irreducible, and thus that the Dynkin diagram is connected.  Since $\Phi$ is irreducible, there is a unique highest root $\tilde{\alpha}$ which satisfies that $\tilde{\alpha} - \alpha$ is a sum of simple roots for all $\alpha \in \Phi^+$.  We denote by $\alpha^{\vee} = 2 \alpha / \langle \alpha, \alpha \rangle$ the coroot associated to $\alpha \in \Phi$ with respect to the inner product $\langle \cdot,  \cdot \rangle: X^*(T) \times X_*(T) \rightarrow \mathbb{Z}$.  
The basis of simple coroots is then denoted by $\Delta^{\vee}$, and we denote by $R^{\vee} = \oplus_{i=1}^n \Z \alpha_i^{\vee} \subset X_*(T)$ 
the coroot lattice. Elements of the coroot lattice will typically be denoted by Greek letters such as $\lambda$, $\mu$, and $\nu$.   Denote by $\Lambda_G$ the quotient of $X_*(T)$ by $R^{\vee}$, and by $\eta_G$ the surjection $\eta_G: G(F) \rightarrow \Lambda_G$.  The fundamental weights $\{ \varpi_i \}_{i=1}^n$
and coweights $\{ \varpi_i^{\vee} \}_{i=1}^n$ 
are dual bases to $\Delta^{\vee}$ and $\Delta$, respectively.  We denote the weight lattice by $P = \oplus_{i=1}^n \Z \varpi_i \subset X^*(T)$.

The finite Weyl group $\sW$ is a Coxeter group which is generated by the set of simple reflections $s_i$, where $s_i$ is the reflection across the hyperplane perpendicular to the $\alpha_i\in \Delta$ that passes through the origin, for $i = 1,\dots,n$. More generally, for any $\alpha \in \Phi$, we denote by $s_{\alpha}$ the reflection across the hyperplane perpendicular to $\alpha$ that passes through the origin.  
 If we put $S = \{ s_i \}_{i=1}^n$, then $(\sW, S)$ forms a Coxeter system. Denote the length of an element $w \in \sW$ by $\ell(w)$, and write $w_0$ for the longest word in $\sW$. We may also view $\sW$ as a finite reflection group, acting on Euclidean space $V = \R^n$ where $n$ is the rank of $G$, which we can identify with $X_*(T)\otimes_{\Z}\R$.  
 
 For each root $\alpha \in \Phi$ and each integer $k \in \Z$, we may consider the affine hyperplane 
$H_{\alpha, k} : = \{ v \in V \, | \, \langle \alpha, v  \rangle = k \},$
and we write $H_{\alpha, 0} = H_{\alpha}$ for convenience.  The affine reflection across $H_{\alpha,k}$ is given by
$s_{\alpha, k} (v):= v - ( \langle \alpha, v \rangle - k ) \alpha^{\vee}$
where we note that $s_{\alpha, 0} = s_{\alpha}$.   The (non-extended) affine Weyl group $\aW$ is then a Coxeter group which is generated by all affine reflections of the form $s_{\alpha, k}$.  For $\widetilde \alpha$ the highest root the set $\tilde{S} = S \cup \{ s_{\widetilde{ \alpha},1} \}$ is a Coxeter generating set for $\aW$, where we denote the additional affine generator as $s_0 =s_{\widetilde{ \alpha},1}$.  As with the finite Weyl group, we denote the length function by $\ell: \aW \rightarrow \Z$.  A minimal presentation for a group element $x \in \aW$ is a word for $x$ in the generating set $\tilde{S}$ which is of minimal length, that is, of length $\ell(x)$.  We also remark that $\aW \cong R^\vee \rtimes \sW$, and we will use both of these perspectives on the affine Weyl group throughout the paper.


\subsection{Hyperplanes, alcoves, and Weyl chambers}\label{sec:HyperplanesAlcovesWeylChambers}

The choice of split maximal torus corresponds to fixing an apartment $\App$ in a (thick) affine building $X$ of type $\aW$. We can (and will) identify this apartment with $X_*(T) {\otimes}_{\Z} \R$. The stabilizer $\aW_{v}$ of any special vertex $v$ in $\App$ is isomorphic to the spherical Weyl group $\sW$. 

Let $\aH$ be the collection of all affine hyperplanes or walls $\aH = \{ H_{\alpha, k} \mid \alpha \in \Phi, k \in \mathbb{Z}\}$, which we identify with a subset of $\App$.  The elements of $\aH$ are permuted naturally by the elements of $\sW$, and they are also permuted by translations by elements of the coroot lattice $R^{\vee}$.  Therefore the elements of the affine Weyl group $\aW$ also permute the hyperplanes in $\aH$, and so $\aW$ acts on the collection of connected components of $\App^{\circ} := \App \backslash \cup_{H \in \aH} H$.  Each such connected component is called an \emph{open alcove}.  Under the identification $\App = X_*(T)\otimes_{\Z} \R$, any open alcove in $\App$ consists of all points $x \in V$ satisfying the strict inequalities $k_{\alpha} < \langle \alpha, x \rangle < k_{\alpha} + 1$, where $\alpha$ runs through $\Phi^+$ and $k_{\alpha} \in \mathbb{Z}$ is some fixed integer depending upon the alcove and $\alpha$. 

We define a \emph{closed alcove}, or simply \emph{alcove}, to be the closure in $\App$ of an open alcove.  The elements of the affine Weyl group $\aW$ are in bijection with the set of alcoves in $\App$.  
The origin $v_0$ in $\App$ is the intersection $\cap_{i = 1}^n 
H_{\alpha_i}$, and we may choose the \emph{fundamental alcove} to be the alcove $\fa = \{ x \in \App \, \vert \, 0 \leq  \langle \alpha, x \rangle \leq 1 \text{ for all } \alpha \in \Phi^+ \},$ which contains the origin.  The codimension one faces of an alcove, that is, its maximal intersections with walls, are called \emph{panels}, and the \emph{walls of an alcove} $\mathbf{c}$ are defined to be the hyperplanes in $\aH$ which share a panel with $\mathbf{c}$.  For example, the walls of the fundamental alcove $\fa$ are $H_{\alpha_i}$ for $i = 1,\dots,n$ 
together with $H_{\tilde{\alpha}, 1}$.  If $p$ is a panel of an alcove $\mathbf{c}$ then the \emph{supporting wall of $p$} is the (unique) wall containing $p$.  

Let $I$ be the Iwahori subgroup of the group $G(F)$ associated to the origin, which fixes the base alcove $\fa$. With our adopted conventions, if $B = B^+$ is the standard Borel, then the Iwahori subgroup $I$ is the inverse image of the opposite Borel subgroup $B^-$ under the projection map $G(\mathcal{O}) \rightarrow G(k)$. The fundamental alcove $\fa$ is the basepoint of the affine flag variety $G(F)/I$, and is thus sometimes also referred to as the \emph{base alcove}. The Bruhat decomposition in this affine context then says that $G(F) = I\eW I$.

We define the \emph{fundamental} or \emph{dominant} Weyl chamber $\Cf$ in $\App$ to be the unique Weyl chamber based at the origin $v_0$ which contains the fundamental alcove $\fa$. That is, $\Cf$ is the set of points $x \in \App$ such that $\langle \alpha, x \rangle \geq 0$ for every $\alpha \in \Phi^+$. We denote by $-\Cf$ the \emph{antidominant} Weyl chamber, which is the unique chamber opposite $\Cf$ in $\App$, consisting of all points $x \in \App$ such that $\langle \alpha, x \rangle \leq 0$ for all $\alpha \in \Phi^+$. All chambers are labeled $\Cw_w$ for a unique element $w\in \sW$, with $\Cf = \Cw_\id$ and $-\Cf = \Cw_{w_0}$.  

Associated to each affine building $X$ there is a spherical building $\partial X$ at infinity whose chambers $C$ correspond to parallelism classes of Weyl chambers in $X$. Apartments of $\partial X$ are in one-to-one correspondence with apartments in $X$ and the chambers of an apartment $\partial \App$ in $\partial X$ are the parallelism classes of Weyl chambers in $\App$.  If $C$ is a chamber at infinity, $\mathbf{c}$ is an alcove, and $H$ is a wall of $\mathbf{c}$, we say $H$ \emph{separates $\mathbf{c}$ from $C$} if there is a representative Weyl chamber $\cC$ for $C$ such that $H$ separates $\mathbf{c}$ from $\cC$, that is $\mathbf{c}$ and $\cC$ are contained in different half-spaces determined by $H$.


\section{Labelings and orientations, galleries, and alcove walks}\label{sec:labelings}

This section discusses some of our key tools, and contains many of our most important definitions.  In Section~\ref{sec:LabelingsOrientations} we define labelings of the standard apartment and its boundary and describe the induced periodic orientation of hyperplanes.  Section~\ref{sec:Galleries} discusses two closely related types of combinatorial galleries, inspired by those introduced by Gaussent and Littelmann in~\cite{GaussentLittelmann}, and Section~\ref{sec:LFAW} recalls and slightly generalizes the labeled folded alcove walks which were introduced by Parkinson, Ram, and C.~Schwer in~\cite{PRS}. 


\subsection{Labelings and orientations of hyperplanes}\label{sec:LabelingsOrientations}

In the present section we will only be working inside a single apartment $\App$.  We write $\cham(\App)$ for the set of alcoves in $\App$. 
If we fix an origin $v_0$ and a fundamental alcove $\fa$ containing $v_0$, it is well known that the alcoves in $\App$ are in bijection with the elements of $\aW$. Fix once and for all one such $\aW$-equivariant bijection $\phi_0:\cham(\App)\to\aW$ that maps $\fa$ to the identity element $\id$ of $\aW$. We call this the \emph{standard labeling of $\App$}.  
Using the standard labeling, an alcove in 
$\App$ can be written as $x\fa$ for a unique element $x \in \aW$, and we will frequently use this correspondence between alcoves and elements in the affine Weyl group.  We will often write $\x$ for the alcove $x\fa$, where $x \in \aW$.

We now define more general labelings.

\begin{definition}
A \emph{labeling of $\App$} is a $\aW$-equivariant map from the set of alcoves of $\App$ to the affine Weyl group $\aW$. The \emph{basepoint} $v_\phi$ of a labeling $\phi$ is the intersection of all (closed) alcoves with label in $\sW$ under $\phi$. The \emph{base alcove of $\phi$} is the unique alcove $\x_\phi$ with $\phi(\x_\phi)=\id$.
\end{definition}

\noindent In particular, observe that the standard labeling $\phi_0$ has basepoint $v_{\phi_0}=v_0$ the origin, and base alcove $\x_{\phi_0}=\fa$ the fundamental alcove; that is, $x_{\phi_0} = 1 \in \aW$. 

\begin{definition}\label{def:InducedLabelingW}
Let $x\in\aW$ be an element of the affine Weyl group. The (type-preserving)  \emph{labeling induced by $x$} is the map $\phi_x:\cham(\App)\to  \aW$ defined by $$\phi_x(\y):= \phi_0(x^{-1}\y). $$ 
\end{definition}

\noindent Intuitively, Definition~\ref{def:InducedLabelingW} says that the alcove labeled $x$ by the standard labeling $\phi_0$ is labeled with the identity element by $\phi_x$, and all other alcoves are then labeled such that all panels and vertices keep their type.  Since $\aW$ acts on $\cA$ by simplicial bijections, Definition~\ref{def:InducedLabelingW} is a special case of the following: 

\begin{definition}\label{def:induced label}
Let $g:\App \to \App$ be any simplicial bijection. The \emph{labeling induced by $g$} is the map $\phi_g:\cham(\App)\to  \aW$ defined by \[\phi_g(\y)\define\phi_0(g^{-1}(\y)).\]
\end{definition}

\noindent The basepoint $v_{\phi_g}$ of $\phi_g$ equals $g(v_0)$ and the base alcove $\x_{\phi_g}$ equals $g(\fa)$. 
Note that the map $g:\App \to \App$ here need not be type-preserving, but it will always take special vertices to special vertices, and so the basepoint $v_{\phi_g}$ will always be a special vertex.

\begin{lemma}\label{lem:labels}
Each labeling is induced by a simplicial bijection $g:\App \to \App$. 
\end{lemma}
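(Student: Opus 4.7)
The plan is to produce, from a given labeling $\phi$, a simplicial bijection $g:\App \to \App$ whose induced labeling $\phi_g$ coincides with $\phi$. The key observation is that $\phi_0$ and $\phi$ are both bijections from $\cham(\App)$ to $\aW$ (the first by construction, and the second because the $\aW$-equivariance together with the simple transitivity of the $\aW$-action on alcoves forces bijectivity), so their composition supplies a canonical candidate for $g$ at the level of alcoves.

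First, I would set $g(\y) \define \phi^{-1}(\phi_0(\y))$ on alcoves. This is immediately a bijection $\cham(\App) \to \cham(\App)$. The heart of the argument is to verify that this alcove-level bijection extends to a simplicial bijection of the whole apartment $\App$. Since $\App$ is the Coxeter complex of $(\aW,\tilde S)$, each non-maximal simplex is recovered as the intersection of the alcoves containing it, so it suffices to check that adjacent alcoves are carried to adjacent alcoves and that panel types match. Two alcoves $\y,\y'$ are adjacent across a panel of type $s \in \tilde S$ precisely when their standard labels $\phi_0(\y)$ and $\phi_0(\y')$ differ by a simple reflection $s$. The $\aW$-equivariance of $\phi$ transports this local relation to the $\phi$-labels of any pair of adjacent alcoves, and hence, after pulling back via $\phi^{-1}$, to the alcoves $g(\y)$ and $g(\y')$. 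Once this combinatorial compatibility is checked, the extension of $g$ to a simplicial bijection of $\App$ is unique by the rigidity of Coxeter complexes.

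With $g$ in hand as a simplicial bijection, the identity $\phi_g = \phi$ is a one-line verification:
\[
\phi_g(\y) \;=\; \phi_0\bigl(g^{-1}(\y)\bigr) \;=\; \phi_0\bigl(\phi_0^{-1}(\phi(\y))\bigr) \;=\; \phi(\y)
\]
for every alcove $\y \in \cham(\App)$.

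The main obstacle is the extension step: one must carefully exploit the $\aW$-equivariance of $\phi$ (in tandem with that of $\phi_0$) to confirm that $g$ preserves adjacency and panel types between every pair of neighboring alcoves. This is essentially a local combinatorial check on pairs of alcoves sharing a panel, and once it is completed everything else in the argument is formal. A cleaner reformulation that I would likely use in the final write-up is to characterize the base alcove $\x_\phi$ of $\phi$ and identify $g$ as the unique simplicial bijection sending $\fa$ to $\x_\phi$ in a way compatible with the labelings, which packages the extension and uniqueness into a single appeal to Coxeter complex rigidity.
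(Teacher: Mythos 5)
Your construction is the same as the paper's: define $g$ alcove-wise by $g(\y) = \phi^{-1}(\phi_0(\y))$ (equivalently $\phi(g(\y)) = \phi_0(\y)$), use the equivariance of $\phi_0$ and $\phi$ to see that this chamber bijection extends to a simplicial bijection of $\App$, and then check $\phi_g = \phi$ in one line, so the proposal matches the paper's proof. One small correction to your extension step: you should not ask that ``panel types match,'' since -- as the paper notes immediately after the lemma -- the bijection $g$ need not be type-preserving (this is exactly what allows labelings such as $\phi_{v,x}$ with basepoint a special vertex not of the same type as $v_0$); the correct local check is that $g$ carries the type-$s$ adjacency relation to the type-$\pi(s)$ adjacency relation for a fixed permutation $\pi$ of $\tilde S$, i.e.\ types are preserved only up to a diagram automorphism, and with that formulation the rigidity/extension argument goes through as you describe.
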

\begin{proof}
Let $\phi$ be a labeling.  We construct a simplicial bijection $g:\App \to \App$ which induces $\phi$ as follows.  For each alcove $\y$, since $\phi$ and $\phi_0$ are bijections from the set of alcoves of $\App$ to the affine Weyl group $\aW$, we may define $g(\y)$ to be the unique alcove such that $\phi(g(\y)) = \phi_0(\y)$.  Then as $\phi_0$ and $\phi$ are both $\aW$-equivariant, this bijection on the set of alcoves may be readily verified to induce a simplicial bijection $g:\App \to\App$.  Now if $\x = g(\y)$ we have $ \phi(\x)=\phi_0(g^{-1}(\x))$ and so $\phi = \phi_g$ as required.
\end{proof}

If a vertex $v$ and alcove $\x = x\fa$ are such that there exists a simplicial bijection $g:\App \to \App$ taking $v_0$ to $v$ and $\fa$ to $\x$, we may write $\phi_{v,x}$ for the induced labeling $\phi_g$.  In particular, for each vertex $v = t^\lambda$ with $\lambda \in R^\vee$, and each $w$ in the spherical Weyl group $\sW$, there is a labeling $\phi_{v,t^\lambda w}$.

We will also consider labelings of the chambers at infinity; that is, labelings of the chambers in $\partial \App$.

\begin{definition}
A \emph{labeling at infinity of $\App$} is a $\sW$-equivariant map from the set of chambers of $\partial \App$ to the spherical Weyl group $\sW$. 
\end{definition}

Any labeling $\phi$ induces a labeling at infinity $\phipartial$, as follows. 
Let $\phi$ be a labeling of $\App$ with basepoint $v$ (necessarily a special vertex). Then for each chamber $C$ in $\partial \App$, there exists a representative Weyl chamber $\cC \subset \App$ which is based at $v$.  We say that an alcove $\x$ is the \emph{tip of the chamber} $C$ if it is the unique alcove in $\cC$ which contains the basepoint $v$. 
Put 
\[\phipartial(C) = \phi(\x) \in \sW \text{ where $\x$ is the tip of $C$ at $v$.}\]
The \emph{standard labeling at infinity} is the labeling at infinity $\phipartialo$ induced by the standard labeling~$\phi_0$.  

\begin{lemma}\label{lem:induced infinity}  Let $\psi$ be a labeling at infinity.  Then there is a unique $w$ in the spherical Weyl group $\sW$ so that for every vertex $v = t^\lambda$ with $\lambda \in R^\vee$, the labeling at infinity $\psi$ is induced by the labeling $\phi_{v,t^\lambda w}$ of $\App$, that is, $\psi=\phipartialvtlw$.  In particular, each labeling at infinity is induced by a labeling $\phi_w$ of $\App$ for a unique $w \in \sW$.
\end{lemma}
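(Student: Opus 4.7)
The plan is to show both existence and uniqueness by reducing any labeling at infinity to its value on a single reference chamber at infinity, which I take to be the dominant chamber $C_0$ (the parallelism class of $\Cf$, whose tip at $v_0$ is $\fa$).

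First I would carry out the key explicit computation. For any chamber $C$ in $\partial\App$, set $u' := \phipartialo(C) \in \sW$, so the tip of $C$ at $v_0$ is the alcove $u'\fa$. Since $u'$ fixes $v_0$ and translations preserve parallelism classes at infinity, the Weyl chamber $t^\lambda(u'\Cf)$ is based at $v = t^\lambda(v_0)$, represents $C$, and has tip $t^\lambda u'\fa$. Applying the definition of the induced labeling and the $\aW$-equivariance of $\phi_0$,
\[
\phipartialvtlw(C) = \phi_0\bigl((t^\lambda w)^{-1}\,t^\lambda u'\fa\bigr) = \phi_0(w^{-1}u'\fa) = w^{-1}u'.
\]
Crucially this value is independent of $\lambda$, so $\phipartialvtlw$ depends only on $w$; in particular $\phipartialvtlw(C_0) = w^{-1}$, which already proves uniqueness of $w$: distinct elements of $\sW$ produce distinct values on $C_0$.

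For existence, given an arbitrary labeling at infinity $\psi$, I would set $w := \psi(C_0)^{-1} \in \sW$. The $\sW$-equivariance of $\psi$, combined with the fact that every chamber $C$ in $\partial\App$ is uniquely determined by $u' = \phipartialo(C)$ via the natural $\sW$-action relating $C_0$ and $C$, reduces $\psi(C)$ to the product $w^{-1}u'$. Together with the computation above, this yields $\psi = \phipartialvtlw$ for every coroot-lattice vector $\lambda$ and corresponding vertex $v = t^\lambda$, establishing the main assertion and giving the final claim as the special case $\lambda = 0$.

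I expect the main technical obstacle to be carefully identifying the tip of $C$ at $v$ in terms of the tip at $v_0$ via the translation $t^\lambda$, and then keeping straight the convention for the $\sW$-action on chambers at infinity so that the $\sW$-equivariance of $\psi$ yields the expected formula $\psi(C) = \psi(C_0)u'$. Once these bookkeeping matters are resolved, the entire lemma reduces to reading off $w$ from the single value $\psi(C_0)$.
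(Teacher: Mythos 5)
Your argument is correct and matches the paper's proof in substance: both rest on the fact that a labeling at infinity is determined by its value on a single chamber, together with the observation that translations preserve parallelism classes at infinity, so that $\phipartialvtlw$ is independent of $\lambda$. The paper normalizes by taking the unique chamber on which $\psi$ equals the identity and reading off $w$ from its representative Weyl chamber based at $v$, whereas you evaluate at the dominant chamber $C_0$ and compute $\phipartialvtlw(C) = w^{-1}\,\phipartialo(C)$ explicitly --- the same reduction, just written out in formulas.
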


\begin{proof}
Let $\psi$ be a labeling at infinity and let $C$ be the unique chamber at infinity such that $\psi(C) = 1$.  Then for any vertex $v = t^\lambda$ with $\lambda \in R^\vee$, the chamber $C$ is represented by a unique Weyl chamber $\cC$ based at $v$.  Suppose that this Weyl chamber $\cC$ is the Weyl chamber based at $v = t^\lambda$ which contains the alcove $t^\lambda w\fa$.  Then the labeling $\phi_{v,t^\lambda w}$ of $\App$ induces the labeling at infinity $\psi$.  In particular, taking $v = v_0$ we find that $\psi$ is induced by the labeling $\phi_{v_0,w} = \phi_w$.  The element $w\in \sW$ here is unique since if a Weyl chamber $\cC'$ is based at a vertex $v' = t^{\lambda'}$ then $\cC$ and $\cC'$ are parallel if and only if $\cC'$ contains the alcove $t^{\lambda'}w\fa$. 
\end{proof}

From now on, we will denote labelings at infinity by $\phipartial$ or sometimes by $\phi_{v,t^\lambda w}^{\mbox{\tiny{$\partial$}}}$ or $\phipartialw$, where $\phi_{v,t^\lambda w}$ or $\phi_w$ induces $\phipartial$ as in Lemma~\ref{lem:induced infinity}.  Note that $\phipartial = \phipartialw$ for $w \in \sW$ if and only if $\phipartial(C) = 1$ where $C$ is the chamber at infinity represented by the Weyl chamber $\cC_w$.

The affine Weyl group $\aW$ acts on the set of all labelings of $\App$, and the spherical Weyl group $\sW$ acts on the set of all labelings at infinity.  Note that if $x,y \in \aW$ then $x\phi_y = \phi_{xy}$ and if $u,w \in \sW$ then $u\phipartialw = \phipartialuw$.  We sometimes write $-\phi$ (respectively,~$-\phipartial$) for the opposite labeling $w_0\phi$ (respectively, $w_0 \phipartial$). 

Labelings at infinity induce orientations on the set of hyperplanes of an apartment, as follows.  

\begin{definition}\label{def:PeriodicOrientation}
For each $w \in \sW$, let $\phipartial = \phipartialw$ be the labeling at infinity induced by the labeling $\phi_w$ of $\App$.  The \emph{periodic orientation induced by $\phipartial$}  on the set of hyperplanes of $\App$ is the orientation of the hyperplanes such that for each $\alpha \in \Phi^+$:
\begin{enumerate}
\item the alcove $\w = w\fa$ is on the positive side of $H_{\alpha}$; and  
\item for all integers $k$, the hyperplanes $H_{\alpha}$ and $H_{\alpha,k}$ have the same orientation.
\end{enumerate}
\end{definition}

\noindent Note that the periodic orientation does not depend explicitly on a choice of Borel subgroup, since $H_{\alpha}= H_{-\alpha}$.  Given Definition~\ref{def:PeriodicOrientation}, we will often refer to a labeling at infinity $\phipartial$ as an \emph{orientation at infinity}.  In the case of the standard labeling at infinity $\phipartialo$ induced by $\phi_0$, we refer to the corresponding periodic orientation as the \emph{standard orientation at infinity}.


\subsection{Combinatorial galleries}\label{sec:Galleries}

We now introduce two closely related notions of combinatorial galleries, the first running from a vertex to a vertex of the same type and the second from an alcove to an alcove.   These are inspired by the combinatorial galleries introduced by Gaussent and Littelmann in \cite{GaussentLittelmann}, which are more general, and we refer the interested reader there for further details. To simplify notation, in this section we mostly denote alcoves by $c$ and $c_i$, rather than by $\x = x \fa$ with $x \in \aW$ as in Section~\ref{sec:LabelingsOrientations}.  We continue to work inside a single apartment $\App$ and to denote the standard labeling by $\phi_0$.

The first kind of combinatorial galleries that we consider, which run from a vertex to a vertex of the same type, are defined as follows.  These are a special case of the combinatorial galleries considered by Gaussent and Littelmann (see Definition 8 of \cite{GaussentLittelmann}).

\begin{definition}\label{def:CombGalleryVertex}
A \emph{(vertex-to-vertex) combinatorial gallery} is a sequence of alcoves $c_i$ and faces $p_i$ 
$$ \gamma=(p_0\subset c_0\supset p_1 \subset c_1 \supset p_2 \subset \dots \supset p_n \subset c_n \supset p_{n+1} ), $$
where the first and last faces $p_0$ and $p_{n+1}$ are vertices of the same type (\textit{e.g.} both in the coroot lattice), and the remaining $p_i$ are panels of both alcoves $c_i$ and $c_{i-1}$.    
\end{definition}

\noindent We remark that if $c_i \neq c_{i-1}$ then there is no choice for the panel $p_i$, and that once the vertex $p_0$ is specified there is no choice for the vertex $p_{n+1}$.  

We now define combinatorial galleries which run from alcoves to alcoves.

\begin{definition}\label{def:CombGalleryAlcove}
An \emph{(alcove-to-alcove) combinatorial gallery} is a sequence of alcoves $c_i$ and faces $p_i$ 
$$ \gamma=(c_0\supset p_1 \subset c_1 \supset p_2 \subset \dots \supset p_n \subset c_n), $$
where each $p_i$ is a panel of both alcoves $c_i$ and $c_{i-1}$.    
\end{definition}

\noindent Again, if $c_i \neq c_{i-1}$ there is no choice for the panel $p_i$.

Let $\gamma$ be a combinatorial gallery as in either Definition~\ref{def:CombGalleryVertex} or Definition~\ref{def:CombGalleryAlcove}.  The gallery $\gamma$ is said to be \emph{stuttering} if there is some $i$ so that $c_i = c_{i-1}$, and otherwise is \emph{non-stuttering}.  The \emph{length} of $\gamma$ is defined to be $n+1$, that is, the number of alcoves in $\gamma$ counted with multiplicity.  

When the context is clear, we will not specify whether a combinatorial gallery is vertex-to-vertex or alcove-to-alcove.  It may be helpful to  think of our vertex-to-vertex galleries as alcove-to-alcove galleries where both the first and last alcove come together with a fixed, marked vertex.  All of our combinatorial galleries will contain at least one alcove.

Obviously each vertex-to-vertex combinatorial gallery $\gamma$ as in Definition~\ref{def:CombGalleryVertex} can be truncated by removing the initial and final vertices in order to obtain a canonical associated alcove-to-alcove gallery, which we denote by $\gamma^\flat$.  To each alcove-to-alcove combinatorial gallery $\gamma$ as in Definition~\ref{def:CombGalleryAlcove} we will associate a canonical vertex-to-vertex gallery $$\gamma^\sharp := (p_0 \subset c_0\supset p_1 \subset c_1 \supset p_2 \subset \dots \supset p_n  \subset c_n \supset p_{n+1})$$ by defining $p_0$ to be the (unique) vertex of the first alcove $c_0$ which lies in the coroot lattice, hence $p_{n+1}$ is the (unique) vertex of the last alcove $c_n$ which lies in the coroot lattice.  Then $(\gamma^\sharp)^\flat = \gamma$ for each alcove-to-alcove combinatorial gallery, while if $\gamma$ is a vertex-to-vertex combinatorial gallery then $(\gamma^\flat)^\sharp = \gamma$ if and only if both the first and last faces of $\gamma$ lie in the coroot lattice.

Fix an orientation at infinity $\phipartial$ and write $C_\phi$ for the chamber at infinity with label $\phipartial(C_\phi)=\id$.  Then $C_{-\phi}$ is the chamber at infinity such that $\phipartial(C_{-\phi}) = w_0$.   
In~\cite{GaussentLittelmann} the only orientation considered is the standard orientation at infinity, but the definition of a positively (respectively, negatively) folded gallery given there naturally generalizes as follows.

\begin{definition}\label{def:PosFolded}
A combinatorial gallery $\gamma$ as in Definition~\ref{def:CombGalleryVertex} or Definition~\ref{def:CombGalleryAlcove} is \emph{positively (respectively, negatively) folded} with respect to $\phipartial$ if for all $1 \leq i \leq n$ the supporting wall $H_i$ of the panel $p_i$ either separates $c_{i-1}$ and $c_i$, or $H_i$ separates $c_i=c_{i-1}$ from the chamber $C_{-\phi}$ (respectively, $C_\phi$).  
\end{definition}

We mostly consider positively folded galleries.  We remark that if $\gamma$ is negatively folded with respect to the orientation $\phipartial$, then $\gamma$ is positively folded with respect to the opposite orientation $-\phipartial$.  

We will be applying various geometric transformations to galleries in this work.  The next two lemmas verify that these transformations take positively folded galleries to positively folded galleries.  

We first consider acting on the left by elements of $\sW$.

\begin{lemma}\label{lem:wpos}   Let $\phipartial$ be an orientation at infinity and let $\gamma$ be a combinatorial gallery which is positively folded with respect to $\phipartial$.  Let $w \in \sW$.  Then $\gamma$ is positively folded with respect to $\phipartial$ if and only if $w\gamma$ is positively folded with respect to the orientation $w\phipartial$.
\end{lemma}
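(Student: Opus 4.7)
The plan is to show that the two cases defining \emph{positively folded} are preserved under the simultaneous action of $w \in \sW$ on combinatorial galleries and on orientations at infinity, using the fact that $w$ acts on $\App$ as a simplicial isometry. Since this action is invertible, it will suffice to verify one direction of the ``if and only if''.

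First I will observe that $w\gamma$ is a combinatorial gallery of the same length as $\gamma$, with alcoves $wc_i$, panels $wp_i$, and $i$-th supporting wall $wH_i$, and that $wc_{i-1} = wc_i$ if and only if $c_{i-1} = c_i$. Consequently, the non-stuttering case of Definition~\ref{def:PosFolded}, that $H_i$ separates $c_{i-1}$ from $c_i$, is clearly preserved under simultaneous $w$-action.

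The main step will be to establish the compatibility of the periodic orientations. Writing $\phipartial = \phipartial_u$ so that $w\phipartial = \phipartial_{wu}$ by Lemma~\ref{lem:induced infinity}, I will show that for each hyperplane $H$, the positive side of $wH$ under the orientation induced by $\phipartial_{wu}$ is exactly the $w$-image of the positive side of $H$ under the orientation induced by $\phipartial_u$. Since both orientations are periodic in the sense of Definition~\ref{def:PeriodicOrientation}, it suffices to verify this on the hyperplanes $H_\alpha$ for $\alpha \in \Phi^+$. The key observation is that the positive side of $H_\alpha$ under $\phipartial_u$ is precisely the side containing the Weyl chamber $\cC_u$, so its $w$-image is the side of $wH_\alpha = H_{w\alpha}$ containing $w\cC_u = \cC_{wu}$, which coincides with the positive side of $H_{w\alpha}$ under $\phipartial_{wu}$.

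With this compatibility in hand, the folded case of Definition~\ref{def:PosFolded} follows: ``$H_i$ separates $c_i$ from $C_{-\phi}$'' is equivalent to saying that $c_i$ lies on the positive side of $H_i$ under $\phipartial$, since $C_{-\phi}$ always lies on the negative side; applying $w$ and invoking the orientation compatibility yields that $wc_i$ lies on the positive side of $wH_i$ under $w\phipartial$, equivalently that $wH_i$ separates $wc_i$ from $C_{-w\phi}$. The main subtlety to watch for is that $wC_{-\phi}$ need not equal $C_{-w\phi}$, since $w$ need not commute with $w_0$; what salvages the argument is that the notion of ``separating from the opposite chamber at infinity'' depends only on the orientation of the hyperplane in question, so the particular chamber representative is irrelevant.
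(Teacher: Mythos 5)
Your proof is correct, but the mechanism in the folded case differs from the paper's. The paper stays at the level of chambers at infinity: the simplicial action of $w$ preserves separation, so $H$ separates $c_i$ from $C_{-\phi}$ iff $wH$ separates $wc_i$ from $wC_{-\phi}$, and then $\sW$-equivariance of labelings at infinity gives the identification $wC_{-\phi} = C_{-w\phi}$, which finishes the argument in one line. You instead descend to the induced periodic orientation of Definition~\ref{def:PeriodicOrientation}: you check on the hyperplanes $H_\alpha$ through the origin that $w$ carries the $\phipartial$-positive side of $H$ to the $(w\phipartial)$-positive side of $wH$, extend by periodicity and linearity, and then invoke the equivalence that ``$H$ separates $c$ from $C_{-\phi}$'' means ``$c$ lies on the $\phipartial$-positive side of $H$'' (valid for any orientation at infinity, and used implicitly by the paper, e.g.\ in the proof of Lemma~\ref{load bearing}). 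What your route buys is a statement purely about hyperplane orientations, which is the form in which positivity is actually used later; what it costs is the extra translation step between separation from $C_{-\phi}$ and the positive side, which you justify only with ``$C_{-\phi}$ always lies on the negative side.'' A careful write-up should spell this out: for each hyperplane, some representative Weyl chamber of $C_{-\phi}$ lies entirely on the negative side, and no representative lies entirely on the positive side, since the cone direction of $C_{-\phi}$ is unbounded toward the negative side.

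One aside in your final paragraph is inaccurate, though harmless. With the paper's conventions ($w\phipartialu$ is the labeling at infinity induced by $\phi_{wu}$, the chamber $C_{-\phi_u}$ labeled $w_0$ is $C_{uw_0}$, and $wC_v = C_{wv}$), one computes $wC_{-\phi} = wC_{uw_0} = C_{wuw_0} = C_{-w\phi}$, so $wC_{-\phi}$ \emph{does} equal $C_{-w\phi}$; this equality is precisely the paper's key step, not an obstruction, and no commutation with $w_0$ is needed. The confusion would arise only if one mixed up $C_{-\psi}$ (the chamber labeled $w_0$ by $\psi$) with the base chamber of the opposite labeling $w_0\psi$. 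Since your argument never relies on any relation between $wC_{-\phi}$ and $C_{-w\phi}$, the misstatement does not affect the validity of your proof.
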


\begin{proof}  A panel $p_i$ of $\gamma$ separates $c_{i-1}$ from $c_i$ if and only if the panel $wp_i$ of $w\gamma$ separates $wc_{i-1}$ from $wc_i$.  Now suppose that $c_i = c_{i-1}$ are alcoves of $\gamma$.  
The wall $H$ separates $c_i$ from $C_{-\phi}$ if and only if the wall $wH$ separates $wc_i$ from $wC_{-\phi}$.  As the labeling $\phipartial$ is $\sW$-equivariant, $\phipartial(wC_{-\phi}) = w_0$ if and only if $(w\phipartial)(C_{-\phi}) = w_0$.  Thus $wC_{-\phi}$ is the chamber at infinity $C_{-w\phi}$, and so $H$ separates $c_i$ from $C_{-\phi}$ if and only if $wH$ separates $wc_i$ from $C_{-w\phi}$.  This completes the proof.
\end{proof}

We will also be acting on positively folded galleries by translations.

\begin{lemma}\label{lem:transpos}    Let $\phipartial$ be an orientation at infinity and let $\gamma$ be a combinatorial gallery which is positively folded with respect to $\phipartial$.   Let $g:\App \to \App$ be a simplicial bijection which, viewed as a Euclidean isometry, is a translation.  Then the gallery $g(\gamma)$ is also positively folded with respect to $\phipartial$.
\end{lemma}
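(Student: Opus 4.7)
The plan is to reduce the positively folded condition of Definition~\ref{def:PosFolded} for $g(\gamma)$ to the same condition for $\gamma$ by exploiting a single key fact: a translation preserves the periodic orientation induced by $\phipartial$. More precisely, I would first establish that for any translation $g$, any hyperplane $H$, and any alcove $c$, the alcove $c$ lies on the positive side of $H$ with respect to $\phipartial$ if and only if $g(c)$ lies on the positive side of $g(H)$ with respect to $\phipartial$.

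To prove this observation, I would use that if $g$ is translation by a vector $\lambda$ and $H = H_{\alpha, k}$, then $g(H) = H_{\alpha,\, k + \langle \alpha, \lambda \rangle}$ is parallel to $H$. By condition~(2) of Definition~\ref{def:PeriodicOrientation}, the positive sides of $H$ and $g(H)$ are cut out by inequalities of the same form, say $\langle \alpha, \cdot \rangle > k$ and $\langle \alpha, \cdot \rangle > k + \langle \alpha, \lambda \rangle$, respectively (or both with the reversed inequality). A direct translation of inequalities then yields the claim, since for $x \in c$ we have $x + \lambda \in g(c)$ and $\langle \alpha, x+\lambda \rangle = \langle \alpha, x \rangle + \langle \alpha, \lambda \rangle$.

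Next, I would verify the two cases of Definition~\ref{def:PosFolded} for each index $i$ of $g(\gamma)$. In the non-stuttering case $c_{i-1} \neq c_i$, the panel $p_i$ separates $c_{i-1}$ and $c_i$, and since $g$ is a simplicial bijection, $g(p_i)$ separates $g(c_{i-1})$ and $g(c_i)$, with supporting wall $g(H_i)$. In the stuttering case $c_{i-1} = c_i$, the condition that $H_i$ separates $c_i$ from the chamber at infinity $C_{-\phi}$ can be reformulated as the statement that $c_i$ lies on the positive side of $H_i$ under $\phipartial$, because any Weyl chamber representative of $C_{-\phi}$ based sufficiently far along the direction of $C_{-\phi}$ lies entirely on the negative side of $H_i$. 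Applying the preliminary observation, $g(c_i)$ then lies on the positive side of $g(H_i)$, and the same asymptotic reasoning shows that some representative Weyl chamber for $C_{-\phi}$ is separated from $g(c_i) = g(c_{i-1})$ by $g(H_i)$.

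The main obstacle, though in fact a minor subtlety, is making precise the reformulation of ``$H_i$ separates $c_i$ from $C_{-\phi}$'' as ``$c_i$ lies on the positive side of $H_i$'', given that $C_{-\phi}$ is a parallelism class of Weyl chambers in $\App$ rather than a subset thereof; one must ensure that a suitable representative Weyl chamber lies on the expected side of any given finite hyperplane. Once this translation-invariant reformulation is in place, the rest of the argument is a direct bookkeeping exercise, and the lemma follows.
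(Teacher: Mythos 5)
Your proof is correct and follows essentially the same route as the paper, whose argument is simply that a translation fixes every chamber at infinity and sends each wall $H$ to a parallel wall $g(H)$. Your explicit verification via the periodic orientation (parallel hyperplanes carry the same orientation, and the separation-from-$C_{-\phi}$ condition is equivalent to lying on the positive side) is just a spelled-out version of that one-line observation, including the minor point the paper leaves implicit.
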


\begin{proof} Since $g$ is a translation $g$ fixes each chamber at infinity, and the wall $H$ is parallel to the wall $g(H)$.  The result follows.
\end{proof}

Now, using the standard labeling $\phi_0$ of the apartment $\App$, we may associate to a combinatorial gallery $\gamma$ as in either Definition~\ref{def:CombGalleryVertex} or Definition~\ref{def:CombGalleryAlcove} its \emph{type}, denoted by $\type(\gamma)$, which is the word in $\aW$ obtained as follows:
$$\type(\gamma)\define s_{j_1}s_{j_2}\dots s_{j_n},$$
where for $1 \leq i \leq n$ the panel $p_i$ of $\gamma$ has type $s_{j_i} \in \tilde{S}$.  Note that if $c_i \neq c_{i-1}$ then the alcove $c_i$ is obtained from $c_{i-1}$ by right-multiplication by the generator $s_{j_i}$, while if $c_i = c_{i-1}$ then the type can be thought of as recording one of the panels of $c_i$.  The type of a vertex-to-vertex gallery $\gamma$ does not depend upon its initial and final vertices.

 If $\gamma$ is an alcove-to-alcove  gallery as in Definition~\ref{def:CombGalleryAlcove} then $\gamma$ is \emph{minimal} if it has minimal length among all alcove-to-alcove combinatorial galleries from $c_0$ to $c_n$.  Such a gallery $\gamma$ is minimal if and only if the word $\type(\gamma)$ in $\aW$ is a minimal presentation for the corresponding group element in $\aW$.  

If $\gamma$ is a vertex-to-vertex gallery as in Definition~\ref{def:CombGalleryVertex}, then $\gamma$ is \emph{minimal} if has minimal length among all vertex-to-vertex combinatorial galleries from $p_0$ to $p_{n+1}$. Characterizing minimality of vertex-to-vertex galleries is subtle, and care is needed when comparing minimality for vertex-to-vertex and for alcove-to-alcove galleries.   In both cases, a minimal gallery must be non-stuttering.    If $\gamma$ is a minimal vertex-to-vertex gallery then the canonical associated alcove-to-alcove gallery $\gamma^\flat$ must be minimal, but if $\gamma$ is a minimal alcove-to-alcove gallery then the canonical associated vertex-to-vertex gallery $\gamma^\sharp$ is not in general minimal.  For instance, if $\gamma^\sharp$ contains more than one alcove in the link of its final vertex $p_{n+1}$ then $\gamma^\sharp$ is not minimal.  In our explicit constructions in Section~\ref{sec:Construction}, however, these two notions of minimality will coincide.

\begin{remark}\label{rem:GLminimal}  The relationship between our definition of minimality and that given by Gaussent and Littelmann in~\cite{GaussentLittelmann} is delicate.  The combinatorial galleries considered in~\cite{GaussentLittelmann} always start and end in a vertex, but their maximal faces need not be alcoves.  In particular, the galleries in~\cite{GaussentLittelmann} can be entirely contained in a wall.  Now the definition of minimality given in Definition 10 of \cite{GaussentLittelmann} implies that if the first vertex $p_0$ and the last vertex $p_{n+1}$ are both contained in some wall $H$, then all minimal galleries from $p_0$ to $p_{n+1}$ must be contained in~$H$.  Since no alcove is contained in a wall, it follows that there is no vertex-to-vertex combinatorial gallery as in Definition~\ref{def:CombGalleryVertex} which runs from $p_0$ to $p_{n+1}$ and is minimal in the sense defined in~\cite{GaussentLittelmann}. In fact in this situation, the intersection of all of our minimal vertex-to-vertex galleries from $p_0$ to $p_{n+1}$ will be the minimal vertex-to-vertex gallery in the sense of \cite{GaussentLittelmann}. Thus our definition of minimality for vertex-to-vertex galleries differs from that given in~\cite{GaussentLittelmann}.  However, if the first and last vertex are not contained in any common wall, then our definition of minimality for vertex-to-vertex galleries does agree with that in~\cite{GaussentLittelmann}.  In our constructions in Section~\ref{sec:Construction}, we will always be in  situations where these definitions of minimality coincide.
\end{remark}

\begin{notation}  If $\lambda$ is a vertex in the coroot lattice we use $\gamma_\lambda$ to denote a minimal vertex-to-vertex combinatorial gallery from the origin $v_0$ to $\lambda$.  Given such a minimal gallery $\gamma_\lambda$  and a vertex $\mu \in R^\vee$ we denote by $\Gamma^+(\gamma_\lambda,\mu)$ the set of all vertex-to-vertex galleries of the same type as $\gamma_\lambda$ which start at $v_0$, end in $\mu$, and are positively folded with respect to the standard orientation at infinity.   
 
We adopt similar notation for other orientations at infinity.  For $w \in \sW$, we denote by $\Gamma_w^+(\gamma_\lambda, \mu)$ the set of all galleries of the same type as $\gamma_\lambda$ which start at $v_0$, end in $\mu$, and are positively folded with respect to the orientation at infinity $\phipartialw$ induced by the labeling $\phi_w$.  

We also use similar notation for alcove-to-alcove galleries.  Given $x \in \aW$, we use $\gamma_x$ to denote a minimal alcove-to-alcove gallery from $c_0 = \fa$ to $c_n = x\fa$.  Then given $y \in \aW$ we define $\Gamma^+(\gamma_x, y)$ to be the set of all galleries of the same type as $\gamma_x$ which go from the fundamental alcove $\fa$ to the alcove $y\fa$ and are positively folded with respect to the standard orientation.  For $w \in \sW$ we define $\Gamma^+_w(\gamma_x,y)$  to be the set of all galleries of the same type as $\gamma_x$ which go from the fundamental alcove $\fa$ to the alcove $y\fa$ and are positively folded with respect to the orientation at infinity $\phipartialw$.  

It will not always be necessary to record each face $p_i$ and alcove $c_i$ of a combinatorial gallery $\gamma$.  In particular, for alcove-to-alcove galleries we often wish simply to indicate the first alcove $c_0$ and the last alcove $c_n$, and we use the notation $\rightsquigarrow$ in this case.  For example, if $\gamma$ is an alcove-to-alcove  combinatorial gallery such that $c_0 = \fa$ and $c_n = \y$ is an arbitrary alcove we denote this by $\gamma: \fa \rightsquigarrow \y$.  

We will also not always need to record the details of the type of combinatorial galleries.  In particular, if $\gamma$ is a minimal alcove-to-alcove gallery, and $x \in \aW$ is the group element defined by the word $\type(\gamma)$, we will say that $\gamma$ is of type $\vec{x}$ in cases where we do not need to specify the choice of minimal presentation for $x$.  
We then denote by $\Gamma_w^+(x,y)$ the set of all positively-folded galleries with respect to the orientation $\phipartialw$ which are of some type $\vec{x}$ and which go from $\fa$ to the alcove $y\fa$. 
 Note that the galleries in the set $\Gamma_w^+(x,y)$ all have type some minimal presentation for $x \in \aW$, but in general will not all have the same type.
\end{notation}


\subsection{Labeled folded alcove walks}\label{sec:LFAW}

In this section we recall and slightly generalize the labeled folded alcove walks which were introduced in~\cite{PRS}.  
We continue to use the notation for combinatorial galleries established in Section~\ref{sec:Galleries}.

Every combinatorial gallery we shall discuss in this paper (either vertex-to-vertex or alcove-to-alcove) can be naturally associated to an alcove walk in the sense of Ram's work \cite{Ram}, which gives rise to a convenient visualization.  An \emph{alcove walk} from an alcove $c$ to an alcove $c'$ is a path connecting a point in the interior of $c$ to a point in the interior of $c'$, such that the path does not pass through a vertex of any alcove.  A combinatorial gallery $ \gamma$ as in Definition~\ref{def:CombGalleryVertex} or Definition~\ref{def:CombGalleryAlcove} thus determines an alcove walk from $c_0$ to $c_n$, where we may choose the barycenter of $c_i$ as the interior point to identify with each alcove $c_i$.  

A \emph{step} in the alcove walk is visualized as a directed arrow with its tail and tip at the barycenter of two consecutive alcoves in the walk.  More specifically, the $i^{\text{th}}$ step of $\gamma$ is an arrow which meets $p_i$ orthogonally, and whose tail and tip lie at the barycenters of alcoves $c_{i-1}$ and $c_i$, respectively.  Given this correspondence, it is natural to visualize each step as either a fold or a crossing.  We say that $\gamma$ has a \emph{fold} at $p_i$ if $c_{i-1} = c_i$, and otherwise $\gamma$ has a \emph{crossing} at $p_i$.  We will often use terminology associated to a combinatorial gallery and the corresponding alcove walk interchangeably.  

To simplify our diagrams, we will typically ``smooth out" alcove walks, so that instead of depicting the alcove walk corresponding to $\gamma$ by a concatenation of directed arrows as in \cite{Ram}, and on the left of Figure~\ref{fig:smoothed}, we will instead as in the center of Figure~\ref{fig:smoothed} draw a single directed path which passes from the interior of $c_0$ to the interior of $c_n$ via the interiors of the other $c_i$, so that if $\gamma$ has a crossing at $p_i$ this path intersects $p_i$ transversally, and if $\gamma$ has as fold at $p_i$ then this path doubles back on itself at $p_i$.  The right of Figure~\ref{fig:smoothed} shows the ``smoothed out" vertex-to-vertex combinatorial gallery canonically associated to both the alcove walk on the left and the ``smoothed out" alcove-to-alcove gallery in the center.

\begin{figure}[ht]
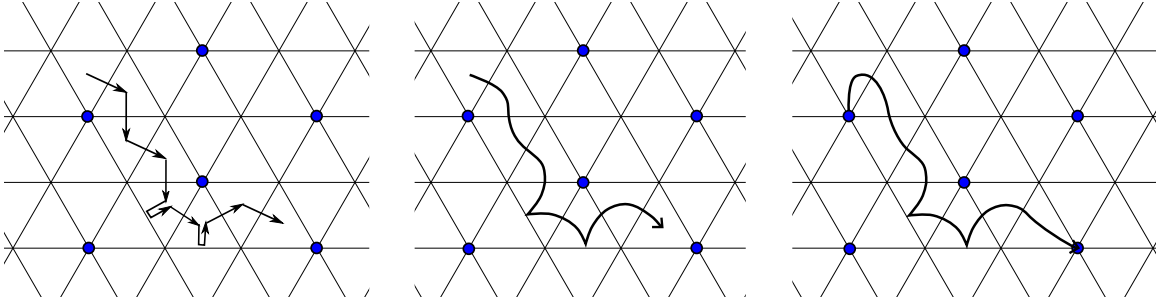

\begin{center}
\resizebox{\textwidth}{!}
{
\begin{overpic}{smoothed_150126}
\end{overpic}
}

\caption{``Smoothing out"  of alcove walks.  The heavy dots are the vertices which lie in the coroot lattice.}
\label{fig:smoothed}
\end{center}
\end{figure}

\begin{definition}
Given a combinatorial gallery $\gamma$ in $\App$ (not necessarily minimal) and a periodic orientation of the hyperplanes of $\App$ induced by a labeling at infinity $\phipartial$, a crossing in $\gamma$ is a \emph{positive crossing} if it crosses from the negative to the positive side of the corresponding hyperplane, and otherwise is a \emph{negative crossing}.  Similarly, $\gamma$ has a \emph{positive (respectively, negative) fold} if the fold occurs on the positive (respectively, negative) side of the hyperplane.
\end{definition}

\noindent We remark that $\gamma$ is positively folded as in Definition \ref{def:PosFolded} if and only if all folds are positive folds, and so these notions for alcove walks and combinatorial galleries coincide.

We may obtain new galleries of the same type by replacing crossings by folds, as follows.  Let $\gamma = (p_0\subset c_0\supset p_1 \subset c_1 \supset p_2 \subset \ldots \subset c_n \supset p_{n+1} )$ be a vertex-to-vertex gallery which has a crossing at face $p_i$.   Let $H_i$ be the supporting wall of $p_i$ and let $r_i$ be the (affine) reflection in the wall $H_i$.  Then the gallery $\gamma'$ obtained by \emph{folding $\gamma$ at $p_i$}, or equivalently \emph{folding $\gamma$ in $H_i$}, is the gallery obtained by applying the reflection $r_i$ to the portion of $\gamma$ after the face $p_i$.  That is, 
$$\gamma' = (p_0\subset c_0\supset p_1 \subset c_1 \supset p_2 \subset \ldots \subset c_{i-1} \supset p_i \subset c_i' \supset \ldots \subset c_n' \supset p'_{n+1} )$$
where $c_j' = r_i(c_j)$ for $i \leq j \leq n$ and $p_j' = r_i(p_{j})$ for $i+1 \leq j \leq n+1$.  In particular, $c_i' = c_{i-1}$ since the reflection $r_i$ interchanges $c_{i-1}$ and $c_i$, which means that the new gallery $\gamma'$ has a fold at $p_i$.  If $\gamma = (c_0\supset p_1 \subset c_1 \supset p_2 \subset \ldots \subset c_n)$ is an alcove-to-alcove gallery then the definition of folding $\gamma$ is similar.

The process described in the previous paragraph will be generally referred to as \emph{folding a gallery}. Where there is no risk of ambiguity, we may record a sequence of such foldings using only the sequence of reflections which have been applied.  In this case, we refer to the sequence of reflections applied in turn as the \emph{folding sequence}.

In order to make a precise connection between folded galleries and affine Deligne--Lusztig varieties, we will need to construct folding sequences in a very specific fashion. The galleries obtained by the following kind of folding sequence are those considered for the standard orientation at infinity  in~\cite{PRS}, which motivates the terminology.

\begin{definition}\label{def:PRS}  Let $\gamma = (p_0\subset c_0\supset p_1 \subset c_1 \supset p_2 \subset \ldots \subset c_n \supset p_{n+1} )$ be a minimal vertex-to-vertex gallery and $\phipartial$ an orientation at infinity.  A \emph{PRS folding sequence} is a sequence of galleries $\gamma^{(j)}$ for $0 \leq j \leq m$ so that $\gamma^{(0)} = \gamma$, $\gamma^{(1)}$ is obtained from $\gamma$ by positively folding at face $p_{i_1}$ with corresponding reflection $r_1$, and for $2 \leq j \leq m$ the gallery $\gamma^{(j)}$ is obtained from $\gamma^{(j-1)}$ by positively folding at face $r_{j-1} \cdots r_1 p_{i_j}$ with corresponding reflection $r_j$, such that $1 \leq i_1 < i_2 < \cdots < i_m \leq n$.  Similarly define a PRS folding sequence of minimal alcove-to-alcove galleries.
\end{definition}

Note that a PRS folding sequence always begins with a minimal gallery and that all folds are positive.  The idea  in a PRS folding sequence is that each fold after the first one occurs in the ``tail" of the gallery; that is, in the portion of the gallery which was just reflected by the previous fold.  In other words, each folding of the gallery does not reflect any previously folded parts.  By abuse of terminology we will also refer to the sequence of folds occurring in Definition~\ref{def:PRS} as a PRS folding sequence.  

We now recall the notion of a labeled folded alcove walk, which was introduced in \cite{PRS}.  We will see in Section~\ref{sec:ADLVGalleries} that these walks can be used to calculate dimensions of intersections of Iwahori and unipotent orbits, and thus dimensions of affine Deligne--Lusztig varieties.

\begin{definition}\label{def:LFAW}
Let $\gamma$ be a positively folded gallery (either vertex-to-vertex or alcove-to-alcove) obtained from a PRS folding sequence.  We obtain a \emph{labeled folded alcove walk} from $\gamma$ by labeling each positive crossing of $\gamma$ by an element of the field of coefficients $k$, each negative crossing by $0$, and each positive fold of $\gamma$ with an element of $k^{\times}$.  The \emph{type} of a labeled folded alcove walk is the type of the underlying (unlabeled) positively folded gallery. 
\end{definition}

\begin{remark}
A labeling of the apartment $\App$ as in Section~\ref{sec:LabelingsOrientations} should not be confused with a labeling of a gallery $\gamma$ as in Definition~\ref{def:LFAW}.  We will only refer to labelings of a gallery $\gamma$ again briefly in Section~\ref{sec:ADLVGalleries}, while we will be referring to labelings of $\App$ throughout this work.
\end{remark}

\begin{notation}\label{not:LFAW}
For each $x,y \in \aW$, and for each orientation at infinity $\phipartialw$, we denote by $\text{L}\Gamma^+_w(\gamma_x,y)$ the set of labeled folded alcove walks whose corresponding alcove-to-alcove galleries are in $\Gamma^+_w(\gamma_x,y)$.  That is, $\text{L}\Gamma^+_w(\gamma_x,y)$ consists of all labeled galleries $\gamma:\fa \rightsquigarrow y\fa$ of the same type as $\gamma_x$ which are positively folded with respect to $\phipartialw$ and obtained through a PRS folding sequence.  If $\phipartialw$ is the standard orientation, we denote this set by $\text{L}\Gamma^+(\gamma_x,y)$.  Similarly define $\text{L}\Gamma^+_w(x,y)$ and $\text{L}\Gamma^+(x,y)$. 
\end{notation} 


\section{Dimensions of galleries and root operators}\label{sec:GalleryDim}

This section makes precise connections between various notions of the dimension of a positively folded gallery, and recalls the root operators introduced by Gaussent and Littelmann in~\cite{GaussentLittelmann}.  In Section~\ref{sec:DimFolded} we define the dimension of the combinatorial galleries introduced in Section~\ref{sec:Galleries}, and show that in certain cases their dimension may be computed by counting the number of folds and positive crossings.  For vertex-to-vertex combinatorial galleries, our definition of dimension generalizes that in~\cite{GaussentLittelmann} to arbitrary orientations.  We then in Section~\ref{sec:RootOps} recall root operators and discuss their effect upon dimension.  Section~\ref{sec:Counting} establishes some useful formulas concerning the dimension of a gallery, and in Section~\ref{sec:BraidMoves} we prove that the dimension of a positively folded gallery does not depend upon the choice of initial minimal gallery.


\subsection{The dimension of a folded gallery}\label{sec:DimFolded}

In this section we define the dimension of a positively folded vertex-to-vertex gallery and of a positively folded alcove-to-alcove gallery, then discuss the relationship between these two notions of dimension.

We begin by generalizing Gaussent and Littelmann's original version of the dimension of a positively folded combinatorial gallery \cite{GaussentLittelmann} to vertex-to-vertex combinatorial galleries which are positively folded with respect to any orientation at infinity.

\begin{definition}\label{def:load-bearing}
Given an orientation at infinity $\phipartial$ and a vertex-to-vertex combinatorial gallery $\gamma=(p_0\subset c_0\supset p_1 \subset \dots \supset p_n \subset c_n\supset p_{n+1})$ which is positively folded with respect to $\phipartial$, we say that a wall $H$ is \emph{load-bearing for $\gamma$ with respect to $\phipartial$ at $p_i$} for $i\neq n+1$ if $H$ contains the face $p_i$ and $H$ separates $c_i$ from $C_{-\phi}$. 
\end{definition}

\noindent  Recall that $C_{-\phi}$ is the chamber at infinity such that $\phipartial(C_{-\phi}) = w_0 \in \sW$.  By definition, no wall will be load-bearing at the final vertex $p_{n+1}$.  For the standard orientation, Definition~\ref{def:load-bearing} agrees with the definition of load-bearing walls in Section 5 of~\cite{GaussentLittelmann}.

The next two lemmas consider the effect of various geometric transformations of galleries on load-bearing walls.  The following result considers the left-action of elements of $\sW$.  Its proof is similar to that of Lemma~\ref{lem:wpos}.

\begin{lemma}\label{lem:w0 load bearing}  Let $\phipartial$ be an orientation at infinity and let $\gamma$ be a vertex-to-vertex combinatorial gallery which is positively folded with respect to $\phipartial$.  Let $w \in \sW$.  Then a wall $H$ is load-bearing for $\gamma$ with respect to $\phipartial$ at $p_i$ if and only if the wall $wH$ is load-bearing for $w\gamma$ with respect to the orientation $w\phipartial$ at the face $wp_i$.  
\end{lemma}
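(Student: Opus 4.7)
The plan is to unwind Definition~\ref{def:load-bearing} for both $\gamma$ (with respect to $\phipartial$) and $w\gamma$ (with respect to $w\phipartial$), and to verify that each of the two conditions defining load-bearing transfers cleanly under the simplicial action of $w \in \sW$. The key geometric fact is already implicit in the proof of Lemma~\ref{lem:wpos}: because the labeling at infinity $\phipartial$ is $\sW$-equivariant, the chamber $wC_{-\phi}$ at infinity is precisely $C_{-w\phi}$, the chamber whose label under $w\phipartial$ is $w_0$. Everything else is formal.

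First I would address the containment condition. Since $w$ acts on $\App$ by a simplicial bijection, a wall $H$ contains the face $p_i$ if and only if the wall $wH$ contains the face $wp_i$. Likewise, $w$ permutes the sides of each hyperplane: $H$ separates two simplices $\sigma$ and $\tau$ if and only if $wH$ separates $w\sigma$ and $w\tau$, where $\tau$ may be taken to be an alcove or a (representative of a) chamber at infinity.

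Next I would handle the separation condition in the definition. Applying the above remark to the alcove $c_i$ of $\gamma$ and any representative Weyl chamber for $C_{-\phi}$, the wall $H$ separates $c_i$ from $C_{-\phi}$ if and only if $wH$ separates $wc_i$ from $wC_{-\phi}$. Using $\sW$-equivariance of $\phipartial$ exactly as in the proof of Lemma~\ref{lem:wpos}, we have $(w\phipartial)(wC_{-\phi}) = \phipartial(C_{-\phi}) = w_0$, so $wC_{-\phi}$ is the chamber $C_{-(w\phi)}$ required in the definition of load-bearing for $w\gamma$ with respect to $w\phipartial$. Combining these observations and recalling from Lemma~\ref{lem:wpos} that $w\gamma$ is positively folded with respect to $w\phipartial$ (so that the definition applies), we conclude that $H$ is load-bearing for $\gamma$ at $p_i$ with respect to $\phipartial$ if and only if $wH$ is load-bearing for $w\gamma$ at $wp_i$ with respect to $w\phipartial$.

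There is no real obstacle here; the only thing to be careful about is the interaction between the $\sW$-action on walls and chambers and the $\sW$-action on labelings at infinity, but this is exactly the bookkeeping already carried out in the proof of Lemma~\ref{lem:wpos}, so I would simply invoke that computation rather than redo it.
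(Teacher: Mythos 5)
Your proof is correct and follows exactly the route the paper intends: the paper simply remarks that the proof is ``similar to that of Lemma~\ref{lem:wpos}'', and your argument carries out precisely that bookkeeping, checking that containment of $p_i$ in $H$ and separation of $c_i$ from $C_{-\phi}$ transfer under the simplicial action of $w$, with the $\sW$-equivariance identification $wC_{-\phi} = C_{-w\phi}$ as the only nontrivial point. Nothing is missing.
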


We will also need that the property of being a load-bearing wall is invariant under translation.  The proof of the following result is similar to that of Lemma~\ref{lem:transpos}.

\begin{lemma}\label{lem:translations load bearing}  Let $\phipartial$ be an orientation at infinity and let $\gamma$ be a vertex-to-vertex combinatorial gallery which is positively folded with respect to $\phipartial$.   Let $g:\App \to \App$ be a simplicial bijection which, viewed as a Euclidean isometry, is a translation.   Then for any face $p_i$ of $\gamma$, a wall $H$ is load-bearing for $\gamma$ with respect to $\phipartial$ at $p_i$ if and only if the wall $g(H)$ is load-bearing for $g(\gamma)$ with respect to $\phipartial$ at the face $g(p_i)$.
\end{lemma}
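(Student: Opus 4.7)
The plan is to verify separately that each of the two defining conditions in Definition~\ref{def:load-bearing} for a wall to be load-bearing is preserved under the translation $g$. First I would observe that since $g:\App \to \App$ is a simplicial bijection, it sends walls in $\aH$ to walls in $\aH$, so $g(H) \in \aH$. The first condition, namely that $H$ contains the face $p_i$, transfers immediately to the statement that $g(H)$ contains $g(p_i)$, simply because $g$ is a bijection respecting the simplicial structure. Similarly, $g(c_i)$ is an alcove of the translated gallery $g(\gamma)$, with face $g(p_i)$, so it makes sense to ask whether $g(H)$ is load-bearing for $g(\gamma)$ at $g(p_i)$.

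For the second condition, that $H$ separates $c_i$ from the chamber at infinity $C_{-\phi}$, the key input is exactly the fact used in the proof of Lemma~\ref{lem:transpos}: since $g$ is a Euclidean translation, it fixes every parallelism class of Weyl chambers, and so $g$ fixes every chamber at infinity. In particular $g(C_{-\phi}) = C_{-\phi}$. Concretely, if $\cC$ is a representative of $C_{-\phi}$ in $\App$, then $g(\cC)$ is parallel to $\cC$ and hence also represents $C_{-\phi}$. Because $g$ is a bijective isometry, it preserves the relation of lying on a given side of a wall, so $H$ separates $c_i$ from some representative $\cC$ of $C_{-\phi}$ if and only if $g(H)$ separates $g(c_i)$ from $g(\cC)$, which represents $C_{-\phi}$.

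Combining these two observations gives the equivalence claimed in the lemma. The statement is the load-bearing analogue of Lemma~\ref{lem:transpos}, and the only non-trivial geometric input is the invariance of chambers at infinity under Euclidean translations, which is standard. I do not expect any real obstacle; once the correct form of this invariance has been identified, the proof reduces to unwinding Definition~\ref{def:load-bearing}.
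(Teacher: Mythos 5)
Your argument is correct and matches the paper's own (very brief) proof, which simply notes that the claim follows as in Lemma~\ref{lem:transpos} because a translation fixes each chamber at infinity and sends each wall to a parallel wall. Your unwinding of Definition~\ref{def:load-bearing} into the two conditions is exactly the intended reasoning.
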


We now define dimension of positively folded vertex-to-vertex combinatorial galleries.  For the standard orientation, this definition agrees with Definition 14 of~\cite{GaussentLittelmann}.

\begin{definition}\label{def:DimGalleryV}
Let $\phipartial$ be an orientation at infinity and let $\gamma$ be a vertex-to-vertex combinatorial gallery which is positively folded with respect to $\phipartial$.  The \emph{dimension of $\gamma$ with respect to $\phipartial$}, denoted $\dim_\phi(\gamma)$, is defined to be the number of load-bearing walls of $\gamma$. 
\end{definition}

In order to give a careful treatment of the relationship between the dimension of a vertex-to-vertex gallery $\gamma$ and the dimension of its canonical associated alcove-to-alcove gallery $\gamma^\flat$, we will need to keep track of the number of walls which are load-bearing at the first vertex $p_0$, and so make the following definition. 

\begin{definition}
Let $\phipartial$ be an orientation at infinity and let $\gamma$ be a vertex-to-vertex combinatorial gallery with first vertex $p_0$ which is positively folded with respect to $\phipartial$.  We define the \emph{load of $\gamma$ at $p_0$ with respect to $\phi$}, denoted $\load_\phi(p_0,\gamma)$, to be the number of walls which are load-bearing for $\gamma$ with respect to $\phipartial$ at $p_0$.  
\end{definition}
 
\noindent Note that $\load_\phi(p_0,\gamma)$ depends only upon the orientation $\phipartial$, the first vertex $p_0$, and the first alcove $c_0$ of $\gamma$.

\begin{example}\label{eg:load bearing}  Suppose a vertex-to-vertex gallery $\gamma$ has first vertex $p_0 = v_0$ the origin, and first alcove $c_0 = \fa$ the base alcove.  If we use the standard orientation at infinity $\phipartialo$, then $C_{-\phi_0}$ is represented by the Weyl chamber $\cC_{w_0}$ and so every wall containing $p_0 = v_0$ separates $c_0 = \fa$ from $C_{-\phi_0}$.  Thus $\load_{\phi_0}(p_0,\gamma)$ is equal to $|\Phi^+|$, the number of positive roots.  On the other hand, if we use the orientation at infinity $\phipartial = -\phipartialo$, then $C_{-\phi} = C_{\phi_0}$ is represented by the fundamental Weyl chamber $\Cf$.  No wall containing $p_0 = v_0$ separates $c_0 = \fa$ from $C_{\phi_0}$, and so $\load_{-\phi_0}(p_0,\gamma) = 0$.  For any orientation at infinity $\phipartial$ other than $\pm \phipartialo$, it will be the case that $0 < \load_\phi(p_0,\gamma) < |\Phi^+|$.  By Lemma~\ref{lem:translations load bearing} the same conclusions hold if $\gamma$ is any vertex-to-vertex gallery with first vertex $p_0 = \lambda$ in the coroot lattice and first alcove $c_0 = t^\lambda \fa$ a pure translation.
\end{example}

We now define the dimension of a combinatorial alcove-to-alcove gallery $\gamma$ which is positively folded with respect to some orientation at infinity $\phipartial$.  For this, we denote by $P_{\phi}(\gamma)$ the number of positive crossings in the gallery $\gamma$ and by $N_{\phi}(\gamma)$ the number of negative crossings in $\gamma$, counted with respect to the periodic orientation induced by $\phipartial$.  The number of folds in a gallery $\gamma$ which is positively folded with respect to $\phipartial$ will then be denoted by $F_{\phi}(\gamma) = \# \{ i \mid \gamma\ \text{has a fold at}\ p_i \}$.  When the choice of orientation is clear, we will occasionally abbreviate this notation by omitting the reference to the orientation as $P(\gamma)$, $N(\gamma)$, and $F(\gamma)$.

\begin{definition}\label{def:DimGalleryA}  Let $\phipartial$ be an orientation at infinity and let $\gamma$ be an alcove-to-alcove combinatorial gallery which is positively folded with respect to $\phipartial$.  Then the \emph{dimension of $\gamma$ with respect to $\phipartial$}, denoted $\dim_\phi(\gamma)$, is defined to be the number of positive crossings in $\gamma$ plus the number of folds in $\gamma$, that is,
$$
\dim_\phi(\gamma) = P_\phi(\gamma) + F_\phi(\gamma).
$$
\end{definition}

\noindent We give this definition of the dimension of a positively folded alcove-to-alcove gallery because, as we will see in Section~\ref{sec:ADLVGalleries}, it correctly records the dimension of the intersection of certain Iwahori and unipotent orbits in the affine flag variety.

We now note the effects on dimension of acting by $\sW$ on the left and of translations, for both vertex-to-vertex and alcove-to-alcove galleries.

\begin{lemma}\label{lem:w0 dim}  Let $w \in \sW$.  For any orientation at infinity $\phipartial$ and any combinatorial gallery $\gamma$ which is positively folded with respect to $\phipartial$, the gallery $w\gamma$ is positively folded with respect to $w\phipartial$ and $\dim_\phi(\gamma) = \dim_{w\phi}(w\gamma)$.  
\end{lemma}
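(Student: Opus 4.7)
The plan is to split the proof according to whether $\gamma$ is a vertex-to-vertex or an alcove-to-alcove gallery. The first assertion, that $w\gamma$ is positively folded with respect to $w\phipartial$, is exactly Lemma~\ref{lem:wpos}, so I will only need to verify the dimension equality in each case.

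For vertex-to-vertex galleries, my plan is to invoke Lemma~\ref{lem:w0 load bearing} directly: it tells us that a wall $H$ is load-bearing for $\gamma$ at $p_i$ with respect to $\phipartial$ if and only if $wH$ is load-bearing for $w\gamma$ at $wp_i$ with respect to $w\phipartial$. Since the action of $w$ is a bijection on walls (and correspondingly on the faces of the gallery, by index), this sets up an explicit bijection between the load-bearing walls of $\gamma$ and those of $w\gamma$. Applying Definition~\ref{def:DimGalleryV} on each side then gives $\dim_\phi(\gamma) = \dim_{w\phi}(w\gamma)$.

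For alcove-to-alcove galleries, where by Definition~\ref{def:DimGalleryA} we have $\dim_\phi(\gamma) = P_\phi(\gamma) + F_\phi(\gamma)$, my plan is to verify that $w$ preserves both counts separately. The fold count is immediate, since $w\gamma$ has a fold at index $i$ exactly when $\gamma$ does, so $F_\phi(\gamma) = F_{w\phi}(w\gamma)$. For the positive crossings, I will reuse the equivariance already exploited in Lemma~\ref{lem:wpos}: since $w$ sends $C_{-\phi}$ to $C_{-w\phi}$, and more generally since $\phipartial$ is $\sW$-equivariant, Definition~\ref{def:PeriodicOrientation} guarantees that $w$ carries the positive side of a wall $H$ (with respect to $\phipartial$) to the positive side of $wH$ (with respect to $w\phipartial$). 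Consequently a crossing from negative to positive side in $\gamma$ transfers under $w$ to a crossing from negative to positive side in $w\gamma$, yielding $P_\phi(\gamma) = P_{w\phi}(w\gamma)$ and hence equality of dimensions.

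I do not anticipate any real obstacle; the statement is essentially a bookkeeping consequence of the $\sW$-equivariance of the periodic orientation and of the load-bearing condition, both of which have already been packaged into Lemmas~\ref{lem:wpos} and~\ref{lem:w0 load bearing}. The only mild subtlety is remembering to treat both flavors of combinatorial gallery, since the two notions of dimension in Definitions~\ref{def:DimGalleryV} and~\ref{def:DimGalleryA} are defined through rather different data.
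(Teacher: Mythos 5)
Your proposal is correct and follows essentially the same route as the paper: the vertex-to-vertex case via Lemma~\ref{lem:w0 load bearing} and Definition~\ref{def:DimGalleryV}, and the alcove-to-alcove case by checking that the left action of $w$ preserves the fold count and carries positive crossings with respect to $\phipartial$ to positive crossings with respect to $w\phipartial$, then applying Definition~\ref{def:DimGalleryA}. Your added remark justifying that $w$ maps the positive side of $H$ to the positive side of $wH$ is a harmless elaboration of what the paper asserts directly.
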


\begin{proof}  If $\gamma$ is vertex-to-vertex this follows from  Lemmas~\ref{lem:wpos} and~\ref{lem:w0 load bearing} and Definition~\ref{def:DimGalleryV}.  If $\gamma$ is alcove-to-alcove and positively folded with respect to $\phipartial$, then $w\gamma$ is positively folded with respect to $w\phipartial$ and has the same number of (positive) folds as $\gamma$, that is, $F_\phi(\gamma) = F_{w\phi}(\gamma)$.  Also the positive crossings of $\gamma$ with respect to $\phipartial$ are exactly the positive crossings of $w\gamma$ with respect to $w\phipartial$, that is, $P_\phi(\gamma) = P_{w\phi}(w\gamma)$.  Thus by Definition~\ref{def:DimGalleryA} we have $\dim_\phi(\gamma) = \dim_{w\phi}(w\gamma)$ as required.
\end{proof}

\begin{lemma}\label{lem:translations dim}   Let $g:\App \to \App$ be a simplicial bijection which, viewed as a Euclidean isometry, is a translation.  Then for any orientation at infinity $\phipartial$ and any combinatorial gallery $\gamma$ which is positively folded with respect to $\phipartial$, the gallery $g(\gamma)$ is positively folded with respect to $\phipartial$ and $\dim_\phi(\gamma) = \dim_\phi(g(\gamma))$.
\end{lemma}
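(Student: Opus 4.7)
The first assertion, that $g(\gamma)$ is positively folded with respect to $\phipartial$, is already Lemma~\ref{lem:transpos}, so the plan is to focus on proving $\dim_\phi(\gamma) = \dim_\phi(g(\gamma))$. I would handle the vertex-to-vertex and alcove-to-alcove cases separately, treating the vertex-to-vertex case first since it reduces immediately to an earlier lemma.

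For a vertex-to-vertex combinatorial gallery $\gamma$, Definition~\ref{def:DimGalleryV} expresses $\dim_\phi(\gamma)$ as the number of load-bearing walls. Lemma~\ref{lem:translations load bearing} says that a wall $H$ is load-bearing for $\gamma$ at $p_i$ if and only if $g(H)$ is load-bearing for $g(\gamma)$ at $g(p_i)$. Since $g$ is a simplicial bijection, it induces a bijection between walls load-bearing for $\gamma$ and walls load-bearing for $g(\gamma)$, so the counts agree.

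For the alcove-to-alcove case, I would unpack Definition~\ref{def:DimGalleryA}, which sets $\dim_\phi(\gamma) = P_\phi(\gamma) + F_\phi(\gamma)$, and check that $g$ preserves both statistics. Since $g$ is a simplicial bijection, the $i^{\text{th}}$ step of $g(\gamma)$ has a fold precisely when the $i^{\text{th}}$ step of $\gamma$ does, which yields $F_\phi(\gamma) = F_\phi(g(\gamma))$. For the crossings, the key point is that $g$ sends each hyperplane $H_{\alpha,k}$ to a parallel hyperplane $H_{\alpha,k'}$, and condition (2) of Definition~\ref{def:PeriodicOrientation} says that all hyperplanes in a parallelism class carry the same orientation under $\phipartial$. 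Therefore the $i^{\text{th}}$ step of $\gamma$ is a positive (respectively, negative) crossing with respect to $\phipartial$ if and only if the $i^{\text{th}}$ step of $g(\gamma)$ is a positive (respectively, negative) crossing with respect to $\phipartial$, giving $P_\phi(\gamma) = P_\phi(g(\gamma))$.

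There is essentially no obstacle here; the one point that deserves explicit mention is the translation-invariance of the periodic orientation, which is exactly condition (2) of Definition~\ref{def:PeriodicOrientation} and is the reason the orientation at infinity $\phipartial$ (rather than some transformed orientation) appears on both sides of the equality, in contrast to the $\sW$-action in Lemma~\ref{lem:w0 dim} where one must pass to $w\phipartial$.
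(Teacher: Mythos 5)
Your proposal is correct and follows essentially the same route as the paper: the vertex-to-vertex case via Lemma~\ref{lem:transpos}, Lemma~\ref{lem:translations load bearing} and Definition~\ref{def:DimGalleryV}, and the alcove-to-alcove case by checking that a translation preserves folds and positive crossings in the sense of Definition~\ref{def:DimGalleryA}. Your explicit appeal to condition (2) of Definition~\ref{def:PeriodicOrientation} simply spells out the reason the paper states more tersely, namely that a translation sends each hyperplane to a parallel one carrying the same orientation.
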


\begin{proof}   If $\gamma$ is vertex-to-vertex this follows from Lemmas~\ref{lem:transpos} and~\ref{lem:translations load bearing} and Definition~\ref{def:DimGalleryV}.   Now suppose that $\gamma$ is alcove-to-alcove and positively folded with respect to $\phipartial$.  Then since $g$ is a translation, the action of $g$ takes positive (respectively, negative) crossings to positive (respectively, negative) crossings, and takes positive folds to positive folds.  So $g(\gamma)$ is positively folded with respect to $\phipartial$, $P_\phi(\gamma) = P_{\phi}(g(\gamma))$, and $F_\phi(\gamma) = F_\phi(g(\gamma))$.  The result then follows from Definition~\ref{def:DimGalleryA}.
\end{proof}

The relationship between the two notions of dimension given by Definitions~\ref{def:DimGalleryV} and~\ref{def:DimGalleryA} is delicate, essentially due to the fact that if $\gamma$ is a minimal alcove-to-alcove gallery then the canonically associated vertex-to-vertex gallery $\gamma^\sharp$ need not be minimal.  These subtleties are known to experts and have been considered in other contexts; see for instance \cite{BaumannGaussent} and Remark 4.3 or 4.9 in \cite{CSchwer}.  For our purposes, it will suffice to show that for certain positively folded alcove-to-alcove galleries $\gamma$, the dimension of $\gamma$ is equal to the dimension of the canonically associated vertex-to-vertex gallery $\gamma^\sharp$.  In all of our explicit constructions of galleries in Section~\ref{sec:Construction}, we will be in this situation.  We first classify the mutually exclusive possibilities for load-bearing walls.

\begin{lemma}\label{load bearing}
Fix an orientation at infinity $\phipartial$.  If $\gamma$ is a vertex-to-vertex gallery that is positively folded with respect to $\phipartial$ and starts with the face $p_0$, and $H$ is load-bearing for $\gamma$ at $p_i$, then exactly one of the following is true:
\begin{enumerate}
 \item $\gamma$ crosses $H$ positively at $p_i$, which means that $c_{i-1}$ is on the negative side of $H$ but $c_i$ is on the positive side;
 \item $\gamma$ has a positive fold at $p_i$, which means that $c_{i-1}=c_i$ are both on the positive side of $H$; or
 \item $H$ contains the vertex $p_0$ and separates $c_0$ from $C_{-\phi}$. 
\end{enumerate}
\end{lemma}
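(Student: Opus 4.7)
The plan is to split into cases based on the index $i$, and within each case to unpack the definition of load-bearing together with the behavior of $\gamma$ at the face $p_i$. The three claimed cases will correspond respectively to $i=0$, $i\geq 1$ with a crossing at $p_i$, and $i\geq 1$ with a fold at $p_i$, and mutual exclusivity will then be immediate from this trichotomy.

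First consider $i=0$. Here $p_0$ is a vertex rather than a panel, and by Definition~\ref{def:load-bearing} the wall $H$ is load-bearing at $p_0$ precisely when $H$ contains $p_0$ and separates $c_0=c_i$ from $C_{-\phi}$. This is exactly the content of case~(3).

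Next consider $1\leq i\leq n$. Then $p_i$ is a panel of both $c_{i-1}$ and $c_i$, and hence has a unique supporting wall. Since $H$ is load-bearing at $p_i$ it contains $p_i$, so $H$ must be this supporting wall. Either $\gamma$ crosses $H$ at $p_i$ (so $c_{i-1}\neq c_i$, and $H$ is the wall separating them) or $\gamma$ folds at $p_i$ (so $c_{i-1}=c_i$). In the crossing subcase, the load-bearing hypothesis places $c_i$ on the side of $H$ opposite $C_{-\phi}$, that is, on the positive side, so $c_{i-1}$ lies on the negative side; this yields case~(1). In the folding subcase, the assumption that $\gamma$ is positively folded with respect to $\phipartial$ combined with Definition~\ref{def:PosFolded} forces the supporting wall $H$ to separate $c_i=c_{i-1}$ from $C_{-\phi}$, so the repeated alcove lies on the positive side of $H$; this yields case~(2).

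The main obstacle here is essentially nonexistent: the proof just unravels Definitions~\ref{def:load-bearing} and~\ref{def:PosFolded} and uses the identification of the ``positive side'' of a hyperplane (from Definition~\ref{def:PeriodicOrientation}) with the side not containing $C_{-\phi}$. The only subtlety worth flagging is that for $i\geq 1$ a panel has a unique supporting wall, which forces $H$ to equal that supporting wall and thereby eliminates any ambiguity when translating between ``$H$ contains $p_i$'' and ``$H$ is the hyperplane across which $\gamma$ crosses or folds at step $i$.''
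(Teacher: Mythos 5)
Your proof is correct and follows essentially the same route as the paper: unwind Definition~\ref{def:load-bearing}, note that the load-bearing condition places $c_i$ on the positive side of $H$, and split into $i=0$ (giving case~(3)) versus $i\geq 1$ with a crossing or a fold (giving cases~(1) and~(2), which are mutually exclusive). The only cosmetic difference is that in the fold subcase you invoke the positively-folded hypothesis, whereas the paper deduces positivity of the fold directly from the load-bearing condition itself; both are valid.
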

\begin{proof}
Since $H$ is load-bearing, by definition $H$ separates $c_i$ from $C_{-\phi}$, and so the alcove $c_i$ must be on the positive side of $H$.  Thus if $i > 0$ then $\gamma$ has either a positive crossing or a positive fold at $p_i$, and these possibilities are mutually exclusive.  If $i = 0$ then $p_i = p_0$ so $H$ contains $p_0$ and separates $c_i = c_0$ from $C_{-\phi}$.
\end{proof}

We can now say when a vertex-to-vertex combinatorial gallery has load-bearing walls which contain its first vertex.  

\begin{lemma}\label{lem:folds+crossing}
Fix an orientation at infinity $\phipartial = \phipartialw$ and let $\gamma$ be a vertex-to-vertex gallery that is positively folded with respect to $\phipartial$, has first vertex $p_0$, and has first alcove $c_0$.  If $p_0 = \lambda$ is in the coroot lattice then $\load_\phi(p_0,\gamma) = 0$ if and only if $c_0$ is the alcove $t^\lambda w_0 w\fa$.
\end{lemma}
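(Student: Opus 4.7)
The plan is to unpack $\load_\phi(p_0,\gamma)$ in terms of the geometry at the special vertex $\lambda$. First I would observe that since $\lambda \in R^\vee$ is a special vertex, the walls of $\App$ containing $\lambda$ are exactly the hyperplanes $H_{\alpha,\langle\alpha,\lambda\rangle}$ for $\alpha \in \Phi^+$, and each such wall is a wall of every alcove containing $\lambda$. By Definition~\ref{def:load-bearing}, such a wall $H$ is load-bearing for $\gamma$ at $p_0$ if and only if $H$ separates $c_0$ from $C_{-\phi}$. Consequently, $\load_\phi(p_0,\gamma) = 0$ if and only if $c_0$ lies on the same side as (a representative Weyl chamber for) $C_{-\phi}$ with respect to every wall of $\App$ through $\lambda$.

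The next step is to identify $C_{-\phi}$ with a specific Weyl chamber based at $\lambda$. Since $\phipartial = \phipartialw$, Lemma~\ref{lem:induced infinity} pins down $C_\phi$ as the chamber at infinity represented by the Weyl chamber $\cC_w$ based at the origin. Using the $\sW$-compatibility $u\phipartialw = \phipartialuw$ with $u = w_0$, the chamber $C_{-\phi}$, defined as the one with $\phipartialw$-label $w_0$, coincides with the chamber whose $\phipartialwow$-label is $\id$, which by Lemma~\ref{lem:induced infinity} is represented by $\cC_{w_0 w}$ at the origin and, equivalently, by the translated Weyl chamber $\lambda + \cC_{w_0 w}$ based at $\lambda$.

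Finally, because $\lambda$ is a special vertex, the alcoves at $\lambda$ are in bijection with the Weyl chambers based at $\lambda$, and two alcoves at $\lambda$ lie on the same side of every wall through $\lambda$ if and only if they lie in the same Weyl chamber. Combining this with the reduction in the first paragraph, the condition $\load_\phi(p_0,\gamma) = 0$ is equivalent to $c_0$ being the unique alcove at $\lambda$ contained in $\lambda + \cC_{w_0 w}$, namely the tip alcove $t^\lambda(w_0 w)\fa = t^\lambda w_0 w\fa$. The argument is essentially a bookkeeping exercise; the only mildly subtle point is the correct identification of $C_{-\phi}$ with the Weyl-chamber representative $\cC_{w_0 w}$ via the $\sW$-equivariance, while everything else follows from elementary geometry at a special vertex.
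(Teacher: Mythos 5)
Your proof is correct and takes essentially the same route as the paper: the paper relocates the labeling to the basepoint $p_0$ via Lemma~\ref{lem:induced infinity} (using $\phi_{p_0,t^\lambda w}$) instead of translating $\cC_{w_0w}$ out to $\lambda$, but the substance in both cases is that $t^\lambda w_0 w\fa$ is the unique alcove at the special vertex $\lambda$ not separated from $C_{-\phi}$ by any wall through $\lambda$, and the rest is Definition~\ref{def:load-bearing}. One unused aside is inaccurate and should be dropped: a hyperplane through $\lambda$ need not be a wall of (share a panel with) every alcove containing $\lambda$, but this does not matter since load-bearing at $p_0$ only requires the hyperplane to contain $p_0$ and separate $c_0$ from $C_{-\phi}$.
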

\begin{proof}
The orientation at infinity $\phipartial = \phipartialw$ is induced by the labeling $\phi_{p_0,x}$ where $x = t^\lambda w$.  The alcove containing $p_0$ with spherical direction $w_0$ times the spherical direction of $x$ is the alcove $t^{\lambda}w_0 w\fa$, and no wall containing $p_0$ separates this alcove from $C_{-\phi}$.  Thus if $c_0$ is the alcove $t^\lambda w_0 w \fa$ then $\load_\phi(p_0,\gamma) = 0$, since case (3) in Lemma~\ref{load bearing} will not occur.  Conversely, if $c_0$ is any alcove $t^\lambda u \fa$ with $u \neq w_0 w$ then there is at least one wall containing $p_0$ which separates $c_0$ from $C_{-\phi}$, hence $\load_\phi(p_0,\gamma) > 0$.
\end{proof}

The notions of dimension for vertex-to-vertex and alcove-to-alcove combinatorial galleries then coincide in the following situation.

\begin{corollary}\label{cor:folds+crossings}  Fix an orientation at infinity $\phipartial = \phipartialw$ and suppose $\gamma$ is an alcove-to-alcove combinatorial gallery which is positively folded with respect to $\phipartial$.  Let $c_0$ be the first alcove of $\gamma$.  If $\gamma^\sharp$ is the associated vertex-to-vertex gallery with first vertex $p_0 \in R^\vee$ then $\dim_\phi(\gamma) \leq \dim_{\phi}(\gamma^\sharp)$ with equality 
$$\dim_\phi(\gamma^\sharp) = \dim_\phi(\gamma) = P_\phi(\gamma) + F_\phi(\gamma)$$ if and only if $c_0$ has spherical direction $w_0w$.
\end{corollary}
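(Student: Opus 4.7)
The plan is to decompose the set of load-bearing walls of $\gamma^\sharp$ according to the trichotomy supplied by Lemma~\ref{load bearing}, and recognize that two of the three cases are precisely counted by $P_\phi(\gamma)$ and $F_\phi(\gamma)$, while the third is controlled by $\load_\phi(p_0,\gamma^\sharp)$.

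More concretely, since $\gamma$ is positively folded with respect to $\phipartial$, every fold of $\gamma$ is a positive fold, so $\gamma^\sharp$ is also positively folded with respect to $\phipartial$ and Lemma~\ref{load bearing} applies. Fix $1 \le i \le n$ and let $H_i$ be the supporting wall of the panel $p_i$. I will first check that $H_i$ is load-bearing for $\gamma^\sharp$ at $p_i$ if and only if $\gamma$ has either a positive crossing or a positive fold at $p_i$: indeed, $H_i$ is load-bearing precisely when $c_i$ lies on the positive side of $H_i$ (that is, on the side of $C_\phi$ and opposite $C_{-\phi}$), which rules out negative crossings and matches exactly the two positive cases. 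Since the supporting wall determines $i$ uniquely among indices in $\{1,\dots,n\}$, these contributions give a bijection with $P_\phi(\gamma) + F_\phi(\gamma)$ load-bearing walls.

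The remaining load-bearing walls of $\gamma^\sharp$ are, by Lemma~\ref{load bearing}(3), precisely those walls containing the initial vertex $p_0$ that separate $c_0$ from $C_{-\phi}$, and the number of these is by definition $\load_\phi(p_0,\gamma^\sharp)$. Adding up, Definition~\ref{def:DimGalleryV} and Definition~\ref{def:DimGalleryA} yield the identity
\begin{equation*}
\dim_\phi(\gamma^\sharp) \;=\; \load_\phi(p_0,\gamma^\sharp) + P_\phi(\gamma) + F_\phi(\gamma) \;=\; \load_\phi(p_0,\gamma^\sharp) + \dim_\phi(\gamma).
\end{equation*}
Since $\load_\phi(p_0,\gamma^\sharp) \ge 0$, this immediately gives $\dim_\phi(\gamma) \le \dim_\phi(\gamma^\sharp)$, with equality exactly when $\load_\phi(p_0,\gamma^\sharp)=0$. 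Writing $p_0 = \lambda \in R^\vee$, Lemma~\ref{lem:folds+crossing} then identifies this vanishing condition with $c_0 = t^\lambda w_0 w\fa$, i.e.\ with $c_0$ having spherical direction $w_0 w$, completing the proof.

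The only real content in this argument is verifying the trichotomy-to-bijection step, which is essentially a bookkeeping exercise using the periodic orientation; the rest is a direct comparison of Definitions~\ref{def:DimGalleryV} and~\ref{def:DimGalleryA} together with the invocation of Lemma~\ref{lem:folds+crossing}. I do not anticipate any genuine obstacle beyond being careful that the load at $p_0$ is the only discrepancy between the two notions of dimension.
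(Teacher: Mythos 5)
Your proof is correct and follows essentially the same route as the paper: decompose the load-bearing walls of $\gamma^\sharp$ via the trichotomy of Lemma~\ref{load bearing}, identify the contributions at the panels $p_i$, $i\ge 1$, with $P_\phi(\gamma)+F_\phi(\gamma)$, and use Lemma~\ref{lem:folds+crossing} to characterize when the load at $p_0$ vanishes. The explicit identity $\dim_\phi(\gamma^\sharp)=\load_\phi(p_0,\gamma^\sharp)+\dim_\phi(\gamma)$ is just a slightly more quantitative phrasing of the paper's argument.
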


\begin{proof}  By Lemma~\ref{load bearing}, the load-bearing walls of $\gamma^\sharp$ at the faces $p_i$ for $i > 1$ correspond to either the positive crossings in $\gamma$, in case (1), or the positive folds in $\gamma$, in case (2), and these possibilities are mutually exclusive.  Thus $\dim_\phi(\gamma) \leq \dim_\phi(\gamma^\sharp)$.  By  Lemma~\ref{lem:folds+crossing}, the gallery $\gamma^\sharp$ has no load-bearing walls at $p_0$ if and only if the spherical direction of $c_0$ is $w_0w$.  The result follows.
\end{proof}


\subsection{Root operators}\label{sec:RootOps}

We now recall the definition and key properties of the root operators $e_\alpha$ and $f_\alpha$, which were introduced by Gaussent and Littelmann in~\cite{GaussentLittelmann}.   A reader familiar with root operators could skip this section.  These operators act on the set of positively folded galleries of a fixed type, and will allow us to manipulate such galleries while keeping track of their dimensions.  We remark that root operators themselves, like positively folded galleries, can be expressed using retractions in Bruhat--Tits buildings. This connection is made precise in \cite{SchwerOperators} by the second author.

Recall that we denote by $\gamma_\lambda$ a minimal vertex-to-vertex combinatorial gallery from the origin to $\lambda \in R^\vee$, and by $\Gamma^+(\gamma_\lambda,\nu)$ the set of vertex-to-vertex combinatorial galleries which are of the same type as $\gamma_\lambda$, start at the origin and end in a cocharacter $\nu$, and are positively folded with respect to the standard orientation.  

We now define the root operators $e_\alpha$ and $f_\alpha$ for all simple roots $\alpha$.  The reason we assume below that $\lambda$ does not lie on any wall containing the origin is so that the minimal vertex-to-vertex gallery $\gamma_\lambda$ is minimal in the sense considered in~\cite{GaussentLittelmann}; compare Remark~\ref{rem:GLminimal}.
 
\begin{notation}\label{not:operators}  Let  $\lambda \in R^\vee$ be dominant and assume that $\lambda$ does not lie on any wall containing the origin.   
Let $\gamma=\left(p_0\subset c_0 \supset p_1 \subset c_1 \supset  \dots \subset c_n\supset p_{n+1} \right)$ 
be a vertex-to-vertex combinatorial gallery with $\gamma\in\Gamma^+(\gamma_\lambda,\nu)$.  Let $\alpha$ be a simple root, and define $m = m(\gamma,\alpha)\in\Z$ to be minimal such that there exists $i$ with $p_i$ contained in the hyperplane $H_{\alpha, m}$. Note that $m\leq 0$ as $p_0$ is the origin. 
 
There are the following cases:
\begin{itemize}
\item[(I)] $m\leq -1$. In this case let  $k = k(\gamma,\alpha,\mbox{I})$ be minimal with $p_k\subset H_{\alpha, m}$, and let $j = j(\gamma,\alpha,\mbox{I})$ be maximal such that $0\leq j\leq k$ and $p_j\subset H_{\alpha, m+1}$. 
\item[(II)] $m\leq \langle \alpha, \nu \rangle - 1$. In this case let $j = j(\gamma,\alpha,\mbox{II})$ be maximal with $p_j\subset H_{\alpha, m}$, and let $k = k(\gamma,\alpha,\mbox{II})$  be minimal such that $j\leq k\leq n+1$ and $p_k\subset H_{\alpha, m+1}$.
\end{itemize}
\end{notation}

\begin{definition}\label{def:GLoperators}
With all notation as in \ref{not:operators}, the \emph{root operators} $e_\alpha$ and $f_\alpha$ are defined as follows:
\begin{itemize}
\item In case (I) let $e_\alpha(\gamma)$ be the vertex-to-vertex combinatorial gallery defined by
\[ e_\alpha(\gamma)=(p_0\subset c'_0\supset p'_1 \subset c'_1 \supset  \dots \subset c'_n \supset p'_{n+1}), \]
where 
\[ c'_i = 
\left\{ \begin{array}{ll}
	  c_i & \text{for } i \leq j-1,  \\
	  s_{\alpha, m+1}(c_i) & \text{for } j\leq i \leq k-1, \\
	  t^{\alpha^\vee}(c_i) & \text{for } i\geq k. \\
        \end{array}
\right .
\]

\item  
In case (II) let $f_\alpha(\gamma)$ be the vertex-to-vertex combinatorial gallery defined by
\[f_\alpha(\gamma)=(p_0\subset c'_0\supset p'_1 \subset c'_1 \supset \dots \subset c'_n \supset p'_{n+1} ), \]
where
\[ c'_i = 
\left\{ \begin{array}{ll}
	  c_i & \text{for } i< j, \\
	  s_{\alpha, m}(c_i) & \text{for } j\leq i< k, \\
	  t^{-\alpha^\vee}(c_i) & \text{for } i\geq k. \\
        \end{array}
\right .
\]
\end{itemize} 
\end{definition}

\begin{remark}
Note that cases (I) and (II) are not mutually exclusive so that often both $e_\alpha$ and $f_\alpha$ will be defined. In \cite{GaussentLittelmann}, a third case is also considered, in which an operator $\tilde e_\alpha$ is defined. Since we will not make use of this operator in the present paper, we do not include its definition here. 
\end{remark}

\begin{example}\label{ex:root-operator} 
Figure~\ref{fig:e_and_f} shows all possible applications of the root operators $e_\alpha$ and $f_\alpha$ to the gallery $\gamma$ in the first frame, which runs from the origin $v_0$ to $\nu \in R^\vee$.   Let  $\gamma_\lambda$ denote the minimal gallery from~$v_0$ to $\lambda \in R^\vee$ shown in the second frame of the bottom row.  Each gallery depicted is of the same type as $\gamma_\lambda$,  and is positively folded with respect to the standard orientation. The hyperplanes $H_{\alpha,k}$ are the horizontal lines in this figure, with $H_{\alpha,0}$ the heavier horizontal line containing $v_0$.  \end{example}

\begin{figure}[ht]
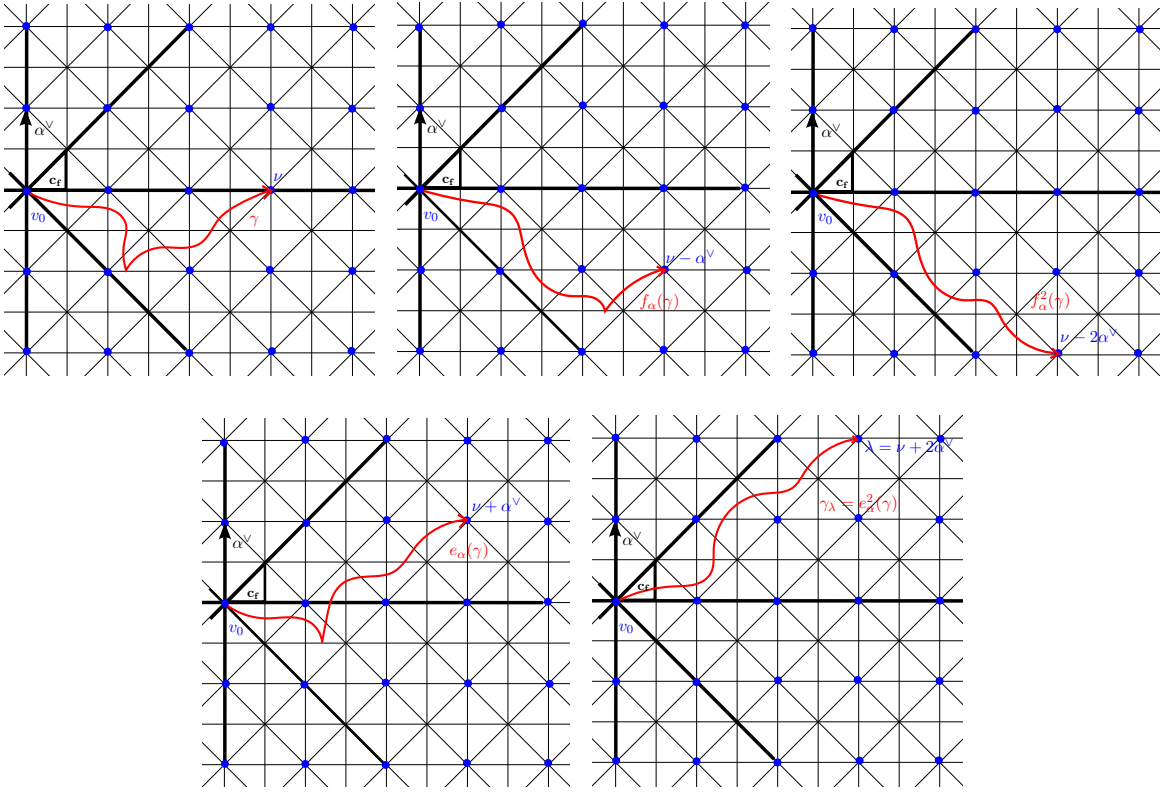

\begin{center}
\resizebox{\textwidth}{!}
{
\begin{overpic}{e-operators-2a}
\put(7,42){\textcolor{blue}{$v_0$}}
\put(8,65){\bf{$\alpha^\vee$}}
\put(12,51.5){\footnotesize{$\fa$}}
\put(72.5,52.5){\textcolor{blue}{$\nu$}}
\put(66,41){\textcolor{red}{\bf $\gamma$}}
\end{overpic}
\hspace{2ex}
\begin{overpic}{e-operators-1b}
\put(7,42){\textcolor{blue}{$v_0$}}
\put(8,65){\bf{$\alpha^\vee$}}
\put(12,51.5){\footnotesize{$\fa$}}
\put(72,30){\textcolor{blue}{$\nu-\alpha^\vee$}}
\put(65,18.5){\textcolor{red}{\bf $f_\alpha(\gamma)$}}
\end{overpic}
\hspace{2ex}
\begin{overpic}{e-operators-1a}
\put(7,42){\textcolor{blue}{$v_0$}}
\put(8,65){\bf{$\alpha^\vee$}}
\put(12,51.5){\footnotesize{$\fa$}}
\put(73,9){\textcolor{blue}{$\nu-2\alpha^\vee$}}
\put(65,19){\textcolor{red}{\bf $f^2_\alpha(\gamma)$}}
\end{overpic}
}

\vspace{3ex}
\resizebox{0.66\textwidth}{!}
{
\begin{overpic}{e-operators-2b}
\put(7,42){\textcolor{blue}{$v_0$}}
\put(8,65){\bf{$\alpha^\vee$}}
\put(12,51.5){\footnotesize{$\fa$}}
\put(73,75){\textcolor{blue}{$\nu+\alpha^\vee$}}
\put(67,63){\textcolor{red}{\bf $e_\alpha(\gamma)$}}
\end{overpic}
\hspace{2ex}
\begin{overpic}{e-operators-3a}
\put(7,42){\textcolor{blue}{$v_0$}}
\put(8,65){\bf{$\alpha^\vee$}}
\put(12,53){\footnotesize{$\fa$}}
\put(73,90){\textcolor{blue}{$\lambda = \nu+2\alpha^\vee$}}
\put(61,75){\textcolor{red}{\bf $\gamma_\lambda = e^2_\alpha(\gamma)$}}
\end{overpic}
}

\caption{An illustration of the root operators $e_{\alpha}$ and $f_\alpha$ in type $\tilde C_2$.}
\label{fig:e_and_f}
\end{center}
\end{figure}

Lemmas 5, 6, and 7 in \cite{GaussentLittelmann} summarize several of the main properties of root operators.  Of these, the properties that we will need are gathered in the next result.  Note that we are only considering the standard orientation.

\begin{lemma}[Properties of Root Operators]\label{lem:posFoldsV} We continue all notation from~\ref{not:operators}. Recall that $\gamma\in\Gamma^+(\gamma_\lambda,\nu)$.  
\begin{enumerate}
\item\label{fold1} $e_{\alpha}$ is not defined if and only if $m=0$.  If it is defined, then $e_{\alpha}(\gamma)\in\Gamma^+(\gamma_\lambda, \nu+\alpha^\vee)$ and the dimension with respect to the standard orientation $\phipartialo$ increases by one, that is, $$\dim_{\phi_0}(e_{\alpha}(\gamma))=\dim_{\phi_0}(\gamma)+1.$$  
\item\label{fold2} $f_{\alpha}$ is not defined if and only if $m=\langle \alpha, \nu \rangle$. If it is defined then $f_{\alpha,\phi}(\gamma)\in\Gamma^+(\gamma_\lambda,\nu-\alpha^\vee)$ and the dimension with respect to the standard orientation $\phipartialo$ decreases by one, that is, $$\dim_{\phi_0}(f_{\alpha}(\gamma))=\dim_{\phi_0}(\gamma)-1.$$
\item\label{fold3} If $e_{\alpha}$ is defined, then $f_{\alpha}(e_{\alpha}(\gamma))$  is defined and equal to $\gamma$. 
If $f_{\alpha}$ is defined, then $e_{\alpha}(f_{\alpha}(\gamma))$  is defined and equal to $\gamma$. 
\item\label{fold4} If $p$ is maximal such that $f_{\alpha}^p(\gamma)$ is defined and $q$ is maximal such that $e_{\alpha}^q(\gamma)$ is defined, then $p-q = \langle \alpha, \nu \rangle$.
   
\end{enumerate}
\end{lemma}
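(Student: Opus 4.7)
The plan is to verify each of the four items by direct inspection of Definition~\ref{def:GLoperators}, along the lines of the proofs of Lemmas~5, 6, and~7 in~\cite{GaussentLittelmann}.  The guiding picture is that both $e_\alpha$ and $f_\alpha$ modify $\gamma$ in three disjoint pieces: a \emph{prefix} ($i < j$) which is left untouched, a \emph{middle segment} ($j \le i < k$) which is reflected in an affine hyperplane parallel to $H_\alpha$, and a \emph{tail} ($i \ge k$) which is translated by $\pm\alpha^\vee$.  Because reflections and translations preserve alcove types, simplicial adjacencies, and the ``vertex-to-vertex" structure, the output is in each case a combinatorial gallery of the same type as $\gamma_\lambda$; it remains to track the endpoint, the positive-foldedness, and the count of load-bearing walls.

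For items~(1) and~(2), the non-existence conditions are immediate from Notation~\ref{not:operators}: case~(I) requires $m \le -1$, while case~(II) requires $m \le \langle\alpha,\nu\rangle - 1$, and these fail precisely when $m=0$ or $m=\langle\alpha,\nu\rangle$ respectively.  The new endpoint is read off from the translation applied to the tail, which moves $\nu$ by $\pm\alpha^\vee$.  The key point is positive foldedness of $e_\alpha(\gamma)$ (dually $f_\alpha(\gamma)$) with respect to $\phipartialo$.  I would argue segment by segment: the prefix is unchanged; the tail remains positively folded by Lemma~\ref{lem:transpos} because $t^{\alpha^\vee}$ is a translation; and the middle segment remains positively folded because the minimality of $k$ and maximality of $j$ guarantee that between $p_j$ and $p_k$ the gallery lies strictly on one side of $H_{\alpha,m+1}$, so reflecting through this wall sends positive folds to positive folds.

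For the dimension change in (1) and (2), the argument is a careful bookkeeping of load-bearing walls using Lemmas~\ref{lem:translations load bearing} and~\ref{lem:w0 load bearing} (or their affine analogues).  Translations and reflections of the three segments each preserve the count of load-bearing walls they contain, so the entire discrepancy is concentrated at the transition panel $p_k$, where the middle segment (reflected) meets the tail (translated).  A direct inspection of the configuration at this join shows that exactly one new load-bearing wall appears for $e_\alpha$, and exactly one disappears for $f_\alpha$, giving the claimed $\pm 1$.  This transition-panel analysis is the step I expect to require the most care, since it is here that the combinatorics of ``reflection meets translation" has to be verified explicitly.

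Item~(3) is then a short computation: computing the indices $(m',j',k')$ associated to $f_\alpha$ applied to $e_\alpha(\gamma)$ shows that the prefix, middle, and tail of $e_\alpha(\gamma)$ are mapped back segment-wise by the inverse reflection and inverse translation, recovering $\gamma$; the opposite composition is symmetric.  Finally, item~(4) follows from (1)--(3) together with the extremality conditions: by (3), the $\alpha$-string $\{f_\alpha^i(\gamma)\}_{i=0}^p \cup \{e_\alpha^i(\gamma)\}_{i=1}^q$ is well-defined, and the endpoints of these galleries form an arithmetic progression with common difference $\alpha^\vee$.  At the top end $e_\alpha^q(\gamma)$ maximality forces $m = 0$ by~(1); at the bottom end $f_\alpha^p(\gamma)$ maximality forces $m = \langle\alpha,\nu - p\alpha^\vee\rangle$ by~(2).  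Since $m$ is invariant in the sense that the number of downward steps of $e_\alpha$ needed to reach $m=0$ from $\gamma$ is $q$, and since every application of $f_\alpha$ decreases $\langle\alpha,\cdot\rangle$ at the endpoint by $2$, combining these constraints yields $p - q = \langle\alpha,\nu\rangle$.
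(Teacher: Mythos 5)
First, a point of comparison: the paper does not actually prove Lemma~\ref{lem:posFoldsV} at all --- it is imported from Lemmas~5, 6 and~7 of \cite{GaussentLittelmann}, and the surrounding text frames it explicitly as a summary of those results. So there is no in-paper argument to match; your plan amounts to reproving Gaussent--Littelmann's lemmas from Definition~\ref{def:GLoperators}, which is a legitimate route, and your overall decomposition (untouched prefix, middle segment reflected in $H_{\alpha,m+1}$ resp.\ $H_{\alpha,m}$, tail translated by $\pm\alpha^\vee$, then bookkeeping at the junctions) is indeed how their proofs are organised. The non-existence criteria in items~(1) and~(2), the endpoint moving by $\pm\alpha^\vee$, and item~(4) are fine; for (4) the cleanest bookkeeping is that each application of $e_\alpha$ raises $m$ by one and each application of $f_\alpha$ lowers it by one, so $q=-m$ and $p=\langle\alpha,\nu\rangle-m$, whence $p-q=\langle\alpha,\nu\rangle$.

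Two of your justifications, however, would not survive being written out. First, you assert that the reflected middle segment stays positively folded ``because reflecting through this wall sends positive folds to positive folds.'' As a general principle this is false: $s_{\alpha,m+1}$ does not fix the reference chamber $C_{-\phi_0}$ at infinity but sends it to $s_\alpha C_{-\phi_0}$, so neither positivity of folds nor load-bearing status is preserved automatically under the reflection. What rescues the step is that $C_{-\phi_0}$ and $s_\alpha C_{-\phi_0}$ are separated only by walls parallel to $H_\alpha$, while the maximality of $j$ and minimality of $k$ guarantee that no panel of the gallery strictly between $p_j$ and $p_k$ lies in such a wall; this is exactly where simplicity of $\alpha$ enters, and it must be argued, not assumed. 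Second, the claim that the $\pm1$ in the dimension ``is concentrated at the transition panel $p_k$'' is misplaced. For $e_\alpha$, the crossing of $H_{\alpha,m+1}$ at $p_j$ is necessarily a negative crossing (a fold or an upward crossing there would contradict the maximality of $j$), and after reflection it becomes a positive fold --- that is where the new load-bearing wall appears; the case analysis at $p_k$ (for instance a positive fold in $H_{\alpha,m}$ turning into a positive crossing of its translate) produces no net change there. You also need to treat the boundary cases $j=0$ and $k=n+1$ separately, since by Definition~\ref{def:load-bearing} no wall is load-bearing at the final vertex $p_{n+1}$, and the load at $p_0$ is counted differently. These junction analyses are precisely the delicate content of the Gaussent--Littelmann proofs; the rest of your outline is sound.
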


\begin{remark}\label{rem:operators}
In our constructions in Section~\ref{sec:Construction} we will be paying close attention to the effects of root operators $e_\alpha$ on the first and last alcoves of a combinatorial gallery.  From Definition~\ref{def:GLoperators}, and as seen in Figure~\ref{fig:e_and_f}, it is possible for the first alcove to be either fixed or reflected by these operators, and for the last alcove to be either translated or reflected. 

In order for the first alcove to change under the application of a root operator $e_\alpha$ it needs to be in the part of the gallery that is reflected by that root operator. This is the case only if the root operator $e_\alpha$ is defined exactly once. In the proof of Lemma~\ref{lem:rootops1} we make use of this fact and essentially show that we never apply the last possible instance of a root operator $e_\alpha$.   

Now suppose the last alcove of a gallery is reflected under the operator $e_{\alpha_i}$.  Then its spherical direction gets shortened and multiplied on the left by $s_i$.  Heuristically, this final alcove will be brought closer to the chamber at infinity with standard label $\id$ by this root operator. This is essentially the reason why the number of folds is bounded by the length of  the longest word in $\sW$, compare Corollary~\ref{w0Folds}, and why in case the final alcove is in the identity position it will never be reflected by a root operator $e_\alpha$, compare Lemma~\ref{lem:rootops1}.  

Similar comments apply for the root operators $f_\alpha$.
 \end{remark}

We will also need the following result and definition from~\cite{GaussentLittelmann}.  Recall that we denote by $\rho$ the half-sum of the positive roots.  

\begin{prop}[Proposition 3 in~\cite{GaussentLittelmann}]\label{prop:LS}  
Let  $\lambda \in R^\vee$ be dominant and assume that $\lambda$ does not lie on any wall containing the origin.  If $\gamma\in\Gamma^+(\gamma_\lambda, \nu)$ then $\dim_{\phi_0}(\gamma)\leq \langle \rho,  \lambda + \nu \rangle $.
\end{prop}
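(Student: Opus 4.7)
My plan is to leverage the invariance of
\[
Q(\gamma) := \dim_{\phi_0}(\gamma) - \langle \rho, \lambda + \nu(\gamma)\rangle,
\]
where $\nu(\gamma)$ denotes the endpoint of $\gamma$, under the root operators $e_\alpha$ and $f_\alpha$ of Definition~\ref{def:GLoperators}, and then reduce the problem to a bound on a canonical representative in each orbit.  By parts (1) and (2) of Lemma~\ref{lem:posFoldsV}, each application of $e_\alpha$ adjusts $\dim_{\phi_0}$ by $+1$ and $\nu$ by $+\alpha^\vee$, while $f_\alpha$ does the opposite; since $\langle\rho,\alpha^\vee\rangle=1$ for every simple $\alpha$, the quantity $Q$ is preserved by all root operators.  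Hence it suffices to prove $Q(\gamma)\leq 0$ for one gallery in each root-operator orbit.

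I would next compute $Q(\gamma_\lambda)=0$ directly.  Since $\gamma_\lambda$ is a minimal vertex-to-vertex gallery from $v_0$ to the dominant regular $\lambda$, it has no folds, every crossing is positive, and each such crossing contributes exactly one load-bearing wall.  Combining $\load_{\phi_0}(p_0,\gamma_\lambda)=|\Phi^+|$ from Example~\ref{eg:load bearing} with the count of positive crossings of $\gamma_\lambda$, a direct combinatorial computation produces $\dim_{\phi_0}(\gamma_\lambda)=\langle 2\rho,\lambda\rangle$, whence $Q(\gamma_\lambda)=0$.

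Given an arbitrary $\gamma\in\Gamma^+(\gamma_\lambda,\nu)$, I would apply operators $e_\alpha$ repeatedly (for varying simple $\alpha$) until no $e_\alpha$ is defined, producing a gallery $\gamma^*$ with endpoint $\nu^*$.  This process terminates since $\langle\rho,\nu\rangle$ strictly increases at each step and is bounded above on the finite Weyl polytope associated to $\lambda$.  When no $e_\alpha$ applies, the condition $m(\gamma^*,\alpha)=0$ for every simple $\alpha$ forces $\nu^*$ to lie in the closed dominant chamber; a short analysis of how $e_\alpha$ can shift the initial alcove then shows that the starting alcove of $\gamma^*$ must be the base alcove $\fa$.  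By invariance of $Q$, it remains to show $Q(\gamma^*)\leq 0$.

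The hard part will be this last bound.  Decomposing the $n=\langle 2\rho,\lambda\rangle-|\Phi^+|$ interior faces of $\gamma^*$ into positive crossings, positive folds, and negative crossings, and using that $\load_{\phi_0}(p_0,\gamma^*)=|\Phi^+|$ because $c_0=\fa$, one obtains
\[
\dim_{\phi_0}(\gamma^*) = \langle 2\rho,\lambda\rangle - N_{\phi_0}(\gamma^*),
\]
so the desired inequality $Q(\gamma^*)\leq 0$ is equivalent to
\[
N_{\phi_0}(\gamma^*) \geq \langle\rho,\lambda-\nu^*\rangle.
\]
This says that the drop in $\rho$-height of the endpoint from $\lambda$ to $\nu^*$ is paid for by the negative crossings of $\gamma^*$.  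I would prove it by induction on the number of positive folds of $\gamma^*$: the base case is an unfolded gallery of type $\gamma_\lambda$ starting at $\fa$, which must be $\gamma_\lambda$ itself, so $N_{\phi_0}=0=\langle\rho,\lambda-\lambda\rangle$; for the inductive step, introducing a positive fold at a wall $H_{\beta,k}$ with $\beta\in\Phi^+$ reflects the subsequent tail of the gallery across this wall, swapping positive and negative crossings in the tail and decreasing the endpoint by a nonnegative multiple of $\beta^\vee$, and one then verifies that the net increase in $N_{\phi_0}$ exactly compensates for the net decrease in $\langle\rho,\nu^*\rangle$.
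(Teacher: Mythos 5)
Your overall strategy (invariance of the quantity $Q$ under $e_\alpha,f_\alpha$, the value $Q(\gamma_\lambda)=0$, termination of the raising process, and dominance of the terminal endpoint $\nu^*$) is fine, but the reduction collapses at the claim that a gallery on which no $e_\alpha$ is defined must start at the base alcove $\fa$. This is false, and it is exactly where the content of the proposition sits. Take type $A_2$ and $\lambda=\tilde\alpha^\vee=\alpha_1^\vee+\alpha_2^\vee$, so that $\gamma_\lambda$ consists of two alcoves with a single interior panel of type $s_0$ lying in $H_{\tilde\alpha,1}$. Let $\gamma$ be the gallery with $p_0=\{v_0\}$, $c_0=c_1=w_0\fa$, a fold at the type-$s_0$ panel of $w_0\fa$ (which lies in $w_0H_{\tilde\alpha,1}=H_{\tilde\alpha,-1}$), and $p_2=\{v_0\}$, so $\nu=0$. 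The fold is positive for $\phipartialo$, so $\gamma\in\Gamma^+(\gamma_\lambda,0)$; and since the only faces of $\gamma$ contained in hyperplanes $H_{\alpha_i,k}$ are $p_0=p_2=\{v_0\}\subset H_{\alpha_i,0}$, one has $m(\gamma,\alpha_i)=0$ for $i=1,2$, so no $e_{\alpha_i}$ is defined, yet $c_0=w_0\fa\neq\fa$. Worse, if your reduction were valid, every $\gamma\in\Gamma^+(\gamma_\lambda,\nu)$ would flow up to a gallery starting at $\fa$; but the only gallery in $\Gamma^+(\gamma_\lambda,\nu)$ with first alcove $\fa$ is $\gamma_\lambda$ itself (before its first fold such a gallery coincides with $\gamma_\lambda$, whose crossings are all positive, so a first \emph{positive} fold is impossible), and then every positively folded gallery would satisfy $Q=0$, i.e.\ would be an LS-gallery. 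The example above has $\dim_{\phi_0}(\gamma)=1<2=\langle\rho,\lambda+\nu\rangle$, so that conclusion is false. This is precisely why Gaussent and Littelmann need the further operator $\tilde e_\alpha$ recalled in Remark~\ref{rem:operators}'s vicinity (the paper only cites their Proposition 3 and does not reprove it): the operators $e_\alpha$ alone do not raise every positively folded gallery to $\gamma_\lambda$.

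The ``hard part'' is also not a genuine reduction. When $c_0=u\fa$ one has $\load_{\phi_0}(p_0,\gamma)=|\Phi^+|-\ell(u)$, so your identity $\dim_{\phi_0}(\gamma^*)=\langle 2\rho,\lambda\rangle-N_{\phi_0}(\gamma^*)$ holds only for $u=1$, where, as just observed, there is nothing to prove; in the genuinely open case the inequality to establish is $N_{\phi_0}(\gamma)+\ell(u)\geq\langle\rho,\lambda-\nu\rangle$, which your argument never addresses. Finally, the inductive step is unsound: reflecting the tail in a fold along $H_{\beta,k}$ does not ``swap positive and negative crossings'' in general, since a crossing of $H_{\beta',k'}$ changes sign precisely when $s_\beta\beta'\in\Phi^-$ (for $\beta$ simple only the hyperplanes parallel to $H_{\beta,k}$ flip), and the drop of $\langle\rho,\nu\rangle$ is $(\langle\beta,\nu\rangle-k)\langle\rho,\beta^\vee\rangle$ with $\langle\rho,\beta^\vee\rangle>1$ for $\beta$ non-simple; the asserted ``exact compensation'' is unproved and, for galleries based at $\fa$, is literally equivalent to the statement of the proposition, so this step is circular. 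You would need either the full Gaussent--Littelmann raising machinery (including $\tilde e_\alpha$) or a direct estimate on $N_{\phi_0}(\gamma)+\ell(u)$ to close the argument.
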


\begin{definition}\label{def:LS}  With the same notation and assumptions as in Proposition~\ref{prop:LS}, a gallery $\gamma \in \Gamma^+(\gamma_\lambda, \nu)$ is called an \emph{LS-gallery} if its dimension with respect to the standard orientation achieves the maximum dimension $\langle \rho,  \lambda + \nu \rangle$  given by Proposition~\ref{prop:LS}. 
\end{definition}

\begin{remark}\label{rem:LS} 
It is shown in \cite[Corollary 2]{GaussentLittelmann} that any LS-gallery of type $\gamma_\lambda$ can be obtained from the minimal gallery $\gamma_\lambda$ via a finite number of applications of root operators $f_\alpha$, or equivalently $\gamma_\lambda$ can be obtained from $\gamma$ via finitely many applications of operators $e_\alpha$. 
\end{remark}

\begin{remark}  Various generalizations of root operators are known to experts; see for example J.E. Humphrey's Mathoverflow answer~\cite{HumphreysMO}.  In our constructions in Section~\ref{sec:Construction} we will be acting on the left by $\sW$ to transform galleries so that the root operators $e_\alpha$ and $f_\alpha$ can be applied.  We could instead have defined root operators with respect to arbitrary orientations and acted by these on the original galleries. 

Yet another (new) approach we discussed in an earlier version of this work was to  introduce root operators centered at a chamber in the spherical building at infinity. In this case one would apply them to infinite galleries that ``start'' at this chamber at infinity. These operators share a lot of properties with the ones introduced by Gaussent and Littelman, yet have the main advantage that infinitely many will be defined for a given gallery and given simple root at any time.   
However, we ended up simplifying the proof by not using this approach. 
\end{remark}

See also Section~\ref{sec:MV-GL-LS} for further remarks on LS-galleries and root operators. 


\subsection{Counting folds and crossings}\label{sec:Counting}

The purpose of this section is to collect some useful relations among the length and dimension of a combinatorial alcove-to-alcove gallery and its number of folds and crossings.  Many of these results are elementary and will be known to experts.

Write $C(\gamma)$ for the number of (unfolded) crossings in a combinatorial alcove-to-alcove gallery $\gamma$.  We first observe:

\begin{lemma}\label{LengthCrossings}
Let $\tilde\gamma$ be a minimal alcove-to-alcove gallery of type $\vec x$, and suppose that $\gamma$ is a gallery obtained by positively folding $\tilde\gamma$ with respect to the orientation $\phipartial$.  Then
\begin{equation}\label{E:LengthCrossings}
\ell(x) = C(\gamma) + F_\phi(\gamma) =  P_\phi(\gamma) + N_\phi(\gamma) + F_\phi(\gamma).  
\end{equation}
\end{lemma}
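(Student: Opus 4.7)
The plan is to unpack the definitions and observe that folding preserves the type (hence the total number of steps) of a gallery, so the identity is essentially a tautology once the pieces are accounted for.

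First I would note that since $\tilde\gamma$ is a minimal alcove-to-alcove gallery of type $\vec x$, its length equals $\ell(x)$: by definition the type of $\tilde\gamma$ is a word $s_{j_1}\cdots s_{j_n}$ in the generators $\tilde S$, and minimality of $\tilde\gamma$ means this word is a minimal presentation for $x$, so $n = \ell(x)$. Thus $\tilde\gamma$ consists of $\ell(x)$ steps (i.e. faces $p_1,\ldots,p_{\ell(x)}$ indexing transitions between consecutive alcoves).

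Next I would observe that the folding procedure described in Section~\ref{sec:LFAW} replaces a crossing at some face $p_i$ by a fold at $p_i$, and reflects the remainder of the gallery. In particular, the type of the gallery is preserved: the faces $p_i$ (up to applying the reflections) still have the same types $s_{j_i}$. Hence $\gamma$ has exactly the same length as $\tilde\gamma$, namely $\ell(x)$ steps. By Definition~\ref{def:CombGalleryAlcove}, every step of $\gamma$ consists of a face $p_i$ lying between two alcoves $c_{i-1}$ and $c_i$, and this step is either a crossing (if $c_{i-1}\neq c_i$) or a fold (if $c_{i-1}=c_i$); these possibilities are mutually exclusive and exhaust all steps. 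Therefore
\begin{equation*}
\ell(x) = C(\gamma) + F_\phi(\gamma).
\end{equation*}

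Finally, to obtain the second equality I would use the orientation $\phipartial$ to split the crossings according to sign: every crossing is either a positive crossing (crossing from the negative to the positive side of the supporting wall) or a negative crossing (crossing the other way), and these two possibilities are again mutually exclusive and exhaustive. Hence $C(\gamma) = P_\phi(\gamma) + N_\phi(\gamma)$, which combined with the previous display yields~\eqref{E:LengthCrossings}. There is no real obstacle here; the only thing worth being careful about is to invoke the fact that each individual folding operation described before Definition~\ref{def:PRS} preserves the type of the underlying gallery, so that iterating any PRS folding sequence (or, more generally, any positive folding procedure) still yields a gallery whose total number of steps equals $\ell(x)$.
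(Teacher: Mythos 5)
Your proposal is correct and follows essentially the same argument as the paper: the paper likewise counts the $\ell(x)$ steps of the minimal gallery and observes that under positive folding each crossing of $\tilde\gamma$ becomes a positive crossing, a negative crossing, or a fold, with these possibilities mutually exclusive. Your extra care about folding preserving the type (and hence the number of steps) is exactly the implicit content of that observation.
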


\begin{proof}
Since the original gallery $\tilde\gamma$ is minimal, we have $\ell(x) = C(\tilde\gamma)$, the number of crossings in $\tilde\gamma$.   After positively folding $\tilde\gamma$ to obtain $\gamma$, every crossing of the original gallery $\tilde\gamma$ has become either a positive crossing, a negative crossing, or a (positive) fold, with these three possibilities being mutually exclusive.  Thus 
$$
\ell(x) = C(\gamma) + F_\phi(\gamma) \quad \mbox{and} \quad C(\gamma) = P_\phi(\gamma) + N_\phi(\gamma).
$$
The result follows.
\end{proof}

The next results provide upper and lower bounds on the number of folds in a positively folded gallery.  
Its proof generalizes an observation made in Remark~4.4 of \cite{Ram}.
Let $\eta_1:\aW \to \sW$ be the function given by $\eta_1(x)=w$ where $x=t^\lambda w\in\aW$ with $\lambda \in R^\vee$ and $w \in \sW$.  That is, $\eta_1$ records the spherical direction of an affine Weyl group element $x$.

\begin{lemma}\label{wFolds2}
Let $\phipartial = \phipartialw$ be the orientation at infinity induced by the labeling $\phi = \phi_{v_0,w}$.  Then for all $x=t^\mu u$ and $y=t^\lambda v$  in $\aW$, all galleries $\gamma\in\Gamma^+_w(x, y)$ satisfy 
\begin{equation} \label{eq1}
F_{\phi}(\gamma) \leq \ell(\phi(\eta_1(y))) - \ell(\phi(\eta_1(x))) = \ell(w^{-1}v)-\ell(w^{-1}u).
\end{equation}
In particular, if $y=t^\lambda$ is a pure translation,  all galleries $\gamma\in\Gamma^+_{w}(x, y)$ satisfy 
\begin{equation} \label{eq2}
F_{\phi}(\gamma) \leq \ell(\eta_1(x))=\ell(u).
\end{equation}
\end{lemma}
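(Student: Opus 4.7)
The plan is to establish the main inequality by induction on the number of folds $F_\phi(\gamma)$ arising from the PRS folding sequence producing $\gamma$. The pure translation case will then follow from the general inequality via the triangle inequality $\ell(w^{-1}u) \leq \ell(w) + \ell(u)$ for the Coxeter length on $\sW$, applied with $v = \id$.

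For the base case $F = 0$, the gallery $\gamma$ is a minimal gallery of type $\vec x$ starting at $\fa$, so it ends at $x\fa$, forcing $y = x$ and hence $v = u$; the inequality then reduces to $0 \leq 0$.

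For the inductive step, let $\gamma'$ denote the positively-folded gallery obtained by reversing the last fold in the PRS sequence producing $\gamma$. Then $\gamma'$ has type $\vec x$ and $F - 1$ folds, and runs from $\fa$ to an alcove $y'\fa$ with spherical direction $v'$. The last fold occurs at a panel on an affine hyperplane $H$, and if $s_\beta \in \sW$ denotes the linear part of the corresponding affine reflection (with $\beta \in \Phi^+$), then reflecting the tail of $\gamma'$ across $H$ multiplies the ending spherical direction on the left by $s_\beta$, yielding the relation $v = s_\beta v'$. The inductive hypothesis applied to $\gamma'$ bounds $F - 1$ in terms of $\ell(w^{-1}v')$ and $\ell(w^{-1}u)$, so to close the induction it suffices to show that left-multiplication by $s_\beta$ changes $\ell(w^{-1}v')$ by at least one unit in the direction required by the stated inequality. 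Since $s_\beta$ is a reflection in $\sW$, the length difference $\ell(w^{-1} s_\beta v') - \ell(w^{-1} v')$ is a nonzero odd integer, so the remaining task is the sign condition, which is equivalent to $s_\beta$ being either a left-descent or a left-ascent for $w^{-1}v'$.

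I expect the main obstacle to be proving the correct sign condition from the positivity of the fold. The key input is Definition~\ref{def:PeriodicOrientation}: the positive side of $H_\beta$ in the $\phipartialw$-orientation is characterized by the sign of $w^{-1}\beta$, and this orientation is constant on the parallel affine family $\{H_{\beta,k}\}_k$. Positivity of the fold places the fold alcove on the positive side of $H$, and I would exploit that the subgallery of $\gamma'$ from the fold panel to its endpoint is a minimal subgallery (since no PRS folds occur at positions past $i_m$) to relate the chamber at infinity $\cC_{v'}$ representing the spherical direction of $y'$ to the sign of $w^{-1}\beta$, thereby extracting the needed left-descent/ascent condition. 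The case of an $s_0$-type fold, in which $s_\beta = s_{\tilde\alpha}$ is a non-simple reflection and the Coxeter length can change by more than one, is the most delicate step; however, since only the sign of the change matters for the induction, the same geometric argument suffices.
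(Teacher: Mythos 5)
Your strategy is the same as the paper's: the paper also processes the folds one at a time (running the PRS sequence forward rather than unfolding the last fold), records that each positive fold replaces the spherical direction $v'$ of the end alcove by $s_\beta v'$, where $s_\beta$ is the linear part of the folding reflection, and obtains \eqref{eq1} by telescoping the per-fold claim $\ell(w^{-1}s_\beta v')>\ell(w^{-1}v')$; the pure translation case is then a length inequality in $\sW$. (For that last step the inequality you quote is not the one you need: with $v=\id$ the deduction requires $\ell(w^{-1})\le\ell(w^{-1}u)+\ell(u)$, that is $\ell(w^{-1}u)\ge\ell(w^{-1})-\ell(u)$, rather than $\ell(w^{-1}u)\le\ell(w)+\ell(u)$; this is easily repaired.)

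The genuine gap is the step you yourself flag as the main obstacle: the descent condition is not proved, and the ingredients you propose do not suffice to prove it. Minimality of the tail beyond the folding panel, and the fact that it lies entirely on one side of the folding wall $H_{\beta,k}$, control only the \emph{position} of the final alcove, whereas the condition you need, $\epsilon\,(v')^{-1}\beta\in\Phi^+$ with $\epsilon$ the sign making $\epsilon\, w^{-1}\beta$ positive, is a condition on its \emph{spherical direction}, and the two are independent. Already for an alcove adjacent to $H_{\beta,k}$ on its negative side, $(v')^{-1}\beta$ has either sign, according to whether the shared panel avoids or contains that alcove's coroot-lattice vertex, and in the latter case a positive fold \emph{decreases} $\ell(w^{-1}v')$. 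Concretely, in rank one with $w=\id$, fold the minimal gallery of type $\vec{x}$, $x=t^{-\alpha^\vee}s_1=s_1s_0s_1$, at its last panel: the fold alcove $t^{-\alpha^\vee}\fa$ is separated by the folding wall from $C_{-\phi_0}$, so the fold is positive, yet the end direction changes from $s_1$ to $\id$ and $\ell(w^{-1}v')$ drops from $1$ to $0$. (Note this is not the ``$s_0$-type'' configuration you single out as delicate, but the one where the folding panel passes through the end alcove's lattice vertex.) So the sign condition cannot be extracted from positivity of the fold together with minimality of the tail, and your induction does not close as planned. This is precisely the point that the paper's own proof settles with a single sentence (each positive fold ``reflects the last alcove closer to $-C_\phi$''), so your write-up adds nothing beyond that assertion at the crucial step; a complete argument has to isolate exactly when the per-fold monotonicity of $\ell(w^{-1}v')$ holds, or else bound $F_\phi(\gamma)$ in the cases actually needed (such as \eqref{eq2}, where $y$ is a pure translation) by an argument that does not proceed fold by fold.
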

\begin{proof}
Write $\gamma_x$ for the minimal gallery $\fa \rightsquigarrow x\fa=\x$ which was positively folded with respect to $\phipartial$ in order to obtain $\gamma$. Suppose $\gamma$ has $F_{\phi}(\gamma) = f$ folds.  For $0 \leq i \leq f$ let $\gamma_x^{(i)}$ be the gallery obtained from $\gamma_x$ by applying the same folds as in $\gamma$ up to the $i$th fold, and no more folds after that. 
We write $c^i$ for the final alcove of the gallery $\gamma_x^{(i)}$ and put $w^i=\eta_1(c^i)$.  
Then in particular, $\gamma_x^{(0)} = \gamma_x$, $c^0=\x$, and $w^0=u$, and $\gamma_x^{(f)} = \gamma$, $c^f=\y$, and $w^f=v$.
As all folds are positive, by applying each successive fold we reflect the last alcove to be closer to $-C_\phi$, the opposite chamber at infinity with respect to $\phi$. It follows that
$
\ell(\phi(w^{i-1}\fa)) < \ell(\phi(w^{i}\fa))
$
and thus
$
\ell(\phi(c^{0})) + f \leq \ell(\phi(c^{f})),  
$
which gives us the first part of the lemma. 

To see the rest suppose $y$ is a pure translation. By Definition~\ref{def:InducedLabelingW} of the induced labeling $\phi_w=\phi$ we get  
$ \phi(\eta_1(y))= \phi_0(w^{-1})=w^{-1}$ and $\phi(\eta_1(x))= \phi_0(w^{-1}u)=w^{-1}u$. 
Combining this with Equation~\eqref{eq1} we obtain
\[
F_{\phi}(\gamma) \leq \ell(\phi(\eta_1(y))) - \ell(\phi(\eta_1(x))) \leq \ell(w^{-1}) - \left(\ell(w^{-1}) - \ell(u)\right) = \ell(u).
\]  Hence the lemma. 
\end{proof}

As $w_0$ is the longest element of the spherical Weyl group we get the corollary. 

\begin{corollary}\label{w0Folds}
For any orientation $\phipartial$, any $x \in \aW$, any minimal gallery $\gamma_x:\fa \rightsquigarrow x\fa$, and any gallery $\gamma$ obtained by positively folding $\gamma_x$ with respect to $\phipartial$, we have 
\begin{equation*} F_{\phi}(\gamma) \leq \ell(w_0).\end{equation*}
\end{corollary}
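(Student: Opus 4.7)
The plan is to reduce this directly to Lemma~\ref{wFolds2}. By Lemma~\ref{lem:induced infinity}, every orientation at infinity $\phipartial$ is of the form $\phipartialw$ for a unique $w \in \sW$, so there is no loss of generality in assuming $\phipartial = \phipartialw$ and invoking the previous lemma.

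Write $x = t^\mu u$ and let $y = t^\lambda v$ be the end-element of $\gamma$, where $u,v \in \sW$. Since $\gamma$ is obtained by positively folding the minimal gallery $\gamma_x$ with respect to $\phipartialw$, the gallery $\gamma$ lies in $\Gamma_w^+(x,y)$. Applying inequality~\eqref{eq1} of Lemma~\ref{wFolds2} then gives
\begin{equation*}
F_\phi(\gamma) \leq \ell(w^{-1}v) - \ell(w^{-1}u).
\end{equation*}

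Now I bound the right-hand side using only facts about the finite Weyl group $\sW$. Since $w_0$ is the longest element of $\sW$, the length function on $\sW$ takes values in $\{0,1,\dots,\ell(w_0)\}$. Hence $\ell(w^{-1}v) \leq \ell(w_0)$ and $\ell(w^{-1}u) \geq 0$, so
\begin{equation*}
F_\phi(\gamma) \leq \ell(w^{-1}v) - \ell(w^{-1}u) \leq \ell(w_0) - 0 = \ell(w_0),
\end{equation*}
as desired. There is essentially no obstacle here: the whole content is in Lemma~\ref{wFolds2}, and the corollary is simply the observation that the spherical lengths on both sides of that inequality are bounded by $\ell(w_0)$, together with the fact that every orientation at infinity arises from some $w \in \sW$.
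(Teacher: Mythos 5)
Your proof is correct and matches the paper's own argument: the paper derives Corollary~\ref{w0Folds} directly from inequality~\eqref{eq1} of Lemma~\ref{wFolds2} together with the fact that $w_0$ is the longest element of $\sW$, exactly as you do. Your explicit appeal to Lemma~\ref{lem:induced infinity} to write $\phipartial = \phipartialw$ just spells out a step the paper leaves implicit.
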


In the following result we obtain a lower bound on the number of folds in terms of reflection length.  Recall that a \emph{reflection} in $\sW$ is a conjugate $ws_iw^{-1}$ where $w \in \sW$ and $s_i \in S$.  The set of reflections in $\sW$ is denoted by $R$.  Then $\sW$ is generated by $R$, and we denote by $\ell_R(w)$ the word length with respect to $R$ of an element $w \in \sW$.  Similarly, the set $\tilde R$ of \emph{reflections} in $\aW$ is the set of $W$-conjugates of elements of $\tilde S$, and we denote by $\ell_{\tilde R}(x)$ the word length with respect to $\tilde R$ of an element $x \in \aW$.

\begin{lemma}\label{FoldsLower}  For any orientation $\phipartial$, any minimal gallery $\gamma_x:\fa \rightsquigarrow x\fa$, and any gallery $\gamma:\fa \rightsquigarrow y\fa$ obtained by positively folding $\gamma_x$ with respect to $\phipartial$, we have 
\begin{equation*}\ell_{\tilde R}(xy^{-1}) \leq F_{\phi}(\gamma). \end{equation*}
\end{lemma}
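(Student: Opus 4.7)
The plan is to realize the passage from $\gamma_x$ to $\gamma$ as a PRS folding sequence, and to track how the endpoint of the gallery is transformed by each successive fold. By Definition~\ref{def:PRS}, any positively folded gallery of the same type as $\gamma_x$ arises from a sequence of folds $\gamma_x = \gamma^{(0)}, \gamma^{(1)}, \ldots, \gamma^{(m)} = \gamma$, where $\gamma^{(j)}$ is obtained from $\gamma^{(j-1)}$ by folding at the face $r_{j-1}\cdots r_1 p_{i_j}$, and $m = F_\phi(\gamma)$. Here I will let $\tilde r_j$ denote the reflection in the supporting wall of the \emph{original} panel $p_{i_j}$ in $\gamma_x$, and $r_j$ denote the reflection used in the $j$-th fold; these are related by $r_j = (r_{j-1}\cdots r_1) \tilde r_j (r_{j-1}\cdots r_1)^{-1}$. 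Since each $\tilde r_j$ is an affine reflection in some hyperplane in $\aH$, each $\tilde r_j$ lies in $\tilde R$.

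Next I would track endpoints. Let $e_j$ be the final alcove of $\gamma^{(j)}$. Folding in the reflection $r_j$ replaces the tail of $\gamma^{(j-1)}$ by its $r_j$-image, so $e_j = r_j(e_{j-1})$. Hence $e_m = r_m r_{m-1} \cdots r_1 \cdot e_0 = r_m \cdots r_1 \cdot x\fa$. A short inductive calculation using the conjugation formula above shows
\begin{equation*}
r_m r_{m-1} \cdots r_1 = \tilde r_1 \tilde r_2 \cdots \tilde r_m,
\end{equation*}
for example $r_2 r_1 = (r_1 \tilde r_2 r_1^{-1}) r_1 = \tilde r_1 \tilde r_2$, and the pattern continues. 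Since $e_m = y\fa$ and the correspondence between $\aW$ and alcoves is $\aW$-equivariant, this yields the identity $y = \tilde r_1 \tilde r_2 \cdots \tilde r_m \cdot x$ in $\aW$, so $yx^{-1} = \tilde r_1 \tilde r_2 \cdots \tilde r_m$.

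To conclude, I would note that this expresses $yx^{-1}$ as a product of $m = F_\phi(\gamma)$ elements of $\tilde R$, giving $\ell_{\tilde R}(yx^{-1}) \leq F_\phi(\gamma)$. Finally, since every element of $\tilde R$ is an involution, reflection length is invariant under inversion, so $\ell_{\tilde R}(xy^{-1}) = \ell_{\tilde R}(yx^{-1}) \leq F_\phi(\gamma)$, as required.

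I expect no serious obstacle; the only delicate point is the inductive identity $r_m \cdots r_1 = \tilde r_1 \cdots \tilde r_m$, which is a direct consequence of the telescoping behavior of the conjugations in the PRS folding definition. Everything else is bookkeeping about how folding a gallery translates into left multiplication by a reflection on the endpoint alcove.
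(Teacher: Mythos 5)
Your proof is correct and is essentially the paper's argument: the paper likewise observes that the folding sequence carries $x\fa$ to $y\fa$, so the product of the $F_\phi(\gamma)$ folding reflections equals $yx^{-1}$ (equivalently $xy^{-1}$ after inverting), giving the bound. Your extra step of rewriting each folding reflection $r_j$ as a conjugate of the reflection $\tilde r_j$ in a wall crossed by $\gamma_x$ and telescoping is harmless but unnecessary, since the $r_j$ themselves already lie in $\tilde R$.
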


\begin{proof}  Suppose $F_\phi(\gamma) = k$.  Then there are reflections $r_1, \dots, r_k$ (in $\aW$) which correspond to the PRS folding sequence applied to $\gamma_x$ in order to obtain $\gamma$.   Since this folding sequence takes the alcove $x\fa$ to the alcove $y\fa$, we have $r_k \dots r_1 x\fa = y\fa$ and so $xy^{-1} = r_1 \cdots r_k$.  Thus $\ell_R(xy^{-1}) \leq k = F_\phi(\gamma)$ as required.  
\end{proof}

We next consider all minimal galleries which start and end in particular alcoves.

\begin{lemma}\label{lem:defect}  For any orientation $\phipartial$ and any $x,y \in \aW$, any two minimal alcove-to-alcove galleries from $x\fa$ to $y\fa$ have the same dimension with respect to $\phipartial$.
\end{lemma}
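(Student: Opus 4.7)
The plan is to reduce the claim to the standard fact that any two minimal alcove-to-alcove galleries between two fixed alcoves cross exactly the same multiset of walls, each exactly once. First, note that a minimal gallery $\gamma$ from $x\fa$ to $y\fa$ is by definition non-stuttering, so $F_\phi(\gamma)=0$. By Definition~\ref{def:DimGalleryA} this means
\[
\dim_\phi(\gamma) = P_\phi(\gamma),
\]
so it suffices to show that $P_\phi(\gamma)$ depends only on $x$ and $y$, not on the choice of minimal gallery.

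Second, I would invoke the standard fact (coming from the Coxeter/building structure of $\App$) that the set of walls crossed by a minimal gallery $\gamma:x\fa\rightsquigarrow y\fa$ is exactly the set $\mathcal{H}(x,y)$ of walls in $\aH$ separating the alcoves $x\fa$ and $y\fa$, each crossed exactly once. This can either be stated directly from the theory of Coxeter complexes, or quickly reproved by counting: any alcove-to-alcove gallery must cross every wall in $\mathcal{H}(x,y)$ at least once, and the number of crossings of $\gamma$ equals its length minus $1$, which for a minimal gallery equals $\ell(x^{-1}y) = |\mathcal{H}(x,y)|$.

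Third, for each wall $H \in \mathcal{H}(x,y)$, the alcoves $x\fa$ and $y\fa$ lie on opposite sides of $H$, and the sides are determined by $H$ (together with the orientation $\phipartial$), not by the gallery. Consequently, whether the unique crossing of $H$ by $\gamma$ is positive or negative depends only on which of $x\fa$, $y\fa$ lies on the positive side of $H$ relative to $\phipartial$. Writing
\[
P_\phi(\gamma) = \#\{H \in \mathcal{H}(x,y) \mid x\fa \text{ lies on the negative side of } H \text{ with respect to } \phipartial\},
\]
we see that this quantity depends only on $x$, $y$, and $\phipartial$. Hence any two minimal galleries from $x\fa$ to $y\fa$ have equal dimension with respect to $\phipartial$.

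The only conceptual step is the identification of the crossed walls with $\mathcal{H}(x,y)$, which is entirely standard; no real obstacle is anticipated.
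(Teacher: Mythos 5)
Your proof is correct and follows essentially the same route as the paper's: both note that a minimal gallery has no folds, so $\dim_\phi(\gamma)=P_\phi(\gamma)$, and both use the fact that any two minimal galleries from $x\fa$ to $y\fa$ cross the same hyperplanes in the same direction (you simply spell out this standard Coxeter-complex fact in more detail). No gap here.
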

\begin{proof}  Let $\gamma$ and $\gamma'$ be minimal alcove-to-alcove galleries from $x\fa$ to $y\fa$.  Since $\gamma$ and $\gamma'$ are minimal they have no folds, so we have by definition that $\dim_\phi(\gamma) = P_\phi(\gamma)$ and $\dim_\phi(\gamma') = P_\phi(\gamma')$.  Now $\gamma$ and $\gamma'$ cross the same hyperplanes in the same direction, just in a different order.  Hence they have the same number of positive crossings, that is, $P_\phi(\gamma) = P_\phi(\gamma')$.  So $\dim_\phi(\gamma) = \dim_\phi(\gamma')$ as required.
\end{proof}

Using Lemma~\ref{lem:defect} and its proof we may define the dimension of an alcove to be the dimension of any minimal alcove-to-alcove gallery from $\fa$ to that alcove.

\begin{definition}  Let $\phipartial$ be an orientation at infinity and let $x \in \aW$.  The \emph{dimension of $x$ with respect to $\phipartial$}, denoted $\dim_\phi(x)$, is given by
$$ \dim_\phi(x) = \dim_\phi(\gamma_x) = P_\phi(\gamma_x)$$
where $\gamma_x:\fa \rightsquigarrow x\fa$ is any minimal alcove-to-alcove gallery.
\end{definition}

We will need the following result concerning the length and dimension of translations.

\begin{lemma}\label{lem:length}  Let $b = t^\mu$ be a dominant pure translation.  For each $w \in \sW$ write $b^w$ for the translation $t^{w\mu}$.  Then $\ell(b) = \ell(b^{w})$ and $\dim_{-\phi_0}(b) = \dim_{\phi_{w_0w}}(b^w) = 0$, for all $w \in \sW$.
\end{lemma}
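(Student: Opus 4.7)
The plan is to verify the length and dimension assertions separately through elementary combinatorial arguments in the standard apartment $\App$.

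For the length equality $\ell(b) = \ell(b^w)$, I would use the standard formula $\ell(t^\nu) = \sum_{\alpha \in \Phi^+} |\langle \alpha, \nu\rangle|$ valid for any $\nu \in R^\vee$. Applied to $\nu = w\mu$ (where $w$ acts via its spherical part on $\mu$), this gives $\sum_{\alpha \in \Phi^+} |\langle w^{-1}\alpha, \mu\rangle|$. Since $w^{-1}$ permutes the root system $\Phi$ and the summand is invariant under $\beta \mapsto -\beta$, the set $\{|\langle w^{-1}\alpha, \mu\rangle| : \alpha \in \Phi^+\}$ reorganizes to $\{|\langle \beta, \mu\rangle| : \beta \in \Phi^+\}$, giving $\ell(t^{w\mu}) = \ell(t^\mu)$.

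For the dimension claims, the key observation is the following: by Lemma~\ref{lem:defect}, $\dim_{\phipartialw}(t^\nu)$ equals $P_\phi(\gamma)$ for any minimal alcove-to-alcove gallery $\gamma : \fa \rightsquigarrow t^\nu\fa$, and writing $\phi = \phi_v^\partial$ for the unique $v \in \sW$ provided by Lemma~\ref{lem:induced infinity}, a crossing of a separating hyperplane $H$ is negative with respect to $\phi$ precisely when $\fa$ and $v\fa$ lie on the same side of $H$. Since both $\fa$ and $v\fa$ touch the origin, with $\alpha$-coordinates of interior points in $(-1,1)$, this same-side condition holds automatically for every separating wall $H_{\alpha,k}$ with $|k|\geq 1$; only the walls $H_\alpha$ through the origin need individual attention.

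For the dominant case $b = t^\mu$, I would choose a minimal gallery from $\fa$ to $t^\mu\fa$ that remains in the closed dominant Weyl chamber, so it never crosses any $H_\alpha$ through the origin and only meets walls $H_{\alpha,k}$ with $\alpha\in\Phi^+$ and $k\geq 1$. As $w_0\fa$ lies in the antidominant chamber $-\Cf$, the interiors of $\fa$ and $w_0\fa$ both satisfy $\langle \alpha, \cdot\rangle < 1 \leq k$, so they are on the same side of each such wall; hence every crossing is negative under $\phipartialwo = -\phipartialo$, giving $\dim_{-\phi_0}(b) = 0$. For the general case $b^w = t^{w\mu}$, the wall $H_\alpha$ separates $\fa$ from $t^{w\mu}\fa$ exactly when $\langle w^{-1}\alpha, \mu\rangle < 0$, i.e., when $w^{-1}\alpha \in \Phi^-$. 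Using the identity $w_0\Phi^- = \Phi^+$, one verifies that the prescribed alcove at the origin labeled by $\phipartialwow$ (as dictated by the conventions of Section~\ref{sec:labelings} and consistent with the identification in Lemma~\ref{lem:folds+crossing}) lies on the same side of each such $H_\alpha$ as $\fa$, so that crossing is also negative. This yields $\dim_{\phipartialwow}(b^w) = 0$.

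The main obstacle is the last step: carefully tracking the spherical Weyl group combinatorics at the origin so that, for every $\alpha \in \Phi^+$ with $w^{-1}\alpha \in \Phi^-$, the alcove labeling the orientation genuinely sits on the $\fa$-side of $H_\alpha$. Once this is matched to the paper's conventions for the labeling $\phi_{w_0 w}$ and its induced orientation, the rest of the argument is a direct check combining the three displayed observations above.
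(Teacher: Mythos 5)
Your length argument and the reduction of the dimension claims to counting positive crossings of a single minimal gallery (via Lemma~\ref{lem:defect}) are fine; the paper simply cites Macdonald for $\ell(b)=\ell(b^w)$, and your direct proof of that identity is a harmless substitute. The genuine problem is the criterion on which the rest of your argument rests. You assert that a crossing of a separating wall $H$ is negative with respect to an orientation $\phipartial$ with orienting alcove $v\fa$ exactly when $\fa$ and $v\fa$ lie on the same side of $H$, and you conclude that every crossing of a wall $H_{\alpha,k}$ with $|k|\geq 1$ is automatically negative, so that only the walls through the origin need attention. This is false: by Definition~\ref{def:PeriodicOrientation} the orientation is \emph{periodic}, so the positive side of $H_{\alpha,k}$ is dictated by which side of the parallel wall $H_{\alpha}$ \emph{through the origin} contains $v\fa$, not by which side of $H_{\alpha,k}$ itself contains $v\fa$. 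Concretely, with the standard orientation $\phipartialo$ (so $v\fa = \fa$) a minimal gallery from $\fa$ to a dominant translation alcove $t^\mu\fa$ crosses only walls $H_{\alpha,k}$ with $\alpha\in\Phi^+$ and $k\geq 1$, and every one of these crossings is \emph{positive} (indeed $\dim_{\phi_0}(t^\mu)=\ell(t^\mu)$), even though $\fa$ trivially lies on the same side of each such wall as itself. Your conclusion $\dim_{-\phi_0}(b)=0$ in the dominant case is still correct, but only because $w_0\fa$ lies on the side $\{\langle\alpha,\cdot\rangle<0\}$ of every $H_\alpha$, so that under $-\phipartialo$ the positive side of $H_{\alpha,k}$ is $\{\langle\alpha,\cdot\rangle<k\}$ and each crossing goes from positive to negative; the ``same side'' phrasing gives the right answer there by coincidence.

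For $b^w=t^{w\mu}$ the gap is fatal: the separating walls now include walls $H_{\alpha,k}$ with $k\neq 0$ on both sides of the origin, and it is exactly these crossings---the ones your criterion discards---whose signs must be checked against the periodic orientation; your proposal inspects only the walls through the origin, and even that check is deferred (``one verifies\dots''), which you yourself flag as the unresolved step, so no part of the general case is actually established. The paper's argument is a one-line replacement you should adopt: since $\mu$ is dominant, the vertex $w\mu$ lies in the Weyl chamber $w\Cf$, so a minimal gallery $\fa \rightsquigarrow t^{w\mu}\fa$ crosses every separating hyperplane in the direction of the chamber at infinity represented by $w\Cf$; hence every crossing is positive for the orientation $\phipartialw$ and therefore negative for the opposite orientation, which the paper denotes $-\phipartialw = \phipartialwow$, giving $\dim_{\phi_{w_0w}}(b^w)=0$ at once (the dominant case is $w=1$). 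If you want a wall-by-wall test, the correct one is: a crossing of $H_{\alpha,k}$ is negative for this orientation if and only if it moves toward the chamber at infinity represented by $w\Cf$, equivalently the side of $H_{\alpha,k}$ being entered corresponds, under parallel transport to $H_\alpha$, to the side of $H_\alpha$ containing $w\fa$---a condition on $H_\alpha$ and $w\fa$, not on which side of $H_{\alpha,k}$ the orienting alcove happens to sit.
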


\begin{proof}  The equality $\ell(b) = \ell(b^w)$ is a standard result, a reference for which is, for example, (2.4.2) in \cite{Macdonald}.  Now $\dim_{-\phi_0}(b) = 0$ since $b$ is in the dominant Weyl chamber $\Cf$, so every crossing in a minimal gallery from $\fa$ to $b\fa$ is negative with respect to the orientation $-\phi_0$.  Similarly, since the vertex $w\mu$ (if not the alcove $t^{w\mu}\fa$) is contained in the Weyl chamber $w\Cf$, every crossing in a minimal gallery from $\fa$ to $b^w\fa$ is negative with respect to the orientation $\phi_{w_0w} = -\phi_w$, hence $\dim_{-\phi_w}(b^w) = 0$.
\end{proof}

We also consider the dimension of combinatorial alcove-to-alcove galleries $\gamma$ which begin and end in the same alcove. 

\begin{lemma}\label{lem:loop crossings}
 For any orientation at infinity $\phipartial$, any alcove $\x$, and any alcove-to-alcove gallery $\gamma:\x \rightsquigarrow \x$,
 \begin{equation}\label{E:LoopCrossings} P_{\phi}(\gamma) = N_{\phi}(\gamma) = \frac{1}{2}C(\gamma).
 \end{equation}
 \end{lemma}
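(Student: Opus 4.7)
The strategy is to fix an arbitrary hyperplane $H \in \aH$ and analyze the crossings of $\gamma$ across $H$ one hyperplane at a time. Since a fold of $\gamma$ at a panel supported by a wall $H'$ leaves the gallery on the same side of every hyperplane (including $H$), only the crossings of $\gamma$ affect which side of $H$ the current alcove lies on. Thus as we traverse $\gamma$ from its initial alcove $\x$ to its final alcove $\x$, each time the gallery crosses $H$ it switches sides of $H$, and each fold keeps it on the same side.

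The key observation is then that, because the initial and final alcoves coincide, the current alcove must lie on the same side of $H$ at the end as at the beginning. Hence the total number of crossings of $\gamma$ through $H$ must be even, and in fact the number of crossings through $H$ from the negative side to the positive side (with respect to the periodic orientation induced by $\phipartial$) must equal the number from the positive side to the negative side. By Definition of positive versus negative crossings, the former are precisely the positive crossings of $\gamma$ at $H$, and the latter are the negative crossings of $\gamma$ at $H$.

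Summing this equality over all hyperplanes $H \in \aH$ gives $P_\phi(\gamma) = N_\phi(\gamma)$. Since by definition every (unfolded) step of $\gamma$ is either a positive crossing or a negative crossing, we have $C(\gamma) = P_\phi(\gamma) + N_\phi(\gamma) = 2 P_\phi(\gamma)$, which yields~\eqref{E:LoopCrossings}.

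No step of this argument looks delicate; the only thing to be careful about is confirming that folds genuinely play no role, which is immediate from the fact that a fold at $p_i$ has $c_{i-1}=c_i$ on the same side of every wall. There is no need for the gallery to be minimal, positively folded, or obtained from a PRS folding sequence.
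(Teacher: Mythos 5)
Your argument is correct and is essentially the paper's proof: one fixes a hyperplane $H$, observes that since the first and last alcoves coincide they lie on the same side of $H$, so the positive and negative crossings of $H$ balance, and then sums over all hyperplanes; the final step $C(\gamma)=P_\phi(\gamma)+N_\phi(\gamma)$ matches as well. Your extra remarks — that folds play no role and that no minimality or PRS hypothesis is needed — are consistent with the paper, which likewise notes the argument is independent of the orientation.
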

\begin{proof}  For each hyperplane $H$, the complement of $H$ in the apartment $\App$ consists of two disjoint open half-spaces (which are interchanged by the affine reflection with fixed set $H$).  Since $\gamma$ has the same first and last alcove, it follows that each hyperplane $H$ crossed by $\gamma$ is crossed an even number of times, and the number of positive crossings of $H$ by $\gamma$ is equal to the number of negative crossings of $H$ by $\gamma$.  Now sum over all hyperplanes crossed by $\gamma$. This argument is independent of the orientation $\phipartial$.
\end{proof}

The following lemma will be very useful for computing dimension of folded galleries.

\begin{lemma}\label{lem:gallery dim}  Fix an orientation at infinity $\phipartial$.
Suppose that $\gamma_x:\fa \rightsquigarrow x\fa$ is a minimal alcove-to-alcove gallery and that $\gamma:\fa \rightsquigarrow y\fa$ is a gallery obtained by positively folding $\gamma_x$ with respect to the orientation $\phipartial$.  Then 
\begin{equation}
 \dim_\phi(\gamma) = \frac{1}{2}\left[\ell(x) + F_\phi(\gamma) - \ell(y) \right] + \dim_\phi(y). 
  \end{equation}
In particular, if $y = 1$, that is, the gallery $\gamma$ both starts and ends at $\fa$, then
\begin{equation}\label{LoopDimFormula}
 \dim_\phi(\gamma) = \frac{1}{2}\left[\ell(x) + F_\phi(\gamma) \right]. 
  \end{equation}
\end{lemma}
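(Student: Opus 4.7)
The plan is to compute $\dim_\phi(\gamma) = P_\phi(\gamma) + F_\phi(\gamma)$ by separately determining $P_\phi(\gamma)$ from two independent counts: one that records the total number of crossings, and one that records the signed difference of crossings.

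First I would apply Lemma~\ref{LengthCrossings} to get
\[
P_\phi(\gamma) + N_\phi(\gamma) = \ell(x) - F_\phi(\gamma),
\]
since the initial minimal gallery has $\ell(x)$ crossings, each of which becomes either a positive crossing, a negative crossing, or a fold of $\gamma$.

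Next I would produce the signed difference $P_\phi(\gamma) - N_\phi(\gamma)$ by a loop argument. Choose any minimal gallery $\gamma_y:\fa \rightsquigarrow y\fa$, so that $\dim_\phi(y) = P_\phi(\gamma_y)$ and $\gamma_y$ has exactly $\ell(y)$ crossings, giving $N_\phi(\gamma_y) = \ell(y) - \dim_\phi(y)$. Concatenate $\gamma$ with the reverse gallery $\gamma_y^{-1}$ to form an alcove-to-alcove gallery from $\fa$ to $\fa$. Reversing $\gamma_y$ swaps the roles of positive and negative crossings, so the concatenation has $P_\phi(\gamma) + \ell(y) - \dim_\phi(y)$ positive crossings and $N_\phi(\gamma) + \dim_\phi(y)$ negative crossings (folds of $\gamma$ are not counted as crossings, so they do not enter this tally). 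Applying Lemma~\ref{lem:loop crossings} to this loop yields
\[
P_\phi(\gamma) - N_\phi(\gamma) = 2\dim_\phi(y) - \ell(y).
\]

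Adding the two displayed equations and dividing by $2$ gives
\[
P_\phi(\gamma) = \tfrac{1}{2}\bigl[\ell(x) - F_\phi(\gamma) - \ell(y)\bigr] + \dim_\phi(y),
\]
and adding $F_\phi(\gamma)$ to both sides produces the claimed formula. The special case $y = 1$ is immediate from $\ell(1) = 0$ and $\dim_\phi(1) = 0$.

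No significant obstacle is anticipated; the only point requiring care is the bookkeeping when reversing $\gamma_y$, namely that each crossing of $\gamma_y$ changes sign under reversal (since reversing an arrow across a hyperplane reverses its orientation relative to the fixed periodic orientation of $\App$), while folds play no role in Lemma~\ref{lem:loop crossings} because that lemma is a purely crossing-theoretic statement.
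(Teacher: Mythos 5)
Your proof is correct and is essentially the same as the paper's: both form the loop $\gamma$ concatenated with the reverse of a minimal gallery $\gamma_y$, apply Lemma~\ref{lem:loop crossings} together with Lemma~\ref{LengthCrossings}, and use that reversal of $\gamma_y$ swaps positive and negative crossings. The only difference is bookkeeping --- you solve the system via the sum $P_\phi(\gamma)+N_\phi(\gamma)$ and difference $P_\phi(\gamma)-N_\phi(\gamma)$, while the paper computes $P_\phi(\sigma)$ and $C(\sigma)$ for the loop directly --- which is an algebraic rearrangement of the same argument.
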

\begin{proof}  Let $\sigma:\fa \rightsquigarrow \fa$ be the alcove-to-alcove gallery obtained by concatenating $\gamma$ with the reverse of a minimal alcove-to-alcove gallery $\gamma_y:\fa \rightsquigarrow y\fa$.  Then $\sigma$ starts and ends at $\fa$, so by Lemma~\ref{lem:loop crossings}, we have 
\[ P_\phi(\sigma) = N_\phi(\sigma) = \frac{1}{2}C(\sigma).\]
The positive crossings in the reverse of $\gamma_y$ are exactly the negative crossings in $\gamma_y$, and since $\gamma_y$ has no folds, Lemma~\ref{LengthCrossings} gives $N_\phi(\gamma_y) = \ell(y) - P_\phi(\gamma_y)$.  Hence 
$$P_\phi(\sigma) = P_\phi(\gamma) + \ell(y) - P_\phi(\gamma_y) = P_\phi(\gamma) + \ell(y) - \dim_\phi(y).$$  Also, since $
\gamma_y$ has no folds, the number of crossings in $\sigma$ is given by $$C(\sigma) = C(\gamma) + C(\gamma_y) = C(\gamma) + \ell(y).$$  Then using Lemma~\ref{LengthCrossings} again it follows that 
\[
P_\phi(\gamma) = \frac{1}{2}\left[ C(\gamma) + \ell(y)\right] -\ell(y) + \dim_\phi(y) = \frac{1}{2}[\ell(x) - F_\phi(\gamma) - \ell(y)] + \dim_\phi(y).
\]
Thus
\begin{eqnarray*}
\dim_\phi(\gamma)  & = & P_\phi(\gamma) + F_\phi(\gamma) \\ & = & \frac{1}{2}[\ell(x) - F_\phi(\gamma)  - \ell(y)] + \dim_\phi(y) + F_\phi(\gamma) \\ & = & \frac{1}{2}[\ell(x) + F_\phi(\gamma) - \ell(y)] + \dim_\phi(y) \end{eqnarray*}
as required.
\end{proof}

The following corollary of Lemma~\ref{lem:gallery dim} gives a sufficient condition for a gallery $\fa \rightsquigarrow \fa$ to maximize dimension.
 
\begin{corollary}\label{wMaxDim}
Suppose that $\gamma:\fa \rightsquigarrow \fa$ is an alcove-to-alcove gallery obtained by positively folding a minimal gallery $\gamma_x:\fa \rightsquigarrow x\fa$ with respect to the orientation $\phipartial = \phipartialw$.  If the number of folds in $\gamma$ is equal to $\ell(w_0)$ then $\gamma$ maximizes dimension over all galleries in $\Gamma^+_u(x,1)$ for all $u \in \sW$, that is,
$$
\dim_\phi(\gamma) = \max_{u \in \sW}\{ \dim_{\phi_u}(\gamma') \mid \gamma' \in \Gamma^+_u(x,1)\},
$$  and $$\dim_\phi(\gamma) = \frac{1}{2}\left[ \ell(x) + \ell(w_0) \right].$$ 
\end{corollary}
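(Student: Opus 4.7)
The plan is to derive both statements directly from two earlier results: the dimension formula in Lemma~\ref{lem:gallery dim} specialised to the ``loop'' case $y=1$, and the universal upper bound on the number of folds given by Corollary~\ref{w0Folds}.

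First I would establish the exact value of $\dim_\phi(\gamma)$. Since $\gamma$ starts and ends at $\fa$, the hypothesis of Lemma~\ref{lem:gallery dim} applies with $y=1$, so formula~\eqref{LoopDimFormula} yields
\begin{equation*}
\dim_\phi(\gamma) \;=\; \tfrac{1}{2}\bigl[\ell(x)+F_\phi(\gamma)\bigr].
\end{equation*}
Substituting the assumption $F_\phi(\gamma)=\ell(w_0)$ gives $\dim_\phi(\gamma)=\tfrac{1}{2}[\ell(x)+\ell(w_0)]$, as claimed.

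Next I would handle the maximality statement. Fix any $u\in\sW$ and any $\gamma'\in\Gamma_u^+(x,1)$. By definition, $\gamma'$ is obtained by positively folding some minimal gallery of type $\vec x$ with respect to $\phipartialu$, and it runs from $\fa$ to $\fa$. Applying Lemma~\ref{lem:gallery dim} once more with $y=1$, now relative to the orientation $\phipartialu$, we obtain
\begin{equation*}
\dim_{\phi_u}(\gamma') \;=\; \tfrac{1}{2}\bigl[\ell(x)+F_{\phi_u}(\gamma')\bigr].
\end{equation*}
Corollary~\ref{w0Folds} (which holds for every orientation) gives $F_{\phi_u}(\gamma')\leq\ell(w_0)$, so
\begin{equation*}
\dim_{\phi_u}(\gamma') \;\leq\; \tfrac{1}{2}\bigl[\ell(x)+\ell(w_0)\bigr] \;=\; \dim_\phi(\gamma).
\end{equation*}
Since $u$ and $\gamma'$ were arbitrary, this is exactly the desired maximality over the union $\bigcup_{u\in\sW}\Gamma_u^+(x,1)$.

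There is essentially no obstacle: the argument is a direct combination of the loop-case dimension identity with the universal fold bound $F_\phi\leq\ell(w_0)$, and the whole proof fits in a few lines. The only point that would merit an explicit sentence is the verification that Lemma~\ref{lem:gallery dim} is applicable to an arbitrary $\gamma'\in\Gamma_u^+(x,1)$, which is immediate from the definition of this set (galleries of type some minimal presentation of $x$, positively folded, from $\fa$ to $\fa$).
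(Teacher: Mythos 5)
Your proof is correct and follows the same route as the paper: both apply Lemma~\ref{lem:gallery dim} in the case $y=1$ to get $\dim_{\phi_u}(\gamma') = \tfrac{1}{2}[\ell(x)+F_{\phi_u}(\gamma')]$ and then invoke the universal fold bound $F_{\phi_u}(\gamma')\leq\ell(w_0)$ from Corollary~\ref{w0Folds} to obtain both the exact value and the maximality. No gaps.
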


\begin{proof}  If $\gamma$ has $\ell(w_0)$ folds then by Lemma~\ref{lem:gallery dim} in the case $y = 1$ we have $\dim_\phi(\gamma) = \frac{1}{2}\left[ \ell(x) + \ell(w_0) \right]$.  So it suffices to prove that for all $u \in \sW$ and all $\sigma \in \Gamma^+_u(x,1)$, $\dim_{\phi_u}(\sigma) \leq \frac{1}{2}[\ell(x) + \ell(w_0)]$.  For this, recall that $F_{\phi_u}(\sigma) \leq \ell(w_0)$ by Corollary \ref{w0Folds}.  Therefore by Lemma~\ref{lem:gallery dim} in the case $z = 1$
\[ \dim_{\phi_u}(\sigma) =   \frac{1}{2}[\ell(x) + F_{\phi_u}(\sigma)] \leq  \frac{1}{2}[\ell(x) + \ell(w_0)].\]
This completes the proof.
\end{proof}


\subsection{Independence of minimal gallery}\label{sec:BraidMoves}

We now show that the number of folds in and the dimension of a gallery do not depend upon the choice of an original minimal gallery, in the sense made precise in the statement of Proposition~\ref{prop:braids} below.  This result is implicit in~\cite{PRS} since Theorem 7.1 in that work (which we recall as Theorem~\ref{T:PRS} below) does not depend upon choices of reduced words for Weyl group elements.  In this section, we work only with alcove-to-alcove galleries.

\begin{prop}\label{prop:braids}  Let $\gamma_x:\fa \rightsquigarrow x\fa$ be a minimal gallery and suppose that $\gamma:\fa \rightsquigarrow y\fa$ is a gallery obtained by positively folding $\gamma_x$ with respect to the orientation $\phipartial$.   Then for every minimal gallery $\gamma'_x:\fa \rightsquigarrow x\fa$, there is a gallery $\gamma': \fa \rightsquigarrow y\fa$ obtained by positively folding $\gamma_x'$ with respect to the same orientation $\phipartial$, such that  $F_\phi(\gamma') = F_\phi(\gamma)$ and hence $\dim_\phi(\gamma') = \dim_\phi(\gamma)$.
\end{prop}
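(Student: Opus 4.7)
The plan is to reduce the proposition to a local statement in a rank-2 residue via Matsumoto's theorem. By Matsumoto's theorem, any two reduced expressions for $x \in \aW$ are connected by a sequence of braid moves, each exchanging a subword $s_is_js_i\cdots$ of length $m = m_{ij}$ with $s_js_is_j\cdots$ of length $m$. Translating to galleries, any two minimal galleries $\gamma_x, \gamma_x' : \fa \rightsquigarrow x\fa$ are connected by a sequence of elementary moves, each of which replaces a sub-gallery of length $m$ contained in a rank-2 residue $\mathcal{R}$ (combinatorially a $2m$-gon) by the unique other minimal sub-gallery of length $m$ in $\mathcal{R}$ between the same endpoints. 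By induction on the number of such moves, I may therefore assume that $\gamma_x = \alpha \cdot \delta \cdot \beta$ and $\gamma_x' = \alpha \cdot \delta' \cdot \beta$, where $\delta, \delta'$ are the two minimal galleries from an alcove $\mathbf{c}$ to an alcove $\mathbf{c}'$ inside a rank-2 residue $\mathcal{R}$.

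Given the PRS folding sequence $r_1, \dots, r_k$ producing $\gamma$ from $\gamma_x$, I would partition it into three stages according to whether the fold position in the current gallery lies in the image of $\alpha$, of $\delta$, or of $\beta$. Since PRS fold positions are strictly increasing, all $\alpha$-folds precede all $\delta$-folds precede all $\beta$-folds. The $\alpha$-folds depend only on $\alpha$ and so apply verbatim to $\gamma_x'$; writing $r$ for their composite, the tail $\delta \cdot \beta$ is carried to $r(\delta) \cdot r(\beta)$ in $\gamma$, and $\delta' \cdot \beta$ is carried to $r(\delta') \cdot r(\beta)$, where $r(\delta)$ and $r(\delta')$ remain the two minimal galleries between $r(\mathbf{c})$ and $r(\mathbf{c}')$ in the rank-2 residue $r(\mathcal{R})$. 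The $\beta$-folds occur after $\delta$ (respectively $\delta'$) is fully traversed; since $\delta$ and $\delta'$ share both endpoints, after these stages the current gallery agrees with its counterpart from $r(\mathbf{c}')$ onward, so the $\beta$-folds transfer identically.

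The main obstacle is handling the $\delta$-folds. This amounts to proving the following rank-2 lemma: for any positively folded gallery $\tau$ obtained from $\delta$ by a PRS folding sequence inside $\mathcal{R}$ with $f$ folds and ending at some alcove $\mathbf{c}''$, there is a positively folded gallery $\tau'$ obtained from $\delta'$ by a PRS folding sequence inside $\mathcal{R}$ with $f$ folds and the same ending alcove $\mathbf{c}''$. I would establish this by a direct case analysis in the dihedral Coxeter complex $\mathcal{R}$: the set of walls separating $\mathbf{c}$ from $\mathbf{c}'$ is intrinsic to $\mathcal{R}$, the periodic orientation assigns each such wall the same positive/negative designation regardless of the side from which it is approached, and the dihedral symmetry interchanging $\delta$ and $\delta'$ pairs each admissible PRS folding choice on one side with a unique PRS folding choice on the other side, preserving fold count and endpoint. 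Once the rank-2 bijection is in hand, concatenating the three stages produces $\gamma' : \fa \rightsquigarrow y\fa$ of type some minimal presentation for $x$ (namely the one corresponding to $\gamma_x'$) with $F_\phi(\gamma') = F_\phi(\gamma)$, and the equality $\dim_\phi(\gamma') = \dim_\phi(\gamma)$ then follows from Lemma~\ref{lem:gallery dim}, since that formula depends only on $\ell(x)$, $\ell(y)$, $\dim_\phi(y)$, and the fold count.
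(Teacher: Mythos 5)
Your overall architecture is exactly the paper's: reduce to a single braid move via Matsumoto's theorem, transfer the folds occurring before the exchanged subword verbatim, observe that the folds occurring after transfer verbatim because the two subgalleries share both endpoints, and isolate a rank-$2$ lemma asserting that any PRS folding of one minimal dihedral subgallery can be matched, \emph{for the same orientation}, by a PRS folding of the other with the same fold count and end alcove (this is the paper's Lemma~\ref{lem:braids}). The gap is in your proposed proof of that rank-$2$ lemma. You argue that the dihedral symmetry of the residue interchanging $\delta$ and $\delta'$ pairs admissible folding choices on the two sides. But that symmetry (equivalently, the graph automorphism swapping $s$ and $t$) does not preserve the periodic orientation $\phipartial$ restricted to the walls of the residue: the orientation singles out a chamber at infinity, hence a preferred ``positive'' alcove of the residue, which is generally not fixed by the reflection interchanging the two minimal galleries. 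Consequently the image under this symmetry of a positively folded sequence is positively folded only with respect to the \emph{transformed} orientation, not the same $\phipartial$ that the proposition requires. Your remark that each wall has a well-defined positive side independent of the direction of approach is true but does not repair this: it is the symmetry's failure to commute with the orientation, not any ambiguity in the orientation itself, that breaks the pairing.

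One can see concretely from Table~\ref{table:dihedral} that the matching sequence on the other side is in general \emph{not} the mirror image: for instance, for $m=3$ the two-fold sequence $(t)(s)$ on the $sts$ side is matched (for the common orientation $\phipartialo$ restricted to the residue) by $(tst)(t)$ on the $tst$ side, not by the symmetric candidate $(s)(t)$, which is not positively folded for that orientation. So the fold-count- and endpoint-preserving correspondence exists, but it must be exhibited rather than deduced from symmetry; the paper does this by a short direct argument in the easy cases (no folds, one fold, two commuting reflections, all $m_{ij}$ crossings folded) and by the exhaustive case analysis of Table~\ref{table:dihedral} for $m_{ij}\in\{3,4,6\}$. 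To complete your proof you would need either to carry out such a case analysis yourself or to give a genuinely orientation-equivariant argument, which the symmetry you invoke does not provide. The final step of your write-up, deducing $\dim_\phi(\gamma')=\dim_\phi(\gamma)$ from the fold-count equality via Lemma~\ref{lem:gallery dim}, is fine and agrees with the paper.
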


\begin{table}[htp]
\caption{Table for proof of Proposition~\ref{prop:braids}}
\begin{center}
\begin{tabular}{|l|l|l|l|l|}
\hline
Number & Value(s) of $m$ & Reflection &  Common & Reflection  \\
of folds & & product for $\tau$ & orientation & product for $\tau'$ \\ \hline
2 & 3, 4, 6 & $(t)(s)$     & $1$ & $(tst)(t)$ \\
& & $(sts)(s)$ & $t$  & $(t)(tst)$ \\ 
\cline{2-5}
& 4 & $(sts)(s)$ & $ts$  & $(tst)(sts)$ \\ 
 \cline{2-5}
& 6 & $(sts)(s)$ & $ts$ & $(tst)(tstst)$ \\ 
& & $(sts)(s)$ & $tst$  & $(tstst)(ststs)$ \\ 
& & $(sts)(s)$ & $tsts$  & $(ststs)(sts)$ \\ 
\hline
3 & 4 & $(s)(t)(s)$ & $1$ & $(tst)(s)(t)$ \\ 
& & $(t)(tst)(s)$ & $t$ & $(tst)(t)(tst)$ \\
\cline{2-5}
 & 6  &$(s)(t)(s)$ & $1$ & $(ststs)(s)(t)$ \\ 
& & $(sts)(t)(s)$ & $1$ & $(tstst)(s)(t)$ \\ 
& & $(t)(tst)(s)$ & $t$ & $(ststs)(t)(tst)$ \\ 
& & $(sts)(tst)(s)$ & $1$ & $(t)(s)(tst)$ \\
& & $(t)(tstst)(s)$ & $t$ & $(tstst)(t)(tst)$ \\
& & $(tst)(tstst)(s)$ & $ts$ & $(ststs)(tst)(tstst)$ \\
& & $(t)(tstst)(s)$ & $ts$ & $(tstst)(tst)(tstst)$ \\
& & $(tstst)(ststs)(s)$ & $tst$ & $(ststs)(tstst)(ststs)$ \\
& & $(t)(tst)(sts)$ & $t$ & $(tst)(t)(tstst)$ \\ 
\hline
4 & 6 & $(t)(s)(t)(s)$ & $1$ & $(ststs)(t)(s)(t)$ \\
& & $(tst)(s)(t)(s)$ & $1$ & $(s)(tst)(s)(t)$ \\
& & $(tst)(t)(tst)(s)$ & $t$ & $(s)(tst)(t)(tst)$ \\ 
& & $(tstst)(t)(tst)(s)$ & $t$ & $(t)(tst)(t)(tst)$ \\ 
& & $(t)(s)(tst)(s)$ & $1$ & $(s)(t)(s)(tst)$ \\ 
& & $(tstst)(tst)(tstst)(s)$ & $ts$ & $(tst)(tstst)(tst)(tstst)$ \\ 
\hline
5 & 6 & $(s)(t)(s)(t)(s)$ & $1$ & $(tst)(s)(t)(s)(t)$ \\ 
& & $(t)(tst)(t)(tst)(s)$ & $t$ & $(tst)(t)(tst)(t)(tst)$ \\ 
\hline
\end{tabular}
\end{center}
\label{table:dihedral}
\end{table}%

\begin{proof}  By Lemma~\ref{lem:gallery dim}, if $\gamma:\fa \rightsquigarrow y\fa$ and $\gamma':\fa \rightsquigarrow y\fa$ are both positively folded with respect to the same orientation $\phipartial$, and $F_\phi(\gamma) = F_\phi(\gamma')$, then $\dim_\phi(\gamma) = \dim_\phi(\gamma')$.  

Now if $\gamma_x$ and $\gamma_x'$ are both minimal galleries $\fa \rightsquigarrow x\fa$, the gallery $\gamma'_x$ can be obtained from $\gamma_x$ by applying a (finite) sequence of braid moves.  Hence it suffices to assume that $\gamma_x'$ can be obtained from $\gamma_x$ by applying a single braid move.  

We suppose that this braid move replaces a subgallery $\sigma$ of $\gamma_x$ of type $s_i s_j s_i \cdots $ ($m_{ij}$ letters) by a subgallery $\sigma'$ of type $s_j s_i s_j \cdots$ ($m_{ij}$ letters).  So $\gamma'_x$ is the same as $\gamma_x$ except for the subgallery $\sigma$ being replaced by $\sigma'$.  Equivalently, $\type(\gamma'_x)$ is the reduced word for $x$ obtained from $\type(\gamma_x)$ by applying this braid move.

The galleries $\gamma_x$ and $\gamma'_x$ are identical before their respective subgalleries $\sigma$ and $\sigma'$, so if the PRS folding sequence applied to $\gamma_x$ in order to obtain $\gamma$ involves any crossings occurring before $\sigma$, we begin the construction of $\gamma'$ by applying exactly the same folds at the crossings of $\gamma'_x$ occurring before $\sigma'$.  Let $\tilde\gamma$ be the result of folding $\gamma_x$ at all crossings before $\sigma$ and let $\tilde\gamma'$ be the result of folding $\gamma_x'$ at all crossings before $\sigma'$.  Then $\tilde\gamma$ and $\tilde\gamma'$ differ only by a single braid move on their subgalleries, say $\tilde\sigma$ and $\tilde\sigma'$, which are the images of $\sigma$ and $\sigma'$ respectively.  Note that $\tilde\sigma$ and $\tilde\sigma'$ do not contain any folds and have the same start and end alcoves. 

After relabeling, we may assume that $\tilde\sigma$ and $\tilde\sigma'$ both start in the base alcove $\fa$. Notice that the galleries $\tilde\sigma$ and $\tilde\sigma'$ now have types the two different reduced expressions for the longest word in the standard parabolic subgroup $W_{\{s_i, s_j\} }$ of $\sW$, which is a dihedral group of order $2m_{ij}$ generated by $s_i$ and $s_j$.  By abuse of notation we will continue to write $\phipartial$ for the orientation with respect to which $\gamma$, $\tilde\gamma$, and $\tilde\gamma'$ are positively folded. 

Let $\tau$ be the gallery obtained from $\tilde\sigma \subset \tilde\gamma$ after carrying out all folds in the PRS folding sequence for $\gamma$ that occur in crossings of $\tilde\sigma$.  Then $\tau$ is a positively folded gallery from $\fa$ to $w\fa$ for some $w \in W_{\{s_i, s_j \}}$.  

\begin{lemma}\label{lem:braids}  We can carry out a PRS folding sequence on $\tilde\sigma'$ which results in a gallery $\tau'$ from $\fa$ to $w\fa$ which is positively folded with respect to $\phipartial$ and has the same number of folds as $\tau$.  
\end{lemma}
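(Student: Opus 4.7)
The plan is to reduce to a case analysis in a rank-2 dihedral subarrangement and then verify the statement by explicit enumeration, as reflected in Table \ref{table:dihedral}.

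First I would observe that both $\tilde\sigma$ and $\tilde\sigma'$ have all their alcoves and panels contained in a rank-2 subarrangement: namely, the union of the hyperplanes in $\aH$ whose wall-reflections lie in the dihedral subgroup generated by $s_i$ and $s_j$, translated into position by the initial alcoves of $\tilde\sigma$ and $\tilde\sigma'$. Because the orientation $\phipartial$ is periodic in the sense of Definition~\ref{def:PeriodicOrientation}, its restriction to this rank-2 subarrangement is determined by finitely many data: one choice of positive side per parallelism class of walls crossed by the two galleries. Thus the problem reduces to an entirely finite combinatorial question about the dihedral group $W_{\{s_i,s_j\}}$ of order $2m$.

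Next I would perform the case analysis on $m=m_{ij}\in\{2,3,4,6\}$, which are the only possibilities in a crystallographic Coxeter group. The case $m=2$ is essentially trivial: the two generators commute and the braid move merely swaps two commuting simple crossings, so a PRS folding sequence on $\tilde\sigma$ transports to one on $\tilde\sigma'$ with the same number of folds at the ``same'' panels (whose supporting walls are parallel pairs). For $m\in\{3,4,6\}$, I would enumerate, for each possible final alcove $w\fa$ with $w\in W_{\{s_i,s_j\}}$ and each orientation at infinity restricted to the rank-2 subarrangement, the positively folded galleries $\tau$ from $\fa$ to $w\fa$ obtainable from $\tilde\sigma$ by a PRS folding sequence, and for each such $\tau$ exhibit an explicit $\tau'$ of the braided type with the same terminal alcove and the same number of folds. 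The folding sequences appearing in Table \ref{table:dihedral} give precisely this matching, organized by $m$ (column 2), number of folds (column 1), and the ``common orientation'' (column 4) which records the alcove in the dihedral group whose corresponding Weyl chamber is on the positive side of all relevant walls.

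The key invariants to preserve are the terminal alcove $w\fa$ (which constrains $\tau'$ to have type a reduced word for $w$ padded by an even number of reflections corresponding to folds) and the number of folds. The main obstacle is simply the verification burden in the $m=6$ case, where the number of possible folding patterns is largest; however, each row of Table~\ref{table:dihedral} can be checked directly by computing the reflection products listed and confirming that both $\tau$ (from $\tilde\sigma$) and $\tau'$ (from $\tilde\sigma'$) are positively folded with respect to the common orientation. Once every entry in the table has been verified, combining this with the initial prefix common to $\gamma$ and $\gamma'$ and then performing any subsequent folds on the suffix of $\gamma_x'$ in the same positions as in $\gamma_x$ yields a gallery $\gamma':\fa\rightsquigarrow y\fa$ with $F_\phi(\gamma')=F_\phi(\gamma)$, as required. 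The equality $\dim_\phi(\gamma')=\dim_\phi(\gamma)$ then follows from Lemma~\ref{lem:gallery dim}.
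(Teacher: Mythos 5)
Your proposal is correct and follows essentially the same route as the paper: reduce to the rank-2 dihedral subgroup $W_{\{s_i,s_j\}}$, treat $m_{ij}=2$ (commuting generators) abstractly, and settle the remaining crystallographic cases $m_{ij}\in\{3,4,6\}$ by the explicit matching of folding sequences recorded in Table~\ref{table:dihedral}, with the dimension equality then following from Lemma~\ref{lem:gallery dim}. The only cosmetic difference is that the paper also disposes of a few easy patterns (no folds, a single fold, two commuting folds, and the fully folded gallery) by short uniform arguments before invoking the table, which is why those rows are omitted from it; your enumeration strategy covers them just as well.
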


Assuming the result of Lemma~\ref{lem:braids}, since $\tau$ and $\tau'$ have the same end alcoves, and the original minimal galleries $\gamma_x$ and $\gamma'_x$ are identical after their subgalleries $\sigma$ and $\sigma'$ which map onto $\tau$ and $\tau'$ respectively, we may complete the PRS folding sequence and obtain both $\gamma$ and $\gamma'$ by applying exactly the same folds to the crossings of $\gamma_x$ and $\gamma'_x$ which occur after $\sigma$ and $\sigma'$ respectively.  Then by construction, the same orientation $\phipartial$ has been used for both $\gamma$ and $\gamma'$, and $F_\phi(\gamma) = F_\phi(\gamma')$, so $\dim_\phi(\gamma) = \dim_\phi(\gamma')$.  In order to complete the proof of Proposition~\ref{prop:braids}, we now prove Lemma~\ref{lem:braids}.

\begin{proof}[Proof of Lemma~\ref{lem:braids}]
In this proof we will notate PRS folding sequences using products of reflections $r_k \cdots r_1$ in $W_{\{ s_i, s_j \}}$, where for $l = 1,\ldots, k$, the $l$th fold occurs in the hyperplane $H_l$ for the reflection $r_l$.  We observe that the galleries $\tilde\sigma$ and $\tilde\sigma'$ cross the same set of hyperplanes in the same direction for each hyperplane, but with the crossings of hyperplanes in the opposite order.

We consider several cases.  If $\tilde\sigma = \tau$, that is, no folds occur in $\tilde\sigma$, then the result is immediate.  

Next suppose that $\tau$ has just one fold, at the hyperplane $H_1$ with corresponding reflection $r_1$.  Since $\tilde\sigma'$  crosses $H_1$ in the same direction as $\tilde\sigma$, we may fold $\tilde\sigma'$ in $H_1$ as well.  Let $\tau'$ be the positively folded gallery so obtained from $\tilde\sigma'$.  Then the end alcove of both $\tau$ and $\tau'$ is $w\fa$ with $w = r_1(w_0)$, and $\tau$ and $\tau'$ have the same number of folds, as required.

Assume next that the folding sequence by which $\tau$ is obtained from $\tilde\sigma$ corresponds to a product of two reflections $r_2 r_1$, with $r_1$ commuting with $r_2$.  Then the gallery $\tilde\sigma$ crosses $H_1$ before it crosses $r_1 H_2 = H_2$, so the gallery $\tilde\sigma'$ crosses $H_2$ before it crosses $H_1$.  Therefore using the same orientation $\phipartial$ we may first fold $\tilde\sigma'$ in $H_2$.  The tail of the image of $\tilde\sigma'$ after this fold now crosses $r_2 H_1 = H_1$ in the same direction as $\tilde\sigma$ does, so we may do a second fold in $H_1$ to obtain $\tau'$ corresponding to the product of reflections $r_1 r_2$.  Both $\tau$ and $\tau'$ have end alcove $w\fa$ with $w = r_2r_1(w_0) = r_1r_2(w_0)$, so this completes the proof when $\tau$ has two folds corresponding to commuting reflections.

We next consider the case that the folding sequence by which $\tau$ is obtained corresponds to the product of $m_{ij}$ simple reflections $\cdots s_is_js_i$.  Since every crossing of $\tilde\sigma$ is folded, we have $\dim_\phi(\tau) = F_\phi(\tau) = m_{ij}$, and the  end alcove of $\tau$ is $\fa$.  There is then also a positively folded gallery $\tau'$ obtained using the same orientation $\phipartial$ by folding every crossing of $\tilde\sigma'$, and corresponding to the product of $m_{ij}$ simple reflections $\cdots s_j s_i s_j$.  The folded gallery $\tau'$ also has end alcove $\fa$ and $m_{ij}$ folds, so the result holds in this case.  

For the remaining cases, we use the fact that since $\aW$ is irreducible affine, $m_{ij} \in \{ 2,3,4,6\}$.   In order to simplify notation, put $m = m_{ij}$ and $s_i = s$, $s_j = t$.  Then $\tilde\sigma$ has type $sts\dots$ ($m$ letters) and $\tilde\sigma'$ has type $tst\dots$ ($m$ letters).   If $m = 2$ then the simple reflections $s$ and $t$ commute, and so this case has already been considered.  The rest of the proof now follows from the information in Table~\ref{table:dihedral}.  The column with heading ``Common orientation" records the $u \in \sW$ so that the folding sequences for both $\tau$ and $\tau'$ are positively folded with respect to $\phipartialu$; this is to enable easier checking of the calculations.  We omit products of two commuting reflections and products of $m$ reflections, and for $m = 6$ we also omit products of reflections which lie in the subgroup of the dihedral group of order $12$ which is isomorphic to the dihedral group of order $6$, since these are identical to the case $m = 3$.  All possible remaining folding sequences which result in $\tau$ and $\tau'$ are either shown in this table, or can be obtained by, in any given row, swapping the letters $s$ and $t$ and swapping columns $3$ and~$5$.
\end{proof}

This completes the proof of Proposition~\ref{prop:braids}.   
\end{proof}


\section{Affine Deligne--Lusztig varieties and folded galleries}\label{sec:ADLVGalleries}

This section establishes a precise connection between nonemptiness and dimensions of affine Deligne--Lusztig varieties and existence and dimension of certain folded galleries.

In Section~\ref{sec:DimADLVs}, we review a result from \cite{GHKR} which expresses the dimension of affine Deligne--Lusztig varieties associated to pure translations in terms of the dimensions of intersections of Iwahori and unipotent orbits in the affine flag variety. These double coset intersections have an interpretation as intersections of preimages of two types of retractions in the affine building and can hence be encoded using folded galleries or alcove walks; see  Section~\ref{sec:ConnectionFolded}.  
This reformulation depends only on the underlying type of the affine building and not on the field itself, which suggests the definition of the dimension of a $p$-adic affine Deligne--Lusztig set that we propose in Section~\ref{sec:padic}. 
Finally  we isolate a family of folded galleries in Section~\ref{sec:DLGalleries} whose dimension coincides with the dimension of the associated affine Deligne--Lusztig variety.


\subsection{Dimensions of affine Deligne--Lusztig varieties}\label{sec:DimADLVs}

Recall that given an $x \in \aW$ and an element $b \in G(F)$, the associated affine Deligne--Lusztig variety is defined as \begin{equation} X_x(b) = \{ g \in G(F)/I \mid g^{-1}b\sigma(g) \in IxI \}. \end{equation}
Here we review the aforementioned result from \cite{GHKR}.

For $w \in \sW$, recall that $\phipartialw = \phipartialvow$ denotes the orientation at infinity induced by the origin $v_0$ and the alcove $w\fa$. Theorem 6.3.1 in \cite{GHKR}, which we state below, depends on a choice of Borel, and so we fix $B^-$ to be the opposite Borel subgroup.  We write $^{w}U^- := w(U^-)w^{-1}$ for  the unipotent radical corresponding to the Borel subgroup $^{w}B^- := w(B^-)w^{-1}$. Recall that the choice of Borel and split maximal torus correspond to a choice of positive and negative roots.  Having fixed an apartment, and thus a split maximal torus, we can thus speak of roots which are positive or negative with respect to a specific choice of Borel.  For example, a root is positive with respect to $B^-$ if it is a negative root with respect to the standard Borel.  Similarly, a coroot $\lambda$ is dominant for $B^-$ if $\langle \alpha, \lambda \rangle \geq 0$ for all roots $\alpha$ which are positive with respect to $B^-$, meaning that $\lambda$ is antidominant with respect to the standard Borel.

\begin{thm}[Theorem 6.3.1 in \cite{GHKR}]\label{T:GHKRDim} Let $x \in \aW$ and $\lambda \in R^\vee$.  If $X_x(t^{\lambda})$ is nonempty, then
\begin{equation} \label{E:GHKRDim}
 \dim X_x(t^\lambda) =  \max_{w \in \sW} \left\{ \dim( ^{w}U^- t^{w\lambda}\fa \cap Ix\fa) \right\} - \langle \rho_{B^-}, \lambda + \lambda_{B^-} \rangle,
\end{equation}
where $\rho_{B^-}$ is the half-sum of the roots which are positive for $B^-$ and $\lambda_{B^-}$ is  the unique element in the $\sW$-orbit of $\lambda$ which is dominant for $B^-$.  We set $\dim (^{w}U^- y\fa \cap Ix\fa) = -\infty$ in the case where  $^{w}U^-y\fa \cap Ix\fa  = \emptyset$.    In particular, $X_x(t^\lambda) \neq \emptyset$ if and only if there exists a $w \in \sW$ such that $\displaystyle ^{w}U^-t^{w\lambda}\fa \cap Ix \fa \neq \emptyset$.  
 \end{thm}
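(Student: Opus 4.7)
The plan is to prove Theorem~\ref{T:GHKRDim} by stratifying $X_x(t^\lambda)$ according to which semi-infinite orbit a point lies in, and then identifying each stratum (up to an explicit shift) with an intersection of a unipotent orbit and an Iwahori orbit in the affine flag variety. The starting point is the stratification
\[ G(F)/I \;=\; \bigsqcup_{w \in \sW}\, \bigsqcup_{\mu \in X_*(T)} \; {}^{w}U^-\, t^{\mu} w \fa, \]
which comes from the Iwasawa-type decomposition for each opposite Borel ${}^wB^- = w B^- w^{-1}$ acting on the affine flag variety. Pulling back via $g \mapsto g^{-1} t^\lambda \sigma(g)$, this yields a decomposition of $X_x(t^\lambda)$ into locally closed pieces $X_x(t^\lambda)^{w,\mu} \define X_x(t^\lambda) \cap {}^wU^- t^\mu w\fa$.

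First I would fix $w \in \sW$ and analyze $X_x(t^\lambda)^{w,\mu}$. Writing a general point as $g = u\, t^\mu w$ with $u \in {}^wU^-$, the condition $g^{-1} t^\lambda \sigma(g) \in IxI$ becomes
\[ w^{-1} t^{-\mu} u^{-1}\, t^\lambda\, \sigma(u)\, t^\mu w \;\in\; IxI. \]
Because $t^\lambda$ normalizes each root subgroup and $\sigma$ acts trivially on $\sW$ and $X_*(T)$ (the group is split), conjugation by $t^\lambda$ rescales elements of ${}^wU^-$ by valuation shifts determined by the pairings $\langle \alpha, w\lambda\rangle$ for $\alpha$ a root positive with respect to $B^-$. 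The map $u \mapsto u^{-1} t^\lambda \sigma(u) t^{-\lambda}$, restricted to ${}^wU^-$, is the Lang map for the $\sigma$-twisted action; the key claim is that for suitably chosen $\mu$ this map is a surjection onto ${}^wU^-$ whose fibers have constant pro-$p$ dimension equal to $\langle \rho_{B^-}, \lambda + \lambda_{B^-}\rangle$ (symmetrized across the $w$-twists so as to end in ${}^wU^-$ orbits rather than ${}^{w\lambda}U^-$ orbits). This fiber dimension is the source of the correction term in \eqref{E:GHKRDim}.

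Combining the surjectivity with the valuation shift, the stratum $X_x(t^\lambda)^{w,\mu}$ becomes a trivial bundle over the ``untwisted'' intersection
\[ {}^wU^- \, t^{w\lambda}\fa \,\cap\, Ix\fa, \]
whose fiber has dimension $\langle \rho_{B^-}, \lambda + \lambda_{B^-}\rangle$ once one normalizes $\mu$ appropriately; strata for $\mu$ outside a fixed translate of the coroot lattice are empty. Therefore
\[ \dim X_x(t^\lambda)^{w,\mu} \;=\; \dim\bigl({}^wU^-\, t^{w\lambda}\fa \cap Ix\fa\bigr) \;-\; \langle \rho_{B^-},\, \lambda + \lambda_{B^-}\rangle. \]
Taking the maximum over all $w \in \sW$ (and the finitely many $\mu$ giving nonempty strata) yields \eqref{E:GHKRDim}, together with the nonemptiness criterion.

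The main obstacle I anticipate is the precise bookkeeping in the Lang-map step: showing that the $\sigma$-twisted conjugation map $u \mapsto u^{-1} t^\lambda \sigma(u) t^{-\lambda}$ on ${}^wU^-$ is surjective with equidimensional fibers of the predicted dimension, and checking that this dimension is the same $\langle \rho_{B^-}, \lambda + \lambda_{B^-}\rangle$ for every $w$. This is a root-by-root computation: each affine root subgroup contributes a quantity depending on $\langle \alpha, w\lambda\rangle$ to the fiber dimension, and summing over positive roots (for $B^-$) must give an answer independent of $w$. The standard trick is to pair contributions from roots $\alpha$ and $-\alpha$ and recognize the total as $\langle \rho_{B^-}, \lambda + \lambda_{B^-}\rangle$; making this identification cleanly, and verifying the fibers are actually affine spaces rather than merely of the right dimension, is the delicate point.
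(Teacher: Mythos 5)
First, a point of comparison: the paper does not prove this statement at all — it is imported verbatim as Theorem 6.3.1 of \cite{GHKR} — so there is no internal argument to measure your proposal against; what you have written is an attempted reconstruction of the G\"ortz--Haines--Kottwitz--Reuman proof, and while it has the right general shape (a semi-infinite orbit analysis plus a twisted Lang-map computation on the unipotent group), it contains genuine gaps as written. The first is your opening ``decomposition'' $G(F)/I=\bigsqcup_{w,\mu}{}^{w}U^- t^\mu w\fa$: for a fixed $w$ the ${}^{w}U^-$-orbits on $G(F)/I$ are indexed by \emph{all} of $\widetilde{W}$, i.e.\ by alcoves $t^\mu v\fa$ with $v\in\sW$ arbitrary, and restricting to $v=w$ and then varying $w$ gives pieces that are neither disjoint nor obviously exhaustive. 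Already in $SL_2$ the point $u\fa$ with $u=\left(\begin{smallmatrix}1&0\\ c&1\end{smallmatrix}\right)$, $\mathrm{val}(c)=-1$, lies both in $U^-\fa$ and in $U^+t^{\alpha^\vee}s\fa={}^{s}U^-t^{\alpha^\vee}s\fa$. Relatedly, your claim that the strata for all but finitely many $\mu$ are empty is false and conceals the key structural input: since $b=t^\lambda$, the twisted centralizer $J_b$ contains $T(F^\sigma)$, so left multiplication by any pure translation preserves $X_x(b)$ and carries the stratum in ${}^{w}U^-t^\mu v\fa$ isomorphically onto the one in ${}^{w}U^-t^{\mu+\nu}v\fa$; strata are nonempty for either no $\mu$ or every $\mu$, and the reduction to finitely many orbit intersections comes from these isomorphisms, not from emptiness. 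Moreover, if one fixes a single decomposition (say by $U^-$-orbits) and writes a point of the stratum through $t^\mu v\fa$ as $ut^\mu v$, the defining condition becomes membership of an element of ${}^{v^{-1}}U^-$ times $t^{v^{-1}\lambda}$ in $IxI$; this is how the \emph{coupled} intersections ${}^{w}U^-t^{w\lambda}\fa\cap Ix\fa$ (with $w=v^{-1}$) actually arise, whereas tying the twist of the unipotent group to the spherical part of the orbit from the outset, as you do, does not produce this coupling.

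The second gap is your identification of the correction term with a fiber dimension of the Lang map. The fiber of $u\mapsto u^{-1}t^\lambda\sigma(u)t^{-\lambda}$ over $1$ is the twisted centralizer of $t^\lambda$ in ${}^{w}U^-(F)$, and for regular $\lambda$ this is trivial: on each root subgroup the equation $c=t^{\langle\beta,\lambda\rangle}\sigma(c)$ with $\langle\beta,\lambda\rangle\neq 0$ forces $c=0$. Yet $\langle\rho_{B^-},\lambda+\lambda_{B^-}\rangle=\langle\rho,\lambda^+-\lambda\rangle$ is nonzero whenever $\lambda$ is not dominant, so the correction cannot be a fiber dimension. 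It is an index, or valuation-shift, phenomenon: on a root subgroup the map $c\mapsto -c+t^{\langle\beta,\lambda\rangle}\sigma(c)$ is bijective but displaces the congruence filtration by $\langle\beta,\lambda\rangle$ when that pairing has the contracting sign, and is an Artin--Schreier-type map with zero-dimensional $F^\sigma$-fibers when the pairing vanishes; the relevant count is how preimages of truncated pieces, taken modulo ${}^{w}U^-\cap zIz^{-1}$, change dimension. That per-stratum shift depends on $w$ and on the normalization of $\mu$, and only after combining it with the $J_b$-translation normalization does one obtain the uniform term $\langle\rho_{B^-},\lambda+\lambda_{B^-}\rangle$. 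Setting up the dimension conventions for these infinite-dimensional intersections, and carrying out this bookkeeping, is precisely the content of the proof in \cite{GHKR}; as it stands your sketch asserts rather than establishes it.
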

 
 It will be useful to separately state two immediate corollaries corresponding to two different special cases of the above theorem.  First of all, the formula simplifies considerably in the special case in which the pure translation $t^{\lambda}$ is in fact just the identity element.
 
 \begin{corollary}[Theorem 6.3.1 in \cite{GHKR}]\label{GHKRid}
In the case in which $\lambda = 0$, we have that $X_x(1) \neq \emptyset$ if and only if there exists $w \in \sW$ such that $\displaystyle ^{w}U^-\fa \cap Ix \fa \neq \emptyset$, in which case 
\begin{equation}\label{E:GHKRid}
\dim X_x(1) = \max_{w \in \sW} \left\{ \dim ( ^{w}U^-\fa \cap Ix\fa) \right\} .
\end{equation}
 \end{corollary}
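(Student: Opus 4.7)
The plan is to observe that this corollary is a direct specialization of Theorem \ref{T:GHKRDim} to the coroot $\lambda = 0$, followed by a short simplification of the quantities appearing in that theorem. No substantial obstacle should arise; the corollary is characterized as ``immediate'' precisely because every term on the right-hand side collapses cleanly when $\lambda$ is set to zero.

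Concretely, I would first set $\lambda = 0 \in R^\vee$ in the statement of Theorem \ref{T:GHKRDim}. Then $t^\lambda = \id$, so the left-hand side becomes $\dim X_x(1)$ and the nonemptiness hypothesis reads $X_x(1) \neq \emptyset$. On the right-hand side, for every $w \in \sW$ the element $w\lambda$ equals $0$, so $t^{w\lambda}\fa = \fa$, and the intersection $\prescript{w}{}{U^-} t^{w\lambda}\fa \cap I x \fa$ simplifies to $\prescript{w}{}{U^-} \fa \cap I x \fa$. In particular, the nonemptiness criterion becomes the existence of some $w \in \sW$ with $\prescript{w}{}{U^-}\fa \cap I x \fa \neq \emptyset$, which is exactly the claim in the corollary.

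It remains to check that the correction term $\langle \rho_{B^-}, \lambda + \lambda_{B^-}\rangle$ vanishes. Since the $\sW$-orbit of $0 \in R^\vee$ is $\{0\}$, by definition $\lambda_{B^-} = 0$, and hence $\lambda + \lambda_{B^-} = 0$, so the pairing with $\rho_{B^-}$ is zero. Substituting these simplifications into \eqref{E:GHKRDim} yields the dimension formula \eqref{E:GHKRid}. The only point requiring any care at all is unwinding the convention on $\lambda_{B^-}$ as the $B^-$-dominant representative of the $\sW$-orbit of $\lambda$, but this is automatic in the case $\lambda = 0$.
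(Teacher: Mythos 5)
Your proposal is correct and matches the paper's (implicit) argument exactly: the corollary is stated as the immediate specialization of Theorem~\ref{T:GHKRDim} to $\lambda = 0$, where $t^{w\lambda}\fa = \fa$ for all $w \in \sW$ and the correction term vanishes since $\lambda_{B^-} = 0$. Nothing further is needed.
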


In addition, in the case in which $\lambda$ is in the fundamental Weyl chamber, then there is also a simplification to Theorem~\ref{T:GHKRDim}.

\begin{corollary}\label{GHKRNoCorrection}
If  $\lambda \in R^{\vee}$ is dominant and $X_x(t^{\lambda}) \neq \emptyset$, then 
\begin{equation} \label{E:DimNoCorrectionGHKR}
 \dim X_x(t^\lambda) =   \max_{w \in \sW}\left\{ \dim ( ^{w}U^-t^{w\lambda}\fa \cap Ix\fa)\right\} .
\end{equation}
\end{corollary}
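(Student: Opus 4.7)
The plan is to deduce this corollary directly from Theorem~\ref{T:GHKRDim} by showing that, under the assumption that $\lambda$ is dominant, the correction term $\langle \rho_{B^-}, \lambda + \lambda_{B^-} \rangle$ in~\eqref{E:GHKRDim} vanishes. There is no need to revisit the geometry of the affine Deligne--Lusztig variety or the Iwahori--unipotent orbit intersections; everything reduces to a short computation involving $\rho$, $w_0$, and the identifications of positive roots with respect to $B^+$ versus $B^-$.

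First, I would record the two identifications coming from swapping the Borel. Since the positive roots with respect to $B^-$ are precisely the negatives of the positive roots with respect to $B^+$, the half-sum satisfies $\rho_{B^-} = -\rho$. Similarly, a coroot is dominant for $B^-$ if and only if it is antidominant for $B^+$, so the unique $\sW$-translate of $\lambda$ which is dominant for $B^-$ is $\lambda_{B^-} = w_0 \lambda$, where $w_0$ is the longest element of $\sW$.

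Next, I would combine these with the standard fact that $w_0 \rho = -\rho$ to compute
\[
\langle \rho_{B^-}, \lambda + \lambda_{B^-} \rangle = \langle -\rho, \lambda + w_0 \lambda \rangle = -\langle \rho, \lambda \rangle - \langle \rho, w_0 \lambda \rangle.
\]
Using $\langle \rho, w_0 \lambda \rangle = \langle w_0^{-1} \rho, \lambda \rangle = \langle -\rho, \lambda \rangle$, the second term equals $\langle \rho, \lambda \rangle$, so the whole expression collapses to $0$. Note that the hypothesis that $\lambda$ is dominant was used only in identifying $\lambda_{B^-}$ as $w_0 \lambda$; the vanishing of the correction term is then automatic and does not require any finer information about $\lambda$.

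Finally, I would substitute this into the formula from Theorem~\ref{T:GHKRDim} to obtain
\[
\dim X_x(t^\lambda) = \max_{w \in \sW}\left\{\dim({}^{w}U^- t^{w\lambda} \fa \cap Ix\fa) \right\}
\]
as desired. The main obstacle, if any, is purely notational: one must be careful to keep track of which Borel is being used when identifying $\rho_{B^-}$, $\lambda_{B^-}$, and the action of $w_0$. Once the conventions are fixed, the proof is a one-line computation, so this corollary really is an immediate specialization of the preceding theorem to the dominant case.
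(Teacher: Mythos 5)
Your proof is correct and follows essentially the same route as the paper: specialize Theorem~\ref{T:GHKRDim} and check that the correction term $\langle \rho_{B^-}, \lambda + \lambda_{B^-} \rangle$ vanishes when $\lambda$ is dominant. The only (harmless) difference is that the paper simply asserts $\lambda_{B^-} = -\lambda$ so that $\lambda + \lambda_{B^-} = 0$, whereas you identify $\lambda_{B^-} = w_0\lambda$ and kill the pairing using $W$-invariance and $w_0\rho = -\rho$ --- which is in fact the more careful version of the same argument, since $w_0\lambda = -\lambda$ only when $w_0$ acts as $-1$.
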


\begin{proof}
If  $\lambda$ is dominant, then $\lambda_{B^-} = - \lambda$.  Therefore $\lambda + \lambda_{B^-} = 0$, and there is no correction term in Equation \eqref{E:GHKRDim}.
\end{proof}


\subsection{Connection to folded galleries}\label{sec:ConnectionFolded}

We now provide a connection between the folded galleries discussed in Section~\ref{sec:Galleries} and the problem of determining nonemptiness and dimensions of affine Deligne--Lusztig varieties. The crucial insight here is that, using a result of Parkinson, Ram, and C. Schwer \cite{PRS}, we can interpret the elements of the intersection $^{w}U^- y\fa \cap Ix\fa$ as certain positively folded alcove-to-alcove combinatorial galleries, which provides a way to explicitly compute the dimensions of affine Deligne--Lusztig varieties using the results of \cite{GHKR} above.  

Given $x,y \in \aW$, recall that we define $\Gamma_w^+(\gamma_x, y)$ to be the set of all galleries from the fundamental alcove $\fa$ to the alcove  $y\fa$ which are of type $\gamma_x$ and are positively folded with respect to the orientation $\phipartialw$ at infinity. The sets of labeled folded alcove walks $\operatorname{L}\Gamma^+(\gamma_x,y)$ and $\operatorname{L}\Gamma_w^+(\gamma_x, y)$ are similarly defined in Section~\ref{sec:LFAW}; see in particular Notation~\ref{not:LFAW}.

\begin{thm}[Theorem 7.1 of \cite{PRS}]\label{T:PRS} 
Let $x, y\in \aW$ and fix a minimal gallery $\gamma_x:\fa \rightsquigarrow x\fa$.  Then there is a natural bijection between the set of labeled folded alcove walks $\operatorname{L}\Gamma_{w_0}^+(\gamma_x,y)$ and points in the intersection $U^-yI \cap IxI$.  Moreover, each choice of a labeling corresponds to a distinct element $u \in U^-$ such that $uyI \cap IxI \neq \emptyset$.  
\end{thm}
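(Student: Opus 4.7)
The plan is to exploit the standard Iwahori factorization of $U^-(F)$ into affine root subgroups and match each elementary factor with a single step of the labeled walk. Recall that for each hyperplane $H = H_{\alpha,k} \in \aH$ there is an affine root subgroup $U_H \leq G(F)$ isomorphic to $(k,+)$ via some $e_H$, and that $U^-(F)$ decomposes uniquely as an ordered product of elements $e_H(c_H)$ with $H$ ranging over the hyperplanes whose positive half-space with respect to $\phipartialwo$ contains $w_0\fa$. Similarly, each Schubert cell $IxI/I$ admits a parametrization, indexed by a choice of reduced expression for $x$, with one root-subgroup parameter per simple reflection in the expression; this corresponds to tracing out the minimal gallery $\gamma_x$ and attaching a parameter to each crossed hyperplane.

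First I would construct the forward map $\text{L}\Gamma_{w_0}^+(\gamma_x, y) \to U^-yI \cap IxI$. Given a labeled walk $\gamma$ with underlying combinatorial gallery $(c_0 \supset p_1 \subset c_1 \supset \cdots \subset c_n)$, walk along $\gamma_x$ step by step and build a product in $G(F)$ as follows: at each step let $H_i$ be the hyperplane supporting $p_i$, $s_{j_i}$ the simple reflection of type $p_i$, and $a_i \in k$ the label of step $i$; insert the factor $e_{H_i}(a_i)$ and, at a crossing, additionally the simple reflection $s_{j_i}$ (at a fold, omit $s_{j_i}$). Multiplying these factors in order produces $g \in G(F)$, and an inductive check shows $g\fa = c_n$. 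Since each $e_{H_i}(a_i)$ lies in $I$ and the appended simple reflections form a reduced expression for $x$, we get $g \in IxI$. Rewriting $g$ via the commutation relations between affine root subgroups and simple reflections as $g = u \cdot z$ with $u \in U^-$ and $zy^{-1} \in I$ produces the coset in $U^-yI \cap IxI$.

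The inverse map goes the other way: given $uyI$ in the intersection, the uniqueness of the Iwahori factorization of $u$ expresses it as $\prod_H e_H(c_H)$, and the condition $uyI \in IxI$ forces the nonzero $c_H$ to be supported precisely on the hyperplanes $H_i$ crossed by the fixed minimal gallery $\gamma_x$; recording these scalars as labels yields a labeled walk in $\text{L}\Gamma_{w_0}^+(\gamma_x, y)$. The two constructions are mutually inverse by uniqueness of the factorization, and distinct labelings produce distinct $u \in U^-$ for the same reason. The principal obstacle is the orientation bookkeeping: one must verify that a nonzero insertion $e_{H_i}(a_i)$ on a step where $\gamma_x$ crosses $H_i$ from the positive to the negative side (with respect to $\phipartialwo$) reflects the tail of the gallery across $H_i$, producing a \emph{positive} fold, whereas on a step crossing $H_i$ in the opposite direction the factor leaves the underlying alcove unchanged, so the label ranges freely over $k$ on a positive crossing. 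This asymmetry is precisely what forces the labels to come from $k^\times$ at folds, $k$ at positive crossings, and $0$ at negative crossings; once it is in place, all remaining verifications reduce to standard Bruhat--Tits calculations and commutator identities in $G(F)$.
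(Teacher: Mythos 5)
This statement is quoted from Parkinson--Ram--Schwer (Theorem 7.1 of \cite{PRS}); the paper does not reprove it, so your attempt should be measured against the root-subgroup argument of \cite{PRS}, which is indeed the strategy you are reaching for. However, as written your argument has a genuine gap at its central step. In your forward map you omit the reflection $s_{j_i}$ at every fold and then assert that $g \in IxI$ ``since each $e_{H_i}(a_i)$ lies in $I$ and the appended simple reflections form a reduced expression for $x$.'' Both halves fail as soon as the walk has a fold: the appended reflections then form a proper subword of the reduced word, and if moreover every inserted factor lay in $I$, your product would lie in $Ix'I$ for some strictly shorter $x'$, not in $IxI$. (Test it on $x = s_1$ with the one-step walk folded at its only panel: your recipe gives $g = e_{H_1}(a_1)$, and membership in $Is_1I$ is exactly what needs proving.) The actual mechanism, which your write-up never invokes, is the rank-one ``unfolding'' identity of the form $x_{-\alpha}(c) = x_{\alpha}(c^{-1})\,\alpha^\vee(c)\,n_{\alpha}\,x_{\alpha}(c^{-1})$, valid only for $c \in k^{\times}$: at a fold one must use the root subgroup \emph{not} contained in $I$, and this identity is simultaneously the reason the element still lies in $IxI$, the reason the fold label is forced into $k^{\times}$, and the reason the end alcove is reflected. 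Without stating and using this relation (or an equivalent folding relation), the heart of the theorem is missing, and the ``orientation bookkeeping'' you defer is not a routine verification but the entire content.

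Your inverse map also does not work as described. A point of the intersection is a coset $uyI$, not a canonical element $u \in U^-$; representatives can be modified by $U^- \cap yIy^{-1}$, so ``the nonzero $c_H$ are supported precisely on the hyperplanes crossed by $\gamma_x$'' is not forced, and in any case the number of free parameters attached to a stratum is $P_{\phi}(\gamma') + F(\gamma')$, not $\ell(x)$, so the support of a factorization of $u$ cannot encode a type-$\vec{x}$ walk. The proof in \cite{PRS} goes in the opposite direction: one parametrizes $IxI/I$ by $k^{\ell(x)}$ via the fixed reduced word (one coordinate per letter, all $\ell(x)$ reflections present), then rewrites each such product into Iwasawa normal form in $U^-\aW I$ step by step, applying the unfolding identity whenever a coordinate is nonzero at a letter where the naive product would cross negatively; the combinatorial record of this rewriting \emph{is} the labeled folded walk, and uniqueness of the decomposition $G(F) = \bigsqcup_{y} U^- yI$ gives both well-definedness and the distinctness of the resulting elements of $U^-$. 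Reorganizing your argument along these lines, with the folding relation stated explicitly, would close the gap.
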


As we see in Theorem~\ref{T:GHKRDim}, however, we will need to compute the dimensions of the intersections of unipotent and Iwahori orbits in the affine flag variety where we range over all possible choices of orientation at infinity.  We remark that, in order to match the conventions in \cite{GHKR}, the conventions used in \cite{PRS} are opposite of our chosen conventions.  In particular, the standard orientation at infinity $\phi_0$ coincides with information about the intersection of Iwahori and $U^+$-orbits, rather than $U^-$-orbits as in \cite{PRS}.  Therefore, information about intersections of Iwahori and $^{w}U^-$-orbits comes from using the orientation $\phi_{w_0w}$.  For example, the opposite standard orientation $\phi_{w_0} = -\phi_0$, which is the orientation we will use for the majority of our constructions in later sections, corresponds to intersections of the form $U^-yI \cap IxI$, which is the situation in Theorem~\ref{T:PRS}.

\begin{corollary}\label{T:PRSCor}
Let $x, y\in \aW$.  Fix a minimal gallery $\gamma_x:\fa \rightsquigarrow x\fa$ and an orientation $\phi_{w_0w}$.  Then there is a natural bijection between the set of labeled folded alcove walks $\operatorname{L}\Gamma_{w_0w}^+(\gamma_x, y)$ and points in the intersection $^{w}U^-yI \cap IxI$. Moreover, each choice of a labeling corresponds to a distinct element $u_w \in\ ^{w}U^-$ such that $u_wyI \cap IxI \neq \emptyset$.   
\end{corollary}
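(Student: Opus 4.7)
The plan is to adapt the argument behind Theorem~7.1 of \cite{PRS}, restated here as Theorem~\ref{T:PRS}, from the base case $w = \id$ to general $w \in \sW$. The bijection in the base case is constructed via the unique factorization of elements of $U^-$ as ordered products of affine root subgroup elements $u_{\alpha,k} \in U_{\alpha,k}$ for $\alpha \in \Phi^-$, and this structure has an exact analogue for the unipotent $^{w}U^-$ and the orientation $\phi_{w_0 w}$.

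First I would observe that $^{w}U^-$ admits a parallel factorization: every element is uniquely expressible as an ordered product of affine root subgroup elements $u_{\beta,k}$ for $\beta$ ranging over $w\Phi^-$, the positive roots of the Borel $^{w}B^- = \dot w B^- \dot w^{-1}$ (for a lift $\dot w \in N_G(T)$ of $w$). Given $g \in\ ^{w}U^- y I \cap I x I$, writing $g = u y i$ with $u \in\ ^{w}U^-$ and $i \in I$, I would read off a labeled folded alcove walk from $\fa$ to $y\fa$ of type $\gamma_x$ by replicating the combinatorial procedure in \cite{PRS}: each nontrivial factor $u_{\beta,k}$ contributes a fold at the hyperplane $H_{\beta,k}$, while trivial factors correspond to crossings, with labels in $k$ at positive crossings and $k^\times$ at positive folds recording the parameters of these affine root subgroup elements. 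The positivity condition the construction requires is that the folds produced by nontrivial $u_{\beta,k}$ land on the positive side of $H_{\beta,k}$ with respect to the chosen orientation; by Definition~\ref{def:PeriodicOrientation}, the positive side of each hyperplane $H_\alpha$ with respect to $\phi_{w_0 w}$ is the one containing the alcove $w_0 w \fa$, and a direct computation shows that this is precisely the requirement for the roots $\beta \in w\Phi^-$ comprising $^{w}U^-$. Proposition~\ref{prop:braids} guarantees that the resulting labeled walk is unaffected by the initial choice of minimal gallery $\gamma_x$.

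Conversely, given a labeled folded alcove walk in $\operatorname{L}\Gamma_{w_0 w}^+(\gamma_x, y)$, one reconstructs a unique element $u_w \in\ ^{w}U^-$ by inverting the procedure above, reading the labels as parameters in the affine root subgroups $U_{\beta,k}$ and multiplying in the prescribed order; this then determines a unique point of $^{w}U^- y I \cap I x I$. The main obstacle will be the detailed bookkeeping: one must verify that the conventions of \cite{PRS} on the ordering of root subgroups and the signs of the affine roots involved match the labeling at infinity $\phi_{w_0 w}$, so that PRS folding sequences (Definition~\ref{def:PRS}) are preserved under the correspondence. Once these compatibilities are in place, the natural bijection asserted in the corollary is a direct $\sW$-equivariant extension of Theorem~\ref{T:PRS}.
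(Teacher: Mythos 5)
Your proposal is correct and follows essentially the same route as the paper: the paper's proof simply observes that changing the orientation at infinity corresponds to replacing $B^-$ by the conjugate Borel $^{w}B^-$ with unipotent radical $^{w}U^-$, and then runs the proof of Theorem~\ref{T:PRS} from \cite{PRS} for this conjugate, which is exactly the adaptation you carry out (your version just spells out the root-subgroup factorization and the orientation bookkeeping in more detail).
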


\begin{proof}
Changing the orientation at infinity corresponds to changing the choice of Borel subgroup.  The new Borel subgroup will be a conjugate of the $B^-$, namely $^{w}B^-$, having unipotent radical $^{w}U^-$.  Following the proof in \cite{PRS}, the set of elements in $^{w}U^-$ for which the intersection $^{w}U^-yI \cap IxI$ is nonempty can then be read off of the labelings of the alcove walks which are positively folded with respect to the orientation $\phi_w$.
\end{proof}

We now use Theorem~\ref{T:PRS} and Corollary \ref{T:PRSCor} to connect the dimensions of intersections $^{w}U^-yI \cap IxI$ to the dimensions of the corresponding labeled folded alcove walks. This same connection was already made implicitly in \cite{GaussentLittelmann}.

\begin{prop}\label{OrbitDims}
Let $w \in \sW$ and $x,y  \in \aW$.  Fix a minimal gallery $\gamma_x: \fa \rightsquigarrow x\fa$.  Then
\begin{equation}
\dim (^{w}U^-y \fa \cap Ix\fa) = \max \{ F(\gamma') + P_{\phi_{w_0w}}(\gamma') \mid \gamma' \in \Gamma^+_{w_0w}(\gamma_x,y) \}. 
\end{equation}
\end{prop}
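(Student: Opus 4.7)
The plan is to combine Corollary~\ref{T:PRSCor} with a careful stratification of the parameter space of labeled folded alcove walks by their underlying unlabeled gallery, and then verify that this decomposition is algebraic.

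First I would invoke Corollary~\ref{T:PRSCor} to identify the set of $k$-points of $^{w}U^-y\fa \cap Ix\fa$ with the set of labeled folded alcove walks $\operatorname{L}\Gamma_{w_0w}^+(\gamma_x,y)$, with distinct labelings corresponding to distinct elements of the intersection. By Definition~\ref{def:LFAW}, each such labeled walk consists of an unlabeled positively folded gallery $\gamma' \in \Gamma_{w_0w}^+(\gamma_x,y)$ together with (i) a label in $k^{\times}$ at each of the $F(\gamma')$ positive folds, (ii) a label in $k$ at each of the $P_{\phi_{w_0w}}(\gamma')$ positive crossings, and (iii) the forced label $0$ at each negative crossing. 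Hence the fiber over a fixed $\gamma'$ is naturally in bijection with the affine variety
\begin{equation*}
V_{\gamma'} \define (k^{\times})^{F(\gamma')} \times \A^{P_{\phi_{w_0w}}(\gamma')},
\end{equation*}
which is irreducible of dimension $F(\gamma') + P_{\phi_{w_0w}}(\gamma')$.

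Next I would argue that this fibered bijection is algebraic. This is already implicit in the construction of the bijection in Theorem~7.1 of \cite{PRS}: to a labeled walk one associates an element $u_w \in {}^{w}U^-$ as an explicit product of root-subgroup elements indexed by the panels of $\gamma'$, where the field labels are the coordinates in those root subgroups. Since each root subgroup is isomorphic to $\mathbb{G}_a$, the assignment $V_{\gamma'} \to {}^{w}U^- y\fa \cap Ix\fa$ is a locally closed immersion, and as $\gamma'$ ranges over $\Gamma_{w_0w}^+(\gamma_x,y)$ the images form a finite stratification of the intersection.

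Finally, since the dimension of a variety is the maximum of the dimensions of its irreducible (locally closed) strata, I would conclude
\begin{equation*}
\dim({}^{w}U^-y\fa \cap Ix\fa) = \max_{\gamma' \in \Gamma_{w_0w}^+(\gamma_x,y)} \dim V_{\gamma'} = \max_{\gamma' \in \Gamma_{w_0w}^+(\gamma_x,y)} \bigl[F(\gamma') + P_{\phi_{w_0w}}(\gamma')\bigr].
\end{equation*}
The main (minor) obstacle is verifying that the stratification above is really a stratification of varieties, not merely of underlying sets; but this follows directly from the explicit root-subgroup parametrization used to construct the bijection of Theorem~\ref{T:PRS}, so no substantially new argument beyond \cite{PRS} is required. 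One should also note that the finiteness of $\Gamma_{w_0w}^+(\gamma_x,y)$ (since every such gallery is a folding of the fixed minimal gallery $\gamma_x$, of which there are only finitely many) ensures that the maximum is attained.
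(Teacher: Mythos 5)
Your proposal is correct and follows essentially the same route as the paper: both use Corollary~\ref{T:PRSCor} to stratify ${}^{w}U^-y\fa \cap Ix\fa$ by the underlying unlabeled gallery $\gamma' \in \Gamma^+_{w_0w}(\gamma_x,y)$, identify each stratum with $\A^{P_{\phi_{w_0w}}(\gamma')} \times (\A\setminus\{0\})^{F(\gamma')}$ via the label data from the construction in \cite{PRS}, and take the maximum of the stratum dimensions. Your added remarks on algebraicity of the parametrization and finiteness of the gallery set are sensible elaborations of what the paper leaves implicit in its appeal to the proof of Theorem~\ref{T:PRS}.
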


\begin{proof}

Corollary \ref{T:PRSCor} indicates that the intersection $^{w}U^-yI \cap IxI$ has a stratification in which the set of points of $^{w}U^-yI \cap IxI$ lie in the same stratum if they have the same underlying unlabeled folded alcove walk; \textit{i.e.} the elements in $\Gamma_{w_0w}^+(\gamma_x, y)$ enumerate this natural stratification of $^{w}U^-yI \cap IxI$.  Following the proof of Theorem~\ref{T:PRS} in \cite{PRS}, we may thus compute the dimension of  $^{w}U^-yI \cap IxI$ as follows.  

Suppose that $\gamma_x$ is a minimal gallery from $\mathbf{c_f}$ to $x\fa$ and that $\gamma'$ is a positively folded gallery which ends at $y\fa$ and is obtained by folding $\gamma_x$ positively with respect to the periodic orientation induced by $\phi_{w_0w}$; that is, $\gamma' \in \Gamma_{w_0w}^+(\gamma_x,y)$.  Fixing a label for a positive crossing corresponds to making a choice of an element in the field $k$ as the coefficient of a unique power $t^m$ in one entry of the matrix $u_w \in$ $^{w}U^-$ such that $u_wyI\cap IxI \neq \emptyset$.  Similarly, fixing a label for a positive fold corresponds to making such a choice of coefficient among the elements in $k^{\times}$.  The negative crossings each indicate a coefficient of 0 for a certain power of the uniformizer $t$ in a particular entry of $u_w$.  Recall that $F(\gamma')$ equals the number of folds in $\gamma'$, and $P_{\phi_{w_0w}}(\gamma')$ is the number of crossings in $\gamma'$ which are positive with respect to the orientation $\phi_{w_0w}$. Then $\gamma'$ corresponds to a stratum of $^{w}U^-yI \cap IxI$ which is isomorphic to $\A^{P_{\phi_{w_0w}}(\gamma')} \times (\A\backslash \{0\})^{F(\gamma')}$ and thus has dimension equal to $F(\gamma') + P_{\phi_{w_0w}}(\gamma')$.  Ranging over all possible $\gamma' \in \Gamma_{w_0w}^+(\gamma_x,y)$ runs over all of the strata, each of which has the algebraic structure of the product of some number of copies of affine space and the affine line minus a point.  The maximum dimension over all of these strata therefore coincides with the dimension of the intersection $^{w}U^-y \fa \cap Ix\fa$.
\end{proof}

We are now able to express the problem of determining the nonemptiness of an affine Deligne--Lusztig variety of the form $X_x(t^{\lambda})$, in addition to computing its dimension, in terms of positively folded labeled alcove walks.  Recall that $\Gamma^+_w(x,1)$ is the set of all galleries of type $\vec x$ from $\fa$ to $\fa$ which are positively folded with respect to the orientation $\phipartial_w$.

\begin{prop}\label{ADLVvsGalleries}
Let $x \in \aW$ be an element of the affine Weyl group.  The variety $X_x(1)$ is nonempty if and only if there exists an orientation at infinity $\phipartial = \phipartialw$ and an alcove-to-alcove gallery $\gamma:\fa \rightsquigarrow \fa$ of type $\vec{x}$ which is positively folded with respect to $\phipartial$.  

Moreover, if $X_x(1)\neq \emptyset$ then $\dim X_x(1)$ equals the maximum value of $\dim_\phi(\gamma)$, ranging over all such galleries $\gamma: \fa \rightsquigarrow \fa$ and all orientations at infinity $\phipartialw$.  That is, 
\begin{equation}\dim X_x(1) = \max_{w\in \sW} \{ F(\gamma') + P_{\phi_w}(\gamma') \mid \gamma' \in \Gamma^+_w(x,1) \}. \end{equation}
\end{prop}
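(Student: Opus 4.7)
The plan is to combine the dimension formula of Corollary~\ref{GHKRid} with the combinatorial interpretation of the relevant orbit intersections provided by Proposition~\ref{OrbitDims}. Since all the necessary ingredients have been set up earlier in the paper, the argument will essentially consist of assembling these results, together with a short bookkeeping step to handle the reindexing $w \leftrightarrow w_0 w$ and the independence of $\gamma_x$.

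First I would invoke Corollary~\ref{GHKRid}, the special case $\lambda = 0$ of Theorem~\ref{T:GHKRDim}, which asserts that $X_x(1) \neq \emptyset$ if and only if there exists $w \in \sW$ such that $\prescript{w}{}U^- \fa \cap Ix\fa \neq \emptyset$, and in that case
\begin{equation*}
\dim X_x(1) \;=\; \max_{w \in \sW}\bigl\{ \dim\bigl( \prescript{w}{}U^- \fa \cap Ix\fa \bigr) \bigr\}.
\end{equation*}
Next I would apply Proposition~\ref{OrbitDims} in the case $y = 1$ (so that $y\fa = \fa$), which reexpresses each of these dimensions as
\begin{equation*}
\dim\bigl( \prescript{w}{}U^- \fa \cap Ix\fa \bigr) \;=\; \max\bigl\{ F(\gamma') + P_{\phi_{w_0 w}}(\gamma') \;\big|\; \gamma' \in \Gamma^+_{w_0 w}(\gamma_x, 1) \bigr\},
\end{equation*}
where $\gamma_x \colon \fa \rightsquigarrow x\fa$ is any fixed minimal gallery.

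To assemble the dimension formula, I would observe that as $w$ ranges over $\sW$, so does $u \define w_0 w$, so taking the max over $w$ is the same as taking the max over $u \in \sW$. Combining the two displays yields
\begin{equation*}
\dim X_x(1) \;=\; \max_{u \in \sW}\bigl\{ F(\gamma') + P_{\phi_u}(\gamma') \;\big|\; \gamma' \in \Gamma^+_{u}(\gamma_x, 1) \bigr\}.
\end{equation*}
The only remaining subtlety is that the right-hand side of the formula in the statement is phrased in terms of $\Gamma^+_w(x,1)$, which does not refer to any fixed minimal gallery $\gamma_x$. To justify this, I would appeal to Proposition~\ref{prop:braids}: given any two minimal galleries $\gamma_x, \gamma_x'\colon \fa \rightsquigarrow x\fa$, every positively folded gallery obtained from $\gamma_x$ (with respect to a given orientation) has a counterpart obtained from $\gamma_x'$ with the same number of folds and the same dimension, and vice versa. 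Hence the maxima of $F(\gamma') + P_{\phi_u}(\gamma')$ over $\Gamma^+_u(\gamma_x, 1)$ and over $\Gamma^+_u(\gamma_x', 1)$ agree, so the right-hand side depends only on $x$ and $u$, matching the stated formula over $\Gamma^+_u(x,1)$.

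For the nonemptiness equivalence I would use the same chain: by Corollary~\ref{GHKRid}, $X_x(1) \neq \emptyset$ iff some $\prescript{w}{}U^- \fa \cap Ix\fa$ is nonempty, and by Corollary~\ref{T:PRSCor} (equivalently, the stratification used in the proof of Proposition~\ref{OrbitDims}), this intersection is nonempty iff $\Gamma^+_{w_0 w}(\gamma_x, 1) \neq \emptyset$. Reindexing $u = w_0 w$ gives the desired characterization. There is no real obstacle here — the only thing to be careful about is getting the convention $w \leftrightarrow w_0 w$ right (recall that the conventions in \cite{PRS} are opposite to ours, which is why the orientation $\phi_{w_0 w}$ appears in Proposition~\ref{OrbitDims} paired with $\prescript{w}{}U^-$), and ensuring the use of Proposition~\ref{prop:braids} to pass from $\Gamma^+_u(\gamma_x, 1)$ to $\Gamma^+_u(x, 1)$.
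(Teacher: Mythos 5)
Your proposal is correct and follows essentially the same route as the paper: both invoke Corollary~\ref{GHKRid} together with Proposition~\ref{OrbitDims} (via Corollary~\ref{T:PRSCor}) for the case $y=1$, reindex $w \leftrightarrow w_0w$, and then use Proposition~\ref{prop:braids} to replace $\Gamma^+_w(\gamma_x,1)$ by $\Gamma^+_w(x,1)$. Your treatment is just a slightly more explicit spelling-out of the same bookkeeping the paper performs in its displayed chain of equalities.
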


\begin{proof}  
The first observation follows directly from Corollaries \ref{GHKRid} and \ref{T:PRSCor}.  If we fix a minimal gallery $\gamma_x:\fa \rightsquigarrow x\fa$ then combining Equation \eqref{E:GHKRid} and Proposition~\ref{OrbitDims} yields
\begin{align*} \dim X_x(1) & =   \max_{w \in \sW} \dim (^{w}U^- \fa \cap Ix\fa) \\
& = \max_{w \in \sW} \{ F(\gamma') + P_{\phi_{w_0w}}(\gamma') \mid \gamma' \in \Gamma^+_{w_0w}(\gamma_x,1) \} \\
& = \max_{w \in \sW} \{ F(\gamma') + P_{\phi_w}(\gamma') \mid \gamma' \in \Gamma^+_w(\gamma_x,1) \}. 
\end{align*}
Now by Proposition~\ref{prop:braids} this maximum does not depend on the choice of minimal gallery $\gamma_x$, and so we may replace the set $\Gamma^+_w(\gamma_x,1)$ here by $\Gamma^+_w(x,1)$.
\end{proof}

We can also generalize Proposition~\ref{ADLVvsGalleries} to the case in which the gallery ends in an alcove corresponding to a pure translation, as opposed to just the fundamental alcove.  It will be useful for us later to separate out the special case in which the gallery both begins and ends at $\fa$, which is why we state and prove that case first.

\begin{thm}\label{ADLVvsGal2}
Suppose $b=t^\lambda$ for some $\lambda\in R^\vee$, and define $b^w=t^{w\lambda}$ for $w \in \sW$. The variety $X_x(b)$ is nonempty if and only if there exists an orientation at infinity $\phipartialwow$ for some $w \in \sW$ and a gallery $\gamma:\fa \rightsquigarrow {b^{w}\fa}$ of type $\vec{x}$ that is positively folded with respect to $\phipartialwow$.  Moreover, if $X_x(b)\neq \emptyset$, then
\begin{equation}\label{E:FoldsInnerProd}
\dim X_x(b) = \max_{w\in \sW} \{ F(\gamma') + P_{\phi_{w_0w}}(\gamma') \mid \gamma' \in \Gamma^+_{w_0w}(x, b^{w}) \} - \langle \rho_{B^-},\lambda+\lambda_{B^-} \rangle. \end{equation}
 \end{thm}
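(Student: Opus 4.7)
The plan is to deduce this theorem by combining the three main ingredients already assembled: the dimension formula of G\"ortz--Haines--Kottwitz--Reuman (Theorem~\ref{T:GHKRDim}), the stratification of unipotent-Iwahori intersections by labeled folded alcove walks (Corollary~\ref{T:PRSCor}), and the translation of that stratification into a dimension count (Proposition~\ref{OrbitDims}). The proof will parallel that of Proposition~\ref{ADLVvsGalleries}, with the sole new feature being that the target alcove is $b^w \fa = t^{w\lambda}\fa$ rather than $\fa$, and that the correction term $\langle \rho_{B^-}, \lambda + \lambda_{B^-}\rangle$ must be carried through.

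For the nonemptiness equivalence, I would invoke the last sentence of Theorem~\ref{T:GHKRDim}: $X_x(t^\lambda) \neq \emptyset$ if and only if there exists $w \in \sW$ with $\,^{w}U^- t^{w\lambda}\fa \cap Ix\fa \neq \emptyset$. By Corollary~\ref{T:PRSCor} applied with $y = b^w$, the set $\,^{w}U^- b^w I \cap IxI$ is nonempty if and only if there exists a labeled folded alcove walk in $\operatorname{L}\Gamma^+_{w_0 w}(\gamma_x, b^w)$ for some (equivalently, every) choice of minimal gallery $\gamma_x: \fa \rightsquigarrow x\fa$; this in turn is equivalent to $\Gamma^+_{w_0 w}(\gamma_x, b^w)$ being nonempty (the labels only add a choice of nonzero scalar at each fold and a scalar at each positive crossing). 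Fix such a $\gamma_x$. This gives the first assertion.

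For the dimension formula, assuming $X_x(b) \neq \emptyset$, I would apply Theorem~\ref{T:GHKRDim} to write
\begin{equation*}
\dim X_x(t^\lambda) = \max_{w \in \sW}\left\{ \dim\bigl(\,^{w}U^- t^{w\lambda}\fa \cap Ix\fa\bigr)\right\} - \langle \rho_{B^-}, \lambda + \lambda_{B^-}\rangle.
\end{equation*}
Then for each $w \in \sW$ I would apply Proposition~\ref{OrbitDims} with $y = b^w = t^{w\lambda}$, obtaining
\begin{equation*}
\dim\bigl(\,^{w}U^- b^w \fa \cap Ix\fa\bigr) = \max\left\{ F(\gamma') + P_{\phi_{w_0 w}}(\gamma') \,\bigm|\, \gamma' \in \Gamma^+_{w_0 w}(\gamma_x, b^w)\right\}.
\end{equation*}
Substituting this back and taking the outer maximum over $w \in \sW$ yields exactly Equation~\eqref{E:FoldsInnerProd}, except with $\gamma_x$ in place of $x$ inside $\Gamma^+_{w_0 w}$. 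To remove this dependence, I would apply Proposition~\ref{prop:braids}: if $\gamma_x$ and $\gamma_x'$ are two minimal galleries of type $\vec x$, then for every positively folded $\gamma \in \Gamma^+_{w_0 w}(\gamma_x, b^w)$ there is a positively folded $\gamma' \in \Gamma^+_{w_0 w}(\gamma_x', b^w)$ with the same number of folds and the same dimension with respect to $\phipartial_{w_0 w}$, and vice versa. Since by Definition~\ref{def:DimGalleryA} the quantity $F(\gamma) + P_{\phi_{w_0 w}}(\gamma) = \dim_{\phi_{w_0 w}}(\gamma)$, the maximum is unchanged if we range over all minimal galleries of type $\vec x$, that is, over $\Gamma^+_{w_0 w}(x, b^w)$.

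I do not anticipate any serious obstacle; the theorem is a bookkeeping combination of previously established results. The only subtle point is ensuring that the Borel conventions from \cite{GHKR} in Theorem~\ref{T:GHKRDim} match the conventions of \cite{PRS} as adjusted in Corollary~\ref{T:PRSCor}, which is precisely what forces the orientation index $w_0 w$ rather than $w$ in Equation~\eqref{E:FoldsInnerProd}; this was already carefully addressed in the discussion preceding Corollary~\ref{T:PRSCor}, so it carries over here without change.
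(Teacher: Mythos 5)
Your proposal is correct and follows essentially the same route as the paper, whose proof of this theorem is simply to combine Theorem~\ref{T:GHKRDim} with Proposition~\ref{OrbitDims} (the latter already resting on Corollary~\ref{T:PRSCor}), with the passage from a fixed minimal gallery $\gamma_x$ to the type-$\vec{x}$ sets handled by Proposition~\ref{prop:braids} just as in Proposition~\ref{ADLVvsGalleries}. You have merely written out explicitly the bookkeeping that the paper leaves implicit.
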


\begin{proof}
 Follows directly from Theorem~\ref{T:GHKRDim} combined with Proposition~\ref{OrbitDims}.
\end{proof}

\begin{remark}
In Section 6 of \cite{GHKR}, G\"{o}rtz, Haines, Kottwitz, and Reuman describe a version of this exhaustive search algorithm in terms of foldings of the entire building, which they used to verify Conjecture~\ref{Conj_GHKR} for several pairs $x$ and $b$ with $\ell(x)$ small.  Their algorithm recasts one of Dabrowski  \cite{Dabrowski}, which in turn is analogous to one of Deodhar in his study of Bruhat cells of $p$-adic Chevalley groups \cite{Deodhar}, in terms of retractions in the affine Bruhat--Tits building.
\end{remark}


\subsection{Dimension of a $p$-adic Deligne--Lusztig set}\label{sec:padic}

The proof of Theorem~\ref{ADLVvsGal2} is independent of the base field, suggesting that the notion of dimension for affine Deligne--Lusztig varieties in the function field and $p$-adic contexts should coincide.  We therefore propose the following definition for the dimension of the $p$-adic Deligne--Lusztig set associated to an affine Weyl group element and pure translation.  Denote by $\widehat{\Q}_p^{\mbox{\tiny{ur}}}$ the maximal unramified extension of the field of $p$-adic numbers. Let $X_x(b)_{\Q_p}$ denote the affine Deligne--Lusztig set inside the affine flag variety $G(\widehat{\Q}_p^{\mbox{\tiny{ur}}})/I$ associated to an element $b \in \widehat{\Q}_p^{\mbox{\tiny{ur}}}$ and an element $x$ of the corresponding affine Weyl group.

\begin{definition}\label{def:padic}
 Let $b = t^{\lambda}$ be a pure translation and $x$ any element in the affine Weyl group of $G(\widehat{\Q}_p^{\mbox{\tiny{ur}}})$. Define
\begin{equation}\dim X_x(b)_{\Q_p} = \max_{w\in \sW} \{ F(\gamma') + P_{\phi_{w_0w}}(\gamma') \mid \gamma' \in \Gamma^+_{w_0w}(x, b^{w}) \} - \langle \rho_{B^-},\lambda+\lambda_{B^-} \rangle. \end{equation}
\end{definition}

Corollary 11.3.5 in \cite{GHKRadlvs} states that $X_x(b) \neq \emptyset$ if and only if $X_x(b)_{\Q_p} \neq \emptyset$.  By combining this with our Theorem~\ref{ADLVvsGal2}, we obtain the following.

\begin{thm}\label{T:padic}
Let $x$ be any element of the affine Weyl group of $G(\widehat{\Q}_p^{\mbox{\tiny{ur}}})$ and $b$ any pure translation.  Then
\begin{equation}
X_x(b) \neq \emptyset \iff X_x(b)_{\Q_p} \neq \emptyset.
\end{equation} Moreover, with $\dim X_x(b)_{\Q_p}$ as in Definition \ref{def:padic}, if both Deligne--Lusztig sets are nonempty, then
\begin{equation}
\dim X_x(b) = \dim X_x(b)_{\Q_p}. 
\end{equation}
\end{thm}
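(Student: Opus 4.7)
The plan is to observe that Theorem~\ref{T:padic} is essentially a formal consequence of what has already been assembled: the nonemptiness equivalence is an external input, while the dimension equality is built into the definition of $\dim X_x(b)_{\Q_p}$ once Theorem~\ref{ADLVvsGal2} is in hand.

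For the nonemptiness statement, I would simply invoke Corollary 11.3.5 of~\cite{GHKRadlvs}, which asserts precisely that $X_x(b)\neq\emptyset \iff X_x(b)_{\Q_p}\neq\emptyset$. This requires no further argument on our part, since the statement is about nonemptiness and not about dimension. (One could note, as motivation, that our combinatorial reformulation in Proposition~\ref{OrbitDims} and Theorem~\ref{ADLVvsGal2} already makes the nonemptiness criterion field-independent, since it asks for the existence of a gallery of prescribed type and endpoints positively folded with respect to some orientation at infinity, a condition that lives entirely in the standard apartment.)

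For the dimension equality, assume now that both $X_x(b)$ and $X_x(b)_{\Q_p}$ are nonempty. By Theorem~\ref{ADLVvsGal2}, the dimension of $X_x(b)$ in the function field case is given by
\[
\dim X_x(b) = \max_{w\in\sW}\{F(\gamma')+P_{\phi_{w_0w}}(\gamma') \mid \gamma'\in\Gamma^+_{w_0w}(x,b^w)\} - \langle\rho_{B^-},\lambda+\lambda_{B^-}\rangle,
\]
where $b=t^\lambda$ and $b^w=t^{w\lambda}$. By Definition~\ref{def:padic}, $\dim X_x(b)_{\Q_p}$ is \emph{defined} by exactly the same expression. The right-hand side is a purely combinatorial quantity: it depends only on the affine Weyl group $\aW$, on the standard apartment $\App$ of the Bruhat--Tits building, and on the structure of positively folded galleries therein, none of which is sensitive to whether the building comes from $G(k((t)))$ or from $G(\widehat{\Q}_p^{\mbox{\tiny{ur}}})$. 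Hence the two quantities coincide.

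No real obstacle arises here, since the theorem is designed to package the observation that our approach never uses the field beyond the combinatorics of the standard apartment; the content is in having proved Theorem~\ref{ADLVvsGal2} and formulated Definition~\ref{def:padic} so as to make the two sides manifestly equal. The only subtlety worth flagging in the write-up is that the combinatorial formula in Theorem~\ref{ADLVvsGal2} is derived in the function field setting via Theorem~\ref{T:GHKRDim} of~\cite{GHKR}, so the correct reading is that Definition~\ref{def:padic} adopts this same formula as the definition on the $p$-adic side; the dimension equality is then a tautology, while the nonemptiness equivalence, which is a genuine theorem, is imported from~\cite{GHKRadlvs}.
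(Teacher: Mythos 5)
Your proposal is correct and follows exactly the paper's own argument: nonemptiness is imported from Corollary 11.3.5 of \cite{GHKRadlvs}, and the dimension equality is immediate because Definition~\ref{def:padic} adopts verbatim the combinatorial formula that Theorem~\ref{ADLVvsGal2} establishes for $\dim X_x(b)$ in the function field case. Nothing further is needed.
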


Theorem~\ref{ADLVvsGal2} is the primary springboard from which we prove our main results about dimensions of affine Deligne-Lusztig varieties.  This result permits a translation of the question of dimension into the language of labeled folded alcove walks in a standard apartment of the affine building, and this will be our primary framework for the remainder of the paper.  Note that none of the results which follow in subsequent sections rely upon the base field, and hence hold equally for the $p$-adic context.


\subsection{Deligne--Lusztig galleries}\label{sec:DLGalleries}

We now define the set of Deligne--Lusztig galleries, which are galleries whose dimension is equal to the dimension of an associated affine Deligne--Lusztig variety.  

Recall from Definition~\ref{def:LS} that an LS-gallery maximizes dimension among certain vertex-to-vertex combinatorial galleries which are positively folded with respect to the standard orientation.  As we see from Theorem~\ref{ADLVvsGal2}, we will be maximizing dimension of alcove-to-alcove galleries over all possible orientations, so we first introduce the notion of an optimal LS-gallery.

\begin{definition}\label{def:optimalLS}  Let $x,y\in \aW$ and let $\phipartial$ be an orientation at infinity.  An alcove-to-alcove combinatorial gallery $\gamma:\fa \rightsquigarrow y\fa$ of type $\vec x$ which is positively folded with respect to $\phipartial$ is called an \emph{optimal LS-gallery} if it maximizes dimension among all positively folded galleries from $\fa$ to $y\fa$ of type $\vec x$ over all possible orientations, that is, if
$$ \dim_\phi(\gamma) = \max_{w \in \sW} \{ \dim_{\phi_w}(\gamma') \mid \gamma' \in \Gamma^+_w(x,y)\}. $$
\end{definition}

We now define Deligne--Lusztig galleries.

\begin{definition}\label{def:DLGallery}
Let $x \in \aW$ and let $b = t^\lambda$ be a pure translation.  An alcove-to-alcove combinatorial gallery $\gamma$ of type $\vec{x}$ is called a \emph{Deligne--Lusztig gallery for $X_x(b)$}, abbreviated \emph{DL-gallery}, if there exists an orientation $\phipartial = \phipartialwow$ so that $\gamma$ runs from $\fa$ to $b^w\fa$, is positively folded with respect to $\phipartialwow$, and
$$\dim X_x(b)  =  \dim_{\phi_{w_0w}}(\gamma),$$ or equivalently, $$\dim X_x(b) = F(\gamma) + P_{\phi_{w_0w}}(\gamma).$$  
\end{definition}

\noindent We will discuss the relationship between LS-galleries and DL-galleries in Section~\ref{sec:MV-GL-LS}.

Due to the inner product correction factor appearing in Theorem~\ref{ADLVvsGal2}, an optimal LS-gallery may not have dimension equal to the dimension of the affine Deligne--Lusztig variety for which it proves nonemptiness, and thus may not be a DL-gallery.  This is illustrated by the following example.

\begin{example}\label{Ex:ConvexityHypNotSharp}
Consider $x= t^{2\rho}w_0$ in type $\tilde A_2$.  The variety $X_x(b)$ for $b = t^{\rho}$ is nonempty, since there exists a gallery $\sigma: \fa \rightsquigarrow \bb = b\fa$ which is positively folded with respect to $-\phipartialo$, illustrated in Figure~\ref{fig:ConvexityHypNotSharp}.  In addition, the gallery $\sigma$ is both an optimal LS-gallery and a DL-gallery, since $\dim X_x(b) = \dim_{-{\phi_0}}(\sigma) = 1$. 

On the other hand, if we instead consider the translation $c = t^{-\rho}$, then we have a gallery $\gamma: \fa \rightsquigarrow \mathbf{c} = c\fa$ which is positively folded with respect to $-\phipartialo$, proving that $X_x(c)\neq \emptyset$.  The gallery $\gamma$ is an optimal LS-gallery, but not a DL-gallery, since $\dim_{-{\phi_0}}(\gamma) = 5$, but $\dim X_x(c) = \dim_{-{\phi_0}}(\gamma) - \langle \rho_{B^-}, -\rho-\rho \rangle = 5-4=1$ by Theorem~\ref{ADLVvsGal2}.
\end{example}

\begin{figure}[ht]
\begin{center}
\begin{overpic}[width=0.4\textwidth]{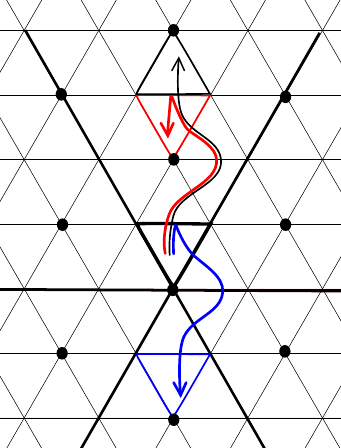}
\put(30,89){$2\rho$}
\put(35,83){$\x$}
\put(30,70){\color{red}{$\bb$}}
\put(34,61){$\rho$}
\put(35,53){\color{red}{$\sigma$}}
\put(43,31){\color{blue}{$\gamma$}}
\put(36,15){\color{blue}{$\mathbf{c}$}}
\put(30,4){$-\rho$}
\end{overpic}
\caption[shrunken]{The gallery $\sigma$ is both an optimal LS-gallery and a DL-gallery.  The gallery $\gamma$ is an optimal LS-gallery but not a DL-gallery.}
\label{fig:ConvexityHypNotSharp}
\end{center}
\end{figure}

However, there are situations in which the notions of optimal LS-galleries and DL-galleries do coincide.  We will be in these situations in our explicit constructions in Section~\ref{sec:Construction}.  The next result shows that if $b$ is a dominant pure translation and the maximum in Equation \eqref{E:DimNoCorrection}  occurs with respect to the opposite standard orientation, then any gallery of maximal dimension over all possible orientations is automatically a DL-gallery, and vice versa.  

\begin{lemma}\label{DomADLVDim}
Suppose $b=t^\lambda$ for some $\lambda\in\Cf\cap R^\vee$, and $x \in \aW$.  If $X_x(b) \neq \emptyset$, then 
\begin{equation} \label{E:DimNoCorrection}
 \dim X_x(b) =   \max_{w \in \sW}\left\{ \dim_{\phi_{w_0w}}(\gamma) \mid \gamma \in \Gamma^+_{w_0w}(x,b^w) \right\}. 
\end{equation}
Moreover, if the maximum in Equation \eqref{E:DimNoCorrection} occurs with respect to the opposite standard orientation, then any optimal LS-gallery of type $\vec x$ from $\fa$ to $b\fa$ is a DL-gallery for $X_x(b)$.

In particular, if $X_x(1) \neq \emptyset$, then 
\begin{equation} \label{E:DimNoCorrection1}
 \dim X_x(1) =   \max_{w \in \sW}\left\{ \dim_{\phi_{w_0w}}(\gamma) \mid \gamma \in \Gamma^+_{w_0w}(x,1) \right\} 
\end{equation}
and  if the maximum in Equation \eqref{E:DimNoCorrection1} occurs with respect to the opposite standard orientation, then any optimal LS-gallery of type $\vec x$ from $\fa$ to $\fa$ is a DL-gallery for $X_x(1)$.
\end{lemma}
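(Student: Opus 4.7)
The plan is to derive this lemma as an essentially immediate corollary of Theorem~\ref{ADLVvsGal2} together with the observation underlying Corollary~\ref{GHKRNoCorrection}. The main formula~\eqref{E:DimNoCorrection} is obtained by specializing Theorem~\ref{ADLVvsGal2} to the case that $\lambda$ is dominant, and then identifying the quantity being maximized with the gallery dimension from Definition~\ref{def:DimGalleryA}.

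First I would observe that since $\lambda \in \Cf \cap R^\vee$ is dominant, the antidominant conjugate $\lambda_{B^-}$ (the unique element of the $\sW$-orbit of $\lambda$ which is dominant with respect to $B^-$) is equal to $-\lambda$, so the correction term $\langle \rho_{B^-}, \lambda + \lambda_{B^-}\rangle$ appearing in Theorem~\ref{ADLVvsGal2} vanishes. By Definition~\ref{def:DimGalleryA}, for a gallery $\gamma' \in \Gamma^+_{w_0w}(x, b^w)$ positively folded with respect to $\phipartialwow$ we have $\dim_{\phi_{w_0w}}(\gamma') = F(\gamma') + P_{\phi_{w_0w}}(\gamma')$. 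Substituting this identification and the vanishing of the correction term into the formula of Theorem~\ref{ADLVvsGal2} yields~\eqref{E:DimNoCorrection}. The special case~\eqref{E:DimNoCorrection1} for $b = 1$ follows upon setting $\lambda = 0$, where trivially $\lambda + \lambda_{B^-} = 0$ and $b^w = 1$ for every $w \in \sW$.

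For the second statement, suppose the maximum in~\eqref{E:DimNoCorrection} is attained at the opposite standard orientation, that is, at $w_0 w = w_0$, which forces $w = 1$ and hence $b^w = b$. In this case the maximum is achieved among galleries of type $\vec{x}$ running from $\fa$ to $b\fa$, so
\[
\dim X_x(b) = \max_{u \in \sW}\{ \dim_{\phi_u}(\gamma') \mid \gamma' \in \Gamma^+_u(x, b)\}.
\]
On the other hand, on general grounds the right-hand side is bounded above by the full maximum in~\eqref{E:DimNoCorrection}, which equals $\dim X_x(b)$, so the two maxima coincide. Now let $\gamma$ be any optimal LS-gallery of type $\vec{x}$ from $\fa$ to $b\fa$; by Definition~\ref{def:optimalLS} it realizes exactly this maximum, so $\dim_\phi(\gamma) = \dim X_x(b)$, and hence $\gamma$ is a DL-gallery for $X_x(b)$ in the sense of Definition~\ref{def:DLGallery}. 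The $b=1$ analogue is identical.

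There is no real obstacle here: the lemma is a bookkeeping consequence of results already established, and the only point requiring any care is matching the indexing conventions (the fact that the \emph{opposite} standard orientation $\phi_{w_0}$ corresponds to the index $w = 1$ in the parametrization $\phipartialwow$ appearing in Theorem~\ref{ADLVvsGal2}) so that the endpoint $b^w$ of the relevant galleries is actually $b$.
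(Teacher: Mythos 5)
Your derivation of the main formula \eqref{E:DimNoCorrection} is correct and is exactly the paper's argument: $\lambda$ dominant gives $\lambda_{B^-}=-\lambda$, so the correction term $\langle \rho_{B^-},\lambda+\lambda_{B^-}\rangle$ in Theorem~\ref{ADLVvsGal2} vanishes, and the quantity $F(\gamma)+P_{\phi_{w_0w}}(\gamma)$ is $\dim_{\phi_{w_0w}}(\gamma)$ by Definition~\ref{def:DimGalleryA}. The $b=1$ specialization is likewise fine.

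The weak point is your treatment of the second statement. You assert that ``on general grounds'' the quantity $\max_{u\in\sW}\{\dim_{\phi_u}(\gamma')\mid \gamma'\in\Gamma^+_u(x,b)\}$ is bounded above by the maximum in \eqref{E:DimNoCorrection}. But these two maxima range over different families of galleries: in \eqref{E:DimNoCorrection} the end alcove $b^w\fa=t^{w\lambda}\fa$ moves with the orientation index $w$, whereas in the optimal-LS maximum of Definition~\ref{def:optimalLS} the end alcove is fixed at $b\fa$ while the orientation varies. There is no containment of index sets, so the inequality does not follow ``on general grounds.'' Concretely, a gallery ending at $b\fa$ but positively folded with respect to $\phipartialwow$ for $w\neq 1$ computes $\dim({}^{w}U^-\,t^\lambda\fa\cap Ix\fa)$, which enters the dimension formula not for $X_x(t^\lambda)$ but for the $\sigma$-conjugate variety $X_x(t^{w^{-1}\lambda})$, where the correction term $\langle\rho_{B^-},w^{-1}\lambda+(w^{-1}\lambda)_{B^-}\rangle$ is no longer zero; the resulting a priori bound is $\dim X_x(b)+\bigl(\langle\rho,\lambda\rangle-\langle\rho,w^{-1}\lambda\rangle\bigr)$, not $\dim X_x(b)$, and the paper's Example~\ref{Ex:ConvexityHypNotSharp} shows that such ``mismatched'' orientation/endpoint pairs really can overshoot the dimension of the variety. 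So the step that identifies the optimal-LS maximum with $\dim X_x(b)$ needs a genuine argument (or should be restricted), and your proof does not supply one. Note that in the case $b=1$ — which is where the paper actually invokes this part of the lemma, e.g.\ in Proposition~\ref{w0Dim} — one has $b^w=1$ for every $w$, the two maxima literally coincide, and your argument is complete; to be fair, for general dominant $b$ the paper's own one-sentence justification is no more detailed than yours, but your explicit ``bounded above by the full maximum'' claim is exactly the point that fails as stated.
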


\begin{proof}
Since $\lambda$ is dominant, $\lambda_{B^-} = -\lambda$ thus $\lambda + \lambda_{B^-} = 0$ and there is no correction term in Equation~\eqref{E:FoldsInnerProd}.  The first statement then follows from Theorem~\ref{ADLVvsGal2}.  The second statement follows because the opposite standard orientation coincides with the case in which the maximum occurs for $w=1$.
\end{proof}

We will use the following observation in Section~\ref{sec:Applications}.

\begin{lemma}\label{lem:ConstantFolds}  Let $x \in \aW$ and let $b = t^\mu$ be a pure translation, with $\mu$ dominant.  If $X_x(b)$ is nonempty then any two DL-galleries for $X_x(b)$ have the same number of folds.
\end{lemma}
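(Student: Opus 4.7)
The plan is to show that for any DL-gallery $\gamma$ for $X_x(b)$, the number of folds $F_\phi(\gamma)$ is determined by $x$, $b$, and $\dim X_x(b)$ alone, so two such galleries must have the same number of folds. I would unpack the definition of DL-gallery, then apply the closed dimension formula of Lemma~\ref{lem:gallery dim}, and finally use the dominance hypothesis on $\mu$ to pin down the end-alcove terms.

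First I would fix a DL-gallery $\gamma$ for $X_x(b)$, so by Definition~\ref{def:DLGallery} there is some $w \in \sW$ with $\gamma \in \Gamma^+_{w_0w}(x, b^w)$ and $\dim X_x(b) = \dim_{\phi_{w_0w}}(\gamma)$. Set $\phi = \phi_{w_0w}$ for brevity. Since $\gamma$ is of type $\vec{x}$, obtained by positively folding some minimal gallery $\fa \rightsquigarrow x\fa$ (its existence is guaranteed via Proposition~\ref{prop:braids}), Lemma~\ref{lem:gallery dim} with $y = b^w$ gives
\begin{equation*}
\dim_\phi(\gamma) \;=\; \tfrac{1}{2}\bigl[\ell(x) + F_\phi(\gamma) - \ell(b^w)\bigr] + \dim_\phi(b^w).
\end{equation*}

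Next I would use the dominance of $\mu$ to control the two terms involving $b^w$. By Lemma~\ref{lem:length}, $\ell(b^w) = \ell(b)$ and $\dim_{\phi_{w_0w}}(b^w) = 0$ for every $w \in \sW$; this is precisely where the hypothesis $\mu \in \Cf$ is used, as the orientation $\phipartialwow$ was chosen in Definition~\ref{def:DLGallery} exactly so that a minimal gallery to $b^w \fa$ contributes no positive crossings. Combining with the defining property $\dim X_x(b) = \dim_\phi(\gamma)$, the display collapses to
\begin{equation*}
F_\phi(\gamma) \;=\; 2\dim X_x(b) - \ell(x) + \ell(b).
\end{equation*}
Since the right-hand side depends only on $x$, $b$, and $\dim X_x(b)$, any two DL-galleries for $X_x(b)$ share this value of $F_\phi$.

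I do not anticipate a real obstacle here: the whole argument is a bookkeeping exercise combining Lemma~\ref{lem:gallery dim} with Lemma~\ref{lem:length}. The only subtle point is recognizing that the DL-gallery condition $\dim X_x(b) = \dim_\phi(\gamma)$ (with no $\langle \rho_{B^-}, \mu + \mu_{B^-}\rangle$ correction term present) is precisely the one guaranteed by Lemma~\ref{DomADLVDim} for dominant $\mu$, which is why the lemma's hypothesis on $\mu$ being dominant is essential rather than decorative.
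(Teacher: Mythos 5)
Your proof is correct and is essentially the paper's own argument: the paper likewise applies Lemma~\ref{lem:gallery dim} with end alcove $b^w\fa$ and then invokes Lemma~\ref{lem:length} (using dominance of $\mu$) to conclude $\ell(b^w)=\ell(b^{w'})$ and $\dim_{\phi_{w_0w}}(b^w)=0$, so the fold counts agree. Your only cosmetic difference is solving explicitly for $F_\phi(\gamma)=2\dim X_x(b)-\ell(x)+\ell(b)$ rather than equating the two displayed formulas, which is the same computation.
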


\begin{proof}  Suppose that $\gamma$ and $\gamma'$ are DL-galleries for $X_x(b)$.  Then $\gamma$ and $\gamma'$ are of type $\vec x$ and there are $w, w' \in \sW$ such that $\gamma:\fa \rightsquigarrow b^w \fa$ is positively folded with respect to the orientation $\phi_{w_0w}$ and $\gamma':\fa \rightsquigarrow b^{w'}\fa$ is positively folded with respect to the orientation $\phi_{w_0w'}$. 

By the definition of a DL-gallery and Lemma~\ref{lem:gallery dim} we have 
\[  \dim X_x(b) = \dim_{\phi_{w_0w}}(\gamma) = \frac{1}{2}\left[\ell(x) + F_{\phi_{w_0w}}(\gamma) - \ell(b^w) \right] + \dim_{\phi_{w_0w}}(b^w) \]
and
\[  \dim X_x(b) =  \dim_{\phi_{w_0w'}}(\gamma') = \frac{1}{2}\left[\ell(x) + F_{\phi_{w_0w'}}(\gamma) - \ell(b^{w'}) \right] + \dim_{\phi_{w_0w'}}(b^{w'}). \]
Now Lemma~\ref{lem:length} implies that $\ell(b^w) = \ell(b^{w'})$ and $\dim_{\phi_{w_0w}}(b^w) = \dim_{\phi_{w_0w'}}(b^{w'}) = 0$. The result follows.
\end{proof}


\section{Explicit constructions of positively folded galleries}\label{sec:Construction}

Our goal in this section is to provide explicit constructions of positively folded galleries from which all of the nonemptiness and dimension patterns for affine Deligne--Lusztig varieties associated to pure translations can be read.   In particular, in Sections~\ref{sec:Xx1ShrunkenDominant} and~\ref{sec:RelabelingSymmetry} we will show that the galleries constructed in this section are DL-galleries.  We begin by providing some motivation for our choices in Section~\ref{sec:shrunken}.  We then give an explicit construction of a single positively folded gallery $\sigma_{a_0}$ of type $\vec a_0$, where $a_0\fa$ is a particular alcove close to the origin, in Section~\ref{sec:construction_a0}.  In Section~\ref{sec:more constructions}, we build upon this construction, applying root operators to obtain an infinite family of galleries which will be used in our proofs. 

\subsection{Motivation: the shrunken Weyl chambers}\label{sec:shrunken}

In this section we provide some additional context which sheds some light on our particular choice of the gallery to construct in Section~\ref{sec:construction_a0}.  In some later parts of this paper, we will need to restrict to working inside the shrunken Weyl chambers, as in \cite{GHKR}. 

\begin{definition}\label{def:shrunken}
The \emph{shrunken Weyl chamber corresponding to $w\in \sW$} is defined as follows:
$$ \Cws_w : = \Cw_w + \sum\limits_{i: w\alpha_i >0} w \varpi_i^{\vee}.$$ That is, the shrunken Weyl chambers exclude all of the alcoves which lie between the hyperplanes $H_{\alpha,0}$ and $H_{\alpha,1}$ for each $\alpha \in \Phi^+$. We sometimes denote the shrunken dominant Weyl chamber by $\Cfs$. Note that in general $w\Cfs\neq \Cws_w$. 
\end{definition}

As a point of reference, we remark that the complement of the shrunken Weyl chambers is a Kazhdan--Lusztig two-sided cell and that it is the lowest one with respect the left-right order.   The union of the ``strips'' which comprise the complement of the shrunken Weyl chambers is therefore also often referred to as the \emph{lowest two-sided cell} in the affine Weyl group; see \cite{Bedard} and \cite{Shi} for details on this perspective.

Recall that $\rho$ is the half-sum of the positive roots.  The quantity $\langle \rho, \lambda \rangle$ equals the height of the coroot $\lambda \in R^{\vee}$.  Therefore, if $\alpha_i$ is a simple root, we have $\langle \rho, \alpha^{\vee}_i \rangle = 1$, and so we can also characterize $\rho$ as $\rho = \cap_{i=1}^n H_{\alpha_i,1}$, where $\alpha_1,\dots,\alpha_n$ are the simple roots.  Said another way, $\rho \in \Cf$  is the special vertex which serves as the basepoint for the shrunken fundamental Weyl chamber $\Cfs$.  Similarly, note that $ 2\rho = \cap_{i=1}^n H_{\alpha_i,2} \in \Cfs,$ and so $2\rho$ is the vertex in $\Cf$ closest to the origin such that all alcoves of the form $t^{2\rho}w\fa$ for any $w \in \sW$ are contained in $\Cfs$.  That is, the local copy of $\sW$ based at $2\rho$ lies completely inside of the shrunken fundamental Weyl chamber, and this is the ``first'' (smallest with respect to dominance order on $R^{\vee}$) vertex for which this is true.
Further recall that since $\rho$ is special, its link is a spherical building of the same type as the link of the origin $v_0$. Moreover the non-type-preserving translation $t^\rho$ in the extended affine Weyl group isometrically maps the star of $v_0$, which is by definition the union of alcoves containing $v_0$, to the star of $\rho$. Given $w\in\sW$ it thus makes sense to speak of the alcove in the $w$-position at $\rho$, which is the image under the (non-type-preserving) translation $t^\rho$ of the alcove in $w$-position at the origin. 

Our main results leverage the nonemptiness pattern and dimensions of the varieties $X_a(1)$ where $a =  t^{2\rho}w$ to determine nonemptiness and dimension for all $X_x(t^{\mu})$.  We thus begin by devoting our attention to a careful construction of a positively folded gallery of type $\vec a_0$ where $a_0 = t^{2\rho}w_0$.


\subsection{Constructing one positively folded gallery}\label{sec:construction_a0}

In the following let $a_0 = t^{2\rho}w_0$ and let $\x_{\pm\rho,w} = x_{\pm\rho,w}\fa$ be the alcove in the $w$-position in the link of $\pm\rho$, for $w \in \sW$. In terms of the extended affine Weyl group one has $x_{\pm\rho,w}=t^{\pm\rho}w$.  In this section we first construct a minimal gallery $\sigma:\x_{-\rho,1} \rightsquigarrow \x_{\rho,w_0}$ by concatenating smaller galleries, then fold $\sigma$ to obtain a gallery $\tilde\sigma:\x_{-\rho,1} \rightsquigarrow \x_{-\rho,1}$, and then translate this gallery by $t^\rho$ to obtain a positively folded gallery $\sigma_{a_0}:\fa\rightsquigarrow \fa$ of type $\vec a_0$.   We conclude by computing the dimension of $\sigma_{a_0}$.

 First put $\w_0\define w_0\fa$ and choose minimal galleries 
\[
\gamma : \fa \rightsquigarrow \x_{\rho, w_0} \quad \mbox{and} \quad \tau: \w_0 \rightsquigarrow \fa. 
\]
Let $\bar\gamma$ be the minimal gallery from $\x_{-\rho, 1}$ to $\w_0$ that is obtained by translating $\gamma$ by $t^{-\rho}$.  We obtain a gallery $\sigma:\x_{-\rho,1} \rightsquigarrow \x_{\rho,w_0}$ by concatenating $\bar\gamma$, $\tau$ and $\gamma$:
 \[ 
 \sigma \define \bar\gamma  \ast \tau \ast \gamma.
 \]  
 See the top left of Figure~\ref{fig:w0_fold}  for an illustration of these galleries in type $\tilde C_2$.

\begin{figure}[ht]
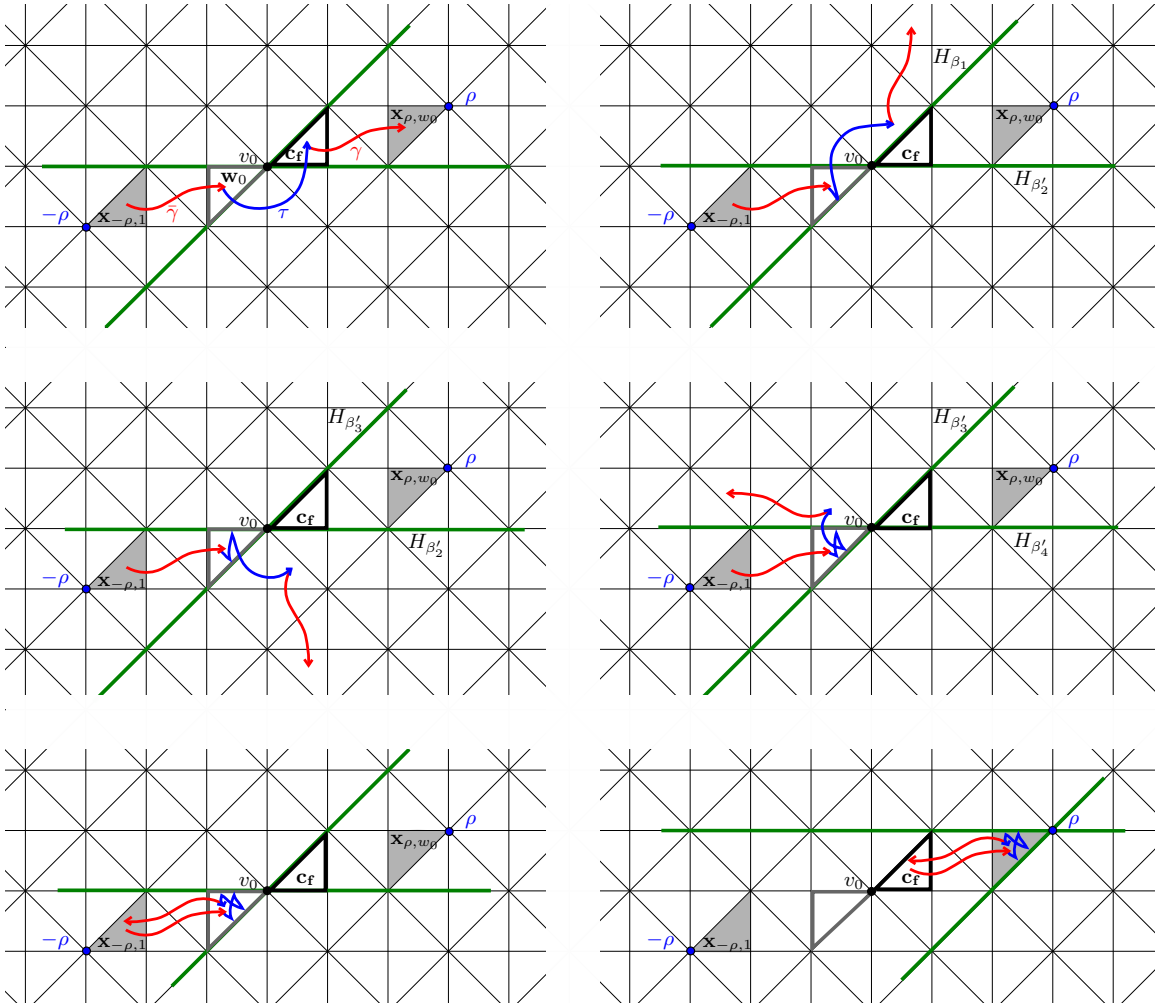

\begin{center}
\resizebox{\textwidth}{!}
{
\begin{overpic}{w0_fold_all_150420}
\put(23.7,68.3){\footnotesize{\textcolor{blue}{$\tau$}}}
\put(30,73.7){\footnotesize{\textcolor{red}{$\gamma$}}}
\put(14,68.3){\footnotesize{\textcolor{red}{$\bar\gamma$}}}
\put(20.3,73.2){\footnotesize{\textcolor{black}{$v_0$}}}
\put(3.1,68.2){\footnotesize{\textcolor{blue}{$-\rho$}}}
\put(40,78.5){\footnotesize{\textcolor{blue}{$\rho$}}}
\put(73,73.2){\footnotesize{\textcolor{black}{$v_0$}}}
\put(55.8,68.2){\footnotesize{\textcolor{blue}{$-\rho$}}}
\put(92.5,78.5){\footnotesize{\textcolor{blue}{$\rho$}}}
\put(20.3,41.5){\footnotesize{\textcolor{black}{$v_0$}}}
\put(3.1,36.5){\footnotesize{\textcolor{blue}{$-\rho$}}}
\put(40,47){\footnotesize{\textcolor{blue}{$\rho$}}}
\put(73,41.6){\footnotesize{\textcolor{black}{$v_0$}}}
\put(55.8,36.5){\footnotesize{\textcolor{blue}{$-\rho$}}}
\put(92.5,47){\footnotesize{\textcolor{blue}{$\rho$}}}
\put(20.3,10.1){\footnotesize{\textcolor{black}{$v_0$}}}
\put(3.1,5){\footnotesize{\textcolor{blue}{$-\rho$}}}
\put(40,15.5){\footnotesize{\textcolor{blue}{$\rho$}}}
\put(73,10.1){\footnotesize{\textcolor{black}{$v_0$}}}
\put(55.8,5){\footnotesize{\textcolor{blue}{$-\rho$}}}
\put(92.5,15.5){\footnotesize{\textcolor{blue}{$\rho$}}}
\put(24.3,73.5){\footnotesize{\textcolor{black}{$\fa$}}}
\put(8,68){\footnotesize{\textcolor{black}{$\x_{-\rho,1}$}}}
\put(33.5,77){\footnotesize{\textcolor{black}{$\x_{\rho,w_0}$}}}
\put(78,73.5){\footnotesize{\textcolor{black}{$\fa$}}}
\put(60.7,68){\footnotesize{\textcolor{black}{$\x_{-\rho,1}$}}}
\put(86.2,77){\footnotesize{\textcolor{black}{$\x_{\rho,w_0}$}}}
\put(18.7,71.5){\footnotesize{\textcolor{black}{$\w_0$}}}
\put(25.3,41.8){\footnotesize{\textcolor{black}{$\fa$}}}
\put(8,36.3){\footnotesize{\textcolor{black}{$\x_{-\rho,1}$}}}
\put(33.5,45.5){\footnotesize{\textcolor{black}{$\x_{\rho,w_0}$}}}
\put(78,41.8){\footnotesize{\textcolor{black}{$\fa$}}}
\put(60.7,36.3){\footnotesize{\textcolor{black}{$\x_{-\rho,1}$}}}
\put(86.2,45.5){\footnotesize{\textcolor{black}{$\x_{\rho,w_0}$}}}
\put(25.3,10.4){\footnotesize{\textcolor{black}{$\fa$}}}
\put(8,4.8){\footnotesize{\textcolor{black}{$\x_{-\rho,1}$}}}
\put(33.5,13.8){\footnotesize{\textcolor{black}{$\x_{\rho,w_0}$}}}
\put(78,10.4){\footnotesize{\textcolor{black}{$\fa$}}}
\put(60.7,4.8){\footnotesize{\textcolor{black}{$\x_{-\rho,1}$}}}
\put(80.7,81.8){\footnotesize{\textcolor{black}{$H_{\beta_1}$}}}
\put(87.7,71.2){\footnotesize{\textcolor{black}{$H_{\beta_2'}$}}}
\put(28,50.5){\footnotesize{\textcolor{black}{$H_{\beta_3'}$}}}
\put(35,39.6){\footnotesize{\textcolor{black}{$H_{\beta_2'}$}}}
\put(80.7,50.5){\footnotesize{\textcolor{black}{$H_{\beta_3'}$}}}
\put(87.7,39.7){\footnotesize{\textcolor{black}{$H_{\beta_4'}$}}}
\end{overpic}
}
\vspace{0.5cm}
\caption[w0_fold]{Construction of a positively folded gallery from $\fa$ to $\fa$ of type $\vec{a}_0$ where $a_0 = t^{2\rho}w_0$ in type $\tilde C_2$. We start in the top left with a minimal gallery from $\x_{-\rho,1}$ to $\x_{\rho,w_0}$ and apply a four-step PRS folding sequence, with all folds occurring in the heavily shaded (green) hyperplanes. The final step, a translation by $t^\rho$ illustrated in the bottom row of the figure, yields the desired gallery.}
\label{fig:w0_fold}
\end{center}
\end{figure}

\begin{lemma}\label{lem_w0-folding-sequence}
The gallery $\sigma: \x_{-\rho,1} \rightsquigarrow \x_{\rho,w_0}$ is minimal.
\end{lemma}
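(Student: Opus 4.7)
The plan is to show that the concatenation $\sigma = \bar\gamma \ast \tau \ast \gamma$ is minimal by verifying that its type, regarded as a word in $\aW$, is a reduced expression for $a_0 = t^{2\rho}w_0$, invoking the characterization that $\gamma$ is minimal iff $\type(\gamma)$ is a minimal presentation for the corresponding group element.

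I would start by observing that each of the three sub-galleries is individually minimal: $\gamma$ and $\tau$ are minimal by construction, and $\bar\gamma = t^{-\rho}(\gamma)$ is minimal because it is obtained from $\gamma$ by a Euclidean translation, which preserves both the type and the minimality of a gallery. Consequently, concatenating the three types yields a word of length
$$ 2\ell(t^\rho w_0) + \ell(w_0), $$
and multiplying out in $\aW$ using $w_0^2 = \id$ shows that this word represents
$$ (t^\rho w_0) \cdot w_0 \cdot (t^\rho w_0) \;=\; t^{2\rho} w_0 \;=\; a_0. $$
Tracking endpoints confirms that $\sigma$ does indeed run from $\x_{-\rho,1}$ to $\x_{\rho,w_0}$. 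Thus the problem reduces to verifying the numerical identity $\ell(a_0) = 2\ell(t^\rho w_0) + \ell(w_0)$.

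The main step is to check this identity via the standard length formula. Since $w_0$ sends every positive root to a negative one, for any dominant cocharacter $\mu$ we have $\ell(t^\mu w_0) = \sum_{\alpha \in \RS^+}|\langle\alpha,\mu\rangle - 1|$. As $\langle\alpha,\rho\rangle \geq 1$ for every $\alpha \in \RS^+$, applying this with $\mu = \rho$ and $\mu = 2\rho$ yields $\ell(t^\rho w_0) = \langle 2\rho,\rho\rangle - |\RS^+|$ and $\ell(t^{2\rho}w_0) = 2\langle 2\rho,\rho\rangle - |\RS^+|$; together with $\ell(w_0) = |\RS^+|$, the identity follows by direct substitution. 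The only real obstacle is getting the sign conventions of the length formula right, which I would double-check against the rank-two example in Figure~\ref{fig:w0_fold} (type $\tilde C_2$, where the identity reads $5 = 2\cdot 1 + 3$).

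As a geometric alternative, I could argue by hyperplane counting: since each sub-gallery is individually minimal, no sub-gallery crosses any hyperplane twice, so it suffices to verify that $\bar\gamma$, $\tau$, and $\gamma$ cross pairwise disjoint sets of hyperplanes. The hyperplanes crossed by $\tau$ are precisely the $H_{\alpha,0}$ for $\alpha \in \RS^+$; those crossed by $\gamma$ separate two alcoves lying in the closed dominant chamber and therefore have the form $H_{\alpha,k}$ with $1 \leq k \leq \langle\alpha,\rho\rangle - 1$; and those crossed by $\bar\gamma$ are the $t^{-\rho}$-translates, namely $H_{\alpha,k'}$ with $1 - \langle\alpha,\rho\rangle \leq k' \leq -1$. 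These three sets are manifestly disjoint, which yields minimality and recovers the length identity as a byproduct.
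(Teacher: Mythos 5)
Your proposal is correct, but your primary argument takes a genuinely different route from the paper's. The paper proves minimality purely geometrically: it shows that the three subgalleries cross pairwise disjoint sets of hyperplanes ($\Inv(\tau)$ consists exactly of the walls $H_{\alpha,0}$ through the origin, while $\gamma$ stays on their positive side and $\bar\gamma$ on their negative side), so $\sigma$ crosses no hyperplane twice. You instead reduce minimality to the concatenated type being a reduced word and verify the length identity $\ell(a_0)=2\ell(t^\rho w_0)+\ell(w_0)$ via the Iwahori--Matsumoto formula; both the formula $\ell(t^\mu w_0)=\sum_{\alpha\in\Phi^+}\lvert\langle\alpha,\mu\rangle-1\rvert$ and the resulting computation $\ell(t^\rho w_0)=\langle2\rho,\rho\rangle-\lvert\Phi^+\rvert$, $\ell(a_0)=2\langle2\rho,\rho\rangle-\lvert\Phi^+\rvert$ are correct. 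What the paper's route buys is independence from any length formula and from type bookkeeping: since $\rho$ need not lie in $R^\vee$, the elements $t^{\pm\rho}$ and $t^\rho w_0$ are a priori elements of the extended affine Weyl group, so your ``multiplying out in $\aW$'' is really a computation in $\eW$, with the types of the pieces representing the $\aW$-elements carrying the respective alcoves; this is harmless because length-zero elements do not affect lengths (and the paper makes the same identification when it calls $\sigma$ a gallery of type $\vec a_0$), but it is a convention worth acknowledging. What your route buys is the explicit value of $\ell(a_0)$, which the paper needs later anyway (in the proof of Lemma~\ref{lem:dim sigma}, in the equivalent form $\ell(a_0)+\ell(w_0)=\ell(t^{2\rho})$). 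Your ``geometric alternative'' is essentially the paper's own proof, and is in fact slightly more explicit at its one terse point, namely why $\Inv(\bar\gamma)\cap\Inv(\gamma)=\emptyset$.

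One small slip: your sanity check cites Figure~\ref{fig:w0_fold} in type $\tilde C_2$ but quotes the $\tilde A_2$ numbers. In $C_2$ one has $\lvert\Phi^+\rvert=\ell(w_0)=4$ and $\ell(t^\rho w_0)=3$, so the identity there reads $10=2\cdot 3+4$, not $5=2\cdot 1+3$. This does not affect the general argument, which is proved correctly.
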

\begin{proof}
It suffices to show that $\sigma$ crosses each hyperplane at most once.  Note first that since $w_0$ is the longest word in $W$, the set of hyperplanes crossed by $\tau$, denoted by $\Inv(\tau)$, is 
\[
\Inv(\tau) = \{ H_{\alpha,0} \mid \alpha \in R^+ \} = \{ H_\alpha \mid \alpha \in R^+ \},
\]
 where we put $H_\alpha\define H_{\alpha,0}$. Now the alcove $\fa$ is on the positive side of $H_{\alpha}$ for all $\alpha \in R^+$. Thus the minimal gallery $\gamma: \fa \rightsquigarrow x_{\rho,w_0}\fa $ does not cross any of the hyperplanes in $\Inv(\tau)$, and so
$
\Inv(\gamma) \cap \Inv(\tau) = \emptyset. 
$
Hence the concatenation $\tau\ast\gamma$ is minimal.  

To see that the concatenation of $\bar\gamma$ with $\tau\ast\gamma$ is minimal, note that the alcove $\w_0 $ is on the negative side of all hyperplanes in $\Inv(\tau)$.  Thus the minimal gallery $\bar\gamma$ that ends in $w_0\fa$ does not cross any of the hyperplanes $H_{\alpha}$, hence $\Inv(\bar\gamma)$ is disjoint from $\Inv(\tau)$ and from $\Inv(\gamma)$.  This completes the proof.
\end{proof}

We now apply a PRS folding sequence to the minimal gallery $\sigma:  \x_{-\rho,1} \rightsquigarrow \x_{\rho,w_0}$ defined above to obtain a gallery $\tilde\sigma$ which is positively folded with respect to $-\phipartialo$, such that each fold occurs in a hyperplane orthogonal to a simple root.  Recall that $\phipartialo$ denotes the standard orientation.  Our method will provide an explicit algorithm for folding $\sigma$ a total of $\ell(w_0)$ times.

To construct this PRS folding sequence, we first enumerate the hyperplanes crossed by the gallery $\tau$ as $H_{\beta_1}, \ldots, H_{\beta_l}$, in that order, and write $s_{\beta_i}$ for the reflection $s_{\beta_i, 0}$. Note that $\beta_1$ must be a simple root, since $H_{\beta_1}$ is a wall of the alcove $\w_0$.  We introduce a first fold in the hyperplane $H_{\beta_1}$, which corresponds to applying the reflection $s_{\beta_1}$ to the portion of the gallery $\sigma$ after $H_{\beta_1}$, including the alcove $\fa$.  Put $\beta_1' = \beta_1$ and define $H_{\beta_2'} = s_{\beta_1} H_{\beta_2}$.   The top right of Figure~\ref{fig:w0_fold} shows the situation after this first fold, in type $\tilde C_2$. 

Now the hyperplane $H_{\beta_2'}$ is equal to $s_{\beta_1} H_{\beta_2}$ which is a wall of the alcove $\w_0$.  Thus $\beta_2'$ is also a simple root.  The second fold is in the hyperplane $H_{\beta_2'}$, and so corresponds to applying the reflection $s_{\beta'_2}$.  Inductively, the $j^{\text{th}}$ fold will be in the hyperplane $H_{\beta_j'}$ which is the image of $H_{\beta_j}$ under the product of reflections $s_{\beta'_{j-1}} \cdots s_{\beta'_2}s_{\beta_1'}$, and the root $\beta'_j$ is simple.   See the middle row and bottom left of Figure~\ref{fig:w0_fold} for the completion of this folding sequence in type $\tilde C_2$, where there are $l = \ell(w_0) = 4$ folds.

\begin{lemma}\label{lem:lem_w0-folding-endpoint}  Applying the product of reflections $s_{\beta'_{l}} \cdots s_{\beta'_2}s_{\beta_1'}$ takes $\x_{\rho, w_0}$ to $\x_{-\rho, 1}$.
\end{lemma}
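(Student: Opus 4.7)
The plan is to track, inductively, how the composition of the fold reflections $r_j \define s_{\beta'_j}$ acts on the affine Weyl group element labelling the endpoint of the gallery. The key observation is that each fold in a PRS folding sequence reflects the tail of the gallery, and so after the $j$-th fold the endpoint is translated by $r_j r_{j-1} \cdots r_1$ applied to the original endpoint $\x_{\rho, w_0}$. Thus it suffices to identify the product $r_l \cdots r_1$ as an element of $\aW$ and evaluate it on $\x_{\rho,w_0}$.

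First I would show by induction on $j$ that $r_j r_{j-1} \cdots r_1 = s_{\beta_1} s_{\beta_2} \cdots s_{\beta_j}$. The base case $j=1$ is immediate since $\beta'_1 = \beta_1$. For the inductive step, the construction gives $H_{\beta'_j} = (r_{j-1} \cdots r_1) H_{\beta_j}$, so conjugating yields $s_{\beta'_j} = (r_{j-1} \cdots r_1)\, s_{\beta_j}\, (r_{j-1} \cdots r_1)^{-1}$, and multiplying on the right by $r_{j-1} \cdots r_1 = s_{\beta_1}\cdots s_{\beta_{j-1}}$ produces the telescoping cancellation $s_{\beta_1}\cdots s_{\beta_{j-1}}\, s_{\beta_j}$ as required.

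Next I would identify this product with the longest element $w_0 \in \sW$. Since $\tau$ is a minimal gallery $\w_0 \rightsquigarrow \fa$ whose successive wall-crossings are $H_{\beta_1}, \dots, H_{\beta_l}$ (each a linear hyperplane through the origin, by the proof of Lemma \ref{lem_w0-folding-sequence}), each reflection $s_{\beta_k}$ carries the $(k-1)$-st alcove of $\tau$ to the $k$-th. Iterating gives $s_{\beta_l}\cdots s_{\beta_1}\cdot \w_0 = \fa$, so $s_{\beta_l} \cdots s_{\beta_1} = w_0^{-1}$. Taking inverses and using $w_0^{-1}=w_0$ yields
\[
s_{\beta'_l}\cdots s_{\beta'_1} \;=\; r_l \cdots r_1 \;=\; s_{\beta_1}\cdots s_{\beta_l} \;=\; (s_{\beta_l}\cdots s_{\beta_1})^{-1} \;=\; w_0.
\]

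Finally, applying this element to the endpoint $\x_{\rho,w_0} = t^{\rho} w_0 \fa$ I would compute
\[
w_0 \cdot \x_{\rho,w_0} \;=\; (w_0 t^{\rho} w_0)\, \fa \;=\; t^{w_0(\rho)}\fa \;=\; t^{-\rho}\fa \;=\; \x_{-\rho,1},
\]
which proves the claim. The only step requiring real care is the inductive identification of $r_l\cdots r_1$ with $s_{\beta_1}\cdots s_{\beta_l}$, since the PRS folds are recursively defined in terms of earlier folds; but once the telescoping is set up correctly, the rest of the argument reduces to a straightforward evaluation of an element of $\sW$ on a translation alcove.
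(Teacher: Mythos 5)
Your proof is correct and follows essentially the same route as the paper's: the same telescoping identification $s_{\beta_l'}\cdots s_{\beta_1'} = s_{\beta_1}\cdots s_{\beta_l}$ via conjugation of the folding hyperplanes, the same identification of this product with $w_0$ from the walls crossed by $\tau:\w_0 \rightsquigarrow \fa$, and the same final computation $w_0 t^{\rho}w_0 = t^{-\rho}$. No gaps to report.
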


\begin{proof}
We first note that all folds occur within the gallery $\tau$ by construction.  Also, as observed above, each of the roots $\beta_j'$ is simple and $0\in H_{\beta'_j}$ for $1 \leq j \leq l$.  

Folding $\tau$ in each of the hyperplanes $H_{\beta_1'}, \ldots, H_{\beta_l'}$ in turn corresponds to the product of simple reflections $s_{\beta_l'} \cdots s_{\beta_1'}$.  Now $s_{\beta_1} = s_{\beta_1'}$ and $s_{\beta_2'} = s_{\beta_1} s_{\beta_2} s_{\beta_1}$, and hence $s_{\beta_2'}s_{\beta_1'} = s_{\beta_1} s_{\beta_2}$.  Inductively, we find that
\[ 
s_{\beta_l'} \cdots s_{\beta_1'} = s_{\beta_1} \cdots s_{\beta_l}. 
 \]
But the hyperplanes crossed by $\tau:\w_0 \rightsquigarrow \fa$ are exactly $H_{\beta_1}, \ldots, H_{\beta_l}$, in that order, and so 
\[ 
(s_{\beta_l} \cdots s_{\beta_1})^{-1} = s_{\beta_1} \cdots s_{\beta_l}
\]
is a word for $w_0$.  Thus the result of applying this sequence of reflections takes the end alcove $\fa$ of $\tau$ to the alcove $\w_0$, and takes the minimal gallery $\gamma:\fa \rightsquigarrow \x_{\rho,w_0}$ to the minimal gallery $w_0\gamma:\w_0 \rightsquigarrow w_0\x_{\rho,w_0}$.  

Now observe that $$w_0x_{\rho,w_0} = w_0 t^\rho w_0 = t^{-\rho} = x_{-\rho,1},$$ and so applying the product of reflections $s_{\beta'_{l}} \cdots s_{\beta'_2}s_{\beta_1'}$ takes $\x_{\rho, w_0}$ to $\x_{-\rho, 1}$ as required.
\end{proof}

\begin{lemma}\label{lem:lem_w0-folding-orientation}  The resulting folded gallery $\tilde\sigma: \x_{-\rho,1} \rightsquigarrow \x_{-\rho,1}$ is positively folded with respect to $-\phipartialo$.
\end{lemma}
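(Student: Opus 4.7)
The plan is to show directly that every fold in $\tilde\sigma$ occurs at the alcove $\w_0$ and that each folding hyperplane separates $\w_0$ from the dominant Weyl chamber at infinity, so that the positivity condition in Definition~\ref{def:PosFolded} (taken with respect to $-\phipartialo$, whose opposite chamber at infinity $C_{-(-\phi_0)} = C_{\phi_0}$ is the dominant one) is satisfied at each fold.

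First I would verify by induction on $j$ that, after the first $j$ folds have been applied, the portion of $\tilde\sigma$ corresponding to $\tau$ consists of $\w_0$ repeated $j+1$ times followed by the image of the tail $c_j^\tau, \dots, c_l^\tau$ under the product $s_{\beta'_j}\cdots s_{\beta'_1} = s_{\beta_1}\cdots s_{\beta_j}$. The base case $j=1$ is immediate: folding at $H_{\beta_1}$ replaces the crossing $c_0^\tau\to c_1^\tau$ by a fold at $\w_0$ and reflects the tail by $s_{\beta_1}$, and since $s_{\beta_1}c_1^\tau = c_0^\tau = \w_0$ we see two copies of $\w_0$ separated by a fold. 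For the inductive step, after $j-1$ folds the next crossing is between $\w_0$ (the last copy) and $s_{\beta'_{j-1}}\cdots s_{\beta'_1}c_j^\tau$; this crossing is through the image $s_{\beta'_{j-1}}\cdots s_{\beta'_1}H_{\beta_j} = H_{\beta'_j}$, so the $j$th fold replaces it by a fold at $\w_0$, extending the string of copies of $\w_0$ by one. In particular, each of the $l$ folds in the PRS sequence occurs with the fold-alcove equal to $\w_0$.

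Next I would check the separation condition. By construction each $H_{\beta'_j}$ is a wall of $\w_0$ containing the origin, so $\pm\beta'_j$ is a simple root; write $H_{\beta'_j} = H_\alpha$ with $\alpha\in\Delta$. The alcove $\w_0$ lies in the closed antidominant Weyl chamber, in particular in the half-space $\{v\mid \langle\alpha,v\rangle\le 0\}$, while the dominant chamber at infinity $C_{\phi_0}$ is represented by the fundamental Weyl chamber $\Cf$, lying in the opposite closed half-space $\{v\mid\langle\alpha,v\rangle\ge 0\}$. Hence $H_\alpha$ separates $\w_0$ from $C_{\phi_0} = C_{-(-\phi_0)}$, which is exactly the condition needed for the fold at $\w_0$ in $H_{\beta'_j}$ to be positive with respect to $-\phipartialo$.

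Combining the two steps, every one of the $l = \ell(w_0)$ folds in the PRS folding sequence is positive with respect to $-\phipartialo$, so $\tilde\sigma$ is positively folded with respect to $-\phipartialo$. The only subtle point, and the one requiring care, is the inductive bookkeeping in the first step: one must verify that the product $s_{\beta'_{j-1}}\cdots s_{\beta'_1}$ simplifies to $s_{\beta_1}\cdots s_{\beta_{j-1}}$, which is the same telescoping identity that was already established in the proof of Lemma~\ref{lem:lem_w0-folding-endpoint}, so no new computation is needed.
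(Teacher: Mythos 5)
Your proof is correct and follows essentially the same route as the paper: the paper's argument is precisely that each $\beta'_j$ is simple, so every fold occurs in a wall of $\w_0$ at the alcove $\w_0$ itself, making it a negative fold for the standard orientation and hence positive for $-\phipartialo$. Your inductive bookkeeping (that the fold alcove is $\w_0$ at every step, via the telescoping identity from Lemma~\ref{lem:lem_w0-folding-endpoint}) simply makes explicit what the paper's one-sentence proof leaves implicit, and your separation check against $C_{\phi_0}$ matches the paper's use of Definition~\ref{def:PosFolded}.
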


\begin{proof}  Since each $\beta_j'$ is simple, all folds of $\tau$ corresponding to the sequence of reflections $s_{\beta_l'} \cdots s_{\beta_1'}$ occur in walls of the alcove $\w_0$, and so are negative folds with respect to the standard orientation, hence positive with respect to the orientation $-\phipartialo$.  \end{proof}

Recall that we are aiming for a gallery $\sigma_{a_0}:\fa\rightsquigarrow \fa$ of type $\vec a_0$ some reduced expression for $a_0 = t^{2\rho}w_0$.
We put 
\[
\sigma_{a_0}\define t^{\rho}\tilde\sigma: \fa\rightsquigarrow \fa .
\]
Here, the translation $t^{\rho}$ is an element of the extended affine Weyl group, which naturally acts on the set of galleries.  This action in general permutes types of panels (so does not preserve types of galleries).  Since $\tilde\sigma:\x_{-\rho,1} \rightsquigarrow \x_{-\rho,1}$ has the same type as the minimal gallery $\sigma:\x_{-\rho,1} \rightsquigarrow \x_{\rho,w_0}$, and $x_{-\rho,1}a_0 = x_{\rho,w_0}$, we obtain that $\sigma_{a_0}$ has type $\vec{a_0}$.  The gallery $\sigma_{a_0}$ can also be described as follows: first let $\sigma'$ be the image of the minimal gallery $\sigma$ under $t^\rho$, so that $\sigma'$ has type $\vec{a_0}$.  Then apply to $\sigma'$ the shifted folding sequence at $t^\rho$, that is, the folding sequence we obtain from the one constructed above by taking as folding hyperplanes the images of the previous folding hyperplanes under $t^\rho$.  The gallery $\sigma_{a_0}$ in type $\tilde C_2$ is shown on the bottom right of Figure~\ref{fig:w0_fold}.

Finally, we compute the dimension of $\sigma_{a_0}$.

\begin{lemma}\label{lem:dim sigma}  The gallery $\sigma_{a_0}:\fa \rightsquigarrow \fa$ satisfies $\dim_{-\phi_0}(\sigma_{a_0}) = \ell(t^{\rho})$.
\end{lemma}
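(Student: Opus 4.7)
The plan is to apply formula \eqref{LoopDimFormula} from Lemma~\ref{lem:gallery dim} directly: since $\sigma_{a_0}$ is a positively folded gallery of type $\vec{a_0}$ running from $\fa$ to $\fa$, we have
\[
\dim_{-\phi_0}(\sigma_{a_0}) = \frac{1}{2}\bigl[\ell(a_0) + F_{-\phi_0}(\sigma_{a_0})\bigr].
\]
It then suffices to compute each of the two quantities on the right-hand side.

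For the number of folds, I would observe that by Lemma~\ref{lem_w0-folding-sequence} the initial gallery $\sigma$ is minimal, and the PRS folding sequence described in the construction consists of exactly one fold for each of the $\ell(w_0)$ hyperplanes crossed by the middle piece $\tau: \w_0 \rightsquigarrow \fa$. Since translation by $t^\rho$ preserves the number of folds (Lemma~\ref{lem:translations dim}), this gives $F_{-\phi_0}(\sigma_{a_0}) = \ell(w_0)$; positivity of the folds with respect to $-\phi_0$ is exactly the content of Lemma~\ref{lem:lem_w0-folding-orientation}.

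For the length of $a_0 = t^{2\rho}w_0$, I would note that since $\rho$ is strictly dominant one has the standard identities $\ell(t^{2\rho}) = \langle 2\rho, 2\rho\rangle = 2\ell(t^\rho)$ and $\ell(t^{2\rho}w_0) = \ell(t^{2\rho}) - \ell(w_0)$, hence
\[
\ell(a_0) = 2\ell(t^\rho) - \ell(w_0).
\]
Alternatively, this can be read off from the decomposition $\sigma = \bar\gamma\ast\tau\ast\gamma$, whose minimality (Lemma~\ref{lem_w0-folding-sequence}) yields $\ell(a_0) = 2\ell(t^\rho w_0) + \ell(w_0) = 2(\ell(t^\rho) - \ell(w_0)) + \ell(w_0)$, using strict dominance of $\rho$ once more to write $\ell(t^\rho w_0) = \ell(t^\rho) - \ell(w_0)$. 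This cross-check is essentially the only place requiring care, and it is routine.

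Substituting both values into the formula above gives
\[
\dim_{-\phi_0}(\sigma_{a_0}) = \frac{1}{2}\bigl[\bigl(2\ell(t^\rho) - \ell(w_0)\bigr) + \ell(w_0)\bigr] = \ell(t^\rho),
\]
as required. There is no significant obstacle here; the proof is a direct bookkeeping computation once the construction of $\sigma_{a_0}$ is in place, and the main content — that the folding sequence has exactly $\ell(w_0)$ positive folds — has already been established during the construction.
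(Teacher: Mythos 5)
Your proposal is correct and follows essentially the same route as the paper: apply the loop case of Lemma~\ref{lem:gallery dim}, use that the construction yields exactly $\ell(w_0)$ positive folds, and use the length identity for the minimal coset representative $t^{2\rho}w_0$ to get $\ell(a_0)+\ell(w_0)=\ell(t^{2\rho})=2\ell(t^\rho)$. The paper simply states this last identity directly rather than computing $\ell(a_0)$ separately, so your extra cross-check via the decomposition $\bar\gamma\ast\tau\ast\gamma$ is harmless but not needed.
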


\begin{proof}
Recall that by construction the gallery $\sigma_{a_0}$ has $\ell(w_0)$ folds.  Hence by Lemma~\ref{lem:gallery dim} in the case $y = 1$
\[
\dim_{-\phi_0}(\sigma_{a_0}) = \frac{1}{2}\left(\ell(a_0) + \ell(w_0) \right).
\]
Now, recall that the alcove $\mathbf{a}_0 = a_0 \fa$ is in the dominant Weyl chamber $\Cf$, and so $a_0 = t^{2\rho}w_0$ is the minimal length coset representative in $\aW/\sW$, while $t^{2\rho}$ is the maximal length coset representative.  Therefore $\ell(a_0) + \ell(w_0) = \ell(t^{2\rho})$, and so 
\begin{equation*}
\dim_{-\phi_0}(\sigma_{a_0}) = \frac{1}{2}\left( \ell(a_0) + \ell(w_0) \right) = \frac{1}{2}\left( \ell(t^{2\rho}) \right) = \ell(t^{\rho}),
\end{equation*}
as desired.
\end{proof}

\begin{remark}
Since all folds of $\sigma$ occur within the subgallery $\tau$, it is easy to see that this construction could be extended to obtain larger families of folded galleries, if we replace the subgalleries $\gamma$ and $\bar\gamma$ of $\sigma$ by other galleries (one not necessarily of the same type as the other).
\end{remark}


\subsection{An infinite family of positively folded galleries}\label{sec:more constructions}

Building upon Section~\ref{sec:construction_a0}, we now use root operators to construct an infinite family of positively folded galleries.  The main result of this section is Proposition~\ref{prop:constr-elts1} below.   Figure \ref{fig:Xa1toXxbPropFig} depicts a family of galleries obtained by the construction given in Proposition~\ref{prop:constr-elts1}. 

\begin{figure}[ht]
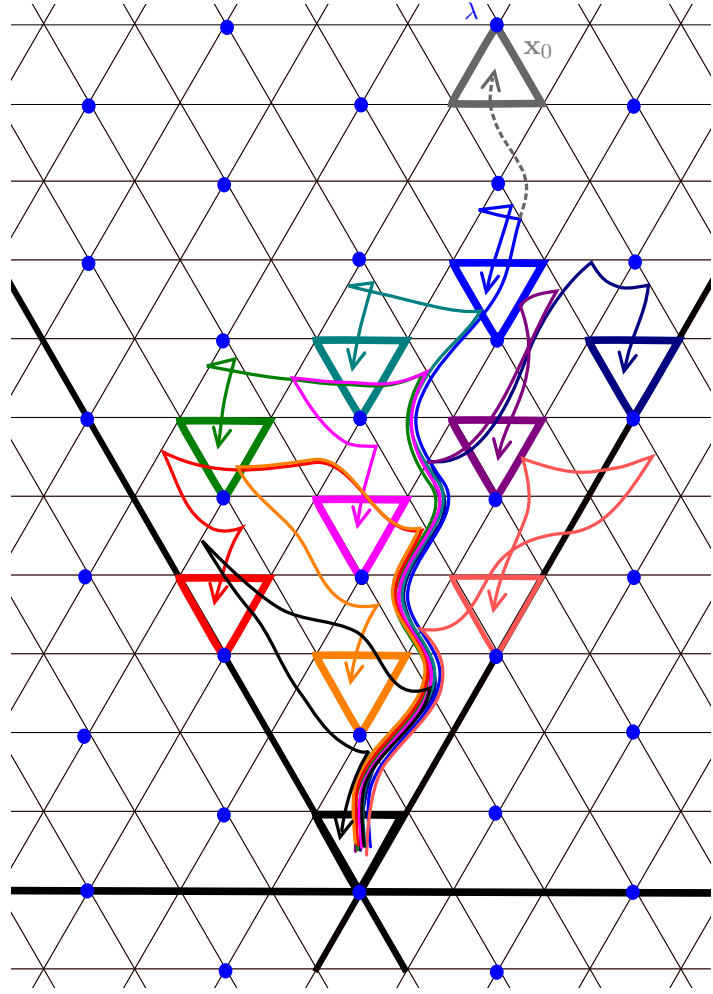

\begin{center}
\begin{overpic}[width=0.6\textwidth]{Xa1toXxb150401}
\put(46,98.5){\footnotesize{\color{blue}{$\lambda$}}}
\put(52,95){\color{gray}{$\x_0$}}
\end{overpic}
\caption{Each gallery in this figure is an instance of a gallery $\gamma_0:\fa \rightsquigarrow \bb$ of type $\vec{x}_0$ obtained by the construction given in Proposition~\ref{prop:constr-elts1}.}
\label{fig:Xa1toXxbPropFig}
\end{center}
\end{figure}

We define the \emph{positive cone} $\Cpos$ to be the positive span of all coroots that are both simple and positive.  Analogously, we define the \emph{negative cone} $\Cneg$ to be the positive span of all coroots that are both simple and negative; compare~\cite{Bourbaki4-6}.  Given any two alcoves $\x$ and $\y$, we define the \emph{convex hull of $\x$ and $\y$}, denoted by $\conv(\x,\y)$, to be the set of all alcoves which lie in some minimal alcove-to-alcove gallery from $\x$ to $\y$.  

\begin{prop}\label{prop:constr-elts1}
Let $x_0 = t^\lambda w_0 \in \aW$ have spherical direction $w_0$ and be such that the alcove $\x_0 =x_0 \fa$ is in the shrunken dominant Weyl chamber $\Cfs$. 
Suppose that $\mu \in R^\vee$ is in the intersection of $\Cf$ with the negative cone based at the vertex $\lambda - 2\rho$, that is, $$\mu\in \Cf\cap((\lambda - 2\rho)+\Cneg),$$ and let $\bb = t^\mu\fa$. 
Then there exists a gallery $\gamma_0:\fa\rightsquigarrow \bb$ of type $\vec x_0$ which is positively folded with respect to $-\phipartialo$ and has $\ell(w_0)$ folds.  
Moreover the dimension of $\gamma_0$ satisfies
\begin{equation} 
\dim_{-\phi_0}(\gamma_0)= \dim_{-\phi_0}(\sigma_{a_0})  + \langle \rho, \lambda - 2\rho - \mu \rangle =  \langle \rho, \lambda - \mu \rangle.
\end{equation}
\end{prop}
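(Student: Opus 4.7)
The plan is to construct $\gamma_0$ in two stages: first, extend $\sigma_{a_0}$ to a positively folded gallery of type $\vec{x}_0$ ending at $t^{\lambda - 2\rho}\fa$; then apply root operators to shift the endpoint to $\bb = t^\mu \fa$.

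For the first stage, the shrunken dominant chamber hypothesis gives $\lambda - 2\rho \in \Cf$, so one may choose a minimal gallery $\delta: \fa \rightsquigarrow t^{\lambda - 2\rho}\fa$ contained in $\Cf$. Since $a_0 = t^{2\rho}w_0$ is the minimum-length coset representative of $t^{2\rho}\sW$ in $\aW/\sW$, the factorization $x_0 = t^{\lambda - 2\rho}\cdot a_0$ is length-additive, so the concatenation
\[\gamma_{\text{base}} := \delta \ast (t^{\lambda - 2\rho}\sigma_{a_0}): \fa \rightsquigarrow t^{\lambda - 2\rho}\fa\]
has type $\vec{x}_0$.  By Lemmas \ref{lem:transpos} and \ref{lem:translations dim}, the translate $t^{\lambda - 2\rho}\sigma_{a_0}$ is positively folded with respect to $-\phipartialo$, has $\ell(w_0)$ folds, and has $-\phipartialo$-dimension equal to $\dim_{-\phi_0}(\sigma_{a_0})$. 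All crossings of $\delta$ are positive with respect to $\phipartialo$ and so contribute nothing to the $-\phipartialo$-dimension, and $\gamma_{\text{base}}$ inherits all these properties from $t^{\lambda - 2\rho}\sigma_{a_0}$.

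For the second stage, I would first conjugate to the standard orientation via $w_0$: by Lemmas \ref{lem:wpos} and \ref{lem:w0 dim}, the gallery $\gamma' := w_0\gamma_{\text{base}}$ is positively folded with respect to $\phipartialo$ with $\phipartialo$-dimension $\dim_{-\phi_0}(\sigma_{a_0})$, runs from $\w_0$ to $w_0 t^{\lambda - 2\rho}\fa$, and has type $\vec{x}_0$.  Setting $\nu := \lambda - 2\rho - \mu$, the negative-cone hypothesis gives $\nu \in \Cpos$; since $w_0$ sends simple positive coroots to negative simple coroots up to permutation, one has $-w_0 \nu = \sum_j d_j \alpha_j^\vee$ for non-negative integers $d_j$ with $\sum_j d_j = \langle \rho, \nu \rangle$. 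Applying the root operator $e_{\alpha_j}$ of Definition \ref{def:GLoperators} a total of $d_j$ times for each $j$, I would obtain a gallery $\gamma''$ whose endpoint vertex has shifted from $w_0(\lambda-2\rho)$ by $-w_0 \nu$ to $w_0 \mu$, whose last alcove is $w_0 t^\mu \fa$, and whose $\phipartialo$-dimension has increased by $\langle\rho, \nu\rangle$, by Lemma \ref{lem:posFoldsV}. A final application of $w_0$ on the left yields $\gamma_0 := w_0 \gamma''$, which is positively folded with respect to $-\phipartialo$, of type $\vec{x}_0$, runs from $\fa$ to $\bb$, and has $-\phipartialo$-dimension equal to $\dim_{-\phi_0}(\sigma_{a_0}) + \langle \rho, \nu \rangle = \langle \rho, \lambda - \mu\rangle$.

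The main obstacle is ensuring that the sequence of root operator applications does not disturb the initial alcove of the intermediate galleries, so that $\gamma''$ still begins at $\w_0$. Concretely, in the notation of Notation \ref{not:operators} one requires the index $j \geq 1$ at every application, which should follow from combining the shrunken-chamber hypothesis on $\x_0$ (ensuring $\lambda - 2\rho$ lies well inside $\Cf$) with the fact that the prepended $\delta$ provides sufficient initial ``runway'' in $\gamma'$ before the region affected by each $e_{\alpha_j}$.  Once this is verified, the fold count emerges automatically via Lemma \ref{lem:gallery dim}: with $\ell(x_0) = 2\langle\rho, \lambda\rangle - \ell(w_0)$, $\ell(t^\mu) = 2\langle\rho, \mu\rangle$, and $\dim_{-\phi_0}(t^\mu) = 0$ (by Lemma \ref{lem:length}, since $\mu \in \Cf$), the already-computed $\dim_{-\phi_0}(\gamma_0) = \langle\rho, \lambda - \mu\rangle$ forces $F_{-\phi_0}(\gamma_0) = \ell(w_0)$, completing the proof.
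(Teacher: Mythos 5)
Your two-stage plan is in fact the same route the paper takes: prepend a minimal gallery $\fa\rightsquigarrow t^{\lambda-2\rho}\fa$ to the translate $t^{\lambda-2\rho}\sigma_{a_0}$, conjugate by $w_0$ into the standard orientation, shift the endpoint with root operators $e_{\alpha}$, and conjugate back (your derivation of the fold count from Lemma~\ref{lem:gallery dim} at the end is a fine shortcut). The gap is that the step you defer as something that ``should follow'' is precisely the technical heart of the proposition, and it is not routine. Three quantitative facts are needed and none is established in your sketch: (i) each $e_{\alpha_j}$ must actually be defined $d_j$ times \emph{after} operators for the other simple roots have been interleaved -- a priori applying $e_\beta$ could change $m(\cdot,\alpha)$ and hence how often $e_\alpha$ is defined, and the paper needs a separate argument (Lemma~\ref{lem:rootops2}, using $\langle\beta,\alpha^\vee\rangle\le 0$ and Lemma~\ref{lem:posFoldsV}(4)) to show availability can only increase; (ii) no application may reflect the initial alcove, which by Definition~\ref{def:GLoperators} can only fail when $m=-1$ at the moment of application; and (iii) no application may reflect, rather than translate, the final alcove -- otherwise its spherical direction changes and the gallery no longer ends in $w_0t^\mu\fa$, so that after applying $w_0$ you do not land in $\bb$. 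You flag (ii) as the main obstacle but only assert it, and you do not mention (iii) at all, although it is equally essential.

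In the paper all three points are controlled by one explicit estimate: the folds of the $w_0$-image sit in hyperplanes $H_{\alpha,M(\alpha)}$ with $M(\alpha)=m(\cdot,\alpha)\le -1$, one for each simple root because $\sigma_{a_0}$ was built with a fold in a wall of every simple type (Lemma~\ref{lem:rootops0}); then $e_\alpha$ is defined exactly $\vert M(\alpha)\vert$ times, and as long as it is applied at most $\vert M(\alpha)\vert-1$ times the relevant hyperplane still contains a fold, which simultaneously fixes the first alcove and forces the last alcove to be translated (Lemma~\ref{lem:rootops1}). The needed bound $d_j\le\vert M(\alpha_j)\vert-1$ is then checked by a short computation that uses the dominance of $\mu$; note that your root-operator stage never invokes $\mu\in\Cf$ (only $\mu\in(\lambda-2\rho)+\Cneg$), which is a symptom of the missing estimate -- for $\mu$ deep outside $\Cf$ the operators do run out or reflect the end alcoves, so some such inequality is unavoidable. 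Your ``runway'' heuristic is gesturing at $\vert M(\alpha)\vert$, but without the bound the argument is incomplete. A secondary, repairable omission: the operators of Definition~\ref{def:GLoperators} act on vertex-to-vertex galleries in $\Gamma^+(\gamma_\lambda,\cdot)$ with $\gamma_\lambda$ minimal and $\lambda$ off the walls through the origin, so you must pass to the associated vertex-to-vertex gallery, check its type really is that of such a minimal $\gamma_\lambda$ (this uses the spherical direction $w_0$ and the shrunken hypothesis), and use Corollary~\ref{cor:folds+crossings} (first alcove in direction $w_0$) to transfer the alcove-to-alcove dimension back and forth.
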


\begin{remark}\label{rem:gamma0}  Since $\x_0$ is in the $w_0$ position in the shrunken dominant Weyl chamber, the vertex $\lambda$ does not lie on any wall containing the origin.  Moreover $\lambda - 2\rho$ is dominant and so in particular the intersection $\Cf\cap((\lambda - 2\rho)+\Cneg)$ contains the origin, thus this intersection is nonempty.  
\end{remark}

\begin{proof}  Let $\y$ be the pure translation alcove $t^{\lambda-2\rho} \fa$ and observe that since $\lambda - 2\rho$ is dominant, we have $\y\in \conv(\fa,\x_0)$.   Now choose $\gamma_y:\fa\rightsquigarrow \y$ a minimal gallery.  Then $\dim_{-\phi_0}(\gamma_y)=0$ since all crossings in $\gamma_y$ are negative with respect to $-\phipartialo$ and $\gamma_y$ has no folds.  See the first frame in Figure~\ref{fig:gamma0sequence1}.

\begin{figure}[ht]
\begin{center}
\begin{overpic}[width=\textwidth]{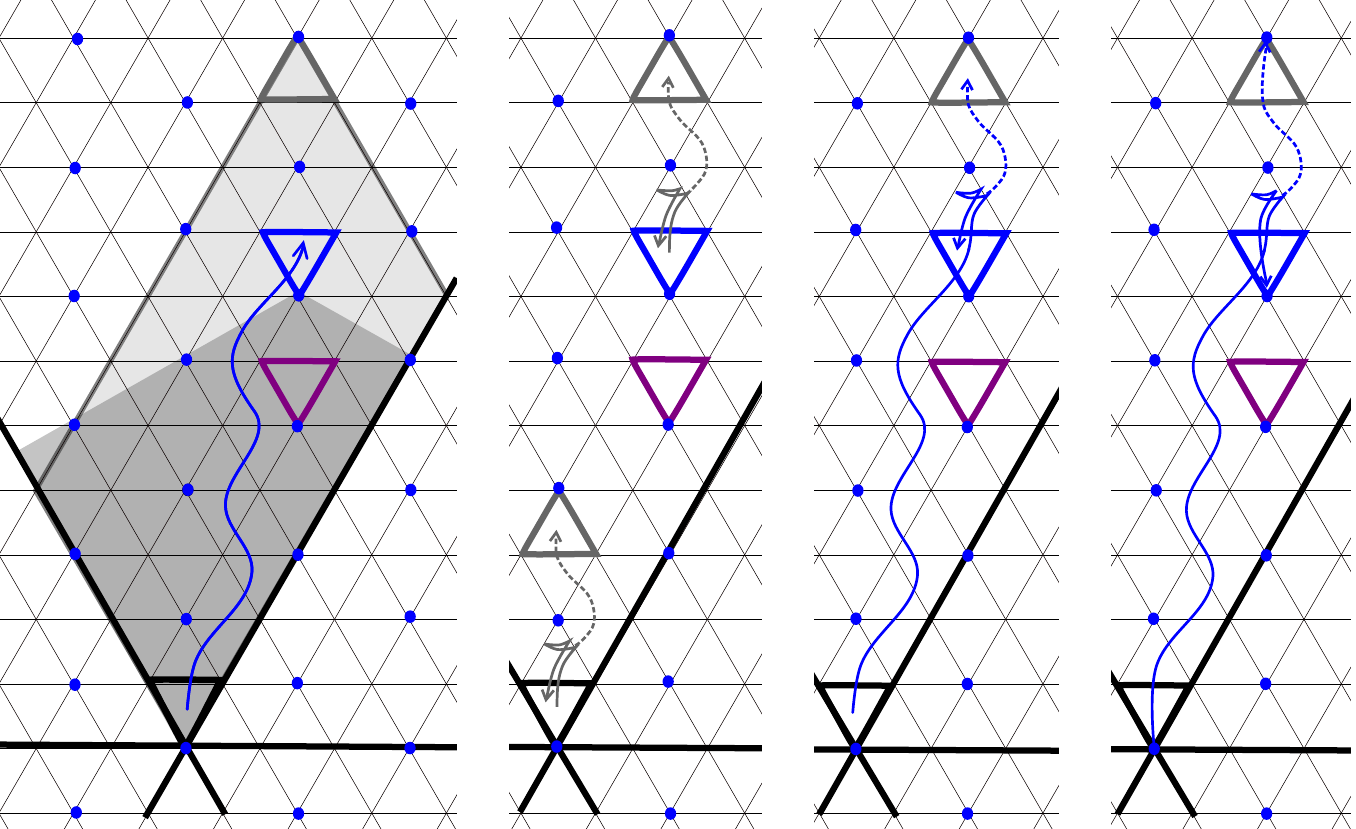}
\put(15.5,27){\footnotesize{\color{blue}{$\gamma_y$}}}
\put(12.7,8){\footnotesize{\color{black}{$\fa$}}}
\put(24,56.3){\footnotesize{\color{gray}{$\x_0$}}}
\put(24,40.5){\footnotesize{\color{blue}{$\y$}}}
\put(23.5,30){\footnotesize{\color{purple}{$\bb$}}}
\put(21.3,59.6){\footnotesize{\color{blue}{$\lambda$}}}
\put(21,37){\footnotesize{\color{blue}{$\lambda - 2\rho$}}}
\put(21.3,27.5){\footnotesize{\color{blue}{$\mu$}}}
\put(37,14){\footnotesize{\color{gray}{$\sigma_{a_0}$}}}
\put(48,45){\footnotesize{\color{gray}{$\sigma$}}}
\put(48.7,51){\footnotesize{\color{gray}{$\tilde\sigma$}}}
\put(43,23){\footnotesize{\color{gray}{$\mathbf{a}_0$}}}
\put(40,8){\footnotesize{\color{black}{$\fa$}}}
\put(51.3,56.3){\footnotesize{\color{gray}{$\x_0$}}}
\put(51.3,40.5){\footnotesize{\color{blue}{$\y$}}}
\put(48.6,59.6){\footnotesize{\color{blue}{$\lambda$}}}
\put(48.4,37){\footnotesize{\color{blue}{$\lambda - 2\rho$}}}
\put(40,26.5){\footnotesize{\color{blue}{$2\rho$}}}
\put(65.5,27){\footnotesize{\color{blue}{$\gamma$}}}
\put(70.5,51){\footnotesize{\color{blue}{$\tilde\gamma$}}}
\put(73.3,56.3){\footnotesize{\color{gray}{$\x_0$}}}
\put(73.3,40.5){\footnotesize{\color{blue}{$\y$}}}
\put(70.6,59.6){\footnotesize{\color{blue}{$\lambda$}}}
\put(70.4,37){\footnotesize{\color{blue}{$\lambda - 2\rho$}}}
\put(62,8){\footnotesize{\color{black}{$\fa$}}}
\put(87,26.5){\footnotesize{\color{blue}{$\gamma^\sharp$}}}
\put(86.5,51){\footnotesize{\color{blue}{$\gamma_\lambda = \tilde\gamma^\sharp$}}}
\put(95.3,56.3){\footnotesize{\color{gray}{$\x_0$}}}
\put(95.3,40.5){\footnotesize{\color{blue}{$\y$}}}
\put(92.6,59.6){\footnotesize{\color{blue}{$\lambda$}}}
\put(92.4,37){\footnotesize{\color{blue}{$\lambda - 2\rho$}}}
\put(86.5,6.5){\footnotesize{\color{blue}{$v_0$}}}
\put(84,8){\footnotesize{\color{black}{$\fa$}}}
\end{overpic}
\caption{We illustrate the proof of Proposition~\ref{prop:constr-elts1}.   In the first frame, the intersection of the negative cone based at the vertex $\lambda - 2\rho$ with the dominant Weyl chamber $\Cf$ is shaded dark gray and the convex hull of the fundamental alcove $\fa$ and the alcove $\x_0$ is shaded light gray.}
\label{fig:gamma0sequence1}
\end{center}
\end{figure}

By the construction given in Section~\ref{sec:construction_a0} above and Lemma~\ref{lem:dim sigma}, there exists $\sigma_{a_0}:\fa \rightsquigarrow \fa$ of type $\vec a_0$ with $a_0 = t^{2\rho}w_0$, so that $\sigma_{a_0}$ is positively folded with respect to $-\phipartialo$ and $\dim_{-\phi_0}(\sigma_{a_0}) = \ell(t^\rho)$.  We now translate $\sigma_{a_0}$ by $t^{\lambda-2\rho}$ to obtain a gallery $\sigma: \y\rightsquigarrow \y$ of type $\vec a_0$.  By Lemma~\ref{lem:translations dim} we have $\dim_{-\phi_0}(\sigma) = \dim_{-\phi_0}(\sigma_{a_0})$.  Note also that $\sigma$ has $\ell(w_0)$ folds.  Let $\tilde\sigma$ be the minimal alcove-to-alcove gallery from $\y$ to $\x_0$ which is of the same type $\vec a_0$ as $\sigma$, that is, $\tilde\sigma$ is the minimal gallery obtained by unfolding $\sigma$.  See the second frame in Figure~\ref{fig:gamma0sequence1}.

Now define $\tilde\gamma$ to be the concatenation $\tilde\gamma\define \gamma_y \ast \tilde\sigma$ and $\gamma$ to be the concatenation $\gamma\define \gamma_y \ast\sigma$.  See the third frame of Figure~\ref{fig:gamma0sequence1}.  These galleries have the same type $\vec x_0$ since $x_0 = t^\lambda w_0 = t^{\lambda - 2\rho}t^{2\rho}w_0 = t^{\lambda - 2\rho}a_0$.  Notice that the gallery $\tilde\gamma$ is minimal.  Also $\tilde\gamma$ is contained in the dominant Weyl chamber, with first alcove $\fa$ a pure translation and last alcove $\x_0$ in the $w_0$ position.  Thus the canonically associated vertex-to-vertex gallery $\tilde\gamma^\sharp$ is minimal as well.  

We observe that the gallery $\gamma$ is positively folded with respect to $-\phipartialo$, all of its $\ell(w_0)$ folds appear outside of $\gamma_y$, and since $\dim_{-\phi_0}(\gamma_y) = 0$ we have 
\[ 
\dim_{-\phi_0}(\gamma) = \dim_{-\phi_0}(\sigma) = \ell(t^\rho).
\]  
Let $\gamma^\sharp$ be the canonical associated vertex-to-vertex gallery, with initial vertex $v_0$ the origin and final vertex $\lambda - 2\rho \in R^\vee$.  As the first alcove of $\gamma$ and thus of $\gamma^\sharp$ is $\fa = w_0w_0\fa$,  Corollary~\ref{cor:folds+crossings} implies that $\dim_{-\phi_0}(\gamma^\sharp) = \ell(t^\rho)$ as well.  Moreover, since as argued above the vertex-to-vertex gallery $\tilde\gamma^\sharp$ is minimal, it follows that $\gamma^\sharp$ has the type of a minimal gallery $\gamma_\lambda = \tilde\gamma^\sharp$ from the origin to $\lambda$.  As noted in Remark~\ref{rem:gamma0} above, the end-vertex $\lambda$ does not lie on any wall through the origin.  It follows that $\gamma_\lambda$ is also a minimal gallery in the sense defined in~\cite{GaussentLittelmann} (compare Remark~\ref{rem:GLminimal}).  See the final frame of Figure~\ref{fig:gamma0sequence1}.

We now act on the left by $w_0$.  See Figure~\ref{fig:gamma0sequence2}.  The gallery $\gamma^\sharp$ is positively folded with respect to the orientation $-\phipartialo$ and has $\ell(w_0)$ folds, therefore using Lemma~\ref{lem:wpos} the gallery $w_0\gamma^\sharp$ is positively folded with respect to the standard orientation and still has $\ell(w_0)$ folds.  By Lemma~\ref{lem:w0 dim}, we have $\dim_{\phi_0}(w_0\gamma^\sharp) = \dim_{-\phi_0}(\gamma^\sharp)$, so the dimension of $w_0\gamma^\sharp$ with respect to the standard orientation is equal to $\ell(t^\rho)$.  As the action of $\sW$ is type-preserving, the gallery $w_0\gamma^\sharp$ is also of the same type as $\gamma_{\lambda}$.   The final vertex of $w_0\gamma^\sharp$ is $2\rho - \lambda = w_0(\lambda - 2\rho)$.  Hence $w_0\gamma^\sharp \in \Gamma^+(\gamma_{\lambda},2\rho - \lambda)$, and we can consider the application of root operators to the gallery $w_0\gamma^\sharp$.  

\begin{figure}[ht]
\begin{center}
\begin{overpic}[width=\textwidth]{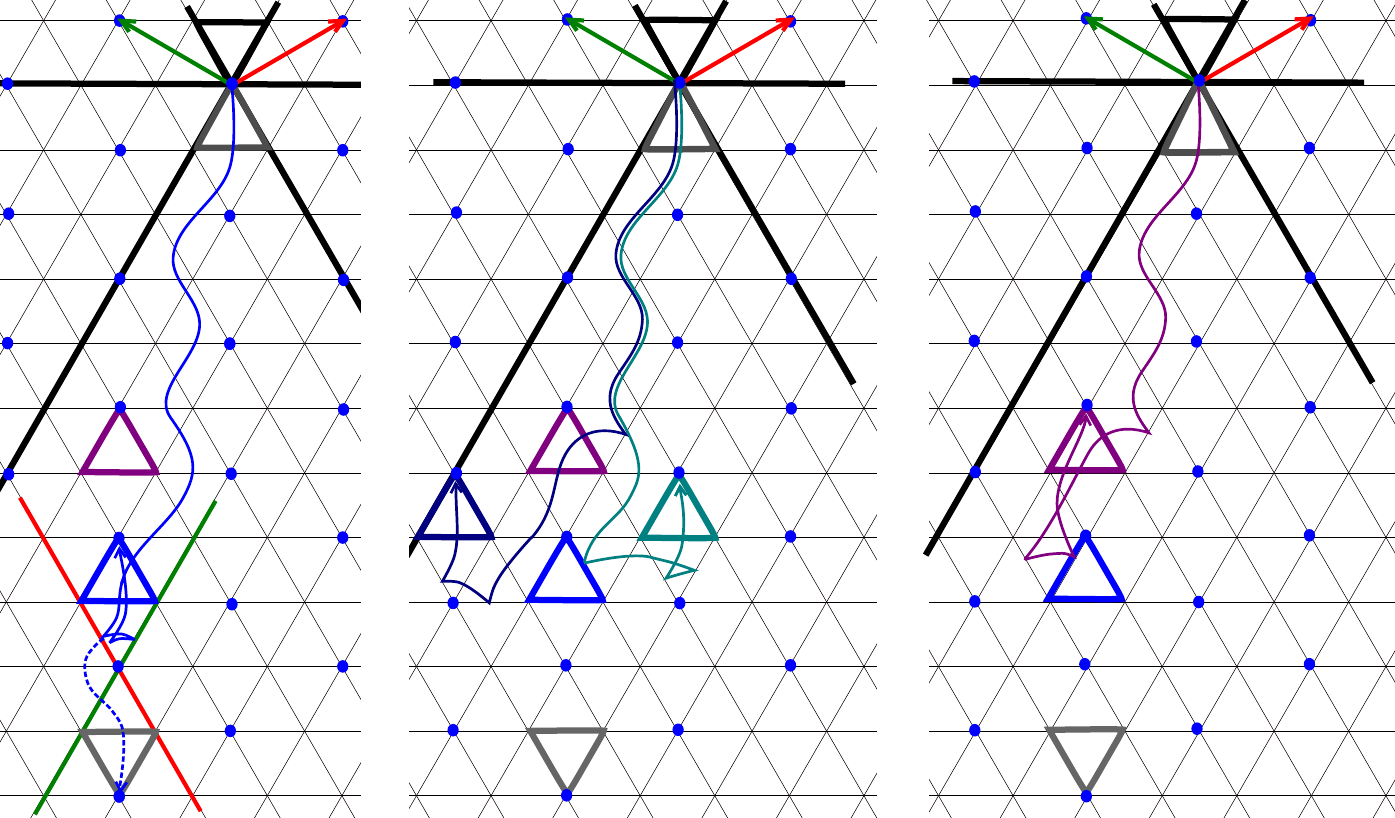}
\put(14,40){\footnotesize{\color{blue}{$w_0\gamma^\sharp$}}}
\put(15.5,55){\footnotesize{\color{black}{$\fa$}}}
\put(18.5,50){\footnotesize{\color{gray}{$\mathbf{w}_0$}}}
\put(8,0){\footnotesize{\color{blue}{$-\lambda$}}}
\put(5,21){\footnotesize{\color{blue}{$2\rho-\lambda$}}}
\put(7,31){\footnotesize{\color{blue}{$-\mu$}}}
\put(13.5,51){\footnotesize{\color{blue}{$v_0$}}}
\put(9.2,53.7){\footnotesize{\color{black}{$\alpha_2^\vee$}}}
\put(21,53.7){\footnotesize{\color{black}{$\alpha_1^\vee$}}}
\put(34,23){\footnotesize{\color{black}{$e_{\alpha_2}(w_0\gamma^\sharp)$}}}
\put(50,18){\footnotesize{\color{black}{$e_{\alpha_1}(w_0\gamma^\sharp)$}}}
\put(41.5,53.7){\footnotesize{\color{black}{$\alpha_2^\vee$}}}
\put(53,53.7){\footnotesize{\color{black}{$\alpha_1^\vee$}}}
\put(81.5,30){\footnotesize{\color{black}{$\tau^\sharp = e_{\alpha_1}e_{\alpha_2}(w_0\gamma^\sharp)$}}}
\put(79,53.7){\footnotesize{\color{black}{$\alpha_2^\vee$}}}
\put(90.5,53.7){\footnotesize{\color{black}{$\alpha_1^\vee$}}}
\put(76,31){\footnotesize{\color{blue}{$-\mu$}}}
\put(78,21){\footnotesize{\color{blue}{$2\rho-\lambda$}}}
\put(87.5,50){\footnotesize{\color{gray}{$\mathbf{w}_0$}}}
\put(72.5,27){\footnotesize{\color{purple}{$w_0\bb$}}}
\put(82.5,51){\footnotesize{\color{blue}{$v_0$}}}
\end{overpic}
\caption{We illustrate the proof of Proposition~\ref{prop:constr-elts1}, showing the application of root operators.}
\label{fig:gamma0sequence2}
\end{center}
\end{figure}

We first consider the indexing of the hyperplanes in which the gallery $w_0\gamma^\sharp$ has folds.  In the first frame of Figure~\ref{fig:gamma0sequence2}, the folds of the gallery $w_0\gamma^\sharp$ are in the hyperplanes $H_{\alpha_1,-6}$ and $H_{\alpha_2,-3}$ (and these hyperplanes are colored according to the corresponding simple root).

\begin{lemma}\label{lem:rootops0}  The folds in the gallery $w_0\gamma^\sharp$ occur in hyperplanes $H_{\alpha,M(\alpha)}$, where $M(\alpha) \in \Z$ is such that $2\rho - \lambda$ is contained in $H_{\alpha,M(\alpha)+1}$. For each simple root $\alpha$ there is at least one such fold.  Moreover $M(\alpha) \leq -1$ and 
$
M(\alpha) = m(w_0\gamma^\sharp, \alpha),
$ 
where $m(w_0\gamma^\sharp, \alpha) \in \Z$ is as defined in Notation~\ref{not:operators}.
\end{lemma}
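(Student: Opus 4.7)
The plan is to track the positions of folds through each step of the construction of $w_0\gamma^\sharp$, then use a careful analysis of the PRS folding sequence to bound the extent of the gallery.

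By the construction in Section~\ref{sec:construction_a0}, the folds of $\tilde\sigma$ lie in hyperplanes $H_{\beta'_j}$ passing through the origin with each $\beta'_j$ a simple root. Translating by $t^\rho$ sends these folds to the hyperplanes $H_{\alpha_i,1}$ for simple $\alpha_i$, using $\langle\alpha_i,\rho\rangle = 1$. Translating again by $t^{\lambda-2\rho}$ places the folds of $\sigma$ at $H_{\alpha_i,\,1+\langle\alpha_i,\lambda-2\rho\rangle}$. Since $\gamma_y$ has no folds, $\gamma^\sharp$ inherits these. Applying $w_0$ and writing $\alpha = -w_0\alpha_i$ (which is simple since $-w_0$ permutes the simple roots), the folds of $w_0\gamma^\sharp$ lie at $H_{\alpha,M(\alpha)}$ with $M(\alpha) = -1 - \langle\alpha_i,\lambda-2\rho\rangle$. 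A short computation using $w_0\alpha = -\alpha_i$ yields $\langle\alpha,w_0(\lambda-2\rho)\rangle = -\langle\alpha_i,\lambda-2\rho\rangle = M(\alpha)+1$, so the end-vertex lies on $H_{\alpha,M(\alpha)+1}$ as required.

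For the claim that each simple $\alpha$ appears, I would invoke the proof of Lemma~\ref{lem:lem_w0-folding-endpoint}: the folding sequence $(s_{\beta'_j})$ yields a reduced word for $w_0$, and every simple reflection appears in every reduced word for $w_0$. The bound $M(\alpha)\leq -1$ follows because $\x_0\in\Cfs$ forces $\lambda-2\rho\in\Cf$ and hence $\langle\alpha_i,\lambda-2\rho\rangle\geq 0$.

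The main remaining task is $M(\alpha) = m(w_0\gamma^\sharp,\alpha)$, which reduces to showing that $w_0\gamma^\sharp$ never touches any $H_{\alpha,k}$ with $k<M(\alpha)$. I would decompose $w_0\gamma^\sharp = w_0\gamma_y^\sharp \ast w_0\sigma^\sharp$. For the first factor, $\gamma_y^\sharp$ is minimal in $\Cf$ from $v_0$ to $\lambda-2\rho$, so $w_0\gamma_y^\sharp$ is minimal in the antidominant Weyl chamber with every face satisfying $\langle\alpha,v\rangle \geq M(\alpha)+1$. The main obstacle is the second factor $w_0\sigma^\sharp$. Here I would exploit the specific structure of the PRS folding sequence for $\tilde\sigma$: since each of the $\ell(w_0)$ hyperplanes $H_{\beta'_j}$ is a wall of $\w_0$, an induction on $j$ shows that in the modified gallery obtained after the first $j-1$ folds the $j$-th crossing lies between $\w_0$ and $s_{\beta'_j}\w_0$, so folding this crossing reduces both alcoves to $\w_0$. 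Hence after all $\ell(w_0)$ folds, the folded-$\tau$ subgallery of $\tilde\sigma$ stays at $\w_0$. Combined with the fact that $\bar\gamma$ and $s_{\beta'_l}\cdots s_{\beta'_1}\gamma_1 = w_0\gamma_1$ are minimal galleries lying in the antidominant Weyl chamber, we obtain $\max_{v\in\tilde\sigma}\langle\alpha_i,v\rangle = 0$ for every simple $\alpha_i$, attained at the origin. Translating by $t^\rho$ and then $t^{\lambda-2\rho}$ boosts this maximum to $\langle\alpha_i,\lambda\rangle - 1$, and finally applying $w_0$ converts it into $\min_{v\in w_0\sigma^\sharp}\langle\alpha,v\rangle = 1-\langle\alpha_i,\lambda\rangle = M(\alpha)$, completing the proof.
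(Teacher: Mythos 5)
Your proof is correct and follows essentially the same route as the paper: you track the fold hyperplanes through the translations by $t^\rho$ and $t^{\lambda-2\rho}$ and the left action of $w_0$, use that the folding reflections spell a word for $w_0$ (hence every simple root occurs among them), and use dominance of $\lambda-2\rho$ to get $M(\alpha)\leq -1$. The only difference is that you spell out in detail the verification that $M(\alpha)=m(w_0\gamma^\sharp,\alpha)$, by bounding how far each piece of the gallery can reach in each simple root direction, whereas the paper simply asserts that this ``follows by the construction.''
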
   

\begin{proof} Recall that by the construction given in Section~\ref{sec:construction_a0} the $l = \ell(w_0)$ folds of $\sigma_{a_0}$ occur in the hyperplanes $H_{\beta_1', 1}$, \ldots, $H_{\beta_l',1}$, where each $\beta_j'$ is a simple root and $s_{\beta_l'}\cdots s_{\beta_1'}$ is a word for $w_0$.  Hence each simple root has to appear at least once among $\beta_1',\dots,\beta_l'$.  

Since $\sigma$ is the result of translating $\sigma_{a_0}$ by $t^{\lambda - 2\rho}$, all folds of $\sigma$ and thus of $\gamma$ and $\gamma^\sharp$ appear in hyperplanes of the form $H_{\alpha,m'(\alpha)}$ where $m'(\alpha) \in\Z$ is such that $\lambda - 2\rho \in H_{\alpha,m'(\alpha) - 1}$, and for each simple root $\alpha$ there is at least one such fold.  Therefore all folds of $w_0\gamma^\sharp$ appear in hyperplanes of the form $H_{\alpha,-m'(\alpha)}$ where $2\rho - \lambda \in H_{\alpha,-m'(\alpha)+1}$.  Observe that $M(\alpha) = -m'(\alpha)$ and we have that all folds of $w_0\gamma^\sharp$ appear in hyperplanes of the form described in the assertion. As each simple root $\alpha$ appears at least once among $\beta_1',\dots,\beta_l'$  there is at least one fold in $H_{\alpha,M(\alpha)}$ for all $\alpha$.

As $2\rho-\lambda$ is antidominant, we have that $M(\alpha)\leq -1$.  It follows by the construction of $w_0\gamma^\sharp$ that  the integer $m(w_0\gamma^\sharp,\alpha)$ defined in Notation \ref{not:operators} equals $M(\alpha)$ defined above.   
\end{proof}

We now consider the application of a single root operator $e_\alpha$.  The proof of the second sentence of the following lemma is in essence a formalization of the heuristics described in Remark~\ref{rem:operators}. We write $e^{m}_{\alpha}$ for an $m$-fold application of the same root operator $e_\alpha$.  The second frame in Figure~\ref{fig:gamma0sequence2} illustrates single applications of the two different possible root operators in type $\tilde A_2$. 

\begin{lemma}\label{lem:rootops1} 
For each simple root $\alpha$, the root operator $e^{\vert M(\alpha)\vert}_{\alpha}$ is defined for $w_0\gamma^\sharp$.  Moreover the image of $w_0\gamma^\sharp$ under $e_\alpha^{\vert M(\alpha)\vert - 1}$ still has first alcove $w_0\fa$, the number of folds remains $\ell(w_0)$, and the spherical direction of its final alcove is the same as the one of $w_0\gamma^\sharp$.
\end{lemma}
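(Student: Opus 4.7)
The plan is to establish the four claims of the lemma in turn: the existence of the iterated operator, preservation of the first alcove, preservation of the spherical direction of the final alcove, and preservation of the fold count.

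For the existence of $e_\alpha^{|M(\alpha)|}$, I would combine Lemma~\ref{lem:rootops0} with Lemma~\ref{lem:posFoldsV}. Since $m(w_0\gamma^\sharp,\alpha) = M(\alpha) \leq -1$, the operator $e_\alpha$ is defined by part (1) of Lemma~\ref{lem:posFoldsV}. Invoking part (4) with $\nu = 2\rho - \lambda$, so $\langle\alpha,\nu\rangle = M(\alpha)+1$: a short check shows that $f_\alpha$ is defined exactly once on $w_0\gamma^\sharp$, since the condition $m \leq \langle\alpha,\nu\rangle - 1$ is tight and after a single $f_\alpha$ both the endpoint's $\alpha$-value and the minimum drop to $M(\alpha)-1$, failing the strict inequality required for a second application. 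Hence $p = 1$ and $q = p - \langle\alpha,\nu\rangle = |M(\alpha)|$.

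For the preservation of the first alcove and the final alcove's spherical direction, I would track how the minimum evolves. A direct inspection of Definition~\ref{def:GLoperators} shows that each application of $e_\alpha$ raises the minimum by exactly one: the slab of minimum panels is reflected into the slab above, and the tail is translated by $\alpha^\vee$, which shifts $\alpha$-coordinates by $\langle\alpha,\alpha^\vee\rangle = 2$. So after $r$ applications the current minimum is $m_r = M(\alpha)+r$. The first alcove changes only when $j = 0$, which requires $v_0 \in H_{\alpha, m+1}$, equivalently $m = -1$; this occurs only at $r = |M(\alpha)| - 1$, just before the final admissible application, so throughout the first $|M(\alpha)| - 1$ iterations $j \geq 1$ and $w_0\fa$ is fixed. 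For the spherical direction, the endpoint $\nu + r\alpha^\vee$ has $\alpha$-coordinate $M(\alpha) + 1 + 2r > m_r$, so the final vertex is never on the minimum hyperplane and $k \leq n$ throughout. Hence the final alcove lies in the translated-tail range $i \geq k$ of Definition~\ref{def:GLoperators} and is translated by $\alpha^\vee$ at every step; translations preserve spherical direction.

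The delicate step is preservation of the fold count. A direct check from Definition~\ref{def:GLoperators} shows that each $e_\alpha$ swaps the fold-crossing status at exactly the two transition panels $p_j \subset H_{\alpha, m+1}$ and $p_k \subset H_{\alpha, m}$, while preserving it at all other panels (reflections and translations commute with alcove equality). An elementary positivity argument forces $p_j$ to always be a crossing: the alcoves $c_j, \ldots, c_{k-1}$ lie in the slab $[m, m+1]$, so a fold at $p_j$ would have $c_{j-1} = c_j$ on the negative side of $H_{\alpha, m+1}$, contradicting positive-foldedness. So the swap is neutral if and only if $p_k$ is also always a fold. For this I would exploit the structure of $w_0\gamma^\sharp$ from Sections~\ref{sec:construction_a0} and~\ref{sec:more constructions}: all $\ell(w_0)$ folds lie in a single translated copy of the folded loop of $\sigma_{a_0}$, contained in the slab $[M(\alpha), M(\alpha)+1]$, while the three minimal subgalleries surrounding this loop stay in slabs of $\alpha$-coordinate $\geq M(\alpha)$ and admit no panel on $H_{\alpha, M(\alpha)}$ by convex-hull arguments. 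So the first $H_{\alpha, M(\alpha)}$-panel of $w_0\gamma^\sharp$ is the first loop-fold in direction $\alpha$, forcing $p_k$ to be a fold. After one application, the reflected middle lies in slab $[M(\alpha)+1, M(\alpha)+2]$ and the translated tail jumps by $2$ in $\alpha$-coordinate, keeping the new gallery strictly confined above the new minimum; the new $p_k$ is exactly the image of the old $p_j$ (which became a fold under the swap), and the same alternation pattern propagates. The main obstacle is precisely this inductive slab-confinement through all $|M(\alpha)|-1$ iterations, which requires careful bookkeeping of how the reflected middle and translated tail layer onto the unchanged prefix at each step.
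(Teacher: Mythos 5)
Your plan is correct and follows essentially the same route as the paper's proof: both arguments track the minimum $m(e_\alpha^{r}(w_0\gamma^\sharp),\alpha)$ as it rises by exactly one with each application, read the behaviour of the first and last alcoves straight off Definition~\ref{def:GLoperators} (first alcove moves only when $m=-1$), and feed in the structural information of Lemma~\ref{lem:rootops0} about where the folds of $w_0\gamma^\sharp$ sit. Two of your micro-arguments differ harmlessly from the paper's: you get the number of admissible applications of $e_\alpha$ from Lemma~\ref{lem:posFoldsV}\eqref{fold4} via $p=1$ (which is indeed correct, since for $f_\alpha$ the translated tail comes after the \emph{last} face in the minimal wall and so cannot dip further), whereas the paper iterates parts \eqref{fold1} and \eqref{fold3}; and for the final alcove you use that the end vertex has height $M(\alpha)+1+2r$, strictly above the current minimum $M(\alpha)+r$, so $k\leq n$ and the last alcove is always in the translated range -- a small simplification of the paper's argument, which instead invokes the existence of a fold in the current minimal wall.

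The step you flag as the main obstacle -- the inductive control needed to keep the fold count at $\ell(w_0)$ -- does close, with less bookkeeping than you fear, and from facts you have already assembled. For the first application, the loop $w_0\sigma$ lies in the $\alpha$-slab $[M(\alpha),M(\alpha)+1]$ and, by Lemma~\ref{lem:rootops0}, every fold of $w_0\gamma^\sharp$ lies in the bottom wall for its \emph{own} simple root; hence the loop has no face contained in $H_{\alpha,M(\alpha)+1}$ at all (a crossing there would leave the slab, and a fold for $\beta\neq\alpha$ is not contained in any $H_{\alpha,k}$). Consequently $p_j$ is the unique negative crossing of $H_{\alpha,M(\alpha)+1}$ by the minimal prefix, $p_k$ is the first $\alpha$-fold of the loop, and your swap analysis preserves the fold count. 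After applying $e_\alpha$, the prefix before $p_j$ is untouched and, being minimal, meets each $H_{\alpha,i}$ exactly once; the reflected middle carries no $\alpha$-perpendicular faces (by the maximality of $j$ and minimality of $k$); and the translated tail sits two levels higher. So the unique face of the new gallery in the new minimal wall is precisely the fold just created at position $j$, and the configuration ``prefix crossing at level $m+1$, fold at level $m$'' recurs with $m$ raised by one, for every application in the range $r\leq |M(\alpha)|-1$ (where $M(\alpha)+r\leq -1$, so the relevant prefix crossing exists). This is exactly the content of the paper's one-sentence assertion that each application shifts a fold from the wall of index $m$ to the wall of index $m+1$ and translates all other folds; your explicit crossing/fold swap at $p_j$ and $p_k$ is a finer-grained version of the same fact.
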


\begin{proof}
We obtain from Lemma~\ref{lem:posFoldsV}\eqref{fold1} and Lemma~\ref{lem:rootops0} that $e_\alpha$ can be applied at least once to $w_0\gamma^\sharp$.
Let $m = m(w_0\gamma^\sharp, \alpha)$.
The operator $e_\alpha$ has the effect of either reflecting or translating all parts of $w_0\gamma^\sharp$ which lie between the walls $H_{\alpha,m}$ and $H_{\alpha,m + 1}$ so that their image is on the same side of $H_{\alpha,m+1}$ as the origin, and of fixing the first panel of $w_0\gamma^\sharp$ which lies in $H_{\alpha,m+1}$.  Hence 
$$
m(e_\alpha(w_0\gamma^\sharp),\alpha) = m(w_0\gamma^\sharp,\alpha)+1 = m + 1 = M(\alpha) + 1.
$$  
By induction, if $1 \leq i \leq |M(\alpha)|$ then 
$$m(e^j_\alpha(w_0\gamma^\sharp),\alpha) = M(\alpha) + i \leq 0.$$
It follows by Lemma~\ref{lem:posFoldsV}\eqref{fold1} and \eqref{fold3} that a single operator $e_\alpha$ is defined exactly $|M(\alpha)|$-many times.

Suppose now that $1 \leq j \leq |M(\alpha)| - 1$.  By Definition~\ref{def:GLoperators}, the root operator $e_\alpha$ leaves the first alcove of the gallery $e_\alpha^{j-1}(w_0\gamma^\sharp)$ fixed unless $m(e_\alpha^{j-1}(w_0\gamma^\sharp),\alpha) = -1$.  However
\[
m(e_\alpha^{j-1}(w_0\gamma^\sharp),\alpha) = M(\alpha) + (j-1) \leq -2,
\]
and so the first alcove $w_0\fa$ is fixed by the first $(|M(\alpha)|-1)$-many applications of $e_\alpha$.  Each of these applications has the effect of 
shifting a fold in the wall perpendicular to $\alpha$ of index $m(e_\alpha^{j-1}(w_0\gamma^\sharp),\alpha) \leq -2$ to a fold in the wall perpendicular to $\alpha$ of index one greater, and of translating all other folds, so the number of folds is preserved.

As for the final alcove, Definition~\ref{def:GLoperators} implies that the final alcove of $e_\alpha^{j-1}(w_0\gamma^\sharp)$ is translated by $e_{\alpha}$ unless $k(e_\alpha^{j-1}(w_0\gamma^\sharp), \alpha)=n+1$ and the only face of $e_\alpha^{j-1}(w_0\gamma^\sharp)$ to be contained in $H_{\alpha,M(\alpha) + (j-1)}$ is its final vertex.
Since $M(\alpha) + (j-1) \leq -2$, as argued above the gallery $e_\alpha^{j-1}(w_0\gamma^\sharp)$ has a fold in the hyperplane $H_{\alpha,M(\alpha) + (j-1)}$, and so has at least one panel contained in this hyperplane.  Hence the final alcove of $e_\alpha^{j-1}(w_0\gamma^\sharp)$ is in the part of the gallery that had been translated by $e_{\alpha}$.  Thus the spherical direction of the final alcove of $w_0\gamma^\sharp$ is not changed by  $e_\alpha^{|M(\alpha)|-1}$.
\end{proof}

We next consider the application of root operators for distinct roots.  The final frame of Figure~\ref{fig:gamma0sequence2} illustrates the result of applying first $e_{\alpha_2}$ then $e_{\alpha_1}$ to the gallery $w_0\gamma^\sharp$.

\begin{lemma}\label{lem:rootops2}  Let $\alpha$ and $\beta$ be distinct simple roots.  Then for all $1 \leq j \leq |M(\alpha)| - 1$ the operator $e_\beta^{\vert M(\beta)\vert - 1}$ can be applied to  $e_\alpha^j(w_0\gamma^\sharp)$, so that the first alcove $w_0\fa$ stays fixed, the spherical direction of the final alcove never changes, and the number of folds remains $\ell(w_0)$.
\end{lemma}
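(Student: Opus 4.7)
The plan is to extend the argument of Lemma~\ref{lem:rootops1}, with the key new ingredient being the computation
\[
m(e_\alpha^j(w_0\gamma^\sharp),\beta) = M(\beta) + j\langle\beta,\alpha^\vee\rangle
\]
for all $1 \leq j \leq |M(\alpha)|-1$. To establish this, I would use the explicit construction of $w_0\gamma^\sharp$ from Section~\ref{sec:construction_a0} and Lemma~\ref{lem:rootops0}, which together say that $w_0\gamma^\sharp$ has folds in $H_{\beta,M(\beta)}$ clustered near its final vertex $2\rho-\lambda$. Each application of $e_\alpha$ splits the gallery by Definition~\ref{def:GLoperators} into an unchanged prefix, a middle reflected in $H_{\alpha,M(\alpha)+1}$, and a suffix translated by $\alpha^\vee$; by the placement of folds in our construction, the $\beta$-folds sit in the translated suffix, so they shift by $\langle\beta,\alpha^\vee\rangle$ with each iteration of $e_\alpha$, while the panels that sit in the reflected middle are carried off into $(s_\alpha\beta)$-hyperplanes and no longer contribute to the $\beta$-minimum.

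Given the displayed formula, since $\alpha$ and $\beta$ are distinct simple roots we have $\langle\beta,\alpha^\vee\rangle\leq 0$, hence $|m(e_\alpha^j(w_0\gamma^\sharp),\beta)|\geq |M(\beta)|$. By Lemma~\ref{lem:posFoldsV}\eqref{fold1} the operator $e_\beta^{|M(\beta)|-1}$ is therefore well-defined on $e_\alpha^j(w_0\gamma^\sharp)$. During these $|M(\beta)|-1$ successive applications, the value $m(\cdot,\beta)$ at which each $e_\beta$ is applied ranges from $M(\beta)+j\langle\beta,\alpha^\vee\rangle$ up to $j\langle\beta,\alpha^\vee\rangle-2\leq -2$, where the hypothesis $j\geq 1$ is essential. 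Consequently we never apply $e_\beta$ at the critical state $m=-1$ where the first alcove could move, so $w_0\fa$ is preserved throughout; by the same reasoning as for the final alcove in Lemma~\ref{lem:rootops1}, the surviving $\beta$-folds at each intermediate stage keep the final alcove in the ``translated'' regime for each successive $e_\beta$, so its spherical direction is unchanged; and every root-operator application preserves the fold count, which therefore remains $\ell(w_0)$.

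The main obstacle is the bookkeeping for the key identity — specifically, verifying that the $\beta$-folds consistently land in the translated suffix under each $e_\alpha$ rather than in the reflected middle. This relies on the explicit order in which the PRS folding sequence of Section~\ref{sec:construction_a0} creates the folds, together with the positions of the indices $j(\cdot,\alpha,\mathrm{I})$ and $k(\cdot,\alpha,\mathrm{I})$ from Notation~\ref{not:operators} relative to those folds at each iteration. While the star-shaped structure of $\sigma_{a_0}$ near its endpoint makes the claim geometrically plausible (each simple-root fold sits in the link of the endpoint in its own simple-root direction), a careful case-by-case check is needed to turn this picture into a rigorous computation.
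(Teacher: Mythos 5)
Your reduction of the lemma to the single identity $m(e_\alpha^j(w_0\gamma^\sharp),\beta)=M(\beta)+j\langle\beta,\alpha^\vee\rangle$ is where the whole proof lives, and that is precisely the step you do not prove: you yourself flag that "a careful case-by-case check is needed." Note moreover that the soft geometric observations you invoke only give the useless direction of the estimate. Since the reflected middle piece of the gallery lies in the strip between $H_{\alpha,m}$ and $H_{\alpha,m+1}$ and the tail is translated by $\alpha^\vee$, one gets essentially for free that $m(e_\alpha^j(w_0\gamma^\sharp),\beta)\geq M(\beta)+j\langle\beta,\alpha^\vee\rangle$; but what you actually need is an upper bound on $m(\cdot,\beta)$ (at most $M(\beta)$, so that all $|M(\beta)|-1$ applications of $e_\beta$ are defined and occur at levels $\leq -2$), together with the persistence of a genuine $\beta$-\emph{fold} at the bottom level, which is what the final-alcove argument of Lemma~\ref{lem:rootops1} requires. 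Both of these are exactly the assertion that the $H_{\beta,M(\beta)}$-folds land in the translated suffix after each $e_\alpha$ — the positional bookkeeping you defer. So as written, the applicability of $e_\beta^{|M(\beta)|-1}$, the fixedness of $w_0\fa$, and the invariance of the final spherical direction all rest on an unverified claim; this is a genuine gap rather than a complete argument.

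For comparison, the paper's proof sidesteps tracking $m(\cdot,\beta)$ altogether. It works with the counts $p$ and $q$ of Lemma~\ref{lem:posFoldsV}\eqref{fold4}: writing $\nu'$ for the end vertex, $q(\gamma',\beta)=p(\gamma',\beta)-\langle\beta,\nu'\rangle$, and using items \eqref{fold3} and \eqref{fold4} to compare $p$ before and after applying $e_\alpha^j$ (whose effect on the end vertex is $+j\alpha^\vee$), it obtains
\begin{equation*}
q(e_\alpha^j(w_0\gamma^\sharp),\beta)=q(w_0\gamma^\sharp,\beta)+j\bigl(1-\langle\beta,\alpha^\vee\rangle\bigr)\geq q(w_0\gamma^\sharp,\beta),
\end{equation*}
so $e_\beta$ is applicable at least as often after $e_\alpha^j$ as before, and the statements about the first alcove, the fold count, and the spherical direction of the final alcove are then established as in Lemma~\ref{lem:rootops1}. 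If you want to complete your more geometric route instead, the missing ingredient is a proof that after each application of $e_\alpha$ the faces realizing the $\beta$-minimum, including at least one fold, occur at indices $\geq k(\cdot,\alpha,\mathrm{I})$, i.e.\ in the translated tail; this uses the explicit order of the folds in the construction of $\sigma_{a_0}$ and is not automatic.
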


\begin{proof}
Lemma~\ref{lem:posFoldsV} item (\ref{fold4}) says that 
\[
q(w_0\gamma^\sharp,\beta)
= p(w_0\gamma^\sharp,\beta) - \langle \beta, 2\rho - \lambda \rangle,
\]
where $p=p(w_0\gamma^\sharp,\beta)$ is maximal such that $f_\beta^p$ is defined for $w_0\gamma^\sharp$ and $q=q(w_0\gamma^\sharp,\beta)$ is maximal such that $e_\beta^q$ is defined for $w_0\gamma^\sharp$. 

Now for the gallery $e^j_\alpha(w_0\gamma^\sharp)$ with end vertex $2\rho - \lambda +j\alpha^{\vee}$ we want to show that  $q(e^j_\alpha(w_0\gamma^\sharp),\beta)\geq q(w_0\gamma^\sharp,\beta)$.   
We have by Lemma~\ref{lem:posFoldsV} item \eqref{fold3} and \eqref{fold4}
  and the fact that $\langle \beta, \alpha^\vee \rangle \leq 0$ that 

\begin{eqnarray*}
q(e^j_\alpha(w_0\gamma^\sharp),\beta) 
& = & p(e^j_\alpha(w_0\gamma^\sharp),\beta) - \langle \beta,  2\rho - \lambda+j\alpha^\vee \rangle \\ 
& = & p(w_0\gamma^\sharp,\beta) + j - \langle \beta,  2\rho - \lambda+j\alpha^\vee \rangle  \\ 
&  = & q(w_0\gamma^\sharp,\beta) + j(1-\langle\beta, \alpha^\vee\rangle) \\
& \geq & q(w_0\gamma^\sharp,\beta). 
\end{eqnarray*}

Thus the operator $e_\beta$ is applicable at least as often for $e_\alpha(w_0\gamma^\sharp)$  as it was for $w_0\gamma^\sharp$.  

The claims about the first alcove, the number of folds, and the spherical direction of the final alcove are proved similarly to the corresponding claims in Lemma~\ref{lem:rootops1}. 
\end{proof}

Using induction and similar arguments one can show the following corollary.  

\begin{corollary}  Let $\alpha_1, \dots, \alpha_n$ be any enumeration of the simple roots.  Let $1 \leq i \leq n$ and suppose for all $1 \leq j \leq i$ that $c_j$ is an integer satisfying $1 \leq c_j \leq |M(\alpha_j)| - 1$.  Then for all galleries $\sigma$ of the form $\sigma=e^{c_i}_{\alpha_i}(e^{c_{i-1}}_{\alpha_{i-1}}(\cdots (e^{c_1}_{\alpha_1}(w_0\gamma^\sharp)))$ one has that 
$q(\sigma, \alpha_k) \geq q(w_0\gamma^\sharp, \alpha_k)$ whenever $i < k \leq n$. 
\end{corollary}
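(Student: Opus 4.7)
The plan is to proceed by induction on $i$, reducing the statement at each step to the computation already carried out in the proof of Lemma~\ref{lem:rootops2}. Throughout, write $\sigma_0 = w_0\gamma^\sharp$ and, for $0 \leq j \leq i$, write $\sigma_j = e_{\alpha_j}^{c_j}(\sigma_{j-1})$, so that $\sigma_i = \sigma$. By Lemma~\ref{lem:rootops1}, the end vertex of $\sigma_j$ differs from that of $\sigma_{j-1}$ by $c_j \alpha_j^\vee$, so the end vertex of $\sigma_j$ is $\nu_j := 2\rho - \lambda + \sum_{l \leq j} c_l \alpha_l^\vee$.

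For the base case $i = 1$, one simply repeats the computation of Lemma~\ref{lem:rootops2} with $j$ replaced by $c_1$: for each $k > 1$, Lemma~\ref{lem:posFoldsV}\eqref{fold4} applied to both $\sigma_0$ and $\sigma_1$ yields
\[
q(\sigma_1, \alpha_k) = p(\sigma_1, \alpha_k) - \langle \alpha_k, \nu_1 \rangle = p(\sigma_0, \alpha_k) + c_1 - \langle \alpha_k, \nu_0 + c_1 \alpha_1^\vee \rangle = q(\sigma_0, \alpha_k) + c_1 \bigl(1 - \langle \alpha_k, \alpha_1^\vee \rangle\bigr),
\]
and since $\alpha_k$ and $\alpha_1$ are distinct simple roots one has $\langle \alpha_k, \alpha_1^\vee \rangle \leq 0$, giving $q(\sigma_1, \alpha_k) \geq q(\sigma_0, \alpha_k)$.

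For the inductive step, assume the conclusion holds for $i-1$, so $q(\sigma_{i-1}, \alpha_k) \geq q(\sigma_0, \alpha_k)$ for all $k \geq i$. In particular, for $k = i$ the hypothesis $c_i \leq |M(\alpha_i)| - 1$ combined with Lemma~\ref{lem:rootops1} gives $q(\sigma_{i-1}, \alpha_i) \geq q(\sigma_0, \alpha_i) \geq |M(\alpha_i)| > c_i$, so $e_{\alpha_i}^{c_i}$ is indeed defined on $\sigma_{i-1}$ and $\sigma_i$ makes sense. For $k > i$, repeat the computation from Lemma~\ref{lem:rootops2}, now with $\sigma_{i-1}$ in place of $w_0\gamma^\sharp$ and end vertex $\nu_{i-1}$:
\[
q(\sigma_i, \alpha_k) = p(\sigma_i, \alpha_k) - \langle \alpha_k, \nu_{i-1} + c_i \alpha_i^\vee \rangle = q(\sigma_{i-1}, \alpha_k) + c_i \bigl(1 - \langle \alpha_k, \alpha_i^\vee \rangle\bigr) \geq q(\sigma_{i-1}, \alpha_k),
\]
using again that $\alpha_k$ and $\alpha_i$ are distinct simple roots, hence $\langle \alpha_k, \alpha_i^\vee \rangle \leq 0$. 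Chaining with the inductive hypothesis yields $q(\sigma_i, \alpha_k) \geq q(\sigma_0, \alpha_k)$, as required.

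The main point requiring care is the identity $p(\sigma_i, \alpha_k) = p(\sigma_{i-1}, \alpha_k) + c_i$ in the step above, which was used tacitly in Lemma~\ref{lem:rootops2}. This rests on the fact that the operators $e_{\alpha_i}$ and $f_{\alpha_k}$ for distinct simple roots $\alpha_i, \alpha_k$ interact compatibly via items~\eqref{fold3} and~\eqref{fold4} of Lemma~\ref{lem:posFoldsV}: each application of $e_{\alpha_i}$ shifts the end vertex by $\alpha_i^\vee$, so (\ref{fold4}) gives an exact bookkeeping of how $p$ and $q$ change under root operators for a different simple root, regardless of how many times that other operator has previously been applied. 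This is the one subtle point of the argument, but it proceeds entirely parallel to the calculation already made in Lemma~\ref{lem:rootops2}.
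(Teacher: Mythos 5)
Your proof follows exactly the route the paper intends: the paper's entire justification of this corollary is the sentence ``using induction and similar arguments one can show\dots'', and your induction on $i$, repeating the displayed computation from the proof of Lemma~\ref{lem:rootops2} at each stage, is that argument written out. The base case is literally Lemma~\ref{lem:rootops2}, and your check that the inductive hypothesis at $k=i$ together with Lemma~\ref{lem:rootops1} gives $q(\sigma_{i-1},\alpha_i)\geq q(w_0\gamma^\sharp,\alpha_i)\geq |M(\alpha_i)|>c_i$, so that $e_{\alpha_i}^{c_i}(\sigma_{i-1})$ is actually defined, is a worthwhile detail that the paper leaves implicit. (Minor point: the shift of the end vertex by $c_j\alpha_j^\vee$ comes from Lemma~\ref{lem:posFoldsV}\eqref{fold1}, not from Lemma~\ref{lem:rootops1}.)

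The one step where your justification does not hold up is precisely the one you flag, the identity $p(\sigma_i,\alpha_k)=p(\sigma_{i-1},\alpha_k)+c_i$. Saying that Lemma~\ref{lem:posFoldsV}\eqref{fold4} ``gives an exact bookkeeping of how $p$ and $q$ change'' is not correct: item \eqref{fold4} only controls the difference $p-q$, which changes by $c_i\langle\alpha_k,\alpha_i^\vee\rangle$ because the end vertex moves by $c_i\alpha_i^\vee$; it says nothing about how $p$ alone changes. In fact, for general positively folded galleries (equivalently, general elements of these crystals) the quantity $p(\cdot,\alpha_k)$ can stay constant or even decrease under an application of $e_{\alpha_i}$ with $i\neq k$, so the identity is a genuine claim about the particular galleries $\sigma_{i-1}$ arising from $w_0\gamma^\sharp$, not a formal consequence of items \eqref{fold3} and \eqref{fold4}. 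To be fair, the paper asserts the same identity (for $w_0\gamma^\sharp$ itself) inside the proof of Lemma~\ref{lem:rootops2} with exactly this citation, so your write-up is no less complete than the paper's; but if you want the inductive step to be airtight you should either verify the $p$-bookkeeping for the iterated galleries $\sigma_{i-1}$ from their explicit structure (tracking the folds in the walls perpendicular to $\alpha_k$ of negative index, as in Lemmas~\ref{lem:rootops0}--\ref{lem:rootops2}), or prove directly the inequality $q(\sigma_i,\alpha_k)\geq q(\sigma_{i-1},\alpha_k)$, which is all the corollary actually requires.
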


To complete the proof of Proposition~\ref{prop:constr-elts1}, for any $\mu\in \Cf\cap((\lambda - 2\rho)+\Cneg)$ there are integers $c_\alpha\geq 0$ such that $\sum_{\alpha}c_\alpha\alpha^\vee = (\lambda - 2\rho) - \mu$, where the sum runs over the simple roots~$\alpha$. Equivalently, $-\mu = (2\rho - \lambda) + \sum_{\alpha}c_\alpha\alpha^\vee $.  (For example, in Figure~\ref{fig:gamma0sequence2} we have $-\mu = (2\rho - \lambda) + \alpha_1 + \alpha_2$.)  It is then easy to check that $c_\alpha < \vert M(\alpha)\vert$ for all  $\alpha$: recall that $ M(\alpha) = \langle \alpha, 2\rho-\lambda\rangle -1\leq -1$ and observe that 
\begin{eqnarray*}
c_\alpha\leq \langle \alpha, c_\alpha\alpha^\vee\rangle  
 & = & \langle \alpha,   \lambda-2\rho-\mu-\sum_{\alpha_i\neq\alpha}c_{\alpha_i}\alpha_i^\vee \rangle \\
 & = & \vert M(\alpha) \vert  - 1 - \langle \alpha,   \mu+\sum_{\alpha_i\neq\alpha}c_{\alpha_i}\alpha_i^\vee \rangle \\
 & \leq &   \vert M(\alpha) \vert  - 1.\\
\end{eqnarray*}

Enumerate the simple roots by $\alpha_1, \dots, \alpha_n$ and put $c_i\define c_{\alpha_i}$ for all $i$.  (For example, in Figure~\ref{fig:gamma0sequence2}, we have $c_1 = c_2 = 1$.)  By Lemma~\ref{lem:rootops1}, each root operator $e_{\alpha_i}$ is defined at least $c_i$-many times for the gallery $w_0\gamma^\sharp$. The order in which we apply any of these operators does not matter, as by Lemma~\ref{lem:rootops2} for each $i \neq j$ an application of $e_{\alpha_i}$ does not decrease the number of times $e_{\alpha_j}$ can be defined.  

Consider the gallery 
\[
\tau^\sharp \define e^{c_1}_{\alpha_1}(e^{c_2}_{\alpha_2}(\cdots (e^{c_n}_{\alpha_n}(w_0\gamma^\sharp)))).
\]
See the final frame of Figure~\ref{fig:gamma0sequence2}.  By definition of $\tau^\sharp$ and repeated application of Lemma~\ref{lem:posFoldsV} item (\ref{fold1}), the gallery  $\tau^\sharp$ has first vertex the origin and final vertex $-\mu$, is of the same type as $\gamma_{\lambda}$, is positively folded with respect to the standard orientation $\phipartialo$, and satisfies 
\[
\dim_{\phi_0}(\tau^\sharp)= \ell(t^\rho) + \sum_{\alpha} c_\alpha. 
\]
Now by definition of the coefficients $c_\alpha$, and since $\lambda - 2\rho - \mu$ is dominant, we have 
\[
 \sum_{\alpha} c_\alpha = \langle \rho, \lambda - 2\rho -\mu \rangle
\]
and so as $\ell(t^\rho) = \langle \rho, 2\rho \rangle$ we obtain 
\begin{equation}\label{E:dimtau}
\dim_{\phi_0}(\tau^\sharp)= \ell(t^\rho) + \langle \rho, \lambda - 2\rho -\mu \rangle =  \langle \rho, \lambda - \mu \rangle. 
\end{equation}
Moreover, by Lemmas~\ref{lem:rootops1} and~\ref{lem:rootops2}, since we have applied each root operator $e_\alpha$ at most $|M(\alpha)| - 1$ times, the first alcove of $\tau^\sharp$ is $w_0\fa$, the final alcove of $\tau^\sharp$ is $w_0\bb$, and $\tau^\sharp$ contains $\ell(w_0)$ folds.  For later use, we record the following lemma.

\begin{lemma}\label{lem:tauDL}  
The gallery $\tau^\sharp \in \Gamma^+(\gamma_{\lambda},-\mu)$ is an LS-gallery.
\end{lemma}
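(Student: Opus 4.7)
The plan is to verify the two conditions in Definition~\ref{def:LS}: that $\tau^\sharp$ lies in $\Gamma^+(\gamma_\lambda, -\mu)$, and that its dimension with respect to the standard orientation achieves the maximum $\langle \rho, \lambda + (-\mu) \rangle = \langle \rho, \lambda - \mu \rangle$ permitted by Proposition~\ref{prop:LS}. The bulk of the work has already been carried out in the construction preceding the lemma.

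For membership in $\Gamma^+(\gamma_\lambda, -\mu)$, I would check each defining condition in turn. The type of $\tau^\sharp$ agrees with that of $\gamma_\lambda$ because $w_0\gamma^\sharp$ has this type by construction, and Definition~\ref{def:GLoperators} shows that the root operators $e_\alpha$ preserve the type of a combinatorial gallery. The first vertex of $\tau^\sharp$ remains the origin because Lemmas~\ref{lem:rootops1} and~\ref{lem:rootops2} guarantee that, throughout the $\sum_\alpha c_\alpha$ applications of the operators $e_{\alpha_i}^{c_i}$, the first alcove stays $w_0\fa$; here we critically use the bound $c_\alpha \leq \vert M(\alpha) \vert - 1$ established just before the enumeration of the simple roots. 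The end vertex is tracked by repeated application of Lemma~\ref{lem:posFoldsV}(\ref{fold1}), which shifts the end by $\alpha^\vee$ at each step, so starting from the end vertex $2\rho - \lambda$ of $w_0\gamma^\sharp$ we arrive at $2\rho - \lambda + \sum_\alpha c_\alpha \alpha^\vee = 2\rho - \lambda + (\lambda - 2\rho - \mu) = -\mu$ by the defining relation of the $c_\alpha$. Finally, positive-foldedness with respect to $\phi_0$ is preserved at each step, again by Lemma~\ref{lem:posFoldsV}(\ref{fold1}).

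With membership established, the lemma reduces to the dimension computation in equation~\eqref{E:dimtau}, which states $\dim_{\phi_0}(\tau^\sharp) = \langle \rho, \lambda - \mu \rangle$. This value is exactly the upper bound from Proposition~\ref{prop:LS} applied with $\nu = -\mu$, so $\tau^\sharp$ saturates the bound and is therefore an LS-gallery by Definition~\ref{def:LS}. No real obstacle arises: every ingredient (type preservation, end-vertex shift, first-alcove stability, preservation of positive-foldedness, and the dimension count) has already been supplied by Lemmas~\ref{lem:rootops1} and~\ref{lem:rootops2} and by~\eqref{E:dimtau}, so the proof is essentially a packaging of that data against the LS criterion.
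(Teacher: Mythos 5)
Your proof is correct and follows essentially the same route as the paper: the paper's proof simply notes that by Definition~\ref{def:LS} it suffices to have $\dim_{\phi_0}(\tau^\sharp) = \langle \rho, \lambda - \mu \rangle$, which is Equation~\eqref{E:dimtau}, the membership of $\tau^\sharp$ in $\Gamma^+(\gamma_\lambda,-\mu)$ having already been recorded in the construction preceding the lemma. Your extra verification of type, endpoints, and positive-foldedness is just a restatement of those already-established facts, so the two arguments coincide in substance.
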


\begin{proof}  By Definition~\ref{def:LS}, it suffices to show that $\dim_{\phi_0}(\tau^\sharp) = \langle \rho, \lambda - \mu \rangle$.  This is given by Equation~\eqref{E:dimtau} above.
\end{proof}

In order to complete the proof of Proposition~\ref{prop:constr-elts1}, we now define the gallery $\gamma_0$ that we are seeking to be the alcove-to-alcove gallery obtained from $w_0\tau^\sharp$ by removing its first and last vertex.  Figure~\ref{fig:Xa1toXxbPropFig} illustrates the galleries $\gamma_0$ obtained in this way, for all $\mu$ satisfying the hypotheses Proposition~\ref{prop:constr-elts1} when $\lambda$ is as depicted.  We have thus obtained a gallery $\gamma_0: \fa \rightsquigarrow \bb$ which is of type $\vec x_0$, is positively folded with respect to $-\phipartialo$, and contains $\ell(w_0)$ folds.   By definition of $\gamma_0$ the canonically associated vertex-to-vertex gallery $\gamma_0^\sharp$ is equal to $w_0\tau^\sharp$, so we have by Lemma~\ref{lem:w0 dim} that $\dim_{-\phi_0}(\gamma_0^\sharp) = \dim_{\phi_0}(\tau_0^\sharp)$.  But since the spherical direction of the first alcove $\fa$ of $\gamma_0$ is $\id = w_0w_0$, Corollary~\ref{cor:folds+crossings} implies that $\dim_{-\phi_0}(\gamma_0) = \dim_{-\phi_0}(\gamma_0^\sharp)$ and hence 
\[
\dim_{-\phi_0}(\gamma_0)= \dim X_{a_0}(1) + \langle \rho, \lambda - 2\rho -\mu \rangle =  \langle \rho, \lambda - \mu \rangle. 
\] 
This completes the proof of Proposition~\ref{prop:constr-elts1}.
\end{proof}

\begin{remark}
The proof of Proposition~\ref{prop:constr-elts1} provides an explicit construction (via root operators) of the gallery $\gamma_0$ whose existence is stated in the proposition. Moreover, as one can easily see from the proof and Figure~\ref{fig:Xa1toXxbPropFig}, the set of $\mu$ for which we can construct a gallery that is of the type stated in the proposition is in fact larger than $\Cf\cap((\lambda - 2\rho)+\Cneg)$. 
\end{remark}


\section{The varieties $X_x(1)$ in the shrunken dominant Weyl chamber}\label{sec:Xx1ShrunkenDominant}

In this section we prove Theorems~\ref{idDomDimw0} and~\ref{idDomDim} below, which consider the varieties $X_x(1)$ for $x \in W$ such that the alcove $\x=x\fa$ is in the shrunken dominant Weyl chamber $\Cfs$. (See~\ref{def:shrunken} for the definition of $\Cfs$.)  Theorems~\ref{idDomDimw0} and~\ref{idDomDim} are key steps towards establishing Theorem~\ref{Shrunken} in the introduction.  The proof of Theorem~\ref{Shrunken} will be completed in Section~\ref{sec:RelabelingSymmetry}.

The first main result in this section considers alcoves in $\Cfs$ with spherical direction $w_0$.   We prove Theorem~\ref{idDomDimw0} in Section~\ref{sec:w0} using our constructions from Section~\ref{sec:Construction}. 

\begin{thm}\label{idDomDimw0}
Let $a_0 = t^{2\rho}w_0$, and let $x_0 = t^{\lambda}w_0$ be such that the alcove $\x_0 = x_0\fa$ is in the shrunken dominant Weyl chamber $\Cfs$.  Then $X_{a_0}(1) \neq \emptyset$, $X_{x_0}(1) \neq \emptyset$, and 
\begin{equation}\label{E:idDimCalcw0}
\dim X_{x_0}(1) =\dim X_{a_0}(1) + \frac{1}{2}\ell\left(t^{\lambda-2\rho}\right) = \frac{1}{2} \ell(t^{\lambda}).
\end{equation}
\end{thm}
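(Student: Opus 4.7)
The plan is to produce explicit positively folded galleries that simultaneously witness nonemptiness and realize the asserted dimensions, then invoke Proposition~\ref{ADLVvsGalleries} and Corollary~\ref{wMaxDim}. The key observation is that the hypotheses of Proposition~\ref{prop:constr-elts1} are satisfied for $x_0 = t^\lambda w_0$ with the choice $\mu = 0$: since $\x_0 \in \Cfs$ the vertex $\lambda - 2\rho$ is dominant, and every dominant element of $V$ lies in $\Cpos$ (the entries of the inverse Cartan matrix are non-negative), so $0 \in \Cf \cap ((\lambda - 2\rho) + \Cneg)$. This is the only hypothesis check that requires thought.

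First I would apply Proposition~\ref{prop:constr-elts1} with $\mu = 0$ to obtain a gallery $\gamma_0 : \fa \rightsquigarrow \fa$ of type $\vec{x}_0$ which is positively folded with respect to $-\phipartialo$ and contains exactly $\ell(w_0)$ folds. The special case $\lambda = 2\rho$ of this construction is the gallery $\sigma_{a_0}$ built in Section~\ref{sec:construction_a0}. Proposition~\ref{ADLVvsGalleries} then immediately gives both $X_{x_0}(1) \neq \emptyset$ and $X_{a_0}(1) \neq \emptyset$.

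For the dimension, the crux is that $\gamma_0$ attains the maximum number of folds permitted by Corollary~\ref{w0Folds}. Hence Corollary~\ref{wMaxDim} shows $\gamma_0$ maximizes $\dim_{\phi_w}(\gamma)$ over \emph{all} orientations $\phipartialw$ and all $\gamma \in \Gamma^+_w(x_0,1)$, and by Proposition~\ref{ADLVvsGalleries} one obtains
\[
\dim X_{x_0}(1) \;=\; \dim_{-\phi_0}(\gamma_0) \;=\; \tfrac{1}{2}\bigl[\ell(x_0) + \ell(w_0)\bigr].
\]
Since $\x_0$ lies in the dominant chamber, $x_0 = t^\lambda w_0$ is the minimal length representative of the coset $t^\lambda \sW \in \aW/\sW$ while $t^\lambda$ is the maximal length representative, so $\ell(x_0) + \ell(w_0) = \ell(t^\lambda)$, giving $\dim X_{x_0}(1) = \tfrac{1}{2}\ell(t^\lambda)$. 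The same argument applied to $a_0 = t^{2\rho}w_0$ gives $\dim X_{a_0}(1) = \tfrac{1}{2}\ell(t^{2\rho})$.

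The final step will be the additivity $\ell(t^\lambda) = \ell(t^{2\rho}) + \ell(t^{\lambda - 2\rho})$, which follows from the identity $\ell(t^\mu) = \langle 2\rho, \mu\rangle$ applied to the three dominant cocharacters $\lambda$, $2\rho$, and $\lambda - 2\rho$; subtracting the two dimension formulas then yields the middle equality in~\eqref{E:idDimCalcw0}. I do not anticipate any serious obstacle: every ingredient has already been assembled in Sections~\ref{sec:GalleryDim} and~\ref{sec:Construction}, and Proposition~\ref{prop:constr-elts1} is essentially tailored to deliver this statement once one notices that the $\mu = 0$ case lies within its scope.
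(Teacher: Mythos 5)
Your proposal is correct and follows essentially the same route as the paper: the paper's proof of this theorem (Propositions~\ref{w0Construction} and~\ref{w0Dim}) likewise applies Proposition~\ref{prop:constr-elts1} with $\mu = 0$ (the $a_0$ case being handled via the gallery $\sigma_{a_0}$ from Section~\ref{sec:construction_a0}), uses the maximal fold count $\ell(w_0)$ together with Corollary~\ref{wMaxDim} and Proposition~\ref{ADLVvsGalleries} (equivalently Lemma~\ref{DomADLVDim}) to identify the dimension as $\tfrac{1}{2}[\ell(x_0)+\ell(w_0)] = \tfrac{1}{2}\ell(t^\lambda)$, and concludes with the same additivity of $\ell(t^\mu) = \langle 2\rho,\mu\rangle$ for dominant $\mu$. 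Your explicit check that $0 \in \Cf \cap ((\lambda-2\rho)+\Cneg)$ is exactly the observation recorded in Remark~\ref{rem:gamma0}.
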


The next theorem, which is the second main result in this section, and which we prove in Section~\ref{sec:Xx1LocallyConstant}, considers arbitrary alcoves in $\Cfs$. 

\begin{thm}\label{idDomDim}
Let $a = t^{2\rho}w$ for some $w \in \sW$, and let $x = t^{\lambda}w$ have the same spherical direction as $a$ and be such that the alcove $\x = x\fa$ is in $\Cfs$.  Then
\begin{equation}\label{E:idNonEmptya}
X_a(1) \neq \emptyset \iff X_x(1) \neq \emptyset.
\end{equation}
Moreover if both of the varieties $X_a(1)$ and $X_x(1)$ are nonempty, their dimensions relate via
\begin{equation}\label{E:idDimCalc}
\dim X_x(1) =\dim X_a(1) + \frac{1}{2}\ell\left(t^{\lambda-2\rho}\right) = \frac{1}{2} \ell(t^{\lambda}).
\end{equation}
\end{thm}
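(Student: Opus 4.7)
The plan is to establish matching upper and lower bounds on $\dim X_x(1)$ and $\dim X_a(1)$ via the folded gallery framework of Proposition~\ref{ADLVvsGalleries}, adapting the strategy of the proof of Theorem~\ref{idDomDimw0} to general spherical direction $w$.

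For the upper bound, I would consider any positively folded gallery $\gamma$ of type $\vec{x}$ from $\fa$ to $\fa$ with respect to any orientation $\phipartialu$. Lemma~\ref{wFolds2}, applied with pure-translation endpoint $1$, gives $F_\phi(\gamma) \leq \ell(w)$, while Lemma~\ref{lem:gallery dim} with $y = 1$ yields $\dim_\phi(\gamma) = \tfrac{1}{2}[\ell(x) + F_\phi(\gamma)]$. Since the hypothesis $\x \in \Cfs$ forces $\ell(x) = \ell(t^\lambda) - \ell(w)$, we obtain $\dim_\phi(\gamma) \leq \tfrac{1}{2}\ell(t^\lambda)$; maximizing over galleries and orientations gives $\dim X_x(1) \leq \tfrac{1}{2}\ell(t^\lambda)$. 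The same argument with $a$ in place of $x$ yields $\dim X_a(1) \leq \ell(t^\rho)$.

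For the lower bound together with the nonemptiness equivalence, I would generalize the construction of $\sigma_{a_0}$ in Section~\ref{sec:construction_a0} by applying the PRS folding sequence along the hyperplanes crossed by a minimal gallery $\tau_w:\w \rightsquigarrow \fa$ (rather than $\tau:\w_0 \rightsquigarrow \fa$). Done correctly this yields $\sigma_a:\fa \rightsquigarrow \fa$ of type $\vec{a}$ with $\ell(w)$ positive folds relative to $-\phipartialo$, which by Lemma~\ref{lem:gallery dim} has dimension $\tfrac{1}{2}[\ell(a) + \ell(w)] = \ell(t^\rho)$. Following the recipe of Proposition~\ref{prop:constr-elts1} with $\mu = 0$—translating $\sigma_a$ by $t^{\lambda - 2\rho}$ (which preserves positive foldings by Lemma~\ref{lem:translations dim}) and concatenating with a minimal gallery $\fa \rightsquigarrow t^{\lambda-2\rho}\fa$—then produces a positively folded gallery of type $\vec{x}$ from $\fa$ to $\fa$ with $\ell(w)$ folds, attaining $\dim = \tfrac{1}{2}\ell(t^\lambda)$. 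The implication $X_a(1) \neq \emptyset \Longrightarrow X_x(1) \neq \emptyset$ follows since the construction depends only on $w$; the converse is obtained by truncating any $\vec{x}$-gallery realizing $X_x(1)$ to its initial $\vec{a}$-segment.

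The main obstacle is verifying that the modified PRS folding sequence actually closes up at its starting alcove for general $w$. In the $w_0$ case of Lemma~\ref{lem:lem_w0-folding-endpoint}, closure followed from the identity $s_{\beta_l}\cdots s_{\beta_1} = w_0$ combined with $w_0$ acting as $-\id$ on the coroot lattice, so that $w_0 \cdot \x_{\rho,w_0} = \x_{-\rho,1}$; for general $w$ the analogous identity need not hold. The construction succeeds precisely when $w$ satisfies the Reuman-type condition characterizing nonemptiness of $X_x(1)$ in $\Cfs$ (compare~\cite{GoertzHeDim}); when this condition fails, both $X_x(1)$ and $X_a(1)$ are empty, so the equivalence~\eqref{E:idNonEmptya} holds vacuously.
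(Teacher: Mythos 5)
Your upper bound is fine (Lemma~\ref{wFolds2} plus Lemma~\ref{lem:gallery dim} with $y=1$, together with $\ell(x)+\ell(w)=\ell(t^\lambda)$, is exactly the reasoning behind Corollary~\ref{wMaxDim}), but the heart of your argument is a genuine gap: the claim that the folding construction of Section~\ref{sec:construction_a0} generalizes, "done correctly," to produce $\sigma_a:\fa\rightsquigarrow\fa$ of type $\vec a$ with $\ell(w)$ positive folds with respect to $-\phipartialo$ whenever Reuman's criterion holds is asserted, not proved, and it is precisely what the paper identifies as an unresolved obstruction (Section~\ref{sec:obstructions}, Question~\ref{Q:Orientation}; see also the discussion in Section~\ref{sec:Reuman}). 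The $w_0$ construction rests on two features special to $w_0$: every wall of $\w_0$ through the origin is a simple-root hyperplane $H_\alpha$, which is what forces each successive fold to lie in a wall of $\w_0$ and hence be positive for $-\phipartialo$ (Lemma~\ref{lem:lem_w0-folding-orientation}); and the product of the fold reflections is $w_0$, which carries the end alcove $\x_{\rho,w_0}$ to the pure-translation alcove $\x_{-\rho,1}$ because $w_0t^{\rho}w_0=t^{-\rho}$ (Lemma~\ref{lem:lem_w0-folding-endpoint}). For general $w$ the walls of $\w$ through the origin are the $H_{w\alpha_i}$, so the folds need not be in simple-root hyperplanes and need not be positive for $-\phipartialo$; moreover $wt^{2\rho}w$ is not a pure translation, so the folded gallery neither closes up at $\fa$ nor ends at any translation alcove, which is what Theorem~\ref{ADLVvsGal2}/Proposition~\ref{ADLVvsGalleries} require. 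Indeed Proposition~\ref{prop:DLXx0b} shows $-\phipartialo$ is the forced orientation only in the $w_0$ position, and for other spherical directions it is not even known which orientation a DL-gallery for $X_x(1)$ must use. So the single sentence "the construction succeeds precisely when $w$ satisfies the Reuman-type condition" is carrying the entire weight of the theorem and is unsubstantiated.

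Two further problems. First, your vacuous case ("when the condition fails, both varieties are empty") silently invokes the emptiness results of \cite{GHKRadlvs} and \cite{GoertzHeDim}, i.e.\ exactly the external input your constructive route was meant to avoid; so the proposal does not escape Theorem~\ref{idADLV} and yet leaves the positive direction unproven. Second, the converse via "truncating any $\vec x$-gallery to its initial $\vec a$-segment" does not work: the length-additive decomposition available here is $x=t^{\lambda-2\rho}a$, with the $\vec a$-part at the \emph{end} of a minimal word (it is not clear that $\mathbf{a}\in\conv(\fa,\x)$ at all), and in any case a truncated positively folded gallery ends at whatever alcove has been reached at that stage, not at $\fa$ or at a pure translation, so it does not witness $X_a(1)\neq\emptyset$. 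For comparison, the paper proves Theorem~\ref{idDomDim} non-constructively in a few lines from Theorem~\ref{idADLV}: for shrunken alcoves the nonemptiness criterion depends only on the spherical direction, which is $w$ for both $a$ and $x$, and the dimension formula $\frac{1}{2}\bigl(\ell(\cdot)+\ell(w)\bigr)$ combined with $\ell(x)+\ell(w)=\ell(t^\lambda)$ and $\ell(a)+\ell(w)=\ell(t^{2\rho})$ gives~\eqref{E:idDimCalc}; only the $w_0$ case (Theorem~\ref{idDomDimw0}) admits the constructive treatment you are attempting. To repair your proof you must either supply the general-$w$ construction (a genuinely hard open problem) or cite Theorem~\ref{idADLV} directly, at which point the constructive apparatus becomes unnecessary.
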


Obviously Theorem~\ref{idDomDim} implies Theorem~\ref{idDomDimw0}, once it is known that $X_{a_0}(1)$ and $X_{x_0}(1)$ are nonempty.  The reason we have separated these statements is that our proof of Theorem~\ref{idDomDimw0} is completely constructive, while that of Theorem~\ref{idDomDim} logically depends on Theorem 1.1.2 from \cite{GoertzHeDim}, which we recall as Theorem~\ref{idADLV} below.  In Section~\ref{sec:Reuman} we discuss this dependence further, and also make a comparison with the nonemptiness criterion found in \cite{GHKR}.


\subsection{The $w_0$ position}\label{sec:w0}

In this section we prove Theorem~\ref{idDomDimw0}, which considers alcoves in the shrunken dominant Weyl chamber $\Cfs$ which have spherical direction $w_0$ the longest word.

We first show that the gallery $\sigma_{a_0}$ constructed in Section~\ref{sec:construction_a0} is a DL-gallery for $X_{a_0}(1)$.  Recall that $a_0 = t^{2\rho}w_0$.

\begin{prop}\label{w0Construction}
The variety $X_{a_0}(1)$ is nonempty and the gallery $\sigma_{a_0}: \fa \rightsquigarrow \fa$ is a DL-gallery for $X_{a_0}(1)$, hence $\dim X_{a_0}(1) = \ell(t^{\rho})$.
\end{prop}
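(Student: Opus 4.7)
The plan is to combine the construction of $\sigma_{a_0}$ from Section~\ref{sec:construction_a0} with the reformulation of $\dim X_x(1)$ in terms of positively folded galleries from Section~\ref{sec:ADLVGalleries}. Nonemptiness will be immediate, and the key input for the dimension computation will be the observation that $\sigma_{a_0}$ has the maximum possible number of folds, namely $\ell(w_0)$.

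First I would note that $\sigma_{a_0}: \fa \rightsquigarrow \fa$ is by construction a gallery of type $\vec{a_0}$ which is positively folded with respect to the orientation at infinity $-\phipartialo = \phipartialwo$. Hence by Proposition~\ref{ADLVvsGalleries}, the variety $X_{a_0}(1)$ is nonempty, and moreover
\begin{equation*}
\dim X_{a_0}(1) \;=\; \max_{w\in \sW}\bigl\{ F(\gamma') + P_{\phi_w}(\gamma') \;\big|\; \gamma' \in \Gamma^+_w(a_0,1) \bigr\}.
\end{equation*}

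Next I would apply Corollary~\ref{wMaxDim}. Since $\sigma_{a_0}$ starts and ends at $\fa$ and was produced by a PRS folding sequence with exactly $\ell(w_0)$ folds, Corollary~\ref{wMaxDim} gives that $\sigma_{a_0}$ attains the maximum dimension over all galleries in $\bigcup_{u \in \sW}\Gamma^+_u(a_0,1)$, and that this maximum equals
\begin{equation*}
\dim_{-\phi_0}(\sigma_{a_0}) \;=\; \tfrac{1}{2}\bigl[\ell(a_0) + \ell(w_0)\bigr].
\end{equation*}
On the other hand, Lemma~\ref{lem:dim sigma} directly identifies this quantity as $\ell(t^\rho)$. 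Combining these two displays with the maximum formula from Proposition~\ref{ADLVvsGalleries} yields $\dim X_{a_0}(1) = \ell(t^\rho)$.

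Finally, to verify the DL-gallery assertion, I would simply check the conditions in Definition~\ref{def:DLGallery} with $x = a_0$, $b = 1$, and $w = 1$: the translation $b^w = 1$, so $\sigma_{a_0}$ runs from $\fa$ to $b^{w}\fa = \fa$; it is of type $\vec{a_0}$; it is positively folded with respect to $\phi_{w_0 w} = \phi_{w_0} = -\phipartialo$; and by the previous step its dimension with respect to this orientation equals $\dim X_{a_0}(1)$. Hence $\sigma_{a_0}$ is a DL-gallery for $X_{a_0}(1)$, and we conclude $\dim X_{a_0}(1) = \ell(t^\rho)$. No step here is a genuine obstacle; the entire argument is just the assembly of results already established in Sections~\ref{sec:GalleryDim}, \ref{sec:ADLVGalleries}, and~\ref{sec:construction_a0}, with Corollary~\ref{wMaxDim} doing the essential work of converting the fold count $\ell(w_0)$ into an optimality statement.
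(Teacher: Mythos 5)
Your proof is correct and follows essentially the same route as the paper: nonemptiness and the maximization formula via Proposition~\ref{ADLVvsGalleries}, optimality of $\sigma_{a_0}$ from its $\ell(w_0)$ folds via Corollary~\ref{wMaxDim}, and the value $\ell(t^\rho)$ from Lemma~\ref{lem:dim sigma}. The only difference is that you spell out the check of Definition~\ref{def:DLGallery} explicitly, which the paper leaves implicit.
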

\begin{proof}
Recall Corollary~\ref{wMaxDim}, which says that since the number of folds in $\sigma_{a_0}$ equals $\ell(w_0)$, the gallery $\sigma_{a_0}$ achieves the maximum dimension among all labeled folded galleries from $\fa$ to $\fa$ of type $\vec a_0$ with respect to any orientation.  Therefore, Proposition~\ref{ADLVvsGalleries} and Lemma~\ref{lem:dim sigma} say that $X_{a_0}(1) \neq \emptyset$ and $\dim X_{a_0}(1) = \dim_{-\phi_0}(\sigma_{a_0})=\ell(t^\rho)$, as required.
\end{proof}

Now let $x_0 = t^\lambda w_0$ be such that the alcove $\x_0 = x_0 \fa$ is in the shrunken dominant Weyl chamber.  It follows that $\lambda - 2\rho$ is dominant.  Then as $\lambda$, $\rho$, and $\lambda - 2\rho$ are all dominant, we have by Proposition~\ref{w0Construction} that 
$$
\frac{1}{2}\ell(t^\lambda) = \ell(t^\rho) + \frac{1}{2}\ell(t^\lambda) - \frac{1}{2}\ell(t^{2\rho})= \dim X_{a_0}(1) + \frac{1}{2}\ell(t^{\lambda - 2\rho}).
$$
Hence to complete the proof of Theorem~\ref{idDomDimw0}, it suffices to establish the following proposition.

\begin{prop}\label{w0Dim}
Let $x_0 = t^\lambda w_0$ be such that the alcove $\x_0 = x_0\fa$ is in the shrunken dominant Weyl chamber $\Cfs$.  Then $X_{x_0}(1) \neq \emptyset$ and $\dim X_{x_0}(1) = \frac{1}{2}\ell(t^\lambda)$.
\end{prop}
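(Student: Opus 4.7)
The plan is to specialize the construction of Proposition~\ref{prop:constr-elts1} to the case $\mu = 0$, and then show that the resulting gallery is a DL-gallery by matching a lower bound from the construction with an upper bound coming from the length-based counting lemmas of Section~\ref{sec:GalleryDim}.

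First I would verify the hypotheses of Proposition~\ref{prop:constr-elts1} in the case $\bb = \fa$, that is, $\mu = 0$. Since the alcove $\x_0 = t^\lambda w_0 \fa$ lies in $\Cfs$, the vertex $\lambda - 2\rho$ is dominant, and as it is an element of the coroot lattice it is a nonnegative integer combination of simple coroots. Hence $0 = (\lambda - 2\rho) - (\lambda - 2\rho)$ lies in $\Cf \cap ((\lambda - 2\rho) + \Cneg)$. Applying Proposition~\ref{prop:constr-elts1} with $\mu = 0$ produces a gallery $\gamma_0 : \fa \rightsquigarrow \fa$ of type $\vec{x}_0$, positively folded with respect to $-\phipartialo$ and containing $\ell(w_0)$ folds, with
\[
\dim_{-\phi_0}(\gamma_0) = \langle \rho, \lambda \rangle = \tfrac{1}{2}\ell(t^\lambda).
\]
By Proposition~\ref{ADLVvsGalleries} the existence of $\gamma_0$ already implies $X_{x_0}(1) \neq \emptyset$ and furnishes the lower bound $\dim X_{x_0}(1) \geq \tfrac{1}{2}\ell(t^\lambda)$.

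For the matching upper bound I would combine Corollary~\ref{w0Folds} and Lemma~\ref{lem:gallery dim}. For any orientation $\phipartial$ and any positively folded gallery $\sigma : \fa \rightsquigarrow \fa$ of type $\vec{x}_0$, the $y = 1$ case of Lemma~\ref{lem:gallery dim} gives $\dim_\phi(\sigma) = \tfrac{1}{2}[\ell(x_0) + F_\phi(\sigma)]$, while Corollary~\ref{w0Folds} bounds $F_\phi(\sigma) \leq \ell(w_0)$. Since $\lambda$ is dominant, $x_0 = t^\lambda w_0$ is the minimal length coset representative in $\aW/\sW$, so $\ell(x_0) = \ell(t^\lambda) - \ell(w_0)$ and hence $\dim_\phi(\sigma) \leq \tfrac{1}{2}\ell(t^\lambda)$. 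Taking the maximum over all orientations and galleries and applying Proposition~\ref{ADLVvsGalleries} yields $\dim X_{x_0}(1) \leq \tfrac{1}{2}\ell(t^\lambda)$, completing the equality. (Equivalently, one could invoke Corollary~\ref{wMaxDim} directly: since $\gamma_0$ has $\ell(w_0)$ folds, it attains the maximum dimension over all labeled folded galleries from $\fa$ to $\fa$ of type $\vec{x}_0$ in any orientation, so $\gamma_0$ is a DL-gallery for $X_{x_0}(1)$.)

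The only nontrivial input here is Proposition~\ref{prop:constr-elts1} itself; given that proposition, the argument is essentially a bookkeeping exercise with lengths. The main (minor) subtlety to double-check is the verification of the cone hypothesis $0 \in \Cf \cap ((\lambda - 2\rho)+\Cneg)$, which relies on the fact that a dominant element of the coroot lattice is a nonnegative integer combination of simple coroots; this holds in our setting because the entries of the inverse Cartan matrix are nonnegative in each irreducible type.
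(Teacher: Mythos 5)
Your proposal is correct and follows essentially the same route as the paper: specialize Proposition~\ref{prop:constr-elts1} to $\mu = 0$ (the cone hypothesis holding because $\lambda - 2\rho$ is dominant), deduce nonemptiness and the lower bound from the resulting gallery, and obtain the matching upper bound from the fact that a gallery $\fa \rightsquigarrow \fa$ of type $\vec{x}_0$ with $\ell(w_0)$ folds maximizes dimension (Corollary~\ref{wMaxDim}, which is exactly the combination of Corollary~\ref{w0Folds} and Lemma~\ref{lem:gallery dim} you spell out). The only cosmetic difference is that you cite Proposition~\ref{ADLVvsGalleries} where the paper uses Theorem~\ref{ADLVvsGal2} together with Lemma~\ref{DomADLVDim}; these are equivalent in the case $b=1$.
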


\begin{proof}
We will apply Proposition~\ref{prop:constr-elts1} in the special case that $\bb = t^\mu\fa = \fa$, that is, $\mu$ is the origin.  Since $\lambda - 2\rho$ is dominant, the origin is in the intersection of the fundamental Weyl chamber $\Cf$ with the negative cone based at $\lambda - 2\rho$.  Then by Proposition~\ref{prop:constr-elts1} there exists a gallery $\gamma_0:\fa\rightsquigarrow \fa$ of type $\vec x_0$ which is positively folded with respect to $-\phipartialo$.  Moreover $\gamma_0$ has dimension $\dim_{-\phi_0}(\gamma_0) =  \langle \rho, \lambda \rangle = \frac{1}{2}\ell(t^\lambda)$.  We then have by Theorem~\ref{ADLVvsGal2} and Lemma~\ref{DomADLVDim} that $X_{x_0}(1)$ is nonempty and that $\dim X_{x_0}(1) \geq \frac{1}{2}\ell(t^\lambda)$.  It remains to show that $\gamma$ is a DL-gallery for $X_{x_0}(1)$.  Since $\gamma$ has $\ell(w_0)$ folds and both starts and ends in the alcove $\fa$, this follows from Lemma~\ref{DomADLVDim} again together with Corollary~\ref{wMaxDim}.
\end{proof}


\subsection{Arbitrary spherical directions}\label{sec:Xx1LocallyConstant}

We will now prove Theorem~\ref{idDomDim}, which considers all alcoves in the shrunken dominant Weyl chamber.  The results in this section depend on Theorem 1.1.2 of \cite{GoertzHeDim}, which we  recall below.  Given any $x \in \aW$, let $\eta_1: \aW \rightarrow \sW$ be the natural projection onto the finite part of $x$ and $\eta_2:\aW \rightarrow \sW$ the map which identifies the Weyl chamber $\mathcal{C}_w$ in which the alcove $\x$ lies.  The following first appeared as Conjecture 7.2.2 in \cite{GHKR}.  The original conjecture was based on the observations made by Reuman in \cite{Reu}, and we therefore informally refer to this nonemptiness condition as \emph{Reuman's criterion}.

\begin{thm}[Theorem 1.1.2 of \cite{GoertzHeDim}]\label{idADLV}
Suppose that $x \in \aW$ is such that $x$ lies in the identity component of $G(F)$ and the alcove $\x = x\fa$ lies in one of the shrunken Weyl chambers.  Then $X_x(1) \neq \emptyset$ if and only if
\begin{equation}\label{E:idNonEmpty}
\eta_2(x)^{-1}\eta_1(x)\eta_2(x) \in \sW \backslash \bigcup\limits_{T \subsetneq S} W_T.
\end{equation}
In the case in which $X_x(1) \neq \emptyset$, 
\begin{equation}\label{E:idADLVDim}
\dim X_x(1) = \frac{1}{2}\left( \ell(x)+\ell(\eta_2(x)^{-1}\eta_1(x)\eta_2(x)) \right).
\end{equation}
\end{thm}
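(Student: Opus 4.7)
The plan is to derive both the nonemptiness criterion and the dimension formula directly from the gallery machinery of Sections~\ref{sec:GalleryDim} and~\ref{sec:ADLVGalleries}, thereby recovering the Görtz--He theorem by a constructive argument. By Proposition~\ref{ADLVvsGalleries}, $X_x(1) \neq \emptyset$ if and only if there exists some $w \in \sW$ and some positively folded alcove-to-alcove gallery $\gamma: \fa \rightsquigarrow \fa$ of type $\vec{x}$ with respect to $\phipartialw$, and in this case
\[
\dim X_x(1) = \max_{w\in \sW}\{\dim_{\phi_w}(\gamma) \mid \gamma \in \Gamma^+_w(x,1)\}.
\]
By Lemma~\ref{lem:gallery dim} applied with $y = 1$, every such gallery satisfies $\dim_{\phi_w}(\gamma) = \tfrac{1}{2}(\ell(x) + F_{\phi_w}(\gamma))$. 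Hence the problem reduces to identifying the maximum number of folds $F$ that can be achieved by a gallery $\gamma: \fa \rightsquigarrow \fa$ of type $\vec x$ positively folded with respect to some orientation at infinity.

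Write $x = t^\lambda u$ with $u = \eta_1(x)$ and let $v = \eta_2(x)$, so that $\x$ lies in the Weyl chamber $\Cw_v$. For the upper bound, one first reduces to the case $v = \id$ by conjugating: the shrunken hypothesis implies that the unique minimal gallery $\gamma_x: \fa \rightsquigarrow x\fa$ enters $\Cw_v$ at an alcove of spherical direction $v$, and its ``tail'' thereafter is essentially a translated copy of a minimal gallery of type $\vec{y}$ for $y = t^{v^{-1}\lambda} v^{-1}uv$ whose starting alcove is in the $w_0$-position relative to $\Cw_v$. Combining this geometric observation with Lemma~\ref{wFolds2} applied to the orientation $\phipartialw$ with $w = v$ yields the bound $F_{\phi_v}(\gamma) \leq \ell(v^{-1}uv)$; a careful case-check using that $\x$ is \emph{shrunken} shows that no other orientation $\phipartialw$ can do better, because folds outside of the ``conjugated tail'' would force negative foldings elsewhere in the gallery. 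This gives
\[
\dim X_x(1) \leq \tfrac{1}{2}\bigl(\ell(x) + \ell(v^{-1}uv)\bigr).
\]

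For the lower bound, I would extend the explicit construction of Section~\ref{sec:construction_a0} from $a_0 = t^{2\rho}w_0$ to the current setting. First, when $v = \id$, write $u = s_{i_1}\cdots s_{i_k}$ as a reduced word with $k = \ell(u)$, and build a minimal gallery $\tilde\sigma$ analogous to $\sigma$ from Section~\ref{sec:construction_a0} by concatenating a minimal gallery into the $w_0$-alcove at $-\rho$, a gallery of type $\vec u^{-1}$ of length $k$ inside $-\Cf$ that crosses precisely the hyperplanes orthogonal to $s_{i_1},\dots,s_{i_k}$, and a translated tail returning to the $w_0$-alcove at $\lambda - \rho$. The argument of Lemma~\ref{lem_w0-folding-sequence} carries over to show minimality, and the inductive folding argument of Lemma~\ref{lem:lem_w0-folding-endpoint} yields a PRS folding sequence of exactly $k = \ell(u)$ folds, all in walls of the bottom alcove, giving a positively folded gallery $\fa \rightsquigarrow \fa$ after translating by $t^\rho$. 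For general $v$, one applies the conjugation technique of Section~\ref{sec:Conjugation} (Theorem~\ref{OtherWeyl}) to transport the $v = \id$ construction into the chamber $\Cw_v$; the number of folds is preserved, yielding $F = \ell(v^{-1}uv)$.

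The nonemptiness dichotomy then follows: such a gallery exists precisely when the initial word of type $\vec{u'}$ (for $u' = v^{-1}uv$) admits a PRS folding sequence that folds every simple crossing it contains. This is possible exactly when $u'$ does not lie in a proper standard parabolic subgroup, since if $u' \in W_T$ with $T \subsetneq S$ then every reduced expression for $u'$ misses at least one simple reflection, and one checks (by tracking the spherical direction of the tail alcove under any proposed folding) that no orientation at infinity can render all required foldings positive. The hardest step will be this last one, the converse direction of the nonemptiness criterion: one must rule out \emph{every} orientation $\phipartialw$ simultaneously when $u' \in W_T$, which requires a careful structural analysis of which reflections can occur in a PRS sequence given the walls crossed by a minimal gallery of type $\vec x$ in a shrunken Weyl chamber. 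I expect that this rests on a parity/wall-separation argument using Lemmas~\ref{load bearing} and~\ref{lem:folds+crossing}, analogous to the Deligne--Lusztig reduction in~\cite{GoertzHeDim} but phrased entirely in terms of the combinatorics of alcove walks.
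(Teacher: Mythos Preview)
This theorem is not proved in the paper at all; it is quoted verbatim from G\"ortz--He and then used as a black box (see the opening of Section~\ref{sec:Xx1LocallyConstant} and the discussion in Section~\ref{sec:Reuman}). More to the point, the paper explicitly explains in Section~\ref{sec:obstructions} that a constructive gallery proof of exactly the kind you propose is \emph{not} currently available: ``providing a construction of DL-galleries for just the finitely many varieties $X_a(1)$ where $a = t^{2\rho}w$ and $w$ satisfies Reuman's criterion appears very difficult in arbitrary type,'' and even the correct orientation for a DL-gallery when the spherical direction is not $w_0$ is posed as the open Question~\ref{Q:Orientation}. So your plan runs straight into obstacles the authors themselves identify as unresolved.

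Concretely, there are two gaps. For the lower bound, the folding construction of Section~\ref{sec:construction_a0} hinges on the fact that $\tau:\w_0\rightsquigarrow\fa$ crosses \emph{every} hyperplane through the origin; this is special to $w_0$ and is what makes all $\ell(w_0)$ folds positive with respect to a single orientation. For a general $w$ meeting Reuman's criterion your sketched analogue is exactly what Section~\ref{sec:obstructions} says is missing. Invoking the conjugation machinery of Section~\ref{sec:Conjugation} does not rescue this: the equality version for $b=1$ (Theorem~\ref{T:Conjugationb=1}, via Proposition~\ref{T:ConjugateXx1}) is itself proved by appealing to Theorem~\ref{idADLV}, so that route is circular. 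For the upper bound, Lemma~\ref{wFolds2} with $y=1$ gives only $F_{\phi}(\gamma)\le \ell(\eta_1(x))$ uniformly over orientations, not $\ell(\eta_2(x)^{-1}\eta_1(x)\eta_2(x))$; these differ in general (e.g.\ take $\eta_1(x)$ a long reflection in $\sW$ and $\eta_2(x)$ an element conjugating it to a simple one). The ``careful case-check'' you allude to is therefore carrying the full weight of the sharper bound outside the dominant chamber, and no such argument is supplied or available in the paper.
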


We now prove Theorem~\ref{idDomDim}.  In the special case that $a = a_0 = t^{2\rho}w_0$ and $x = x_0 = t^\lambda w_0$ this result follows from Theorem~\ref{idDomDimw0} above.  For other spherical directions, we use Theorem~\ref{idADLV}.

\begin{proof}[Proof of Theorem~\ref{idDomDim}]
Let $a = t^{2\rho}w$, and note that the corresponding alcove $\mathbf{a} = a\fa$ is in the shrunken dominant Weyl chamber $\Cfs$ for any choice of $w \in \sW$.  Now let $x = t^{\lambda}w$ have the same finite part as $a$.  To see that $X_a(1) \neq \emptyset \iff X_x(1) \neq \emptyset,$ note that for a fixed Weyl chamber, the nonemptiness condition \eqref{E:idNonEmpty} of Theorem~\ref{idADLV} only depends on the finite part of $x$.  In particular, in the dominant Weyl chamber, in which $\eta_2(x) = 1$, this condition reduces to $ w \in \sW \backslash \bigcup_{T \subsetneq S} W_T.$  Therefore Theorem~\ref{idADLV} directly implies \eqref{E:idNonEmptya}.

For the equality comparing the dimensions of $X_a(1)$ and $X_x(1)$ in \eqref{E:idDimCalc}, we also use Theorem~\ref{idADLV}.  
Since the alcoves $\x = x\fa$ and $\mathbf{a} = a\fa$ are in the shrunken dominant Weyl chamber, at the vertices $\lambda$ and $2\rho$ respectively, we have $\ell(x) + \ell(w) = \ell(t^\lambda)$ and $\ell(a) + \ell(w) = \ell(t^{2\rho})$.  Together with Equation~\eqref{E:idADLVDim} this implies that
\begin{align}
\dim X_x(1) 
& = \frac{1}{2}\ell(t^\lambda) \\
& =  \frac{1}{2}\ell(t^{2\rho})+ \frac{1}{2}\ell \left( t^{\lambda-2\rho} \right)\\
& =  \dim X_a(1) + \frac{1}{2}\ell \left( t^{\lambda-2\rho} \right),
\end{align}
as desired.
\end{proof}

Theorem~\ref{idADLV} can also be used to show that the varieties $X_x(1)$ have ``locally constant"  dimension, in the sense made precise in the statement of the following result.  

\begin{prop}\label{LocConstant} 
Suppose $x = t^\lambda w$ and $x' = t^\lambda w'$ in  $\aW$ are such that the alcoves $\x = x\fa$ and $\x' = x\fa$ are contained in the shrunken dominant Weyl chamber $\Cfs$ and are based at the same vertex $\lambda \in R^{\vee}$.  If both $X_x(1)$ and $X_{x'}(1)$ are nonempty, then $\dim X_x(1) =\dim X_{x'}(1).$
\end{prop}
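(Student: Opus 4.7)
The plan is to derive Proposition~\ref{LocConstant} as an immediate corollary of Theorem~\ref{idDomDim}, which has already established the explicit formula $\dim X_{t^\lambda v}(1) = \tfrac{1}{2}\ell(t^\lambda)$ for every $t^\lambda v$ whose alcove lies in $\Cfs$ and for which $X_{t^\lambda v}(1)$ is nonempty. Since the right-hand side depends only on $\lambda$ and not on the finite part $v$, the conclusion follows by applying this formula to $x$ and $x'$ separately.

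Concretely, I would proceed in two short steps. First, apply Theorem~\ref{idDomDim} to the pair $(x, a) = (t^\lambda w, t^{2\rho}w)$. The nonemptiness hypothesis for $X_a(1)$ is automatic: Reuman's criterion \eqref{E:idNonEmpty} in the shrunken dominant Weyl chamber reduces to $w \in \sW \setminus \bigcup_{T \subsetneq S} W_T$, a condition on $w$ alone that is independent of the choice of translation vertex, so the nonemptiness of $X_x(1)$ forces $X_a(1) \neq \emptyset$ as well. Theorem~\ref{idDomDim} then yields $\dim X_x(1) = \tfrac{1}{2}\ell(t^\lambda)$. Second, apply the same theorem to the pair $(x', a') = (t^\lambda w', t^{2\rho}w')$ to obtain $\dim X_{x'}(1) = \tfrac{1}{2}\ell(t^\lambda)$. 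Comparing these two equalities proves $\dim X_x(1) = \dim X_{x'}(1)$.

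Equivalently, one can bypass Theorem~\ref{idDomDim} and invoke Theorem~\ref{idADLV} directly. Since $\x$ and $\x'$ both lie in $\Cfs \subset \Cf$, we have $\eta_2(x) = \eta_2(x') = 1$, so the conjugate $\eta_2^{-1}\eta_1\eta_2$ simply equals $w$ for $x$ and $w'$ for $x'$. The standard length identity $\ell(t^\lambda v) = \ell(t^\lambda) - \ell(v)$, valid whenever the full $\sW$-orbit of alcoves at $\lambda$ lies in $\Cf$ (which is guaranteed by the shrunken hypothesis), causes the finite parts to cancel in $\dim X_{t^\lambda v}(1) = \tfrac{1}{2}\bigl(\ell(t^\lambda v) + \ell(v)\bigr)$, producing the common value $\tfrac{1}{2}\ell(t^\lambda)$. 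Either way, there is no real obstacle to the argument: the substantive content has been absorbed into Theorem~\ref{idDomDim}, and the present proposition merely records the resulting independence from the spherical direction.
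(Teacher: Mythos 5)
Your proposal is correct and takes essentially the same approach as the paper: the paper's proof applies Theorem~\ref{idADLV} directly, noting $\eta_2(x)=\eta_2(x')=1$ and using the length identity $\ell(x)+\ell(w)=\ell(x')+\ell(w')=\ell(t^\lambda)$, which is exactly your second route. Routing instead through Theorem~\ref{idDomDim} changes nothing of substance, since that theorem is itself deduced from Theorem~\ref{idADLV} and yields the same common value $\tfrac{1}{2}\ell(t^\lambda)$.
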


\begin{proof}
We have $\eta_1(x) = w$, $\eta_1(x') = w'$, and $\eta_2(x) = \eta_2(x')= 1$.  Assume that both $X_x(1)$ and $X_{x'}(1)$ are nonempty.  Then Theorem~\ref{idADLV} says that 
\begin{equation}
\dim X_x(1)-\dim X_{x'}(1) = \frac{1}{2} \left( (\ell(x) + \ell(w)) - (\ell(x') + \ell(w')) \right).
\end{equation}
Since $x$ and $x'$ share a common vertex $\lambda$ and the alcoves $\x$ and $\x'$ are in the shrunken dominant Weyl chamber, we have $\ell(x) + \ell(w) = \ell(x') + \ell(w') = \ell(t^\lambda)$.  Therefore $\dim X_x(1)-\dim X_{x'}(1) = 0$, and so the dimensions are the same.
\end{proof}


\subsection{Dependence upon Theorem~\ref{idADLV} and comparison with Reuman's criterion}\label{sec:Reuman}

In the case that $x$ is in the shrunken Weyl chambers and $b\in G(F)$ is basic, in particular for $b=1$, both the nonemptiness patterns and dimensions of affine Deligne--Lusztig varieties $X_x(b)$ were already known, having been proved by G\"{o}rtz and He in \cite{GoertzHeDim}.  We already remarked that our proofs in Section~\ref{sec:w0} are entirely constructive and are logically independent of Theorem~\ref{idADLV} from \cite{GoertzHeDim}, but that in Section~\ref{sec:Xx1LocallyConstant} we depend upon Theorem~\ref{idADLV}.  In this section we discuss the relationship between our proofs and Theorem~\ref{idADLV} further, and compare our approach with Reuman's criterion.

We first compare the formulas from Theorem~\ref{idADLV} directly with our conclusions in Propositions~\ref{w0Construction} and \ref{w0Dim} for alcoves in the $w_0$ position.   In the case of $x=a_0 = t^{2\rho}w_0$, the alcove $a_0\fa$ lies in the shrunken fundamental Weyl chamber $\Cfs$, and so $\eta_2(a_0) = 1$.  On the other hand, $a_0$ is in the $w_0$ position, and so $\eta_1(a_0) = w_0$.  Therefore $\eta_2(a_0)^{-1}\eta_1(a_0)\eta_2(a_0) = w_0 \notin W_T$ for any $T \subsetneq S$, which means that $X_{a_0}(1) \neq \emptyset$ by \eqref{E:idNonEmpty}.  In addition, Equation \eqref{E:idADLVDim} predicts that $X_{a_0}(1) = \frac{1}{2}\left( \ell(a_0) + \ell(w_0) \right)$, which matches our conclusion from Proposition~\ref{w0Construction} since $\frac{1}{2}(\ell(a_0) + \ell(w_0)) = \frac{1}{2}\ell(t^{2\rho}) = \ell(t^\rho)$.  More generally, if $x_0 = t^\lambda w_0$ is such that $x_0\fa$ lies in the shrunken fundamental Weyl chamber $\Cfs$, then again $\eta_2(x_0) = 1$ and $\eta_1(x_0) = w_0$ which means that $X_{x_0}(1) \neq \emptyset$ by \eqref{E:idNonEmpty}.  Also Equation \eqref{E:idADLVDim} predicts that $X_{x_0}(1) = \frac{1}{2}\left( \ell(x_0) + \ell(w_0) \right)$, which matches our conclusion from Proposition~\ref{w0Dim}.   

Of course, these conclusions should match, and the previous paragraph shows that we could have obtained both of Theorem~\ref{idDomDimw0} and~\ref{idDomDim} as corollaries of Theorem~\ref{idADLV}.  On the other hand, while the proof of Theorem~\ref{idADLV} in \cite{GoertzHeDim} uses an inductive argument, we highlight a direct relationship between $X_a(1)$ and $X_x(1)$ in Theorems~\ref{idDomDimw0} and~\ref{idDomDim}, and our constructive approach gives a new explanation as to why relationships between the nonemptiness and dimensions of $X_a(1)$ and $X_x(1)$ should exist.  

More precisely, the constructions presented in Section~\ref{sec:Construction} provide a heuristic argument as to why the conditions on nonemptiness given in Reumann's criterion are natural, as follows.  In our constructions, in order for the varieties $X_x(1)$ to be nonempty for all $x = x_0 = t^\lambda w_0$ in the shrunken dominant Weyl chamber, all of the simple root operators have to be available, so that we can obtain a gallery of type $\vec{x}_0$ ending in $\fa$ and so construct a DL-gallery for $X_{x_0}(1)$.  This availability of all root operators in turn depends on the DL-gallery for $X_{a_0}(1)$ having at least one fold in a hyperplane perpendicular to each of the simple roots, which in turn depends on any minimal word for $w_0$ containing each simple reflection at least once.  That is, the key fact is that Reuman's criterion holds for alcoves in the $w_0$ position in $\Cfs$.  Such a connection between Reuman's criterion and availability of root operators has not appeared in the literature before.  Our approach then allows us to  compare the dimensions of $X_{a_0}(1)$ and $X_{x_0}(1)$ directly, since we can track the effect of Gaussent and Littelmann's root operators on dimension.  

Recall from Remark~\ref{rem:LS} that any LS-gallery can be obtained from a minimal gallery by a finite number of applications of root operators.  This, together with the observations above for the $w_0$ position, and other examples we have considered, suggests the following.  

\begin{question}\label{Q:RootOps}  Let $x = t^\lambda w \in \aW$ be such that the alcove $\x = x\fa$ is in the shrunken dominant Weyl chamber and the variety $X_x(1)$ is nonempty. Let $\gamma:\fa \rightsquigarrow \fa$ be a DL-gallery for $X_x(1)$, that is, $\gamma$ is of type some minimal gallery $\gamma_x:\fa \rightsquigarrow \x$ and $\gamma$ is positively folded with respect to some orientation at infinity $\phipartial$.  
\begin{enumerate}
\item[(a)] Can $\gamma$ be obtained from $\gamma_x$ by applying a finite sequence of root operators, for roots which are simple and positive with respect to $\phipartial$?  
\item[(b)] If the answer to (a) is yes, must every root which is simple and positive with respect to $\phipartial$ appear at least once in this sequence of root operators?
\end{enumerate}
\end{question}

\noindent We are abusing terminology here by discussing the application of root operators to alcove-to-alcove galleries; formally, it may be necessary to extend to vertex-to-vertex galleries, apply root operators, and then truncate, as in our proof of Proposition~\ref{prop:constr-elts1}.

In order to prove our Theorem~\ref{idDomDim} without relying upon Theorem~\ref{idADLV}, we would either need to find an alternative proof of the fact that dimensions of the varieties $X_x(1)$ for $x$ in the shrunken dominant Weyl chamber are ``locally constant", or to mimic the constructions in Section~\ref{sec:Construction} for other spherical directions.  We discuss the obstructions to extending our constructions to other spherical directions in Section~\ref{sec:obstructions} below, since similar obstructions arise when considering the affine Deligne--Lusztig varieties $X_x(b)$ for $b$ a pure translation.


\section{The varieties $X_x(1)$ and $X_x(b)$}\label{sec:RelabelingSymmetry}

In this section we complete the proof of Theorem~\ref{Shrunken} in the introduction.  We also prove Theorem~\ref{ForwardShift} as Theorem~\ref{thm:relabel shift}, and prove Theorem~\ref{NonemptyDim} as Theorem~\ref{ArbitraryNonemptyDim}.  
Theorem~\ref{Shrunken} is obtained by combining the main results from Section~\ref{sec:Xx1ShrunkenDominant} with the main results in this section, which are Theorems~\ref{thm:relabel shift},~\ref{ArbitraryNonemptyDim}, ~\ref{bDomDimw0}, and~\ref{bDomDim}.  As in Section~\ref{sec:Xx1ShrunkenDominant}, we provide separate statements for alcoves in the $w_0$ position.  We remark that many of our proofs in this section use the explicit constructions of Section~\ref{sec:Construction}.

We first in Section~\ref{sec:Translations} use ``forward-shifting"  of galleries to establish Theorem~\ref{thm:relabel shift}.  We then apply  Theorem~\ref{thm:relabel shift} to prove Theorem~\ref{ArbitraryNonemptyDim} in Section~\ref{sec:Xxb}.  After this, we restrict attention to the shrunken dominant Weyl chamber $\Cfs$.  We prove Theorem~\ref{bDomDimw0}, which pertains to alcoves in the $w_0$ position in $\Cfs$, in Section~\ref{sec:Xxbw0}. Our proofs of this result are entirely constructive and we obtain an equality concerning dimensions.  
Theorem~\ref{bDomDim}, which gives an upper bound on dimension for arbitrary alcoves in the shrunken Weyl chamber, is established in Section~\ref{sec:DimShrunkenDom}.  
In Section~\ref{sec:obstructions} we describe obstructions we have found to obtaining constructive proofs for general alcoves, and state a question whose answer would be a significant step towards obtaining further constructions.  Finally, in Section~\ref{sec:MV-GL-LS}, we discuss some relationships between galleries, root operators, crystals, and MV-cycles.

\subsection{Forward-shifting galleries}\label{sec:Translations}

The main result in this section is Theorem~\ref{thm:relabel shift}, which involves ``forward-shifting" a gallery by applying a pure translation, and is illustrated in Figure~\ref{fig:relabel shift}.   Instead of shifting galleries in this way, we could have relabeled the apartment.  Theorem~\ref{thm:relabel shift} establishes a nonemptiness implication and a dimension inequality for certain pairs of affine Deligne--Lusztig varieties.  We note that this result applies to arbitrary alcoves $\x = x\fa$.

\begin{figure}[ht]
\begin{center}
\resizebox{\textwidth}{!}
{
\begin{overpic}{Xxb_implies_Xx-b-left}
\put(29,18.5){$\color{blue} \gamma$}
\put(7,10){$\color{black} \fa$}
\put(60,37){$\color{red} \x$}
\put(89,48){\footnotesize{$\color{blue} t^{\mu} \x$}}
\end{overpic}
\hspace{2ex}
\begin{overpic}{Xxb_implies_Xx-b-right}
\put(14.5,10){$\color{red} \sigma''$}
\put(58,32){$\color{blue} \sigma'$}
\put(7.5, 7.5){$\color{black} \fa$}
\put(35,25){\footnotesize{$\color{magenta} \bb$}}
\put(88,48){\footnotesize{$\color{blue} t^{\mu} \x$}}
\end{overpic}

}
\caption{We illustrate the proof of Theorem~\ref{thm:relabel shift}. The DL-gallery $\gamma:\fa \rightsquigarrow \fa$ of type $\vec x$ on the left may be  translated by $t^\mu$ to obtain $\sigma':\bb \rightsquigarrow \bb$ on the right.  The concatenation of  $\sigma'$ with $\sigma'':\fa \rightsquigarrow \bb$ is a  gallery $\sigma:\fa \rightsquigarrow \bb$ of type $\vec z$ where $z = t^\mu x$.  The alcoves in $\conv(\fa, t^\mu \x)$ are shaded light gray on the right. }
\label{fig:relabel shift}
\end{center}
\end{figure}

\begin{thm}\label{thm:relabel shift}
Let $x \in W$ and let $b = t^\mu$ be a pure translation.  Assume that $\bb \in\conv(\fa, t^\mu \x)$.  Then  
\begin{equation}
 X_x(1)\neq \emptyset \Longrightarrow X_{t^\mu x}(t^\mu)\neq\emptyset.
\end{equation}
Moreover, if these varieties are nonempty, then
\begin{equation}\label{E:DimShift}
 \dim X_{t^\mu x}(t^\mu) \geq \dim X_x(1) - \langle \rho_{B^-}, \mu + \mu_{B^-} \rangle
\end{equation}
and if $b = t^\mu$ is dominant then $ \dim X_{t^\mu x}(t^\mu) \geq \dim X_x(1) $.
\end{thm}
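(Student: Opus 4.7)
The plan is to build, from a DL-gallery witnessing $\dim X_x(1)$, a positively folded gallery that simultaneously demonstrates $X_{t^\mu x}(t^\mu)\neq\emptyset$ and furnishes a lower bound on its dimension. As indicated by Figure~\ref{fig:relabel shift}, the construction has two steps: first translate the DL-gallery by $t^\mu$, then prepend a minimal gallery from $\fa$ to $\bb$.

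More precisely, assuming $X_x(1)\neq\emptyset$, Proposition~\ref{ADLVvsGalleries} supplies an orientation $\phipartialw$ and a DL-gallery $\gamma:\fa\rightsquigarrow\fa$ of type $\vec{x}$, positively folded with respect to $\phipartialw$, with $\dim_{\phi_w}(\gamma)=\dim X_x(1)$. Applying the translation $t^\mu$ produces $\sigma'\define t^\mu\gamma:\bb\rightsquigarrow\bb$, which by Lemmas~\ref{lem:transpos} and~\ref{lem:translations dim} remains positively folded with respect to $\phipartialw$ and still satisfies $\dim_{\phi_w}(\sigma')=\dim X_x(1)$. I then choose any minimal gallery $\sigma'':\fa\rightsquigarrow\bb$. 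The convexity hypothesis $\bb\in\conv(\fa,t^\mu\x)$ is precisely what guarantees $\ell(t^\mu x)=\ell(t^\mu)+\ell(x)$, so the concatenation $\sigma\define\sigma''\ast\sigma'$ has type a reduced expression for $z\define t^\mu x$. Because $\sigma''$ is unfolded, $\sigma$ inherits from $\sigma'$ positive folding with respect to $\phipartialw$ and its collection of folds, and separating contributions from $\sigma''$ and $\sigma'$ gives
\[
\dim_{\phi_w}(\sigma)=P_{\phi_w}(\sigma'')+\dim_{\phi_w}(\sigma')\geq\dim X_x(1).
\]

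To finish, I will invoke Theorem~\ref{ADLVvsGal2} for $X_z(b)=X_{t^\mu x}(t^\mu)$: after identifying $\bb$ with $b^{w'}\fa$ for some $w'\in\mathrm{Stab}_{\sW}(\mu)$ and aligning the orientation $\phi_{w_0 w'}$ with $\phi_w$, the gallery $\sigma$ lies in the admissible set $\Gamma^+_{w_0 w'}(z,b^{w'})$. This yields both nonemptiness and the dimension inequality
\[
\dim X_{t^\mu x}(t^\mu)\geq\dim_{\phi_w}(\sigma)-\langle\rho_{B^-},\mu+\mu_{B^-}\rangle\geq\dim X_x(1)-\langle\rho_{B^-},\mu+\mu_{B^-}\rangle.
\]
When $\mu$ is dominant, a short direct computation using $\mu_{B^-}=w_0\mu$, $\rho_{B^-}=-\rho$, and $w_0\rho=-\rho$ verifies that $\langle\rho_{B^-},\mu+\mu_{B^-}\rangle=-\langle\rho,\mu\rangle-\langle\rho,w_0\mu\rangle=0$, recovering the sharpened inequality $\dim X_{t^\mu x}(t^\mu)\geq\dim X_x(1)$.

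The most delicate step is reconciling the orientation $\phipartialw$ under which $\gamma$ realizes $\dim X_x(1)$ with the orientations admissible in Theorem~\ref{ADLVvsGal2}: the theorem pairs an endpoint $b^{w'}\fa$ with orientation $\phi_{w_0 w'}$, forcing the compatibility $w\in w_0\mathrm{Stab}_{\sW}(\mu)$. For regular $\mu$ this pins the orientation down to $\phi_{w_0}=-\phipartialo$, and the key technical point becomes exhibiting an optimal DL-gallery $\gamma\in\Gamma^+_{w_0}(x,1)$, or else adjusting the construction (for instance by retranslating $\gamma$ by $t^{w'\mu}$ for a different admissible $w'$ and modifying $\sigma''$ accordingly) to accommodate a different optimal orientation. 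The convexity hypothesis $\bb\in\conv(\fa,t^\mu\x)$ is essential throughout, since it is precisely what ensures the concatenated type is reduced and the dimension calculus of Section~\ref{sec:GalleryDim} applies cleanly to $\sigma$.
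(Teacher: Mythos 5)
Your proof is essentially the paper's own argument: take a DL-gallery $\gamma:\fa\rightsquigarrow\fa$ realizing $\dim X_x(1)$, translate it by $t^\mu$ to get $\sigma':\bb\rightsquigarrow\bb$, prepend a minimal gallery $\sigma'':\fa\rightsquigarrow\bb$ (the convexity hypothesis is what makes the concatenation a gallery of type a reduced word for $z=t^\mu x$), feed $\sigma=\sigma''\ast\sigma'$ into Theorem~\ref{ADLVvsGal2}, and note $\dim_\phi(\sigma)=\dim_\phi(\sigma'')+\dim_\phi(\gamma)\geq\dim X_x(1)$; your direct computation that $\langle\rho_{B^-},\mu+\mu_{B^-}\rangle=0$ for dominant $\mu$ is just an unwinding of Lemma~\ref{DomADLVDim}, which is what the paper quotes for that case.

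The only place you depart from the paper is your closing discussion of matching the orientation $\phipartialw$ with the endpoint $\bb$. The paper's proof does not perform any such matching; it applies Theorem~\ref{ADLVvsGal2} to $\sigma$ as it stands. Two cautions about your proposed ways of handling it. First, insisting that the maximum for $X_x(1)$ be attained with respect to $-\phipartialo$ is not something one can arrange in general, and it is not needed. Second, your fallback of ``retranslating $\gamma$ by $t^{w'\mu}$ and modifying $\sigma''$ accordingly'' does not prove the stated theorem: the concatenation would then have type a reduced word for $t^{w'\mu}x$, so it would witness nonemptiness of $X_{t^{w'\mu}x}(t^\mu)$ rather than of $X_{t^\mu x}(t^\mu)$. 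If one does wish to realign an orientation $\phi_{w_0w}$ with an endpoint that is a different pure translation alcove in the $\sW$-orbit of $\mu$, the mechanism the paper uses for exactly this kind of mismatch is Lemma~\ref{T:FrobConjIso} (nonemptiness and dimension of $X_z(t^\nu)$ depend only on the $\sW$-orbit of $\nu$), as in the proof of Proposition~\ref{prop:bConjugateOneDirection} --- one changes the translate of $b$, never the element $x$.
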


\begin{proof}
Suppose $ X_x(1)\neq \emptyset $.  Then Lemma~\ref{DomADLVDim} implies that there is an orientation at infinity $\phipartial$ and a gallery $\gamma:\fa\rightsquigarrow \fa$ of type $\vec x$ that is positively folded with respect to $\phipartial$ and is such that $\dim_\phi(\gamma) = \dim X_x(1)$.  That is, $\gamma:\fa \rightsquigarrow \fa$ is a DL-gallery for $X_x(1)$.  

Now translate $\gamma$ by $t^\mu$ to obtain a gallery $\sigma' = t^\mu\gamma$ of the same type as $\gamma$ that runs from $\bb$ to $\bb$.  By Lemma~\ref{lem:translations dim}, $\dim_\phi(\sigma') = \dim_\phi(\gamma)$.  Let  $\sigma'':\fa\rightsquigarrow \bb$ be minimal.  Since we assumed that $\bb\in\conv(\fa, t^\mu \x)$, the concatenation $\sigma=\sigma''\ast\sigma':\fa \rightsquigarrow \bb = t^\mu\fa$ is a gallery of type $\vec{z}$, where $z = t^\mu x$.  By construction, $\sigma$ is positively folded with respect to $\phipartial$.  Thus by Theorem~\ref{ADLVvsGal2}, the existence of $\sigma$ establishes that $X_{t^\mu x}(t^\mu) \neq \emptyset$.  

We now consider dimension.  Using Theorem~\ref{ADLVvsGal2} again, we obtain that $$\dim X_{t^\mu x}(t^\mu) \geq \dim_\phi(\sigma) - \langle \rho_{B^-}, \mu + \mu_{B^-} \rangle.$$ By Lemma~\ref{DomADLVDim} again, we have that if $b = t^\mu$ is dominant then $\dim X_{t^\mu x}(t^\mu) \geq \dim_\phi(\sigma)$.  

Now recall that for any $b \in W$ and any orientation $\phipartial$ at infinity, the dimension of $b$ with respect to $\phipartial$ is defined by $ \dim_\phi(b)= \dim_\phi(\gamma_b)$ for any minimal gallery $\gamma_b:\fa\rightsquigarrow \bb$, where $\bb = b\fa$.   Since $\sigma'':\fa \rightsquigarrow \bb$ is minimal, for $b = t^\mu$ as in the statement we have that $\dim_\phi(b) = \dim_\phi(\sigma'')$.  Hence
\[
\dim_{\phi}(\sigma)=\dim_{\phi}(\sigma'') + \dim_{\phi}(\sigma')=\dim_{\phi}(b) + \dim_{\phi}(\gamma)
\]
and since $\dim_\phi(b) \geq 0$, it follows that $\dim_\phi(\sigma) \geq \dim_\phi(\gamma)$.  But $\dim_\phi(\gamma) = \dim X_x(1)$ and so the result follows.
\end{proof}

Note that for any $u \in \sW$, $\mu = u\rho$, and arbitrary $\x$ in the Weyl chamber $\Cu$, the convexity condition on $\x$ and $\bb$ in Theorem~\ref{thm:relabel shift}
is satisfied, which leads to the following emptiness criterion for alcoves in the strips. 

\begin{corollary}\label{cor:strip}
Let $x \in W$ be such that $\x \in \Cu\setminus\Cus$. Then  
$X_{t^{u\rho} x}(t^{u\rho})=\emptyset \Longrightarrow  X_x(1)= \emptyset.$
\end{corollary}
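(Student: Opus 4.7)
The plan is to derive the corollary directly as the contrapositive of Theorem~\ref{thm:relabel shift} applied with $\mu = u\rho$. That theorem would yield the forward implication $X_x(1) \neq \emptyset \Longrightarrow X_{t^{u\rho}x}(t^{u\rho}) \neq \emptyset$, and contraposing immediately gives the stated criterion. So everything reduces to checking that the hypotheses of Theorem~\ref{thm:relabel shift} are met.

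The only nontrivial hypothesis to verify is the convexity condition $\bb = t^{u\rho}\fa \in \conv(\fa, t^{u\rho}\x)$, which the paragraph preceding the corollary flags as automatic for arbitrary $\x \in \Cu$. I would prove this by reformulating it as the length additivity identity $\ell(t^{u\rho}x) = \ell(t^{u\rho}) + \ell(x)$; equivalently, that the concatenation of a minimal gallery $\fa \rightsquigarrow \bb$ with the $t^{u\rho}$-translate of a minimal gallery $\fa \rightsquigarrow \x$ is itself minimal. The key geometric input is that $u\rho$ is the unique vertex of $\Cu$ closest to the origin with the property that every alcove incident to $u\rho$ lies in $\Cus$. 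Concretely, the affine hyperplanes $H_{\alpha,k}$ in $\Inv(t^{u\rho})$ (those separating $\fa$ from $\bb$) are precisely those with $\alpha \in \Phi^+$, $u^{-1}\alpha < 0$, and $\langle \alpha, u\rho \rangle \leq k \leq -1$, and each such hyperplane has all of $\Cu$ strictly on the side of $\bb$ and opposite $\fa$. Since $\x \in \Cu$, its translate $t^{u\rho}\x$ lies on that same side, so every such hyperplane belongs to $\Inv(t^{u\rho}x)$, giving $\Inv(t^{u\rho}) \subseteq \Inv(t^{u\rho}x)$. The complementary contribution to $\Inv(t^{u\rho}x)$ is precisely $t^{u\rho}\Inv(x)$ and, by the same side argument, is disjoint from $\Inv(t^{u\rho})$, producing the required equality of inversion-set cardinalities.

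With the convexity condition in hand, Theorem~\ref{thm:relabel shift} applies and the corollary follows by contraposition. The main obstacle in executing this plan is purely notational: keeping track of root signs through the action of $u$ in the inversion-set comparison. No new ideas beyond Theorem~\ref{thm:relabel shift} and standard facts about length functions and inversion sets in affine Weyl groups are required.
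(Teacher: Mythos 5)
Your overall route is exactly the paper's: Corollary~\ref{cor:strip} is obtained there as an immediate consequence of Theorem~\ref{thm:relabel shift} applied with $\mu=u\rho$, the convexity hypothesis being asserted to hold for every $\x\in\Cu$, and the stated implication is just the contrapositive. So the only substantive content you add is the verification of $\bb\in\conv(\fa,t^{u\rho}\x)$, and there your supporting claims are not correct as written. First, $\Inv(t^{u\rho})$ is not confined to roots with $u^{-1}\alpha<0$: for $\alpha\in\Phi^+$ with $u^{-1}\alpha>0$ one has $\langle\alpha,u\rho\rangle=\operatorname{ht}(u^{-1}\alpha)\geq 1$, and the hyperplanes $H_{\alpha,k}$ with $1\leq k\leq\langle\alpha,u\rho\rangle$ all separate $\fa$ from $\bb$. (Already in type $A_2$ with $u=s_1$, so $u\rho=\alpha_2^\vee$, your recipe produces the single wall $H_{\alpha_1,-1}$, which in fact does not separate $\fa$ from $\bb$, whereas $\Inv(t^{u\rho})=\{H_{\alpha_1,0},H_{\alpha_2,1},H_{\alpha_2,2},H_{\alpha_1+\alpha_2,1}\}$ has $\ell(t^{u\rho})=4$ elements.) In the case $u^{-1}\alpha<0$ the correct range is $\langle\alpha,u\rho\rangle+1\leq k\leq 0$, not $\langle\alpha,u\rho\rangle\leq k\leq -1$. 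Second, it is false that every separating hyperplane has all of $\Cu$ strictly on the $\bb$-side: only the walls through the origin have $\Cu$ weakly on one side, while the translated walls in $\Inv(t^{u\rho})$ generally cut through $\Cu$, so the stated reason for $\Inv(t^{u\rho})\subseteq\Inv(t^{u\rho}x)$ does not stand as given.

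The repair is routine and keeps your strategy. For $\alpha\in\Phi^+$ put $m_\alpha=\langle\alpha,u\rho\rangle$; then $\Inv(t^{u\rho})$ consists of the $H_{\alpha,k}$ with $1\leq k\leq m_\alpha$ when $u^{-1}\alpha>0$ and with $m_\alpha+1\leq k\leq 0$ when $u^{-1}\alpha<0$. Since $\x\subset\Cu$, the value of $\langle\alpha,\cdot\rangle$ on the interior of $\x$ lies in an interval $(j,j+1)$ with $j\geq 0$ in the first case and $j\leq -1$ in the second; hence every wall $H_{\alpha,k}\in\Inv(x)$ has $k\geq 1$, respectively $k\leq 0$. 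Translating by $u\rho$ then shows simultaneously that $t^{u\rho}\x$ lies on the side opposite $\fa$ of every element of $\Inv(t^{u\rho})$, and that $t^{u\rho}\Inv(x)$ (whose walls $H_{\alpha,k}$ have $k\geq m_\alpha+1$, respectively $k\leq m_\alpha$) is disjoint from $\Inv(t^{u\rho})$. This gives $\ell(t^{u\rho}x)=\ell(t^{u\rho})+\ell(x)$, i.e.\ $\bb\in\conv(\fa,t^{u\rho}\x)$, and then Theorem~\ref{thm:relabel shift} and contraposition finish the argument exactly as you and the paper intend.
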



\subsection{Nonemptiness and dimension for arbitrary alcoves}\label{sec:Xxb}

This section contains Theorem~\ref{ArbitraryNonemptyDim}, which establishes a nonemptiness implication and a dimension inequality for the varieties $X_x(1)$ and $X_x(b)$, where $x \in \aW$ is arbitrary and $b = t^\mu$ is a pure translation satisfying certain convexity hypotheses.  

\begin{thm}\label{ArbitraryNonemptyDim}
Let $x = t^\lambda w \in \aW$ and let $b = t^\mu$ be a pure translation, with $\mu \in R^\vee$.  Assume that $\bb \in \conv(\fa,\x)$, the alcoves $\x$ and $t^{-\mu}\x$ lie in the same Weyl chamber $\Cu$ where $u \in \sW$, and if $\x$ is in the shrunken Weyl chamber $\Cus$ then $t^{-\mu}\x$ is also in $\Cus$.  Then
\begin{equation}\label{E:1NonEmptyb}
X_x(1) \neq \emptyset \implies X_x(b) \neq \emptyset.
\end{equation}
Moreover, if these varieties are nonempty, then
\begin{equation}
 \dim X_{x}(b) \geq \dim X_x(1) - \langle \rho, \mu^+ \rangle - \langle \rho_{B^-}, \mu + \mu_{B^-} \rangle
\end{equation}
and if $\mu = \mu^+$ is dominant then 
\begin{equation}
 \dim X_{x}(b) \geq \dim X_x(1) - \langle \rho, \mu \rangle.
\end{equation}
\end{thm}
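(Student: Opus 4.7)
The strategy is to reduce Theorem~\ref{ArbitraryNonemptyDim} to Theorem~\ref{thm:relabel shift} applied with $x$ replaced by $y = t^{-\mu}x$, using known results from the basic case to bridge between $X_x(1)$ and $X_{t^{-\mu}x}(1)$. Since $t^\mu y = x$, this substitution turns the conclusion of Theorem~\ref{thm:relabel shift} into a statement about $X_x(b) = X_{t^\mu y}(t^\mu)$, provided the convexity hypothesis $\bb \in \conv(\fa, t^\mu y) = \conv(\fa, \x)$ holds, which is exactly the first hypothesis of the theorem.

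First I would extract from the convexity hypothesis the length-additivity $\ell(x) = \ell(t^\mu) + \ell(t^{-\mu}x)$, which follows from the existence of a minimal gallery $\fa \rightsquigarrow \x$ passing through $\bb$. Next, because $\x$ and $t^{-\mu}\x$ lie in the same Weyl chamber $\Cu$ and (when applicable) the same shrunken Weyl chamber, the classification of nonemptiness and dimension of $X_\cdot(1)$ due to G\"ortz--He~\cite{GoertzHeDim}, G\"ortz--He--Nie~\cite{GHN}, and He~\cite{HeAnnals} applies symmetrically to both elements. They have identical finite parts and Weyl chamber positions, so $X_x(1) \neq \emptyset \iff X_{t^{-\mu}x}(1) \neq \emptyset$, and the dimension formulas depend on the two elements only through $\ell(x)$ versus $\ell(t^{-\mu}x)$ plus a common term $\ell(u^{-1}wu)$ that cancels on subtraction. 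Combining the resulting identity
\[
\dim X_x(1) - \dim X_{t^{-\mu}x}(1) = \tfrac{1}{2}\bigl(\ell(x) - \ell(t^{-\mu}x)\bigr)
\]
with the length-additivity $\ell(x) - \ell(t^{-\mu}x) = \ell(t^\mu) = 2\langle \rho, \mu^+\rangle$ gives $\dim X_x(1) - \dim X_{t^{-\mu}x}(1) = \langle \rho, \mu^+\rangle$.

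Finally, having $X_{t^{-\mu}x}(1) \neq \emptyset$, Theorem~\ref{thm:relabel shift} applied with $y = t^{-\mu}x$ produces $X_x(b) \neq \emptyset$ together with
\[
\dim X_x(b) \geq \dim X_{t^{-\mu}x}(1) - \langle \rho_{B^-}, \mu + \mu_{B^-}\rangle,
\]
from which the main dimension bound follows by substituting the previous equality. The dominant case is immediate: if $\mu = \mu^+$ then $\mu_{B^-} = -\mu$, making the $\langle \rho_{B^-}, \mu + \mu_{B^-}\rangle$ term vanish and reducing $\langle \rho, \mu^+\rangle$ to $\langle \rho, \mu\rangle$. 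The main obstacle will be confirming the symmetric application of the external nonemptiness and dimension results from~\cite{GoertzHeDim},~\cite{GHN}, and~\cite{HeAnnals} to $\x$ and $t^{-\mu}\x$, particularly for alcoves lying outside the shrunken Weyl chambers where the criteria are more subtle; once this symmetry is verified, the remainder is a direct combination with Theorem~\ref{thm:relabel shift}.
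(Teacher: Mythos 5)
Your proposal is correct and follows essentially the same route as the paper: length-additivity $\ell(x) = \ell(t^\mu) + \ell(t^{-\mu}x)$ extracted from the convexity hypothesis (Lemma~\ref{xby}), the equivalence and dimension comparison between $X_x(1)$ and $X_{t^{-\mu}x}(1)$ via the basic-case results (Proposition~\ref{Xx1Xy1}), and then Theorem~\ref{thm:relabel shift} applied to $y = t^{-\mu}x$. The ``main obstacle'' you flag for alcoves outside the shrunken chambers is precisely what the paper's Proposition~\ref{Xx1Xy1} verifies, using the $P$-alcove criterion of~\cite{GHN} and He's virtual dimension formula together with the fact $\y \in \conv(\fa,\x)$ to show $\eta_\sigma(x) = \eta_\sigma(t^{-\mu}x)$.
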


\begin{proof}  We first make the following observations.

\begin{lemma}\label{xby}  Let $x$ and $b = t^\mu$ be as in the statement of Theorem~\ref{ArbitraryNonemptyDim} and let $y = t^{-\mu}x$.  Then $\ell(x) - \ell(y) = \ell(b)$ and $\y \in \conv(\fa,\x)$.
\end{lemma}

\begin{proof}   Since $\bb$ is in $\conv(\fa,\x)$, there exists a minimal presentation say $s_{i_1}\cdots s_{i_k}$ for $x$ that starts with a minimal presentation $s_{i_1}\cdots s_{i_j}$ for $b$.  The remaining subword $s_{i_{j+1}}\cdots s_{i_k}$ of this minimal presentation for $x$ must then be a minimal presentation for $b^{-1}x = t^{-\mu}x = y$.  Hence $$\ell(x) - \ell(y) = k - (k-j) = j = \ell(b).$$  

Now by Lemma~3.5 of~\cite{McCammondPetersen}, since $s_{i_1}\cdots s_{i_k}$ is a product of (simple) reflections, there is also a presentation for $x$ as a product of $k$ reflections which begins  with the minimal presentation $s_{i_{j+1}}\cdots s_{i_k}$ for $y$.  That is, $x = s_{i_{j+1}}\cdots s_{i_k}r_1\cdots r_j$ where $r_1,\ldots,r_j$ are reflections.  Let $z$ be the group element $r_1 \cdots r_j$, in other words $z = y^{-1}x$.  If $\ell(z) \neq j$ we contradict $\ell(x) = k$.  Thus $\ell(z) = j$, and there is a minimal presentation for $z$ containing $j$ elements of $\widetilde S$ such that the concatenation of the minimal presentation $s_{i_{j+1}}\cdots s_{i_k}$ for $y$ with this minimal presentation for $z$ is a minimal presentation for $x$.  It follows that $\y$ is in $\conv(\fa,\x)$.
\end{proof}

We will also use the following proposition, which depends on results from~\cite{GoertzHeDim},~\cite{GHN}, and~\cite{HeAnnals}.

\begin{prop}\label{Xx1Xy1}  Let $x$ and $b = t^\mu$ be as in the statement of Theorem~\ref{ArbitraryNonemptyDim}, and let $y = t^{-\mu}x$.  Then \begin{equation}X_x(1) \neq \emptyset \iff X_y(1) \neq \emptyset.\end{equation}  Moreover, if these varieties are nonempty, then 
\begin{equation}\label{E:DimXx1Xy1}
\dim X_y(1) = \dim X_x(1) - \frac{1}{2}(\ell(x) - \ell(y)).
\end{equation}
\end{prop}

\begin{proof}  Suppose first that $\x$ is in the shrunken Weyl chamber $\Cus$.  Then by assumption $\y = y\fa = t^{-\mu}\x$ is also in $\Cus$.  Since $x$ and $y$ have the same spherical direction $w$, we may apply Reuman's criterion from Theorem~\ref{idADLV} to conclude that $X_x(1) \neq \emptyset$ if and only if $X_y(1) \neq \emptyset$.  For dimension, Theorem~\ref{idADLV} says that if $X_x(1)$ and $X_y(1)$ are nonempty then 
$$ \dim X_x(1) = \frac{1}{2}(\ell(x) + \ell(u^{-1}wu)) \quad \mbox{and} \quad \dim X_y(1) = \frac{1}{2}(\ell(y) + \ell(u^{-1}wu)). $$
Equation~\eqref{E:DimXx1Xy1} follows immediately from this.

Now assume that $\x$ is in $\Cu \setminus \Cus$.  For nonemptiness, we use the language of $P$-alcoves, which was introduced in~\cite{GHKRadlvs}.  We will only need the special case $b = 1$ of Conjecture~1.1.1 in~\cite{GHKRadlvs}.  The full conjecture was proved as Theorem A in~\cite{GHN}.  The relevant statement for us is that for any $z \in \aW$, the variety $X_z(1)$ is nonempty if and only if, for every semistandard parabolic subgroup $P = MN$ for which $\z = z\fa$ is a $P$-alcove, the element $1$ is $\sigma$-conjugate to an element $b' \in M(F)$ and $z$ and $b'$ have the same image under the Kottwitz homomorphism $\eta_M:M(F) \to \Lambda_M$.  We refer the reader to \cite{GHKRadlvs} for definitions of the terms in this statement.  

We are in the situation that $x$ and $y$ differ by a pure translation, hence have the same spherical direction, that $\x \in \Cu \setminus \Cus$, and that $\y \in \conv(\fa,\x)$ by Lemma~\ref{xby}.  It is then not difficult to verify that for any semistandard parabolic $P$, the alcove $\x$ is a $P$-alcove if and only if the alcove $\y$ is a $P$-alcove.  Now if $X_x(1)$ is nonempty and $\x$ is a $P$-alcove for some semistandard $P=MN$, let $b' \in M(F)$ be an element $\sigma$-conjugate to $1$ such that $\eta_M(x) = \eta_M(b')$.  Since we are working in the affine Weyl group $\aW$, we have $\eta_M(x) = \eta_M(y)$,  as both $x$ and $y$ are in the same connected component of the $k$-ind-scheme $M(F)$.  It follows that $X_y(1)$ is also nonempty.  The converse $X_y(1) \neq \emptyset \implies X_x(1) \neq \emptyset$ is similar.

For dimension, we use the special case $b = 1$ of Corollary 12.2 of~\cite{HeAnnals}.   Suppose that $X_x(1)$ and $X_y(1)$ are nonempty.  Then by Corollary 12.2 of~\cite{HeAnnals} we have that $ \dim X_x(1) = d_x(1)$ and $\dim X_y(1) = d_y(1)$, where $d_x(1)$ and $d_y(1)$ are the virtual dimensions, defined at 10.1 of \cite{HeAnnals}.  Now $x$, $y$, and $1$ are all in $\aW$, so these elements have the same image under the Kottwitz map.  Also the identity element $1$ has defect zero and its Newton point is the origin, thus  
\begin{equation}\label{E:virtualdim} \dim X_x(1) = \frac{1}{2}(\ell(x) + \ell(\eta_\sigma(x)) ) \quad \mbox{and} \quad  \dim X_y(1) = \frac{1}{2}(\ell(y) + \ell(\eta_\sigma(y)) ) \end{equation}
where $\eta_\sigma:\aW \to \sW$ is the map defined as follows.  Enumerate the simple roots as $\alpha_1, \ldots, \alpha_n$, and for any $\nu \in P_+$, define $I(\nu) = \{ i \mid \langle \alpha_i, \nu \rangle  = 0 \}$.  Let $W_{I(\nu)} = \langle s_i \mid i \in I(\nu) \rangle \leq \sW$.  Then $\eta_\sigma(z) = \sigma^{-1}(v')v$, where $z = vt^\nu v'$ with $\nu \in P_+$, $v \in \sW$, and $v'$ a minimal length element in the cosets $W_{I(\nu)} \backslash \sW$.   

We claim that $\eta_\sigma(x) = \eta_\sigma(y)$.  Write $x = v_xt^{\nu_x} v'_x$ and $y = v_y t^{\nu_y}v'_y$ where $\nu_x,\nu_y \in P_+$, $v_x,v_y \in \sW$, and $v'_x$ (respectively, $v'_y$) is a minimal length element in the cosets $W_{I(\nu_x)} \backslash \sW$ (respectively, $W_{I(\nu_y)} \backslash \sW$).   By assumption, $\bb \in \conv(\fa,\x)$.  Since $\x$ is in $\Cu \setminus \Cus$ it follows that the pure translation alcove $\bb$ is also in $\Cu \setminus \Cus$.   Thus the vertex $\mu$ lies in a $0$-hyperplane.  Now $x = t^\lambda w$ and $y = t^{-\mu}x = t^{\lambda - \mu}w$ differ by the pure translation $t^\mu$, and since $\mu$ lies in a $0$-hyperplane and $\y \in \conv(\fa,\x)$ it follows that $I(\nu_x) = I(\nu_y)$, $v_x = v_y$, and $v'_x = v'_y$.  Thus $\eta_\sigma(x) = \eta_\sigma(y)$ as claimed.  We then obtain from Equation~\eqref{E:virtualdim} that  $\dim X_y(1) = \dim X_x(1) - \frac{1}{2}(\ell(x) - \ell(y))$ as required.
\end{proof}

To complete the proof of Theorem~\ref{ArbitraryNonemptyDim}, let $y = t^{-\mu}x$ and suppose that $X_x(1) \neq \emptyset$.   Then by Proposition~\ref{Xx1Xy1}, $X_y(1) \neq \emptyset$.  Since $\x = t^\mu \y$ we have $\bb \in \conv(\fa,t^\mu \y)$.  Hence we may apply Theorem~\ref{thm:relabel shift} to the pair $y$ and $b$ to conclude that $X_{t^\mu y}(t^\mu)  = X_{t^\mu t^{-\mu}x}(t^\mu) = X_{x}(b)$ is nonempty, as desired.

For dimension, assume that $X_x(1)$ and $X_x(b)$ are both nonempty.  Combining the statement $\ell(x) - \ell(y) = \ell(b) = \ell(t^\mu)$ from Lemma~\ref{xby} with Equation~\eqref{E:DimXx1Xy1} from Proposition~\ref{Xx1Xy1}, we obtain 
$$  \dim X_y(1) = \dim X_x(1) - \frac{1}{2}\ell(t^{\mu}).$$
Now $\ell(t^\mu) = \ell(t^{\mu^+})$ by Lemma~\ref{lem:length}.  Thus
$$ \dim X_y(1) = \dim X_x(1) - \frac{1}{2}\ell(t^{\mu^+}) = \dim X_x(1) - \langle \rho, \mu^+ \rangle.$$
By the dimension inequality~\eqref{E:DimShift} in Theorem~\ref{thm:relabel shift}, it follows that
$$ \dim X_x(b) \geq \dim X_y(1) - \langle \rho_{B^-}, \mu + \mu_{B^-} \rangle =  \dim X_x(1) - \langle \rho, \mu^+ \rangle - \langle \rho_{B^-}, \mu + \mu_{B^-} \rangle$$
with $$ \dim X_x(b) \geq \dim X_y(1) = \dim X_x(1) - \langle \rho, \mu \rangle$$ if $\mu$ is dominant.  This completes the proof of Theorem~\ref{ArbitraryNonemptyDim}.
\end{proof}


\subsection{The $w_0$ position in the shrunken dominant Weyl chamber}\label{sec:Xxbw0}

In the remainder of Section~\ref{sec:RelabelingSymmetry} we consider $x \in \aW$ such that the alcove $\x = x\fa$ is in the shrunken dominant Weyl chamber $\Cfs$.  The current section is devoted to the proof of the following result, which establishes nonemptiness and an equality concerning dimension for the varieties $X_{x_0}(1)$ and $X_{x_0}(b)$, where $\x_0$ is an alcove in $\Cfs$ with spherical direction $w_0$ and $b$ is a pure translation.

\begin{thm}\label{bDomDimw0}
Let $x_0 = t^{\lambda}w_0$ have spherical direction $w_0$ and be such that the alcove $\x_0 = x_0\fa$ lies in the shrunken dominant Weyl chamber~$\Cfs$.  Suppose that $\mu \in R^\vee$ is in the intersection of $\Cf$ with the negative cone based at the vertex $\lambda - 2\rho$  and let $b = t^\mu$. 
Then 
$$
X_{x_0}(1) \neq \emptyset  \text{  and } X_{x_0}(b) \neq \emptyset
$$
and their dimensions relate via 
\begin{equation}\label{E:bDimCalcw0}
\dim X_{x_0}(b)=\dim X_{x_0}(1) - \langle \rho, \mu \rangle .
\end{equation}
\end{thm}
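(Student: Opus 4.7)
My plan is to deduce Theorem~\ref{bDomDimw0} by combining the explicit construction of Proposition~\ref{prop:constr-elts1} with a universal upper bound coming from the fold-counting machinery of Section~\ref{sec:GalleryDim}. The hypotheses on $\lambda$ and $\mu$ are precisely those of Proposition~\ref{prop:constr-elts1}, so I would invoke it directly to produce a gallery $\gamma_0\colon\fa\rightsquigarrow\bb$ of type $\vec x_0$ that is positively folded with respect to $-\phipartialo$, carries exactly $\ell(w_0)$ folds, and has dimension $\langle\rho,\lambda-\mu\rangle$ with respect to $-\phipartialo$. Since $\mu\in\Cf$ is dominant, Lemma~\ref{DomADLVDim} then both certifies $X_{x_0}(b)\neq\emptyset$ and yields the inequality $\dim X_{x_0}(b)\geq\langle\rho,\lambda-\mu\rangle$. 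Proposition~\ref{w0Dim} already provides $X_{x_0}(1)\neq\emptyset$ together with $\dim X_{x_0}(1)=\langle\rho,\lambda\rangle$, so these combine to give the lower half
\[
\dim X_{x_0}(b) \;\geq\; \dim X_{x_0}(1) - \langle\rho,\mu\rangle.
\]

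For the matching reverse inequality, Lemma~\ref{DomADLVDim} reduces the problem to showing that for every $w\in\sW$ and every $\gamma\in\Gamma^+_{w_0w}(x_0,b^w)$ one has $\dim_{\phi_{w_0w}}(\gamma)\leq\langle\rho,\lambda-\mu\rangle$. Lemma~\ref{lem:length} supplies $\ell(b^w)=\ell(b)=2\langle\rho,\mu\rangle$ and $\dim_{\phi_{w_0w}}(b^w)=0$, so Lemma~\ref{lem:gallery dim} rewrites the dimension as
\[
\dim_{\phi_{w_0w}}(\gamma) \;=\; \tfrac{1}{2}\bigl(\ell(x_0)+F_{\phi_{w_0w}}(\gamma)-\ell(b)\bigr).
\]
Corollary~\ref{w0Folds} caps the fold count at $F_{\phi_{w_0w}}(\gamma)\leq\ell(w_0)$. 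Because $\x_0$ lies in the shrunken dominant Weyl chamber in the $w_0$-position, $x_0=t^\lambda w_0$ is the minimal-length coset representative in $t^\lambda\sW$, so $\ell(x_0)+\ell(w_0)=\ell(t^\lambda)=2\langle\rho,\lambda\rangle$ exactly as in the proof of Lemma~\ref{lem:dim sigma}. Substituting these inputs yields the uniform upper bound
\[
\dim_{\phi_{w_0w}}(\gamma) \;\leq\; \tfrac{1}{2}\bigl(\ell(t^\lambda)-\ell(t^\mu)\bigr) \;=\; \langle\rho,\lambda-\mu\rangle,
\]
independent of $w$.

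Taking the maximum over $w$ and combining with the lower bound forces the equality $\dim X_{x_0}(b)=\langle\rho,\lambda-\mu\rangle=\dim X_{x_0}(1)-\langle\rho,\mu\rangle$ asserted in \eqref{E:bDimCalcw0}, and simultaneously certifies that $\gamma_0$ is a DL-gallery for $X_{x_0}(b)$. The hard work has in fact already been done inside Proposition~\ref{prop:constr-elts1}: the nontrivial content is the assertion that the $\ell(w_0)$-fold maximum permitted by Corollary~\ref{w0Folds} can actually be realized on a gallery of type $\vec x_0$ ending at the specific alcove $\bb=t^\mu\fa$, and this is precisely what the root-operator construction of Section~\ref{sec:Construction} delivers (with LS-optimality of the auxiliary vertex-to-vertex gallery $\tau^\sharp$ from Lemma~\ref{lem:tauDL} accounting, indirectly, for the $\ell(w_0)$ count). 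Once $\gamma_0$ is in hand, the upper bound above is formal, so no additional obstacle remains.
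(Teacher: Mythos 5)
Your proof is correct, and the lower-bound half (existence of $\gamma_0$ via Proposition~\ref{prop:constr-elts1}, then Theorem~\ref{ADLVvsGal2} and Lemma~\ref{DomADLVDim}, then Proposition~\ref{w0Dim} for $X_{x_0}(1)$) is exactly what the paper does. Where you genuinely diverge is the upper bound. The paper proves $\dim X_{x_0}(b)\leq\langle\rho,\lambda-\mu\rangle$ in Proposition~\ref{prop:DLXx0b} by taking an arbitrary DL-gallery $\gamma_w$ ending at $b^w\fa$, passing to the associated vertex-to-vertex gallery, acting by $w^{-1}$ and then $w_0$ to land in $\Gamma^+(\gamma_\lambda,-\mu)$, and invoking the LS-gallery maximality of $\tau^\sharp$ (Lemma~\ref{lem:tauDL}, i.e.\ the Gaussent--Littelmann bound of Proposition~\ref{prop:LS}) together with Corollary~\ref{cor:folds+crossings} to compare alcove-to-alcove and vertex-to-vertex dimensions. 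You instead bound every $\gamma\in\Gamma^+_{w_0w}(x_0,b^w)$ directly at the alcove-to-alcove level: Lemma~\ref{lem:gallery dim} with $\dim_{\phi_{w_0w}}(b^w)=0$ and $\ell(b^w)=\ell(b)$ from Lemma~\ref{lem:length}, the fold cap $F_{\phi_{w_0w}}(\gamma)\leq\ell(w_0)$ from Corollary~\ref{w0Folds}, and the length identity $\ell(x_0)+\ell(w_0)=\ell(t^\lambda)$ (valid since $\x_0$ is the $w_0$-alcove at a dominant regular $\lambda$). This is sound: every positively folded gallery of a minimal type starting at $\fa$ arises from a PRS folding sequence, so Lemma~\ref{lem:gallery dim} and Corollary~\ref{w0Folds} apply, and the bound $\frac{1}{2}(\ell(t^\lambda)-\ell(t^\mu))=\langle\rho,\lambda-\mu\rangle$ is uniform in $w$, so Lemma~\ref{DomADLVDim} closes the argument. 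Your route is shorter and avoids LS-gallery theory altogether; it is really the same counting device the paper itself deploys in Corollary~\ref{wMaxDim} and in Proposition~\ref{lem:LocallyConstant} for other spherical directions. What the paper's heavier argument buys, and yours does not, is the additional ``moreover'' statement of Proposition~\ref{prop:DLXx0b} that every DL-gallery for $X_{x_0}(b)$ must be positively folded with respect to $-\phipartialo$ (your fold-count bound cannot distinguish orientations); that refinement is quoted later in Section~\ref{sec:obstructions} but is not needed for Theorem~\ref{bDomDimw0} itself, so your proof of the stated theorem is complete.
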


\begin{proof}  We have by Theorem~\ref{idDomDimw0} that $X_{x_0}(1) \neq \emptyset$.  For nonemptiness and dimension of the variety $X_{x_0}(b)$, we use the following result.

\begin{prop}\label{prop:DLXx0b}  Let $x_0 = t^\lambda w_0$ and $\mu \in R^\vee$ be as in Theorem~\ref{bDomDimw0} and let $b = t^\mu$.  Then the gallery $\gamma_0$ constructed in Proposition~\ref{prop:constr-elts1} is a DL-gallery for $X_{x_0}(b)$, and so $X_{x_0}(b) \neq \emptyset$ and 
\[
\dim X_{x_0}(b) = \langle \rho, \lambda - \mu \rangle.
\]  
Moreover, every DL-gallery for $X_{x_0}(b)$ is positively folded with respect to the orientation $-\phipartialo$.
\end{prop}

\begin{proof}  By Theorem~\ref{ADLVvsGal2} and Lemma~\ref{DomADLVDim}, the existence of $\gamma_0$ shows that $X_{x_0}(b) \neq \emptyset$ and that $\dim X_{x_0}(b) \geq \dim_{-\phi_0}(\gamma_0)$.  Now Proposition~\ref{prop:constr-elts1} says that $\dim_{-\phi_0}(\gamma_0) = \langle \rho, \lambda - \mu \rangle$.  So it suffices to prove that $\dim X_{x_0}(b) \leq \langle \rho, \lambda - \mu \rangle$.

Consider the canonical associated vertex-to-vertex gallery $\gamma_0^\sharp$.  As observed in the proof of Proposition~\ref{prop:constr-elts1}, we have $\dim_{-\phi_0}(\gamma_0) = \dim_{-\phi_0}(\gamma_0^\sharp)$.   Now by the construction given in the proof of Proposition~\ref{prop:constr-elts1} and Lemma~\ref{lem:tauDL}, $\gamma_0^\sharp = w_0 \tau^\sharp$ where $\tau^\sharp \in \Gamma^+(\gamma_{\lambda},-\mu)$ is an LS-gallery.  That is, $\tau^\sharp$ achieves the maximum dimension with respect to the standard orientation among all galleries in $\Gamma^+(\gamma_{\lambda},-\mu)$.  Note also that $\dim_{\phi_0}(\tau^\sharp) =  \langle \rho, \lambda - \mu \rangle$.

Now suppose that $\gamma_w$ is a DL-gallery for $X_{x_0}(b)$, which is positively folded with respect to the orientation $\phipartialwow$ and ends in $b^w\fa = t^{w\mu}\fa$.  By Proposition~\ref{prop:braids} we may assume that $\gamma_w$ is of the same type as $\gamma_0$.  Thus $\gamma_w^\sharp$ is of the same type $\gamma_\lambda$ as $\gamma_0^\sharp$, and so $\gamma_w^\sharp \in \Gamma^+_{w_0w}(\gamma_\lambda, w\mu)$.  Since $\gamma_w$ is positively folded with respect to the orientation $\phi_{w_0w} = -\phi_w$, the gallery $\gamma_w$ is negatively folded with respect to the orientation $\phi_w$, and so $\gamma_w^\sharp \in \Gamma^-_{w}(\gamma_\lambda,w\mu)$.

We now act by first $w^{-1}$ then $w_0$ on the left.  The gallery $w^{-1}\gamma_w^\sharp$ is negatively folded with respect to the standard orientation and ends in $w^{-1}w\mu = \mu$.  Now acting on the left by $w_0$, we obtain that $w_0 w^{-1}\gamma_w^\sharp$ is positively folded with respect to the standard orientation and ends in $w_0\mu = -\mu$.  Since the left-action of any element of $\sW$ is type-preserving, the gallery $w_0w^{-1}\gamma_w^\sharp$ is also of the same  type as  $\gamma_{\lambda}$.  Therefore $w_0w^{-1}\gamma_w^\sharp \in \Gamma^+(\gamma_{\lambda},-\mu)$.  Since $\tau^\sharp$ is an LS-gallery, this implies that $$\dim_{\phi_0}(w_0w^{-1}\gamma_w^\sharp) \leq \dim_{\phi_0}(\tau^\sharp) =  \langle \rho, \lambda - \mu \rangle.$$ Thus by Lemma~\ref{lem:w0 dim}, $\dim_{\phi_{w_0w}}(\gamma_w^\sharp) \leq  \langle \rho, \lambda - \mu \rangle$.  

By Corollary~\ref{cor:folds+crossings} we have $\dim_{\phi_{w_0w}}(\gamma_w) \leq \dim_{\phi_{w_0w}}(\gamma_w^\sharp)$ with equality if and only if $w = \id$.  Since $\gamma_w$ is a DL-gallery for $X_{x_0}(b)$, it follows that
$$\dim X_{x_0}(b) = \dim_{\phi_{w_0w}}(\gamma_w) \leq \langle \rho, \lambda - \mu \rangle.$$
Hence $\gamma_0$ is a DL-gallery for $X_{x_0}(b)$, and so $\dim X_{x_0}(b) = \langle \rho, \lambda - \mu \rangle$.  This proof also shows that $w$ must equal $\id$, that is, that every DL-gallery for $X_{x_0}(b)$ is positively folded with respect to the orientation $-\phipartialo$.
\end{proof}

We now prove Equation~\eqref{E:bDimCalcw0} of Theorem~\ref{bDomDimw0}.

\begin{corollary}   Let $x_0 = t^\lambda w_0$ and $\mu \in R^\vee$ be as in Theorem~\ref{bDomDimw0} and let $b = t^\mu$.   Then
\[
\dim X_{x_0}(b) =\dim X_{x_0}(1) - \langle \rho, \mu \rangle.
\]
\end{corollary}

\begin{proof}  We can apply Proposition~\ref{prop:DLXx0b} to obtain that $\dim X_{x_0}(b) = \langle \rho, \lambda - \mu \rangle$.  By Theorem~\ref{idDomDimw0}, we have $\dim X_{x_0}(1) = \frac{1}{2} \ell(t^\lambda)$.  Now $\frac{1}{2}\ell(t^\lambda) = \langle \rho, \lambda \rangle$, and the result follows.  
\end{proof}

This completes the proof of Theorem~\ref{bDomDimw0}.  As an alternative to using Theorem~\ref{idDomDimw0}, we could have applied Proposition~\ref{prop:DLXx0b} in the special case that $\mu$ is the origin to get that $X_{x_0}(1) \neq \emptyset$ and $\dim X_{x_0}(1) = \langle \rho, \lambda \rangle$.
\end{proof}

\subsection{Dimension in the shrunken dominant Weyl chamber}\label{sec:DimShrunkenDom}

We now establish an upper bound on the dimension of the variety $X_x(b)$ in terms of the dimension of $X_x(1)$, where the alcove $\x = x\fa$ is in the shrunken dominant Weyl chamber and has arbitrary spherical direction, and $b$ is a pure translation.  The main result in this section is Theorem~\ref{bDomDim}, which completes the proof of Theorem~\ref{Shrunken} in the introduction.  

\begin{thm}\label{bDomDim}
Let $x = t^\lambda w \in \aW$ be such that every alcove at the vertex $\lambda$ is contained in the shrunken dominant Weyl chamber $\Cfs$.  Suppose that $\mu \in R^\vee$ lies in the intersection of $\Cf$ with the negative cone based at the dominant vertex $\lambda - 2\rho$, and let $b = t^\mu$.  Then if both of the varieties $X_x(1)$ and $X_x(b)$ are nonempty, their dimensions relate via 
\begin{equation}\label{E:bDimCalc}
\dim X_x(b) \leq \dim X_x(1) - \langle \rho, \mu \rangle.
\end{equation}
\end{thm}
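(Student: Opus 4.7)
The plan is to derive the upper bound by combining the known dimension formula for $X_x(1)$ with a direct estimate on the dimension of any DL-gallery for $X_x(b)$, using the counting identities from Section~\ref{sec:Counting}. Since both varieties are assumed nonempty, and since every alcove at $\lambda$ lies in $\Cfs$, Theorem~\ref{idDomDim} immediately yields $\dim X_x(1) = \tfrac{1}{2}\ell(t^\lambda) = \langle \rho, \lambda \rangle$. Consequently the conclusion~\eqref{E:bDimCalc} is equivalent to the bound $\dim X_x(b) \leq \langle \rho, \lambda - \mu \rangle$, which is what I would aim to prove.

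Because $\mu$ is dominant, Lemma~\ref{DomADLVDim} guarantees the existence of some $w' \in \sW$ and a DL-gallery $\gamma \in \Gamma^+_{w_0 w'}(x, b^{w'})$ realizing $\dim X_x(b) = \dim_{\phi_{w_0 w'}}(\gamma)$; this $\gamma$ is of some type $\vec{x}$ and runs from $\fa$ to the pure-translation alcove $t^{w'\mu}\fa$. I would then apply Lemma~\ref{lem:gallery dim} with the endpoint $y = t^{w'\mu}$. Lemma~\ref{lem:length} contributes the two key simplifications: $\ell(t^{w'\mu}) = \ell(t^\mu)$, and $\dim_{\phi_{w_0 w'}}(t^{w'\mu}) = 0$ (since the orientation $\phi_{w_0 w'}$ is precisely the one for which $w'\mu$ sits in the relevant ``antidominant'' chamber). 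This reduces the identity to
\begin{equation*}
\dim_{\phi_{w_0 w'}}(\gamma) = \tfrac{1}{2}\bigl[\ell(x) + F_{\phi_{w_0 w'}}(\gamma) - \ell(t^\mu)\bigr].
\end{equation*}

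The remaining input is a bound on the number of folds. Since the endpoint $t^{w'\mu}$ is a pure translation, the second half of Lemma~\ref{wFolds2} applies directly and gives $F_{\phi_{w_0 w'}}(\gamma) \leq \ell(\eta_1(x)) = \ell(w)$. Combining this with the strict dominance of $\lambda$, which ensures $\ell(x) + \ell(w) = \ell(t^\lambda)$, I obtain
\begin{equation*}
\dim X_x(b) \;=\; \dim_{\phi_{w_0 w'}}(\gamma) \;\leq\; \tfrac{1}{2}\bigl[\ell(t^\lambda) - \ell(t^\mu)\bigr] \;=\; \langle \rho, \lambda - \mu \rangle,
\end{equation*}
which is the desired inequality.

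The argument is short because all the heavy lifting has been done in earlier sections: the exact value of $\dim X_x(1)$ in the shrunken dominant Weyl chamber, the translation-invariance and orientation-dependence of dimension for pure-translation alcoves, the dimension formula of Lemma~\ref{lem:gallery dim}, and the fold bound of Lemma~\ref{wFolds2}. I would not expect a substantive obstacle in this proof; the only delicate point is confirming that the hypotheses of Lemma~\ref{wFolds2} and Lemma~\ref{lem:length} align with the orientation $\phi_{w_0 w'}$ chosen by Lemma~\ref{DomADLVDim}, and that the identity $\ell(x) + \ell(w) = \ell(t^\lambda)$ is available, which follows because $\lambda$ lies strictly inside the dominant Weyl chamber. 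The convexity hypothesis on $\mu$ plays no role in the upper bound itself; it is present in the statement because Theorem~\ref{bDomDim} is the final ingredient feeding into the combined statement of Theorem~\ref{Shrunken}, where nonemptiness and the matching lower bound from Theorem~\ref{bDomDimw0} do require it.
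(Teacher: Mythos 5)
Your proof is correct, and it takes a genuinely more direct route than the paper's. The paper proves Theorem~\ref{bDomDim} by detouring through the variety $X_{x_0}(b)$ for $x_0 = t^\lambda w_0$: Proposition~\ref{lem:LocallyConstant} compares an arbitrary DL-gallery for $X_x(b)$ against the explicitly constructed gallery of Propositions~\ref{prop:constr-elts1} and~\ref{prop:DLXx0b}, which has $\ell(w_0)$ folds, and then Corollary~\ref{cor:bDomDim} converts $\dim X_{x_0}(b)$ into $\dim X_x(1) - \langle \rho, \mu \rangle$ using Theorem~\ref{bDomDimw0} and the local constancy statement of Proposition~\ref{LocConstant}. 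You bypass the intermediate variety entirely: the same three ingredients that power Proposition~\ref{lem:LocallyConstant} --- the dimension formula of Lemma~\ref{lem:gallery dim}, the facts $\ell(b^{w'}) = \ell(b)$ and $\dim_{\phi_{w_0w'}}(b^{w'}) = 0$ from Lemma~\ref{lem:length}, and the fold bound $F_{\phi_{w_0w'}}(\gamma) \leq \ell(w)$ from Lemma~\ref{wFolds2} --- already give $\dim X_x(b) \leq \tfrac{1}{2}\bigl[\ell(t^\lambda) - \ell(t^\mu)\bigr] = \langle \rho, \lambda - \mu \rangle$ for any DL-gallery supplied by Lemma~\ref{DomADLVDim}, and Theorem~\ref{idDomDim} supplies $\dim X_x(1) = \langle \rho, \lambda \rangle$ (your appeal to Lemma~\ref{lem:gallery dim} for a DL-gallery matches the paper's own usage in Proposition~\ref{lem:LocallyConstant}, so there is no hidden issue there). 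What your version buys: it is shorter, it uses none of the constructions of Section~\ref{sec:Construction}, and it makes visible that the convexity hypothesis on $\mu$ is not needed for the upper bound --- only dominance of $\mu$ (for Lemmas~\ref{DomADLVDim} and~\ref{lem:length}) and regular dominance of $\lambda$ (for $\ell(x) + \ell(w) = \ell(t^\lambda)$), exactly as you observe. What the paper's route buys: it pins down the exact value $\dim X_{x_0}(b) = \langle \rho, \lambda - \mu \rangle$ as realized by a constructed DL-gallery, in keeping with its constructive program and with the equality statement of Theorem~\ref{bDomDimw0}. Both arguments ultimately lean on the G\"ortz--He formula for $\dim X_x(1)$ with arbitrary spherical direction, you via Theorem~\ref{idDomDim} and the paper via Proposition~\ref{LocConstant}.
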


The proof of Theorem~\ref{bDomDim} is contained in the next two statements.  

\begin{prop}\label{lem:LocallyConstant}
Let $x$ and $b = t^\mu$ be as in the statement of Theorem~\ref{bDomDim}.  Let $x_0 = t^\lambda w_0$.  
 If the varieties $X_x(b)$ and $X_{x_0}(b)$ are both nonempty, then
\begin{equation}\label{E:LocallyConstant} 
\dim X_{x}(b)   \leq \dim X_{x_0}(b). 
\end{equation}
\end{prop}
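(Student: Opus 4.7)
Plan of proof. The plan is to give an upper bound on the dimension of an arbitrary DL-gallery for $X_x(b)$ that is independent of $w$, and to match it to the exact value of $\dim X_{x_0}(b)$ already computed in Theorem~\ref{bDomDimw0}. Since $b = t^\mu$ is dominant, Lemma~\ref{DomADLVDim} supplies a DL-gallery $\gamma$ for $X_x(b)$ of type $\vec x$ from $\fa$ to $b^{w'}\fa = t^{w'\mu}\fa$, positively folded with respect to $\phi_{w_0 w'}$ for some $w' \in \sW$, with $\dim X_x(b) = \dim_{\phi_{w_0 w'}}(\gamma)$ and no correction term.

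First I would apply Lemma~\ref{lem:gallery dim} to $\gamma$ with end alcove $b^{w'}$ to obtain
\[
\dim_{\phi_{w_0 w'}}(\gamma) = \tfrac{1}{2}\bigl[\ell(x) + F_{\phi_{w_0 w'}}(\gamma) - \ell(b^{w'})\bigr] + \dim_{\phi_{w_0 w'}}(b^{w'}).
\]
By Lemma~\ref{lem:length}, the pure translation $b^{w'}$ satisfies $\ell(b^{w'}) = \ell(t^\mu) = 2\langle\rho,\mu\rangle$ and $\dim_{\phi_{w_0 w'}}(b^{w'}) = 0$. The key ingredient in the argument is the bound on the number of folds coming from the ``in particular'' clause of Lemma~\ref{wFolds2}: since $b^{w'}$ is a pure translation and $\eta_1(x) = w$, we have
\[
F_{\phi_{w_0 w'}}(\gamma) \leq \ell(w),
\]
and critically this bound is independent of the spherical direction $w'$ of the end-alcove.

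To conclude, I would combine these facts with the standard length identity $\ell(x) = \ell(t^\lambda w) = \ell(t^\lambda) - \ell(w)$, which holds because the hypothesis that every alcove at $\lambda$ lies in $\Cfs$ forces $t^\lambda$ to be the unique maximal-length representative in its $\sW$-coset, to deduce
\[
\dim X_x(b) \;\leq\; \tfrac{1}{2}\bigl[\ell(t^\lambda) - \ell(w) + \ell(w) - \ell(t^\mu)\bigr] \;=\; \tfrac{1}{2}\ell(t^\lambda) - \langle \rho, \mu \rangle.
\]
On the other hand, Theorems~\ref{idDomDimw0} and~\ref{bDomDimw0} together give $\dim X_{x_0}(b) = \dim X_{x_0}(1) - \langle\rho,\mu\rangle = \tfrac{1}{2}\ell(t^\lambda) - \langle\rho,\mu\rangle$, matching the upper bound and completing the proof. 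The whole argument is quite short; the substantive point is that the fold bound from Lemma~\ref{wFolds2} conveniently cancels precisely the length deficit $\ell(t^\lambda) - \ell(x) = \ell(w)$ of $x$ relative to $t^\lambda$, which is the true reason the upper bound is independent of $w$ and why no direct comparison via root operators or explicit gallery construction is needed.
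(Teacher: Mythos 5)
Your proof is correct and takes essentially the same approach as the paper: the substance in both is the fold bound $F_{\phi_{w_0w'}}(\gamma)\leq\ell(w)$ from Lemma~\ref{wFolds2} (independent of the chamber $w'$), combined with Lemma~\ref{lem:gallery dim}, Lemma~\ref{lem:length}, and the length identity $\ell(x)=\ell(t^\lambda)-\ell(w)$. The only cosmetic difference is that you compare against the closed value $\dim X_{x_0}(b)=\tfrac{1}{2}\ell(t^\lambda)-\langle\rho,\mu\rangle$ already supplied by Theorems~\ref{idDomDimw0} and~\ref{bDomDimw0}, whereas the paper compares the DL-gallery for $X_x(b)$ directly against the explicitly constructed gallery $\gamma_0$ with $\ell(w_0)$ folds; these amount to the same computation since Theorem~\ref{bDomDimw0} is itself proved via $\gamma_0$.
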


\begin{proof}  Recall that for $v \in \sW$ we denote by $b^v$ the translation $t^{v\mu}$.  

As $b$ is dominant and $X_x(b)$ is nonempty, by Lemma~\ref{DomADLVDim} there is a DL-gallery $\sigma$ for $X_x(b)$ which is positively folded with respect to $\phi_{w_0v}$ and ends at $b^v\fa$, for some $v \in \sW$.  By assumption all alcoves containing $\lambda$ are in $\Cfs$, and so the vertex $\lambda - 2\rho$ is dominant.  We may thus apply Proposition~\ref{prop:constr-elts1} together with Proposition~\ref{prop:DLXx0b} to obtain a DL-gallery $\gamma_0$ for $X_{x_0}(b)$ which is positively folded with respect to the orientation $-\phipartialo$, ends at $b\fa = \bb$, and contains $\ell(w_0)$ folds. 

Since $\sigma$ and $\gamma_0$ are DL-galleries for respectively $X_x(b)$ and $X_{x_0}(b)$, it suffices to compare the dimensions of these galleries.
 From Lemma~\ref{lem:gallery dim} we have that
$$
\dim_{\phi_{w_0v}}(\sigma)=\frac{1}{2}\left[\ell(x)+F_{\phi_{w_0v}}(\sigma) -\ell(b^v)  \right] + \dim_{\phi_{w_0v}}(b^v)
$$
and
$$
\dim_{-\phi_0}(\gamma_0)=\frac{1}{2}\left[\ell(x_0)+F_{-\phi_0}(\gamma_0) -\ell(b)  \right] + \dim_{-\phi_0}(b).
$$
Now as $b$ is dominant, Lemma~\ref{lem:length} implies that $\dim_{-\phi_0}(b) = \dim_{\phi_{w_0v}}(b^v) = 0$.  Also since both $\x$ and $\x_0$ are in $\Cfs$ we get that $k\define\ell(x)-\ell(x_0)=\ell(w_0)-\ell(w)$, and by Lemma~\ref{lem:length} again we have $\ell(b) = \ell(b^v)$.  Hence as $\gamma_0$ has $\ell(w_0)$ folds 
\begin{eqnarray*}
\dim_{\phi_{w_0v}}(\sigma) 
& = &\dim_{-\phi_0}(\gamma_0)- \frac{1}{2}\left(F_{-\phi_0}(\gamma_0)- F_{\phi_{w_0v}}(\sigma)-k\right) \\ 
&= &\dim_{-\phi_0}(\gamma_0)- \frac{1}{2}\left(\ell(w_0)- F_{\phi_{w_0v}}(\sigma)-k\right) \\ 
& = &\dim_{-\phi_0}(\gamma_0)+ \frac{1}{2}\left(F_{\phi_{w_0v}}(\sigma) - \ell(w)\right).
\end{eqnarray*}
By Lemma~\ref{wFolds2}, we have $F_{\phi_{w_0v}}(\sigma) \leq   \ell(w)$, and so we obtain
\[ \dim X_x(b) = \dim_{\phi_{w_0v}}(\sigma) \leq \dim_{-\phi_0}(\gamma_0) = \dim X_{x_0}(b) \]
as desired.
\end{proof}

The following result completes the proof of Theorem~\ref{bDomDim}.

\begin{corollary}\label{cor:bDomDim}  Let $x$ and $b = t^\mu$ be as in the statement of Theorem~\ref{bDomDim}.  If the varieties $X_x(b)$ and $X_{x}(1)$ are both nonempty, then
\[ \dim X_{x}(b) \leq \dim X_{x}(1) - \langle \rho, \mu \rangle. \]
\end{corollary}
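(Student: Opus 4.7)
The plan is to chain together Theorem~\ref{bDomDimw0}, Theorem~\ref{idDomDim}, and Proposition~\ref{lem:LocallyConstant}, using the alcove $x_0 = t^\lambda w_0$ in the $w_0$ position at the vertex $\lambda$ as a bridge between $x$ and the case treated constructively in Section~\ref{sec:Xxbw0}.

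First I would set $x_0 = t^\lambda w_0$. Since by hypothesis every alcove at the vertex $\lambda$ lies in the shrunken dominant Weyl chamber $\Cfs$, the alcove $\x_0$ is in $\Cfs$, and the vertex $\lambda - 2\rho$ is dominant. The hypothesis on $\mu$ then allows us to invoke Theorem~\ref{bDomDimw0}, which gives both $X_{x_0}(1) \neq \emptyset$ and $X_{x_0}(b) \neq \emptyset$, together with the equality
\[
\dim X_{x_0}(b) = \dim X_{x_0}(1) - \langle \rho, \mu \rangle.
\]

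Next, since $x = t^\lambda w$ and $x_0 = t^\lambda w_0$ share the vertex $\lambda$ with $\x, \x_0 \in \Cfs$, and both $X_x(1)$ and $X_{x_0}(1)$ are nonempty, Theorem~\ref{idDomDim} (applied to the pair $x$ and $x_0$, or equivalently Proposition~\ref{LocConstant} on the locally constant nature of dimension) yields
\[
\dim X_x(1) = \dim X_{x_0}(1) = \tfrac{1}{2}\ell(t^\lambda).
\]
Finally, since $X_x(b)$ and $X_{x_0}(b)$ are both nonempty, Proposition~\ref{lem:LocallyConstant} gives
\[
\dim X_x(b) \leq \dim X_{x_0}(b).
\]
Combining these three statements yields
\[
\dim X_x(b) \leq \dim X_{x_0}(b) = \dim X_{x_0}(1) - \langle \rho, \mu \rangle = \dim X_x(1) - \langle \rho, \mu \rangle,
\]
which is exactly the desired inequality.

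There is no real obstacle: all the substantive work has already been carried out in Proposition~\ref{prop:DLXx0b} (the constructive DL-gallery $\gamma_0$ computing $\dim X_{x_0}(b)$), Theorem~\ref{idDomDim} (the locally constant behavior of $\dim X_x(1)$ across spherical directions at a fixed vertex in $\Cfs$), and Proposition~\ref{lem:LocallyConstant} (the comparison $\dim X_x(b) \leq \dim X_{x_0}(b)$ obtained via Lemma~\ref{wFolds2} and the fold count of $\gamma_0$). The only point meriting brief mention is that the nonemptiness of $X_{x_0}(b)$ required to apply Proposition~\ref{lem:LocallyConstant} is not assumed in the statement of the corollary, but is supplied for free by Theorem~\ref{bDomDimw0} under the convexity hypothesis on $\mu$.
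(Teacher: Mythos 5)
Your proposal is correct and follows essentially the same route as the paper: the paper's proof also passes through $x_0 = t^\lambda w_0$, using Theorem~\ref{bDomDimw0} for the equality $\dim X_{x_0}(b) = \dim X_{x_0}(1) - \langle \rho, \mu \rangle$, Proposition~\ref{LocConstant} for $\dim X_{x_0}(1) = \dim X_x(1)$, and Proposition~\ref{lem:LocallyConstant} for $\dim X_x(b) \leq \dim X_{x_0}(b)$. Your remark that the nonemptiness of $X_{x_0}(b)$ and $X_{x_0}(1)$ needed for these applications is supplied by Theorem~\ref{bDomDimw0} is a correct (and slightly more careful) observation that the paper leaves implicit.
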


\begin{proof}
Let $x_0 = t^\lambda w_0$.  Then Theorem~\ref{bDomDimw0} says that $\dim X_{x_0}(b) = \dim X_{x_0}(1) - \langle \rho, \mu \rangle$, and by Proposition~\ref{LocConstant}, we get $\dim X_{x_0}(1) = \dim X_x(1)$.  
The result then follows from Proposition~\ref{lem:LocallyConstant}.
\end{proof}


\subsection{Obstructions to further constructive proofs}\label{sec:obstructions}

We now discuss several obstructions we have found to extending our constructions in Section~\ref{sec:Construction} to spherical directions besides $w_0$ in the shrunken dominant Weyl chamber $\Cfs$, and so obtaining entirely constructive proofs of several of our main results in Sections~\ref{sec:Xx1ShrunkenDominant} and~\ref{sec:RelabelingSymmetry}.  Our discussion is limited to $\Cfs$ since we expect that overcoming these obstructions in $\Cfs$ will be a necessary first step towards obtaining constructive proofs of all of our results.

In Section~\ref{sec:Construction} we constructed DL-galleries for varieties $X_{x_0}(b)$, where $x_0 = t^\lambda w_0$ is such that the alcove $\x_0 = x_0 \fa$ lies in $\Cfs$, and $b = t^\mu$ is a dominant pure translation satisfying certain convexity hypotheses.  In all of these constructions, we used the orientation $-\phipartialo$, and in fact we proved in Proposition~\ref{prop:DLXx0b} that this is the only orientation possible.  For other spherical directions, we have not been able to find a closed formula for the orientation(s) to use for DL-galleries, even in the special case $X_x(1)$, where we know nonemptiness and dimension by Theorem~\ref{idADLV}.  Thus we pose the following question.

\begin{question}\label{Q:Orientation}  
Let $x = t^\lambda w$ be such that the alcove $\x = x \fa$ lies in the shrunken dominant Weyl chamber $\Cfs$, and let $b = t^\mu$ be a dominant pure translation.  If $X_x(b) \neq \emptyset$, for which $u \in \sW$ does there exist a DL-gallery $\gamma_u:\fa \rightsquigarrow b^u \fa$ for $X_x(b)$ which is positively folded with respect to the orientation $\phipartialwou$?
\end{question}

Even knowing the answer to Question~\ref{Q:Orientation}, in arbitrary type it seems a nontrivial combinatorial problem to provide an explicit construction of any family of DL-galleries.  Indeed, providing a construction of DL-galleries for just the finitely many varieties $X_a(1)$ where $a = t^{2\rho}w$ and $w$ satisfies Reuman's criterion appears very difficult in arbitrary type.

\subsection{Galleries, root operators, crystals, and MV-cycles}\label{sec:MV-GL-LS}
We conclude this section by discussing some relationships between LS-galleries, DL-galleries, root operators, crystals, and MV-cycles.  

In \cite{LittelmannLR}, Littelmann developed an algorithm using Lakshmibai--Seshadri paths, called LS-paths for short,  which gives both character formulas for complex symmetrizable Kac--Moody algebras, and the decomposition of the tensor product of two highest weight representations. These LS-paths (and the root operators defined to manipulate them) took their inspiration from the standard monomial theory of Lakshmibai and Seshadri \cite{LakshmibaiSeshadri}, and can be seen as a generalization of Young tableaux to the setting of symmetrizable Kac--Moody algebras.
Although differently defined, the crystal basis introduced by Kashiwara \cite{Kashiwara} and equivalently the canonical basis of Lusztig \cite{Lusztig}, were later shown to be equivalent to Littelmann's path model.  
Gaussent and Littelmann then developed the theory of LS-paths into a folded gallery model in the standard apartment of the associated Bruhat--Tits building~\cite{GaussentLittelmann}.   The LS-galleries play the same role in \cite{GaussentLittelmann} as Lakshmibai--Seshadri paths do in the path model for crystals; see \cite{LittelmannPaths} and \cite{LittelmannLR}. 

Recall from Remark~\ref{rem:LS} that any LS-gallery can be obtained from a minimal gallery by applying root operators.   We believe that an analogous result should hold for arbitrary DL-galleries, even allowing for varying the orientation, which has the effect of changing the set of simple roots and thus the available root operators (see also Question~\ref{Q:RootOps}, which discusses a special case).  For $x_0 = t^\lambda w_0$ in the shrunken dominant Weyl chamber, we obtain the same crystal structure on the corresponding family of DL-galleries as appears in \cite{GaussentLittelmann}.  We expect that there is a deeper connection to crystals even for other families of DL-galleries.

Furthermore, LS-galleries coincide with the Mirkovi\'{c}--Vilonen cycles of \cite{MirkovicVilonen} inside the affine Grassmannian.  The crucial perspective on various alcove walk models for the study of crystal graphs is that labeled LS-galleries are in bijection with the intersections of unipotent and Iwahori-orbits in both partial and complete affine flag varieties.   In \cite{GaussentLittelmann}, the authors identify the irreducible components of these intersections of unipotent and $G(\mathcal{O})$-orbits with their LS-galleries, and these irreducible components are precisely the MV-cycles of \cite{MirkovicVilonen}.  As such, the alcove walk model for these crystal graphs can be used to index cells in a generalized version of the MV-cycles in the affine Grassmannian.  Deligne--Lusztig galleries play the same role in the affine flag variety as LS-galleries do in the affine Grassmannian, and so we expect that DL-galleries are indexing some generalized family of MV-cycles.


\section{Conjugating to other Weyl chambers}\label{sec:Conjugation}

The aim of this section is to obtain additional statements concerning nonemptiness and dimension in the other (shrunken) Weyl chambers.  The main result is Theorem~\ref{T:Conjugation}, which establishes Theorem~\ref{OtherWeyl} of the introduction.  This relates the affine Deligne--Lusztig variety associated to an element $x$ in the shrunken dominant Weyl chamber to that of any conjugate of $x$ by an element of the finite Weyl group.  

We begin in Section~\ref{sec:ConjGalleries} by recalling some results concerning alcoves and galleries in affine Coxeter complexes, and in Section~\ref{sec:ConjSimple} we convert these into statements about the corresponding affine Deligne--Lusztig varieties.  In Section~\ref{sec:ConjProofs} we consider the special case $b = 1$ in Theorem~\ref{T:Conjugationb=1} and then prove Theorem~\ref{T:Conjugation}.

\subsection{Conjugating galleries}\label{sec:ConjGalleries}

We start by reviewing some facts about alcoves and galleries in an affine Coxeter complex.  The main result in this section is Proposition~\ref{prop:transformation}.

First recall that, given a fixed labeling of the standard apartment $\App$, the panels in $\App$ can be colored by the generators $s_0, s_1, \ldots, s_n \in \tilde{S}$ of the affine Weyl group $\aW$, where $s_1,\dots,s_n$ generate the spherical Weyl group $\sW$.  In Figure~\ref{fig:conjugation}, we have instead colored the vertices of $\App$, and the color of each panel is the color of its opposite vertices.

\begin{figure}[ht]
  \begin{center}
    	\includegraphics[width=0.8\textwidth]{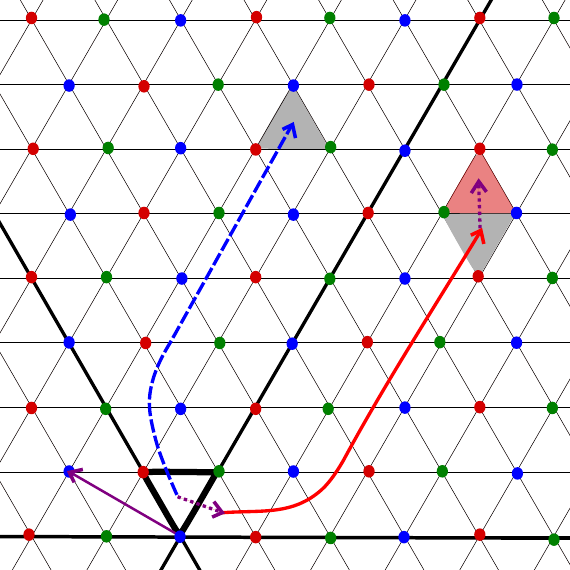}
\caption[conjugation]{Conjugation of a dashed (blue) minimal gallery by a dotted (purple) generator.}

\vspace{2ex}
\label{fig:conjugation}
\end{center}
\end{figure}

For a fixed alcove $\x = x\fa$ and any $y \in \aW$, every (minimal) word $s_{j_1}\cdots s_{j_k}$ for $y$ corresponds to a unique (minimal) combinatorial alcove-to-alcove gallery  
$$
 \gamma_{x,y}= (c_0 \subset p_1 \supset  c_1 \subset \dots \supset p_k \subset c_k)
$$
such that $c_0 = \x = x\fa$, $c_k = xy\fa$, and the alcoves $c_i$ and $c_{i-1}$ intersect in their panel $p_i$ of type $s_{j_i}$. 
That is, we define the first alcove $c_0$ of $\gamma_{x,y}$ to be $\x$ and then, reading the word for $y$ from the left, for $1 \leq i \leq k$ we define the alcove $c_i$ in $\gamma_{x,y}$ to be the unique $s_{j_i}$-neighbor of the alcove $c_{i-1}$.

If we start with a minimal word the gallery $\gamma_{x,y}$ will, by definition, be non-stuttering, that is, $c_i \neq c_{i-1}$ for $1 \leq i \leq k$.  If not mentioned otherwise, we will always choose the fundamental alcove  $\fa=c_0$ as the start alcove of the gallery in this construction. 

Similarly we can associate to $\bf x$ and $y$ a gallery $\gamma_{y,x}$ from $y^{-1}\x$ to $\bf x$ by reading a word for $y$ from the right and carrying out multiplication on the left. Combining left and right multiplication by $s_i \in \tilde{S}$, the geometric effect of conjugation by a generator on a minimal gallery is illustrated in Figure~\ref{fig:conjugation}. Observe that these elongations at the end, respectively at the start, can also be done for stuttering galleries. 

Propositions~\ref{prop:transformation} relies on this elongation procedure and will serve as the model case for Theorem~\ref{T:Conjugation}.  Before we can get started we require two more technical lemmas.

\begin{lemma}\label{le:texlemma2}
Suppose $x = t^\lambda v \in \aW$ is such that the alcove $\x = x\fa$ is in $u\Cf$ with $u\in\sW$.  Then  there exists a minimal gallery $\gamma:\fa \rightsquigarrow \x$ that is the concatenation of two minimal galleries as follows: a gallery connecting $\fa$ with $u\fa$ and a minimal gallery from $u\fa$ to $\x$.  
\end{lemma}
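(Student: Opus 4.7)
The plan is to show that the concatenation of any minimal gallery $\gamma_1:\fa \rightsquigarrow u\fa$ with any minimal gallery $\gamma_2:u\fa \rightsquigarrow \x$ is itself a minimal gallery from $\fa$ to $\x$. Recall that a combinatorial gallery is minimal if and only if it crosses every hyperplane at most once, or equivalently if and only if the sets of hyperplanes crossed by successive pieces are pairwise disjoint. So it suffices to verify that $\Inv(\gamma_1) \cap \Inv(\gamma_2) = \emptyset$.

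First I would describe $\Inv(\gamma_1)$. Since both $\fa$ and $u\fa$ are contained in the star of the origin $v_0$, every wall separating these two alcoves passes through $v_0$, so $\Inv(\gamma_1)$ consists entirely of linear hyperplanes of the form $H_\alpha = H_{\alpha,0}$ for $\alpha\in\Phi^+$; explicitly, $\Inv(\gamma_1) = \{H_\alpha \mid \alpha\in\Phi^+,\ u^{-1}\alpha\in\Phi^-\}$.

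Next I would analyze $\Inv(\gamma_2)$. The key observation is that by hypothesis $\x \subset u\Cf$, and the alcove $u\fa$ also lies in $u\Cf$ since $\fa \subset \Cf$. The Weyl chamber $u\Cf$ is, by definition, an intersection of open half-spaces whose bounding hyperplanes are exactly the linear hyperplanes $H_\alpha$ through the origin. Since $u\fa$ and $\x$ lie in the same open chamber $u\Cf$, any minimal gallery $\gamma_2$ between them can be chosen to stay inside $u\Cf$, and so $\gamma_2$ crosses no hyperplane through the origin. Thus every hyperplane in $\Inv(\gamma_2)$ has the form $H_{\alpha,k}$ with $k \neq 0$.

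These two descriptions immediately give $\Inv(\gamma_1)\cap\Inv(\gamma_2)=\emptyset$, so the concatenation $\gamma=\gamma_1\ast\gamma_2$ is a minimal gallery from $\fa$ to $\x$ of the desired form. The main (only) subtlety is justifying that $\gamma_2$ may be chosen to remain inside $u\Cf$; this follows from the standard convexity of Coxeter complexes, namely that the convex hull $\conv(u\fa,\x)$ of two alcoves in a common Weyl chamber lies in that chamber, together with the fact that minimal galleries realize paths through $\conv(u\fa,\x)$. Apart from this standard convexity fact, the argument is purely a bookkeeping of inversion sets.
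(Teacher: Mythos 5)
Your proof is correct and takes essentially the same route as the paper's: split the gallery at $u\fa$ and observe that the hyperplanes crossed between $\fa$ and $u\fa$ all pass through the origin, while those crossed between $u\fa$ and $\x$ do not, so the inversion sets are disjoint and the concatenation is minimal. The only cosmetic difference is that the paper produces the second piece as the $u$-translate of a minimal gallery inside $\Cf$, whereas you argue directly in $u\Cf$ via convexity; both justifications amount to the same fact that no wall through the origin separates $u\fa$ from $\x$.
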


\begin{proof}
Let $ \hat\x = u^{-1}\x$.  Then $\hat \x$ is the unique alcove in the $\sW$-orbit of $\x$ that is contained in $\Cf$. 
Choose a minimal gallery $\gamma':\fa \rightsquigarrow \hat \x$.
Take $\gamma'' := u\gamma'$ to be the image of $\gamma'$ under $u$, that is, the unique gallery of the same type as $\gamma'$ that starts in $u\fa$. Observe that this gallery does not cross any hyperplane that contains the origin. Moreover it connects $u\fa$ with $\x$. 
Let $\tau$ be a minimal gallery from $\fa$ to $u\fa$. Then all hyperplanes crossed by $\tau$ contain the origin.  Thus the concatenation $\tau\ast\gamma''$ is a minimal gallery $\gamma:\fa \rightsquigarrow \x$ as desired.
\end{proof}

\begin{lemma}\label{le:words}
Let $x\in\aW$ and let $s\in \tilde{S}$ be a generator.
\begin{enumerate}
 \item Either $\ell(sxs)=\ell(x)$ or $\ell(sxs)=\ell(x)\pm 2$. 
 \item If $\ell(sxs)=\ell(x) = \ell$ then there exists a minimal presentation $s_{i_1}\cdots s_{i_\ell}$ of $x$ satisfying either $s_{i_\ell}=s$ or $s_{i_1}=s$. In particular we then have either $sxs=ss_{i_1}\cdots s_{i_{\ell-1}}$ or $sxs=s_{i_2}\cdots s_{i_\ell}s$, respectively. 
 \item If $\ell(sxs)=\ell(x)+2$ then no minimal presentation of $x$ starts or ends with $s$. 
 \item If $\ell(sxs)=\ell(x)-2$ then there is a  minimal presentation of $x$ that starts and ends with $s$. 
\end{enumerate}
\end{lemma}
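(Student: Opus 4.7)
The plan is to prove all four parts using standard properties of Coxeter groups, specifically the fact that for any $x \in \aW$ and any simple reflection $s \in \tilde S$, one has $\ell(sx) = \ell(x) \pm 1$ (and similarly $\ell(xs) = \ell(x) \pm 1$), together with the characterization that $\ell(sx) < \ell(x)$ if and only if some reduced expression for $x$ begins with $s$, and analogously for $\ell(xs) < \ell(x)$ and expressions ending in $s$.

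Part (1) is immediate: applying the above once on each side shows $\ell(sxs) \in \{\ell(x)-2,\ell(x),\ell(x)+2\}$, where the case $\ell(x) \pm 1$ is ruled out by parity (both $sx$ and $xs$ differ from $x$ in length by an odd number). To prepare for the remaining parts, I would first establish the following refinement, which is the backbone of the argument: $\ell(sxs) = \ell(x) + 2$ precisely when $\ell(sx) = \ell(xs) = \ell(x) + 1$; $\ell(sxs) = \ell(x) - 2$ precisely when $\ell(sx) = \ell(xs) = \ell(x) - 1$; and $\ell(sxs) = \ell(x)$ precisely when exactly one of $\ell(sx), \ell(xs)$ is $\ell(x) + 1$ and the other is $\ell(x) - 1$. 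This follows by a short case analysis combining the two bounds $\ell(sxs) \leq \ell(s) + \ell(xs)$ and $\ell(sxs) \geq \ell(sx) - \ell(s)$ (and their symmetric analogues).

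Parts (2) and (3) then fall out directly. For (3), $\ell(sxs) = \ell(x)+2$ gives $\ell(sx) > \ell(x)$ and $\ell(xs) > \ell(x)$, so no reduced expression for $x$ starts or ends with $s$. For (2), $\ell(sxs) = \ell(x)$ forces exactly one of $\ell(sx) < \ell(x)$ or $\ell(xs) < \ell(x)$; in the former case $x$ has a reduced expression starting with $s$, say $x = s \cdot s_{i_2}\cdots s_{i_\ell}$, and then $sxs = s_{i_2}\cdots s_{i_\ell} s$ (with $\ell$ letters, hence reduced), while in the latter case $x$ has a reduced expression ending with $s$, say $x = s_{i_1}\cdots s_{i_{\ell-1}} \cdot s$, and then $sxs = s \cdot s_{i_1}\cdots s_{i_{\ell-1}}$.

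Part (4) is the only step with any content beyond the preceding dictionary: from $\ell(sxs) = \ell(x)-2$ we get both $\ell(sx) < \ell(x)$ and $\ell(xs) < \ell(x)$, so there exist reduced expressions for $x$ beginning with $s$ and ending with $s$, but one must upgrade this to a single reduced expression doing both simultaneously. The clean way is to start from a reduced expression $x = s \cdot y$ with $\ell(y) = \ell(x)-1$, so that $sx = y$, and then observe that $\ell(ys) = \ell(sxs) = \ell(y) - 1$. Hence $y$ itself has a reduced expression ending in $s$, say $y = z \cdot s$ with $\ell(z) = \ell(x) - 2$. Then $x = s \cdot z \cdot s$ is an expression of length $\ell(x)$, automatically reduced, starting and ending with $s$, as desired. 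The main (mild) obstacle is precisely this last bookkeeping step, since naively combining two a priori unrelated reduced expressions need not produce one with both properties; the argument above sidesteps this by iterating the one-sided length-decrease property.
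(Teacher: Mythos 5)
Your parts (1), (3) and (4) are fine; in particular the iteration at the end of (4) --- writing $x=sy$ with $y=sx$ reduced, noting $\ell(ys)=\ell(sxs)=\ell(y)-1$, and extracting a final $s$ from a reduced word for $y$ --- is a correct and clean way to produce a single reduced expression that both begins and ends with $s$. The genuine gap is in the ``refinement'' you make the backbone of the argument. The two inequalities $\ell(sxs)\le\ell(xs)+1$ and $\ell(sxs)\ge\ell(sx)-1$ give only the forward implications (if $\ell(sxs)=\ell(x)+2$ then $\ell(sx)=\ell(xs)=\ell(x)+1$, and dually for $-2$), which is all that (3) and the start of (4) need. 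They do \emph{not} give the claimed characterization of the case $\ell(sxs)=\ell(x)$, and that characterization is false: whenever $sx=xs$ one has $\ell(sxs)=\ell(x)$ while $\ell(sx)$ and $\ell(xs)$ lie on the same side of $\ell(x)$. A concrete instance inside an affine Weyl group: in type $\tilde C_2$ take $s=s_0$ and $x=s_2$, so $s_0s_2=s_2s_0$; then $\ell(sx)=\ell(xs)=2=\ell(x)+1$, yet $sxs=x$ and $\ell(sxs)=\ell(x)$. Hence your deduction in (2) that ``$\ell(sxs)=\ell(x)$ forces exactly one of $\ell(sx)<\ell(x)$, $\ell(xs)<\ell(x)$'' does not follow from the stated bounds, and is not true.

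Repairing (2) requires the standard lifting/dihedral fact, which is essentially the content of the result in Bj\"orner--Brenti that the paper's one-line proof cites: if $\ell(sx)>\ell(x)$, $\ell(xs)>\ell(x)$ and $\ell(sxs)=\ell(x)$, then $sxs=x$, i.e.\ $sx=xs$ (this follows from the exchange condition applied to left multiplication of $xs$ by $s$, using a reduced word for $xs$ ending in $s$). With that in hand, the case $\ell(sxs)=\ell(x)$ splits into: exactly one descent among $sx,xs$, where your argument works verbatim; two descents, where reduced words for $x$ both beginning and ending with $s$ exist and the conclusion of (2) holds trivially; and two ascents, which forces $sxs=x$. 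Note that in this last degenerate case the conclusion of (2) as literally stated actually fails (in the $\tilde C_2$ example no reduced word of $x=s_2$ begins or ends with $s_0$), so a correct treatment must either record $sx=xs$ as an exceptional case or check that it cannot occur where the lemma is applied. Since the paper gives no argument beyond the citation to the deletion condition and Bj\"orner--Brenti, a self-contained proof like yours is welcome, but as written the key dichotomy is asserted rather than proved, and the assertion breaks down exactly at the subtle point the cited result is there to handle.
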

\begin{proof}
 This follows from the deletion condition and Corollary 1.4.6 in \cite{BjoernerBrenti}.
\end{proof}

Proposition~\ref{prop:transformation} will tell us how to transform folded galleries for alcoves in some Weyl chamber $\cC$ to folded galleries corresponding to alcoves in Weyl chambers $s\cC$ where $s \in S$ in such a way that we can control the change of dimension.  Recall that $S = \{s_1,\dots,s_n\}$ is the generating set of simple reflections for the finite Weyl group $\sW$ and that $\tilde S = S \cup \{ s_0 \}$ is the generating set for the affine Weyl group $\aW$.  

\begin{prop}\label{prop:transformation}
Let $x \in \aW$ be such that the alcove $\x = x\fa$ is in $u\Cf$, where $u \in \sW$. Let $s_{i_1}\dots s_{i_\ell}$ be a minimal presentation for $x$, where each $s_{i_j} \in \tilde S$ and $\ell = \ell(x)$ and let $b = t^\mu$ be a pure translation.   Suppose there exists a gallery $\sigma:\fa\rightsquigarrow \bb = b\fa$ of type $s_{i_1}\dots s_{i_\ell}$ that is positively folded and of dimension $d$ with respect to an orientation at infinity $\phipartialw$.

If $s \in S$ is such that $\ell(su) > \ell(u)$, then there exists a gallery $\sigma^{s}:\fa\rightsquigarrow \bb^{s}:= t^{s\mu}\fa$ that is positively folded with respect to the orientation $\phipartialsw$ and has the following properties: 
\begin{enumerate}
\item If $\ell(sxs)=\ell(x)+2$ then $\sigma^{s}$ is of type the minimal presentation $ss_{i_1}\dots s_{i_\ell}s$ for $sxs$, and has dimension $d+1$ with respect to $\phipartialsw$.
\item If $\ell(sxs)=\ell(x)$ the gallery $\sigma^{s}$ is of dimension $d$ with respect to $\phipartialsw$.  Moreover, the minimal presentation for $x$ can be chosen to end with $s_{i_\ell} = s$, and:
\begin{enumerate}
\item If $\sigma$ has a crossing at its last panel (which has type $s$), then $\sigma^{s}$ is of type the minimal presentation $ss_{i_1}\cdots s_{i_{\ell-1}}$ for $sxs$. 
\item If $\sigma$ has a fold at its last panel (which has type $s$), then $\sigma^{s}$ is of type the minimal presentation  $ss_{i_1}\cdots s_{i_\ell}$ for $sx$. 
\end{enumerate}
\end{enumerate}
\end{prop}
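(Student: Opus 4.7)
The plan is to construct $\sigma^s$ explicitly in each of the three cases by \emph{conjugating} $\sigma$ by $s$: we prepend a single-step minimal gallery $\tau_0:\fa \rightsquigarrow s\fa$, apply the left $s$-action to $\sigma$ (or to a short modification of it), and in case~(1) append a terminal single step $\tau_0':s\bb \rightsquigarrow \bb^s$.  Two preliminary observations drive the argument: by Lemma~\ref{lem:wpos}, $s\sigma$ is positively folded with respect to $\phipartialsw$, and by Lemma~\ref{lem:w0 dim}, $\dim_{\phi_{sw}}(s\sigma) = \dim_{\phi_w}(\sigma) = d$.  The identity $st^\mu = t^{s\mu}s$ in $\aW$ gives $s\bb = t^{s\mu}s\fa$, which is the $s$-neighbor of $\bb^s = t^{s\mu}\fa$; this will be used repeatedly to verify that each proposed concatenation terminates at $\bb^s$.

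For case~(1), where $\ell(sxs) = \ell(x)+2$ and by Lemma~\ref{le:words}(3) no reduced word for $x$ starts or ends with $s$, I would define $\sigma^s := \tau_0 \ast s\sigma \ast \tau_0'$.  The concatenated word $s\cdot s_{i_1}\cdots s_{i_\ell}\cdot s$ is then reduced of length $\ell(sxs)$, giving the asserted type.  I would verify that $\sigma^s$ arises as a PRS folding of the minimal gallery $\tau_0 \ast s\tilde\sigma \ast \tau_0'':\fa \rightsquigarrow sxs\fa$ (where $\tilde\sigma$ is an underlying minimal gallery for $\sigma$) by applying the $s$-conjugates of $\sigma$'s folds in the same PRS order, shifted one position to the right, and then checking that the cumulative effect of these reflections sends $sxs\fa$ to $\bb^s$.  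For dimension, the two added crossings $\tau_0$ and $\tau_0'$ traverse the parallel hyperplanes $H_s$ and its translate through the vertex $s\mu$, which share the same periodic orientation under $\phipartialsw$ but are crossed in opposite directions; hence exactly one of the two is a positive crossing, yielding $\dim_{\phi_{sw}}(\sigma^s) = d+1$.

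For case~(2), where $\ell(sxs) = \ell(x)$, I would first invoke Lemma~\ref{le:words}(2) together with the hypothesis $\ell(su) > \ell(u)$ and Lemma~\ref{le:texlemma2} to choose a reduced presentation of $x$ with $s_{i_\ell} = s$; by Proposition~\ref{prop:braids} we may assume $\sigma$ has this type without altering its dimension.  In subcase~(a), the last step of $\sigma$ is a crossing of the type-$s$ panel of $\bb$, so the penultimate alcove is $t^\mu s\fa$; truncate $\sigma$ to $\sigma^-$ of length $\ell-1$ and set $\sigma^s := \tau_0 \ast s\sigma^-$.  Then $s(t^\mu s\fa) = \bb^s$, the type is $s\cdot s_{i_1}\cdots s_{i_{\ell-1}}$ reduced for $sxs$, and the contributions of the removed last crossing of $\sigma$ and of the newly added $\tau_0$ (which lie in parallel type-$s$ hyperplanes) cancel under the orientation change from $\phipartialw$ to $\phipartialsw$, leaving the dimension equal to $d$.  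In subcase~(b), unfold the last fold of $\sigma$ to produce $\sigma^+$ (with one fewer fold and one extra, necessarily negative, crossing) ending at $t^\mu s\fa$, and set $\sigma^s := \tau_0 \ast s\sigma^+$.  The type is $s\cdot s_{i_1}\cdots s_{i_\ell}$ reduced for $sx$, and the end alcove is $\bb^s$.  Crucially, the positivity of $\sigma$'s last fold with respect to $\phipartialw$ forces $\fa$ to lie on the positive side of $H_s$ under $\phipartialw$---equivalently $\ell(sw) > \ell(w)$---which by periodicity ensures that $\tau_0$ is a positive crossing under $\phipartialsw$, exactly balancing the loss incurred by unfolding the fold.

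The hardest step will be the delicate bookkeeping in subcase~(2b), where the dimension identity $\dim_{\phi_{sw}}(\sigma^s) = d$ hinges on showing that the orientation constraint implicit in the positivity of the last fold of $\sigma$ is precisely the constraint needed to make $\tau_0$ a positive crossing with respect to $\phipartialsw$.  Equally careful is the verification, in every case, that $\sigma^s$ arises from a genuine PRS folding sequence in the sense of Definition~\ref{def:PRS} rather than merely being positively folded; this demands tracking the fold positions through the $s$-conjugation and confirming that the inserted single steps at the beginning (and, in case~(1), the end) do not disrupt the strictly increasing order of fold positions.
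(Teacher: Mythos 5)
Your construction is essentially identical to the paper's proof: in case (1) you conjugate $\sigma$ by $s$ and add the two crossings $\fa\rightsquigarrow s\fa$ and $s\bb\rightsquigarrow \bb^s$, arguing via parallel type-$s$ walls and periodicity that exactly one is positive; in case (2) you use Proposition~\ref{prop:braids} and Lemmas~\ref{le:texlemma2} and~\ref{le:words} to arrange $s_{i_\ell}=s$, then delete the last crossing or unfold the last fold and prepend a single $s$-crossing, with the same sign bookkeeping (your explicit observation that positivity of the last fold with respect to $\phipartialw$ makes the prepended crossing positive with respect to $\phipartialsw$ is exactly the paper's implicit justification). The only differences are cosmetic (order of applying the left $s$-action versus truncating/unfolding, and your extra attention to the PRS folding sequence, which the statement does not require).
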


\begin{proof}
We may assume, by Proposition~\ref{prop:braids} and Lemma~\ref{le:texlemma2}, that the minimal word $s_{i_1}\cdots s_{i_\ell}$ presenting $x$ starts with a word $s_{i_1}\cdots s_{i_k}$ for $0 \leq k \leq \ell$ presenting $u\in\sW$. In case $u=\id$ the index $k=0$ and the first letter of the word equals $s_{i_1}=s_0$. 
As $\ell(su)>\ell(u)$, no minimal presentation of $x$ starts with $s$.  Hence, $\ell(sxs)\neq \ell(x)-2$. By Lemma~\ref{le:words}, the remaining two cases are $\ell(sxs)=\ell(x)+2$ and $\ell(sxs)=\ell(x)$.  

\begin{figure}[ht]
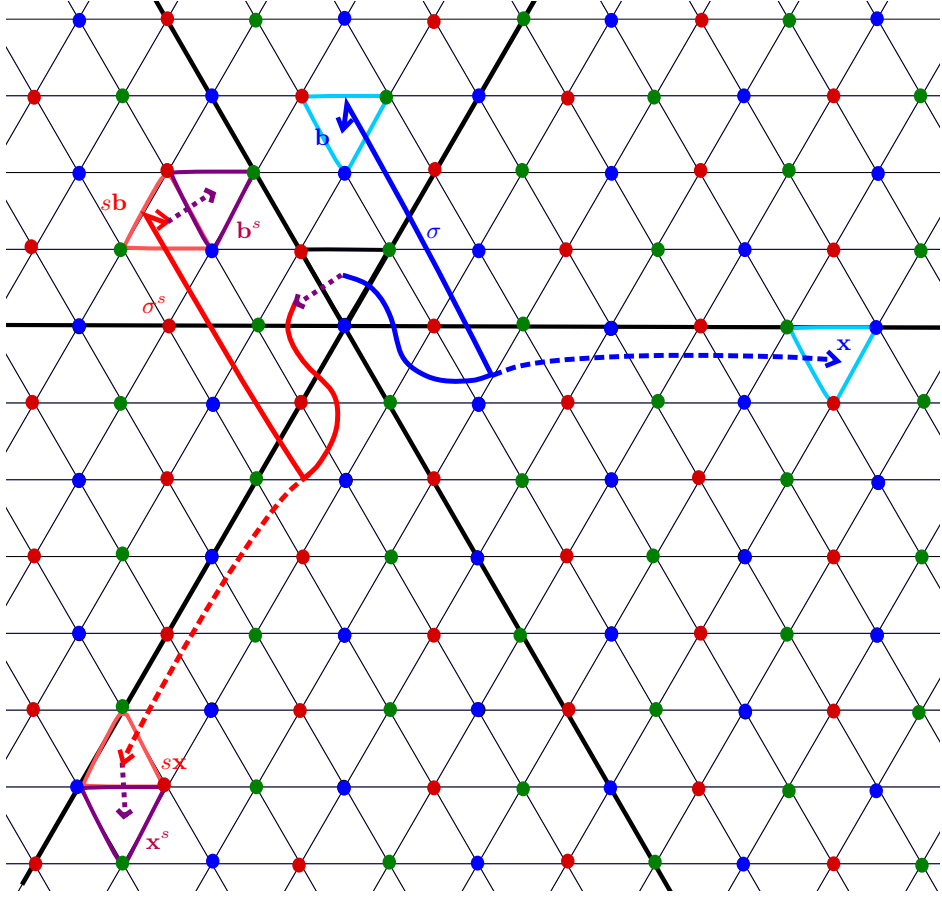

\begin{center}
\begin{overpic}[width=0.8\textwidth]{conjugation_sxs}
\put(89,58){\footnotesize{$\color{blue} \x$}}
\put(45,70){\footnotesize{$\color{blue} \sigma$}}
\put(33,80){\footnotesize{$\color{blue} \bb$}}
\put(16.5,13){\footnotesize{$\color{red} s\x$}}
\put(14.9,4.5){\footnotesize{$\color{purple} \x^s$}}
\put(14.5,62){\footnotesize{$\color{red} \sigma^s$}}
\put(24.6,70){\footnotesize{$\color{purple} \bb^s$}}
\put(10,73){\footnotesize{$\color{red} s\bb$}}
\end{overpic}

\caption{Conjugating a gallery $\sigma:\fa \rightsquigarrow \bb$ of type $\vec x$ by a finite simple reflection $s$ such that $\ell(sxs) = \ell(x)+2$ produces a gallery $\sigma^s:\fa \rightsquigarrow \bb^s$ of type $\vec{s}\vec{x}\vec{s}$ with dimension of the gallery increased by 1.} \label{fig:ConjugatedGallery}
\end{center}
\end{figure}

Suppose first that $\ell(sxs)=\ell(x)+2$. Then the word $s s_{i_1}\cdots s_{i_\ell}s$ is minimal, and in particular, $s_{i_1}\neq s$ and $s_{i_\ell} \neq s$.  We may thus obtain a gallery $\sigma'$ of type the minimal word $s s_{i_1}\cdots s_{i_\ell}s$ by concatenating the gallery from $\fa$ to $s\fa$ which is just the crossing in the $s$-panel of $\fa$, with the image $s\sigma:s\fa \rightsquigarrow s\bb$ of $\sigma$ under the left-action of $s$, and then the gallery from $s\bb$ to its unique $s$-neighbor, which is $\bb^s = sbs\fa = t^{s\mu}\fa$ itself.  Thus conjugating $\sigma$ by $s$ we are left with a gallery $\sigma'$ from $\fa$ to $\bb^{s}$ that is of type $s s_{i_1}\cdots s_{i_\ell}s$.

Passing from $\sigma$ to $\sigma'$ no new folds are added, and the subgallery $s\sigma$ is positively folded with respect to the orientation~$s\phipartialw = \phipartialsw$.  Now by construction both extensions of $s\sigma$ are crossings in an $s$-panel.  One of these crossings goes from $\fa$ to $s\fa$ while the other goes from $s\bb$ back to the pure translation alcove $\bb^{s}$.  In the case of $\bb = \fa$ these crossings are in the exact same $s$-panel, and in general, these crossings are in parallel walls since $\fa$ and $\bb$ are both pure translations.  Therefore, one of these crossings is negative with respect to the orientation $\phipartialsw$, and one is positive. Hence $\sigma'$ is positively folded with respect to $\phipartialsw$ and of dimension $d+1$; see Figure~\ref{fig:ConjugatedGallery}. Put $\sigma^{s}\define \sigma'$ and we are done with case (1).   

\begin{figure}[ht]
\begin{center}
\begin{overpic}[width=0.8\textwidth]{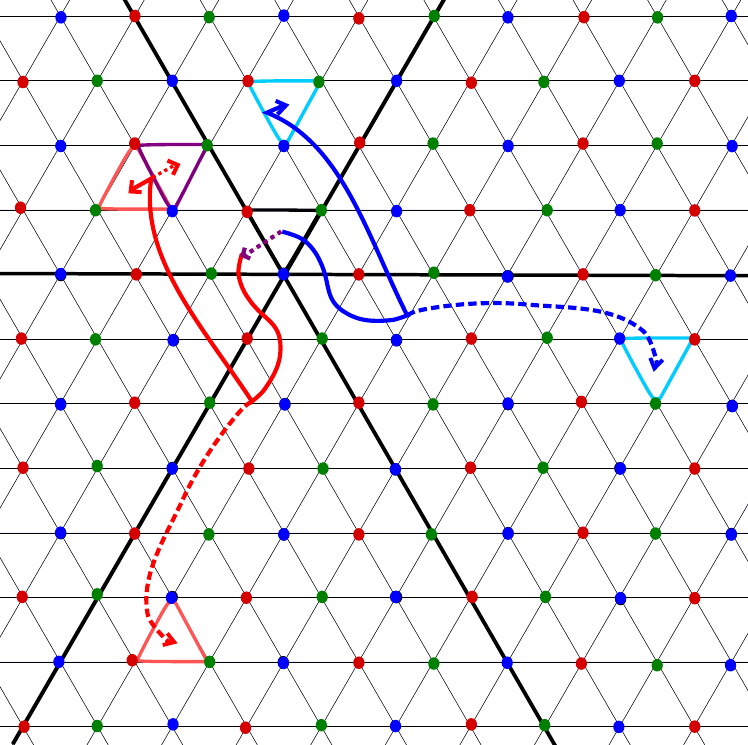}
\put(89,50){$\color{blue} \x$}
\put(48,73){$\color{blue} \sigma$}
\put(36,87){$\color{blue} \bb$}
\put(12,76.5){$\color{red} s\bb$}
\put(27,14.5){$\color{red} s\x$}
\put(21,57){$\color{red} \sigma^s$}
\put(21.5,81){$\color{purple} \bb^s$}
\end{overpic}
\caption{Conjugating a gallery $\sigma:\fa \rightsquigarrow \bb$ by a finite simple reflection $s$ such that $\ell(su) > \ell(u)$ and $\ell(sxs) = \ell(x)$ results in a gallery $\sigma^s:\fa \rightsquigarrow \bb^s$ of the same dimension. }
\label{fig:ConjugatedGallery2}
\end{center}
\end{figure}

Suppose now that we are in case (2), so that $\ell(sxs)=\ell(x)$. Then since $\ell(su) > \ell(u)$ and we are assuming that the minimal presentation of $x$ begins with a word for $u$, there exists a presentation of $x$ with final letter $s_{i_\ell} = s$.  We may assume without loss of generality by Proposition~\ref{prop:braids} that $\sigma$ is of this type. Now the last panel in $\sigma$ is a face of $\bb$ of type $s$.
There are two cases for $\sigma$: at this last panel there is either a (positive or negative) crossing or a positive fold with respect to $\phipartialw$. 

In case (2)(a), the gallery $\sigma$ has a crossing at its last panel.  Apply $s$ on the left to $\sigma$ to obtain a gallery $s\sigma:s\fa \rightsquigarrow s\bb = sb\fa$ of the same type as $\sigma$, and which is positively folded with respect to the orientation $\phipartialsw$.  Next, delete the last crossing of $s\sigma$ to obtain $\sigma':s\fa\rightsquigarrow sbs\fa = \bb^s$ of type $s_{i_1}\cdots s_{i_{\ell-1}}$. The dimension of $\sigma'$ is $d$ if the deleted crossing was negative and is $d-1$ if the deleted crossing was positive. Now elongate $\sigma'$ at the start by adding a crossing from $\fa$ to the alcove $s\fa$.  We then obtain a gallery $\sigma'':\fa\rightsquigarrow \bb^s$ of type the minimal presentation $ss_{i_1}\cdots s_{i_{\ell-1}}$ for $sxs$ and which is positively folded with respect to $\phipartialsw$.  As we deleted a crossing from the pure translation alcove $\bb^s = sbs\fa$ to its $s$-neighbor $sb\fa$, and we added back a crossing from the fundamental alcove $\fa$ to its $s$-neighbor $s\fa$, the dimension of $\sigma''$ is again $d$.   Put $\sigma^s = \sigma''$ and this completes case (2)(a).

For case (2)(b), we have a positive fold at the last panel of $\sigma$, and this panel is of type $s$.  We may unfold this fold to obtain a gallery $\sigma':\fa\rightsquigarrow bs\fa$ of the same type as $\sigma$ but whose dimension has changed by $-1$ as we removed a positive fold and replaced it by a negative crossing.  Now elongate $\sigma'$ at the start by a crossing from the alcove $s\fa$ to $\fa$. This elongation adds a positive crossing to the gallery (with respect to $\phipartialw$), resulting in a gallery of type $ss_{i_1}\cdots s_{i_{\ell}}$ that starts in $s\fa$ and ends in $bs\fa$. The image of this gallery under the left-action of $s$ is the desired gallery $\sigma^{s}:\fa \rightsquigarrow \bb^s$.  See Figure \ref{fig:ConjugatedGallery2} for an illustration.  
\end{proof}

The reverse statement of Proposition~\ref{prop:transformation} is contained in Proposition~\ref{prop:transformation-reverse}. 
As we have not been able to deduce a clean statement about nonemptiness of affine Deligne-Lusztig varieties from \ref{prop:transformation-reverse}  we leave its proof, which can be done using similar cutting and unfolding techniques to the proof of Proposition~\ref{prop:transformation}, to the reader. 
  
\begin{prop}\label{prop:transformation-reverse} 
Let $x \in \aW$ be such that $\x = x\fa$ is in $u\Cf$, where $u \in \sW$, and let $s_{i_1}\cdots s_{i_\ell}$ be a minimal presentation for $x$, where each $s_{i_j} \in \tilde S$ and $\ell = \ell(x)$.  Let $b = t^\mu$ be a pure translation.   Suppose there exists a gallery $\sigma:\fa\rightsquigarrow \bb = b\fa$ of type $s_{i_1}\cdots s_{i_\ell}$ that is positively folded and of dimension $d$ with respect to an orientation at infinity $\phipartialw$. 

If $s \in S$ is such that $\ell(su) < \ell(u)$, then there exists a gallery $\sigma^{s}:\fa\rightsquigarrow \bb^{s}:= t^{s\mu}\fa$ that is positively folded with respect to the orientation $\phipartialsw$ and has the following properties: 
\begin{enumerate}
\item If $\ell(sxs)=\ell(x)-2$, then $\sigma^{s}$ is of dimension $d-1$.  Moreover, the minimal presentation for $x$ can be chosen to start with $s_{i_1} = s$ and end with $s_{i_\ell} = s$, and: 
\begin{enumerate}
\item If $\sigma$ has a crossing at its last and first panel, then $\sigma^{s}$ is of type the minimal presentation $s_{i_{2}}\cdots s_{i_{\ell-1}}$ for $sxs$.
\item If $\sigma$ has a crossing at its last panel and a fold at its first, then $\sigma^{s}$ is of type the minimal presentation $s_{i_{1}}\cdots s_{i_{\ell-1}}$ for $sxs$.
\item If $\sigma$ has a fold at its last panel and a crossing at its first, then $\sigma^{s}$ is of type the minimal word $s_{i_{2}}\cdots s_{i_\ell}$ for $sx$.
\item If $\sigma$ has a fold at its last and first panel, then $\sigma^{s}$ is of type the minimal word $s_{i_{1}}\cdots s_{i_\ell}$ for $sx$.
\end{enumerate}
\item If $\ell(sxs)=\ell(x)$, then the gallery $\sigma^{s}$ is of dimension $d$.  Moreover, the minimal presentation for $x$ can be chosen to start with $s_{i_1} = s$, and  
\begin{enumerate}
\item If $\sigma$ has a crossing at its first panel (which has type $s$), then $\sigma^s$ is of type the minimal presentation  $s_{i_2}\cdots s_{i_\ell}s$ for $sxs$. 
\item If $\sigma$ has a fold at its first panel (which has type $s$), then $\sigma^s$ is of type the minimal presentation  $s_{i_1}\cdots s_{i_\ell}s$ for $xs$. 
\end{enumerate}
\end{enumerate}
\end{prop}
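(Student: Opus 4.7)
The plan is to mirror the cut-and-extend constructions in the proof of Proposition~\ref{prop:transformation}, reversed: rather than elongating $\sigma$ at its endpoints, we shorten $\sigma$ by deleting crossings and unfolding folds. Since $\ell(su)<\ell(u)$, every reduced word for $u$ begins with $s$, so by Proposition~\ref{prop:braids} together with Lemma~\ref{le:texlemma2} we may assume the minimal presentation $s_{i_1}\cdots s_{i_\ell}$ for $x$ begins with $s_{i_1}=s$. In case (1), Lemma~\ref{le:words}(4) additionally yields a minimal presentation also ending in $s_{i_\ell}=s$.

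For each subcase, $\sigma^{s}$ is produced by combining three elementary operations at one or both endpoints of $\sigma$. Where $\sigma$ has a crossing at its initial or final $s$-panel, we delete that crossing, which decreases dimension by $1$ if the crossing is positive with respect to $\phipartialw$ and leaves it unchanged otherwise. Where $\sigma$ has a fold, we unfold it; this reflects the remainder of the gallery across the supporting hyperplane, shifts the corresponding endpoint, and decreases dimension by $1$ (a positive fold is replaced by a negative crossing). Finally we act on the left by $s$, which by Lemmas~\ref{lem:wpos} and~\ref{lem:w0 dim} produces a gallery positively folded with respect to $\phipartialsw$ of the same dimension. A direct check in each subcase verifies that the resulting gallery terminates at $\bb^{s}=t^{s\mu}\fa$ and has the claimed type. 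For instance, in (1)(a), deleting the first crossing $\fa\to s\fa$ and the last crossing $t^\mu s\fa\to t^\mu\fa$, then acting by $s$, yields a gallery from $\fa$ to $s t^\mu s\fa=\bb^{s}$ of type $s_{i_2}\cdots s_{i_{\ell-1}}$, a reduced expression for $sxs$.

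The dimension analysis in case (1) hinges on the same parity observation used in the proof of Proposition~\ref{prop:transformation}: the first and last $s$-panels of $\sigma$ lie in parallel hyperplanes since $\fa$ and $\bb$ are pure-translation alcoves, and $\phipartialw$ assigns the same orientation to parallel hyperplanes. Hence exactly one of the two terminal crossings in~(1)(a) is positive, for a net change of $-1$. The mixed subcases (1)(b)--(c) combine one crossing deletion with one unfold, yielding net change $-1$ by the same parity argument, and the choice between a type for $sxs$ versus $sx$ is dictated by whether the unfolded crossing can be cancelled against the leftward action of $s$ or must be retained. In (1)(d), unfolding both ends contributes $-2$, offset by $+1$ coming from an extra letter retained in the resulting type $s_{i_1}\cdots s_{i_\ell}$, whose apparent length $\ell$ still presents the length $\ell-1$ element $sx$ since the fold-unfold operations preserve letter count while altering the element presented. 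Case (2) is analogous but only one endpoint is modified, producing net change $0$. The main obstacle will be the meticulous bookkeeping across the six subcases: identifying which reduced word is obtained after the local operations and verifying, via Lemma~\ref{le:words} and the deletion condition, that it presents the claimed element among $sxs$, $sx$, and $xs$.
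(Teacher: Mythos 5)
Your overall strategy---shorten $\sigma$ at its two terminal $s$-panels by deleting crossings or unfolding folds and then act on the left by $s$---is exactly the route the paper has in mind: it gives no proof of this proposition, saying only that it can be done by ``similar cutting and unfolding techniques'' to those in the proof of Proposition~\ref{prop:transformation}, and leaves it to the reader. Your worked subcase (1)(a) is correct, as is the reduction (via Proposition~\ref{prop:braids}, Lemma~\ref{le:texlemma2} and Lemma~\ref{le:words}) to a minimal presentation beginning, and in case (1) also ending, with $s$; note though that $\ell(su)<\ell(u)$ only guarantees that \emph{some} reduced word for $u$ begins with $s$, not every one, and in case (2) both endpoints of $\sigma$ get modified (a deletion or an unfold at the start together with an appended crossing at the end), not one.

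The genuine flaw is in your per-operation dimension rule: unfolding a positive fold does \emph{not} always cost $1$. At the \emph{last} panel it does, since only the final alcove is reflected and the orientation is unchanged, so a positive fold becomes a negative crossing. At the \emph{first} panel the supporting wall is $H_s$ through the origin, and positivity of the fold means $\fa$ lies on the positive side of $H_s$; if you unfold by moving the initial alcove to $s\fa$, the new crossing $s\fa\to\fa$ is \emph{positive}, so this unfold is dimension-neutral, and it stays neutral after the global left action by $s$, which switches the orientation to $\phipartialsw$ and preserves dimension. (If instead you unfold by reflecting the remainder of the gallery, as you describe, you must measure positivity in $\phipartialsw$ from then on and must \emph{not} apply the final left action by $s$, or the endpoints come out wrong.) With the corrected rule the tallies are forced: in (1)(b) the positive first fold forces the last crossing $t^\mu s\fa\to\bb$ to be positive, so its deletion supplies the $-1$; in (1)(c) the positive last fold forces the first crossing to be negative, so the $-1$ comes from the last-panel unfold; and in (1)(d) the total is $(-1)+0$, not ``$-2$ offset by $+1$ from an extra letter retained''---retained letters affect the type, never the dimension, so that step of your argument is invalid even though the answer is right (and no argument should try to make the $\ell$-letter type into a reduced word for the length-$(\ell-1)$ element $sx$; your construction simply produces a gallery of that type ending at $\bb^s$, which is all the statement's type claim requires). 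In case (2) the deleted or unfolded step at the start and the appended crossing at the end lie in parallel, identically oriented walls and are traversed in the same direction relative to the pure-translation alcoves, giving net $0$. These corrections are local and leave your construction and conclusions intact, but they are precisely the ``meticulous bookkeeping'' you defer, and as stated your rules would lead it astray.
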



\subsection{Conjugating by simple reflections}\label{sec:ConjSimple}

We will now translate the geometric statements of the previous section into algebro-geometric results for the corresponding affine Deligne--Lusztig varieties.  The main results in this section are Propositions~\ref{T:ConjugateXx1} and~\ref{prop:bConjugateOneDirection}, which consider conjugates of $x$ by finite simple reflections $s \in S$, and so relate alcoves in Weyl chambers.  Proposition~\ref{T:ConjugateXx1} considers the case $b = 1$ and Proposition~\ref{prop:bConjugateOneDirection} considers general translations $b = t^\mu$.  For the proof of Proposition~\ref{prop:bConjugateOneDirection}, we will need a result from~\cite{GoertzHeDim}, which we review as Theorem~\ref{T:GoertzHe}. 

We first consider the case $b = 1$.  The proof of the following result relies on Theorem~\ref{idADLV}.

\begin{prop}\label{T:ConjugateXx1}
Let $x$ be an alcove in $u\Cfs$, and write $x = ut^{\lambda}w$, where $t^\lambda w$ lies in the shrunken dominant chamber $\Cfs$ and $u,w \in \sW$.   
Then for any $s \in S$ such that $\ell(su) > \ell(u)$,
$$
X_x(1)\neq \emptyset  \Longleftrightarrow X_{sxs}(1)\neq \emptyset.
$$ 
If both varieties are nonempty, then their dimensions relate as follows:  
\begin{enumerate}
\item if $\ell(sxs)=\ell(x)+2$, then $\dim X_{sxs}(1)=\dim X_x(1)+1$;  and 
\item if $\ell(sxs)=\ell(x)$, then $\dim X_{sxs}(1)=\dim X_x(1)$.
\end{enumerate}
\end{prop}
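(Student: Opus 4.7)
The plan is to reduce the claim to Theorem~\ref{idADLV} of G\"ortz--He, by showing that the Reuman-type element $\eta_2(\cdot)^{-1}\eta_1(\cdot)\eta_2(\cdot)$ takes the same value on $x$ and $sxs$.

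First I would rewrite both elements in the form $t^\nu v$ with $\nu \in R^\vee$ and $v \in \sW$. Since $x = ut^\lambda w = t^{u\lambda}(uw)$ and $\x \in u\Cfs \subset u\Cf$, we have $\eta_1(x) = uw$ and $\eta_2(x) = u$, so
\[
\eta_2(x)^{-1}\eta_1(x)\eta_2(x) = u^{-1}(uw)u = wu.
\]
A direct computation gives $sxs = (su)t^\lambda(ws) = t^{(su)\lambda}\bigl((su)(ws)\bigr)$, hence $\eta_1(sxs) = suws$, and, once we verify that $sxs\fa \in (su)\Cfs$, we also have $\eta_2(sxs) = su$. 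Two applications of $s^2 = \id$ then give
\[
\eta_2(sxs)^{-1}\eta_1(sxs)\eta_2(sxs) = (su)^{-1}(suws)(su) = u^{-1}s \cdot suws \cdot su = ws \cdot su = wu.
\]
Since the two Reuman elements are equal, Theorem~\ref{idADLV} immediately gives the equivalence
\[
X_x(1)\neq\emptyset \iff wu \in \sW \setminus \bigcup_{T \subsetneq S} W_T \iff X_{sxs}(1)\neq\emptyset,
\]
and when both varieties are nonempty, the dimension formula in the same theorem yields
\[
\dim X_{sxs}(1) - \dim X_x(1) = \tfrac{1}{2}\bigl(\ell(sxs) - \ell(x)\bigr),
\]
which equals $1$ in case~(1) and $0$ in case~(2). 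The remaining a priori case $\ell(sxs) = \ell(x) - 2$ from Lemma~\ref{le:words} is ruled out because, by Lemma~\ref{le:texlemma2}, some minimal presentation of $x$ begins with a minimal word for $u$, which the hypothesis $\ell(su) > \ell(u)$ forbids from starting with~$s$.

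The main obstacle is the geometric check that $sxs\fa$ lies in $(su)\Cfs$, since Theorem~\ref{idADLV} requires the alcove to sit in some shrunken Weyl chamber. The hypothesis $t^\lambda w\fa \in \Cfs$ forces $\lambda$ to be deep enough in $\Cf$ that every alcove at $\lambda$ lies in $\Cfs$ (this genericity of $\lambda$ is implicit in the shrunken-chamber regime and is made explicit in the hypothesis of the subsequent Theorem~\ref{T:Conjugation}), so in particular $t^\lambda(ws)\fa \in \Cfs$, and applying $su$ transports this alcove to $(su)\Cfs$ as required. If instead one were to route the argument constructively through Proposition~\ref{prop:transformation}, the length-increasing case and case~(2)(a) would give galleries of type $\vec{sxs}$ immediately, but case~(2)(b) only produces a gallery of type $\vec{sx}$; this is precisely why the cleaner route is through Theorem~\ref{idADLV}.
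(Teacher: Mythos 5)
You take essentially the same route as the paper: reduce to Theorem~\ref{idADLV}, check that the Reuman element $\eta_2(\cdot)^{-1}\eta_1(\cdot)\eta_2(\cdot)$ equals $wu$ for both $x$ and $sxs$, rule out $\ell(sxs)=\ell(x)-2$ using $\ell(su)>\ell(u)$, and read the dimension change off \eqref{E:idADLVDim}; this is exactly the paper's proof, which likewise avoids the gallery construction of Proposition~\ref{prop:transformation} for the case $b=1$.

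One caution about your final paragraph: the claim that $t^\lambda w\fa\in\Cfs$ forces every alcove at $\lambda$ to lie in $\Cfs$ is false in general --- if $\langle\alpha_i,\lambda\rangle=1$ for some simple root $\alpha_i$, the alcoves at $\lambda$ on the lower side of $H_{\alpha_i,1}$ lie outside $\Cfs$, so $t^\lambda ws\fa$ need not lie in $\Cfs$. What your argument really needs (and what the paper's proof also uses implicitly, since it simply asserts $\eta_2(sxs)=su$ and applies Theorem~\ref{idADLV} to $sxs$) is that $sxs\fa=su\,t^{\lambda}ws\fa$ lands in the shrunken chamber $\Cws_{su}$; note $su\Cfs\subseteq\Cws_{su}$ but $su\Cfs\neq\Cws_{su}$ in general, so showing $t^\lambda ws\fa\in\Cfs$ would indeed suffice, it just does not follow from $t^\lambda w\fa\in\Cfs$ alone. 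The paper only makes this point airtight later, via the hypothesis in Theorem~\ref{T:Conjugationb=1} that the entire star of $\lambda$ lies in $\Cfs$. So your proof matches the paper's and inherits the same implicit genericity assumption on $\lambda$; the explicit justification you offer for it should be replaced by that star hypothesis (or by $\langle\alpha_i,\lambda\rangle\geq 2$ for all $i$) rather than derived from the stated hypothesis.
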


\begin{proof}
Given any $x \in \aW$, recall that $\eta_1: \aW \rightarrow \sW$ is the natural projection onto the finite part of $x$ and $\eta_2:\aW \rightarrow \sW$ is the map which identifies the Weyl chamber in which the alcove $\x$ lies.  By Reuman's criterion, $X_x(1) \neq \emptyset$ if and only if $\eta_2(x)^{-1}\eta_1(x) \eta_2(x)$ has full support in $S$.  If $x = ut^{\lambda}w$ with $t^{\lambda}w$ in the dominant chamber, then $\eta_1(x) = uw$ and $\eta_2(x) =  u$.  Therefore, $X_x(1) \neq \emptyset$ if and only if $u^{-1}(uw)u = wu \in \sW \backslash \bigcup\limits_{T \subsetneq S} W_T$.

Now consider $sxs = sut^{\lambda}ws$.  Then $\eta_1(sxs) = suws$ and $\eta_2(sxs) = su$ so that \[\eta_2(sxs)^{-1}\eta_1(sxs)\eta_2(sxs) = (su)^{-1}(suws)(su) = wu,\] which is precisely the same as for $x$.  Therefore, $X_x(1) \neq \emptyset $ if and only if $X_{sxs}(1) \neq \emptyset$ by Reuman's condition in Theorem \ref{idADLV}.

Recall from \eqref{E:idADLVDim} that $\dim X_x(1) = \frac{1}{2}\left(\ell(x) + \ell(\eta_2(x)^{-1}\eta_1(x)\eta_2(x)\right) = \frac{1}{2}\left( \ell(x) + \ell(wu) \right)$, which means that $\dim X_{sxs}(1) = \frac{1}{2} \left(\ell(sxs) + \ell(wu)\right)$ by the previous discussion.  Since by hypothesis $\ell(su) > \ell(u)$, the case that $\ell(sxs) = \ell(x) -2$ cannot arise.  Clearly, if $\ell(sxs) = \ell(x)$, then these dimension formulas yield the same result.  On the other hand, if $\ell(sxs) = \ell(x) + 2$, then $\dim X_{sxs}(1) = \frac{1}{2}\left( \ell(x) + 2 + \ell(wu)\right) = \dim X_x(1) + 1$.
\end{proof}

To treat the case in which $b\neq 1$, we need the following basic algebraic fact, which uses that affine Deligne-Lusztig varieties associated to $\sigma$-conjugate elements are isomorphic.

\begin{lemma}\label{T:FrobConjIso}
Let $t^\lambda$ be a pure translation in $\aW$ and let $w \in \sW$.  Then for all $x \in \aW$ 
\begin{equation} X_x(t^\lambda) \neq \emptyset \iff X_x(t^{w\lambda}) \neq \emptyset,
\end{equation}
and moreover 
\begin{equation}
\dim X_x(t^\lambda)  = \dim X_x(t^{w\lambda}).
\end{equation}
\end{lemma}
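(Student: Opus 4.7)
The plan is to reduce the lemma to the observation, already made in Section~\ref{sec:Intro}, that left multiplication by an element $y\in G(F)$ induces an isomorphism $X_x(b)\xrightarrow{\sim} X_x(yb\sigma(y)^{-1})$. It therefore suffices to exhibit some $y\in G(F)$ with $y\, t^\lambda\, \sigma(y)^{-1}=t^{w\lambda}$, from which both the nonemptiness equivalence and the equality of dimensions follow at once, since the two varieties will in fact be isomorphic as $k$-schemes.

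To produce such a $y$, I would choose a lift $\dot w\in N_G(T)$ of $w\in\sW=N_G(T)/T$ that is defined over $\mathbb{F}_q$. Since $G$ (and hence $T$ and $N_G(T)$) is split over $\mathbb{F}_q$, such a lift exists: each simple reflection has an $\mathbb{F}_q$-rational representative in $N_G(T)$, and a product of such representatives gives $\dot w\in N_G(T)(\mathbb{F}_q)$.

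Next I would verify that $y\define\dot w$ does the job. Recall that $\sigma$ acts on $F=k((t))$ by acting on coefficients and fixing the uniformizer $t$. Because $\dot w$ has all of its matrix entries in $\mathbb{F}_q$, we have $\sigma(\dot w)=\dot w$, so $\sigma(y)^{-1}=\dot w^{-1}$. The element $t^\lambda\in T(F)$ is the image of $t$ under $\lambda:\mathbb{G}_m\to T$, and conjugation by $\dot w$ on $T$ realizes the Weyl-group action on cocharacters; hence
\[
y\, t^\lambda\,\sigma(y)^{-1}=\dot w\, t^\lambda\,\dot w^{-1}=t^{w\lambda}.
\]

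Finally, applying the isomorphism $g\mapsto \dot w g$ yields $X_x(t^\lambda)\cong X_x(t^{w\lambda})$ as desired. There is really no substantive obstacle here: the content of the lemma is entirely the $\sigma$-conjugacy of $t^\lambda$ and $t^{w\lambda}$, and the only small subtlety is the choice of an $\mathbb{F}_q$-rational lift of $w$ so that the Frobenius twist $\sigma(y)$ equals $y$. This is why the statement ``will not surprise experts,'' to echo the authors' comment on a similar result earlier in the paper.
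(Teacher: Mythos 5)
Your proposal is correct and follows essentially the same route as the paper: both arguments reduce the statement to the observation that $g\mapsto yg$ gives $X_x(b)\cong X_x(yb\sigma(y)^{-1})$, and then take $y$ to be a Frobenius-fixed representative of $w$ so that $t^\lambda$ and $t^{w\lambda}=wt^\lambda w^{-1}$ are $\sigma$-conjugate. Your only addition is to spell out the choice of an $\mathbb{F}_q$-rational lift $\dot w\in N_G(T)$, which the paper glosses by simply saying elements of $\sW$ are $\sigma$-fixed.
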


 \begin{proof}
 For $y \in G(F)$, the isomorphism $g \mapsto yg$ on $G(F)$ yields an isomorphism between $X_x(b)$ and $X_x(yb\sigma(y)^{-1})$.  All elements in the $\sW$-orbit of $t^{\lambda}$ are conjugate to $t^{\lambda}$ by an element of $\sW$.  Since elements in $\sW$ are $\sigma$-fixed, the elements $t^{\lambda}$ and $t^{w\lambda} = wt^\lambda w^{-1}$ are also $\sigma$-conjugate, and the result follows.
 \end{proof}

We next review a result of G\"{o}rtz and He about the structure of affine Deligne--Lusztig varieties associated to affine Weyl group elements which are related by conjugation by simple reflections.  The following theorem is obtained by a generalization of the reduction method of Deligne and Lusztig in \cite{DL}.

\begin{thm}[Corollary 2.5.3 in \cite{GoertzHeDim}]\label{T:GoertzHe}
Let $x \in \aW$ and $s \in \tilde{S}$ be an affine simple reflection, and $b \in G(F)$.
\begin{enumerate}
\item If $\ell(sxs) = \ell(x)$, then there exists a universal homeomorphism $X_x(b) \rightarrow X_{sxs}(b)$.
\item If $\ell(sxs) = \ell(x)-2$, then $X_x(b)$ can be written as a disjoint union $X_x(b) = X_1 \sqcup X_2$, where $X_1 \rightarrow X_{sxs}(b)$ and $X_2 \rightarrow X_{sx}(b)$ are compositions of a Zariski-locally trivial fiber bundle with one dimensional fibers and a universal homeomorphism.  Here, $X_1$ is closed and $X_2$ is open in $X_x(b)$.
\end{enumerate}
\end{thm}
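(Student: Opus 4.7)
The plan is to adapt the classical Deligne--Lusztig reduction from \cite{DL} to the affine Iwahori setting. Let $P_s = I \cup IsI$ be the standard parahoric subgroup containing $I$ and $s$, and consider the natural projection $\pi: G(F)/I \to G(F)/P_s$, whose fibers are isomorphic to $\mathbb{P}^1$. The strategy is to stratify $X_x(b)$ via the image under $\pi$ and the relative position of $g^{-1}b\sigma(g)$ with respect to the $P_s$-double cosets, then identify each stratum with a piece of either $X_{sxs}(b)$ or $X_{sx}(b)$.

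First I would exploit the Iwahori--Matsumoto multiplication rules: for $s \in \tilde{S}$ and $x \in \aW$, one has $IsI \cdot IxI = IsxI$ when $\ell(sx) > \ell(x)$, and $IsI \cdot IxI = IsxI \sqcup IxI$ when $\ell(sx) < \ell(x)$, with symmetric statements for right multiplication. Given $g \in X_x(b)$, as $h$ varies in a $P_s/I$-fiber through $g$, the element $h^{-1}(g^{-1}b\sigma(g))\sigma(h)$ varies within $(I \cup IsI)(IxI)(I \cup IsI)$; the above product rules then determine the finite list of $IyI$ into which it can land, depending on the relative magnitudes of $\ell(sx)$, $\ell(xs)$, and $\ell(sxs)$ compared to $\ell(x)$.

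For part (1), when $\ell(sxs) = \ell(x)$, exactly one of $\ell(sx), \ell(xs)$ equals $\ell(x)+1$ and the other equals $\ell(x)-1$. A carefully chosen section of $\pi$ over $X_x(b)$ selects a canonical element of each fiber landing in $IsxsI$, yielding a morphism $X_x(b) \to X_{sxs}(b)$. Running the same construction starting from $sxs$ produces an inverse on $\overline{k}$-points, but the two morphisms compose to a Frobenius twist rather than the identity, so the map is only a universal homeomorphism. For part (2), when $\ell(sxs) = \ell(x)-2$, both $\ell(sx) = \ell(xs) = \ell(x)-1$, so generically each $\mathbb{P}^1$-fiber over a point of $\pi(X_x(b))$ contributes a one-parameter family of elements with $h^{-1}g^{-1}b\sigma(g)\sigma(h) \in IsxsI$ (producing the closed stratum $X_1$, which fibers over $X_{sxs}(b)$ with $\mathbb{A}^1$-minus-a-point fibers), together with a single distinguished point giving $IsxI$ (producing the open stratum $X_2$, which fibers over $X_{sx}(b)$ with one-dimensional fibers).

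The main obstacle will be constructing these stratifications and morphisms at the scheme-theoretic level and verifying the Zariski-local triviality of the fiber bundles with the stated one-dimensional fibers. In particular, one must carefully track the interaction between $\sigma$-conjugation and the parametrization of each $\mathbb{P}^1$-fiber: the $\sigma$-twist is precisely what forces universal homeomorphisms rather than isomorphisms, and the identification of the Frobenius-twisted universal torsors requires an application of Lang--Steinberg adapted to the loop-group setting. Handling this Frobenius bookkeeping uniformly across both parts is the genuinely technical step, while the combinatorics of the double-coset decomposition is routine once the product rules are in hand.
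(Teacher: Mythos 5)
A preliminary remark: the paper offers no proof of this statement at all --- it is quoted verbatim as Corollary 2.5.3 of \cite{GoertzHeDim}, whose proof is exactly the affine analogue of the Deligne--Lusztig reduction of \cite{DL} that you set out to reproduce, via the $\mathbb{P}^1$-fibration $G(F)/I \to G(F)/P_s$ and the Iwahori--Matsumoto product rules. So your overall strategy coincides with the intended one; the problems lie in the execution of part (2) and in the steps you defer.

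The concrete gap is your fiber analysis in part (2). For $gI \in X_x(b)$, since $sx<x$ there is a unique chamber $g_1I$ in the $s$-panel of $gI$ whose relative position to $b\sigma(g)I$ is $sx$ (the gate of the panel), and then $g_1^{-1}b\sigma(g_1) \in IsxsI \sqcup IsxI$. A short check with the gate property shows that a panel meeting $X_x(b)$ contains \emph{exactly one} point whose $\sigma$-conjugate lies in $IsxsI$ or in $IsxI$, namely this gate, while the one-parameter family of remaining points of the panel lies in $X_x(b)$ itself; there is never a one-parameter family of fiber points landing in $IsxsI$, as you assert --- and in any case such points would belong to $X_{sxs}(b)$, not to $X_x(b)$, so they could not ``produce the stratum $X_1 \subset X_x(b)$.'' The correct stratification is $X_1 = \{gI \in X_x(b) : g_1I \in X_{sxs}(b)\}$ and $X_2 = \{gI \in X_x(b) : g_1I \in X_{sx}(b)\}$, both maps being $gI \mapsto g_1I$; their fibers are, respectively, the whole panel minus its gate point, i.e.\ $\mathbb{A}^1$, and that set minus one further point, i.e.\ $\mathbb{G}_m$ --- the opposite of your assignment (this does not contradict the statement, which only asks for one-dimensional fibers, but it shows the analysis was not actually carried out). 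Two further gaps: in part (1) your claim that exactly one of $\ell(sx),\ell(xs)$ equals $\ell(x)+1$ fails precisely when $sxs=x$, where the result is trivial but should be dispatched separately; and the genuinely technical content of the G\"ortz--He proof --- that $gI \mapsto g_1I$ is a morphism of (ind-)schemes, that the bundles are Zariski-locally trivial, and that in part (1) the set-theoretic inverse, which involves $\sigma^{-1}$ and hence is a morphism only after composing with Frobenius, yields a universal homeomorphism --- is acknowledged but not supplied (Lang--Steinberg plays no role here). As written, the proposal is the right plan accompanied by an incorrect fiber analysis, not a proof.
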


We now apply Theorem~\ref{T:GoertzHe} to obtain an analog of Proposition~\ref{T:ConjugateXx1} for arbitrary pure translations $b = t^\mu$.

\begin{prop}\label{prop:bConjugateOneDirection}
Let $\x$ be an alcove in $u\Cf$ where $u \in \sW$, and let $s \in S$ be such that $\ell(su)>\ell(u)$.  Let $b = t^{\mu}$ be a pure translation with $\mu \in R^{\vee}$.  Then 
$$
X_x(b)\neq \emptyset  \Longrightarrow X_{sxs}(b)\neq \emptyset,
$$ 
and if both varieties are nonempty, then the dimensions relate as follows:  
\begin{enumerate}
\item if $\ell(sxs)=\ell(x)+2$, then $\dim X_{sxs}(b) \geq \dim X_x(b)+1$;  and 
\item if $\ell(sxs)=\ell(x)$, then $\dim X_{sxs}(b) \geq \dim X_x(b)$.
\end{enumerate}
\end{prop}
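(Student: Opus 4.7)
The plan is to deduce Proposition~\ref{prop:bConjugateOneDirection} from Theorem~\ref{T:GoertzHe}, the G\"{o}rtz--He reduction, which provides exactly the bundle structure needed to transfer nonemptiness and to compare dimensions between $X_x(b)$ and $X_{sxs}(b)$.  The first step is to observe that the hypothesis $\ell(su) > \ell(u)$ rules out the case $\ell(sxs) = \ell(x) - 2$: by Lemma~\ref{le:texlemma2} and Proposition~\ref{prop:braids} one may choose a minimal presentation for $x$ that begins with a minimal presentation for $u$, which cannot begin with $s$, and Lemma~\ref{le:words} then forces $\ell(sxs) \in \{\ell(x),\, \ell(x)+2\}$, matching the two sub-cases in the statement.

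When $\ell(sxs) = \ell(x)$, I would simply invoke Theorem~\ref{T:GoertzHe}(1), which produces a universal homeomorphism $X_x(b) \to X_{sxs}(b)$.  This yields the nonemptiness implication (in fact both directions) and equality of dimensions, which is actually stronger than the asserted $\dim X_{sxs}(b) \geq \dim X_x(b)$.

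When $\ell(sxs) = \ell(x) + 2$, the key conceptual step--and the main point of the argument--is to apply Theorem~\ref{T:GoertzHe}(2) not to $x$ but to $sxs$, with the same $s$.  Since $\ell\bigl(s(sxs)s\bigr) = \ell(x) = \ell(sxs) - 2$, the theorem applies and gives a decomposition
\[
X_{sxs}(b) = X_1 \sqcup X_2,
\]
with $X_1 \to X_{s(sxs)s}(b) = X_x(b)$ and $X_2 \to X_{s(sxs)}(b) = X_{xs}(b)$ each a composition of a Zariski-locally trivial $\A^1$-bundle with a universal homeomorphism.  If $X_x(b)$ is nonempty then so is the bundle $X_1$, forcing $X_{sxs}(b)$ to be nonempty; moreover $\dim X_1 = \dim X_x(b) + 1$, and since $X_1 \subseteq X_{sxs}(b)$ this yields $\dim X_{sxs}(b) \geq \dim X_x(b) + 1$.

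The main obstacle to a purely constructive proof via Proposition~\ref{prop:transformation}, in the spirit of the rest of this section, is an orientation--endpoint mismatch with Theorem~\ref{ADLVvsGal2}.  Starting from a gallery $\gamma:\fa \rightsquigarrow b^{w_1}\fa$ positively folded with respect to $\phi_{w_0 w_1}$ that witnesses $X_x(b) \neq \emptyset$, Proposition~\ref{prop:transformation} outputs $\sigma^s:\fa \rightsquigarrow b^{sw_1}\fa$ positively folded with respect to $\phi_{sw_0 w_1}$; but reading $\sigma^s$ backwards through Theorem~\ref{ADLVvsGal2} to deduce $X_{sxs}(b) \neq \emptyset$ requires the matched form $(\phi_{w_0 w_2},\, b^{w_2})$, which forces the identity $w_0 s = s w_0$ and fails for a general simple reflection.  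Lemma~\ref{T:FrobConjIso} can absorb this mismatch by replacing $b$ with an appropriate $\sW$-conjugate, but then the correction term $\langle \rho_{B^-}, \mu + \mu_{B^-}\rangle$ in Theorem~\ref{ADLVvsGal2} shifts by an amount of indefinite sign, so the desired dimension inequality no longer follows cleanly.  Hence my reliance on the G\"{o}rtz--He machinery.
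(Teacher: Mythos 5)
Your proof is correct, but it takes a genuinely different route from the paper's. The paper argues constructively: after normalizing $b$ to be dominant via Lemma~\ref{T:FrobConjIso}, it takes a DL-gallery for $X_x(b)$ (Theorem~\ref{ADLVvsGal2} and Lemma~\ref{DomADLVDim}), feeds it into Proposition~\ref{prop:transformation}, and then resolves exactly the orientation--endpoint mismatch you describe by rewriting the pair $(\phipartialswow, b^{sw})$ in matched form for a suitable $\sW$-conjugate $b^v$, so that Theorem~\ref{ADLVvsGal2} gives nonemptiness and a dimension bound for $X_{sxs}(b^v)$, which Lemma~\ref{T:FrobConjIso} transfers back to $X_{sxs}(b)$; Theorem~\ref{T:GoertzHe} enters only in the single sub-case of $\ell(sxs)=\ell(x)$ where the gallery has a fold at its last panel. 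You instead run everything through Theorem~\ref{T:GoertzHe}: part (1) directly when $\ell(sxs)=\ell(x)$, and, when $\ell(sxs)=\ell(x)+2$, part (2) applied to $sxs$ rather than to $x$, so that the closed stratum $X_1$ of $X_{sxs}(b)$ maps onto $X_x(b)$ with one-dimensional fibers, giving both nonemptiness and $\dim X_{sxs}(b)\geq\dim X_1=\dim X_x(b)+1$; this is the standard Deligne--Lusztig reduction and needs no input beyond a theorem the paper already invokes. What you lose is the constructive content: the paper produces explicit positively folded galleries of type $\vec{s}\vec{x}\vec{s}$, and the authors remark after their proof that they would like to eliminate even the one remaining appeal to Theorem~\ref{T:GoertzHe}, whereas your argument moves in the opposite direction. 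What you gain is robustness at precisely the step you flagged: in the paper's handling of the mismatch, the assertion that $\langle\rho_{B^-}, v\mu+(v\mu)_{B^-}\rangle\leq 0$ for dominant $\mu$ is at odds with the paper's own conventions, since by the definitions in Theorem~\ref{T:GHKRDim} this term equals $\langle\rho,\mu-v\mu\rangle\geq 0$ (compare Example~\ref{Ex:ConvexityHypNotSharp}, where the analogous term is $+4$ and is subtracted), so the correction term weakens rather than preserves the dimension bound there. Your caution about that step is therefore well founded --- though your diagnosis of ``indefinite sign'' is slightly off, as after making $\mu$ dominant the sign is definite, just unhelpful --- and your G\"ortz--He argument sidesteps the issue entirely.
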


\begin{proof}  Suppose that $X_x(b) \neq \emptyset$.  By Lemma~\ref{T:FrobConjIso} we may assume without loss of generality that $b$ is dominant.  Then by Theorem~\ref{ADLVvsGal2} and Lemma~\ref{DomADLVDim}, there exists an orientation at infinity $\phipartialwow$ and a gallery $\sigma: \fa \rightsquigarrow b^w\fa$ of type $\vec{x}$ 
which is positively folded with respect to the orientation $\phipartialwow$, such that $\dim X_x(b) = \dim_{\phi_{w_0w}}(\sigma)$.  That is, $\sigma$ is a DL-gallery for $X_x(b)$.

By the same arguments as in Proposition~\ref{prop:transformation} the hypothesis that $\ell(su)>\ell(u)$ implies that either $\ell(sxs) = \ell(x) + 2$ or $\ell(sxs) = \ell(x)$.  Assume first that $\ell(sxs) = \ell(x)+2$.  Then Proposition~\ref{prop:transformation}(1) implies that the gallery $\sigma^s: \fa \rightsquigarrow sb^ws\fa = b^{sw}\fa = \bb^{sw}$ is positively folded and of dimension $\dim_{\phi_{w_0w}}(\sigma) + 1$ with respect to the orientation $\phipartialswow$.   Also, $\sigma^s$ is of type a minimal presentation $s s_{i_1} \cdots s_{i_\ell} s$ for $sxs$, where $s_{i_1} \cdots s_{i_\ell}$ is a minimal presentation for $x$.

Now $\sigma^s$ is positively folded with respect to the orientation associated to the element $sw_0w$ of $\sW$, but the gallery $\sigma^s$ runs from $\fa$ to $\bb^{sw}$.  Writing $sw_0w = (w_0sw)(w^{-1}sw_0sw_0w)$, we then have that $\sigma^s$ is positively folded with respect to the orientation associated to the element $w_0swv^{-1}$ where $v^{-1}=w^{-1}sw_0sw_0w$, and that $\sigma^s$ ends in $(b^v)^{swv^{-1}}\fa$.  Thus Theorem~\ref{ADLVvsGal2} implies that $X_{sxs}(b^v) \neq \emptyset$ and that $$\dim X_{sxs}(b^v) \geq \dim_{\phi_{sw_0w}}(\sigma^s) - \langle \rho_{B^-}, v\mu + (v\mu)_{B^-} \rangle.$$ We assumed that $b$ is dominant, equivalently, $\mu$ is dominant, and so $(v\mu)_{B^-} = -\mu$ is less than or equal to $v\mu$ in the dominance order for all $v \in \sW$.  Thus $\langle \rho_{B^-}, v\mu + (v\mu)_{B^-} \rangle \leq 0$ and so $$\dim X_{sxs}(b^v) \geq \dim_{\phi_{sw_0w}}(\sigma^s) =  \dim_{\phi_{w_0w}}(\sigma) + 1 = \dim X_x(b) + 1.$$   Lemma~\ref{T:FrobConjIso} then says that $X_{sxs}(b) \neq \emptyset$ and that $\dim X_{sxs}(b) = \dim X_{sxs}(b^v)$.  Therefore $\dim X_{sxs}(b) \geq \dim X_x(b) + 1$, proving (1).

Now assume that $\ell(sxs) = \ell(x)$.  Then Theorem~\ref{ADLVvsGal2} and Proposition~\ref{prop:transformation}(2) imply that the gallery $\sigma^s: \fa \rightsquigarrow \bb^{sw}$ is positively folded with respect to the orientation $\phipartialswow$.  In addition, $\sigma^s$ has either type $\vec s\vec x\vec s$ or $\vec s \vec x$, according to whether or not $\sigma$ has a crossing or fold at its last panel, respectively.  In the case in which $\sigma$ has a crossing at its last panel, by the same arguments as in the previous paragraph we have that $X_{sxs}(b^v) \neq \emptyset$  and $\dim X_{sxs}(b^v) \geq \dim X_x(b)$, where as before $v^{-1} = w^{-1}sw_0sw_0w$.  Therefore, applying Lemma~\ref{T:FrobConjIso} resolves (2) of Proposition~\ref{prop:bConjugateOneDirection} in this case.  

In the case in which $\sigma$ has a fold at its last panel, we appeal to Theorem \ref{T:GoertzHe} in order to say that the varieties $X_x(b)$ and $X_{sxs}(b)$ are isomorphic.  Therefore, clearly they are nonempty and empty together and, if nonempty, have the same dimension.
\end{proof}

Observe that we used Theorem~\ref{T:GoertzHe} only in the last step of the proof, which is the case of a gallery with a fold at its last panel. To obtain an independent and purely constructive proof it would be enough to be able to deal with this case geometrically.


\subsection{Conjugate affine Deligne--Lusztig varieties}\label{sec:ConjProofs}

In this section we establish Theorems~\ref{T:Conjugationb=1} and~\ref{T:Conjugation}, which consider conjugates of $x$ by arbitrary elements $u \in \sW$, for $b = 1$ and general translations $b = t^\mu$, respectively.  

In passing to Weyl chambers that are not adjacent, the book-keeping of the dimension gets slightly more complicated. We thus  introduce a correction term that depends on the spherical part $w$ of the alcove $\x\in\Cf$ and the label $u$ of the new Weyl chamber $u\Cf$. Ultimately, this correction term will describe how the dimensions of $X_x(1)$ and $X_{u^{-1}xu}(1)$ are related for any $u \in \sW$.

\begin{definition}\label{def:correctionterm}
Let $u,w \in\sW$ and suppose $u$ has a reduced expression $u=s_{i_1}\cdots  s_{i_k}$ with generators $s_{i_j}\in S$ corresponding to simple roots $\alpha_{i_j}$. Denote by $u^j\define s_{i_1}\cdot\ldots\cdot s_{i_j} $, where we put $u^0 = \id$. That is, $u^j$ is the product of the first $j$ letters of the word presenting $u$. Define the \emph{correction term} $c(w,u)$ by
$$
c(w,u)=\sum_{j=1}^k \max\left\{0,\ell(wu^{j-1})-\ell(wu^j)\right\}.
$$
\end{definition}

\begin{remark}\label{R:CorrectionTerm}
We can make several immediate observations about the definition of this correction term.  First, the term $\ell(wu^{j-1})-\ell(wu^j)$ equals $1$ if the element $wu^j \in\sW$ is shorter than $wu^{j-1}$, and it equals $-1$ if $wu^j \in\sW$ is longer than $wu^{j-1}$.  These are the only two possibilities; in particular, equality never happens.  Note also that $wu^j$ is shorter than $wu^{j-1}$ if and only if the crossing from the alcove $wu^{j-1}$ to the alcove $wu^{j}$ is a positive crossing with respect to the standard orientation.  Since any minimal gallery from $w$ to $wu$ crosses the same set of hyperplanes in the same directions, it follows that $c(w,u)$ does not depend on the choice of minimal presentation for $u$.  By definition $c(w,u)$ is non-negative for all $u, w \in \sW$ and nonconstant on each set of alcoves based at a same vertex in $R^\vee$.
\end{remark}

We will need the following technical lemma.  Recall that the \emph{star} of a vertex $v$ in a simplicial complex is the union of all simplices containing $v$. 

\begin{lemma}\label{le:texlemma} 
Suppose $x = t^\lambda w \in \aW$ is such that the alcove $\x = x\fa$ is in $\Cfs$. Let $\gamma$ be a minimal gallery $\gamma:\fa\rightsquigarrow \x$ of type $\vec{x}$.
For a fixed minimal word $\vec{u}$ representing some $u\in\sW$ let $\sigma_u$ be the unique gallery of type $\vec{u}$ that runs from $x\fa$ to $xu\fa$ in the star of the vertex $\lambda$.
Then:
\begin{enumerate}
 \item\label{item1} $c(w,u)$ equals the number of positive crossings of $\sigma_u$ with respect to the standard orientation;
 \item\label{item2} if $l(wu^{j-1})-l(wu^j)=1$ for all $j$, that is, $c(w,u)=\ell(u)$, then $\gamma\star\sigma_u$ is a minimal gallery from $\fa$ to $xu\fa$ of type $\vec{x}\vec{u}$; and
 \item\label{item3} if $l(wu^{j-1})-l(wu^j)=-1$ for some $j$ then $\gamma\star\sigma_u$ is a stuttering gallery. It can be shortened within the star of $\lambda$ if $\gamma$ runs via $t^\lambda w_0$. 
\end{enumerate}
\end{lemma}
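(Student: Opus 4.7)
The plan is to reduce all three assertions to elementary facts about the length function on $\aW$ combined with the geometry of the finite Coxeter complex inside the star of $\lambda$.

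For part~(\ref{item1}) I would first translate by $t^{-\lambda}$, which is a simplicial isometry preserving the standard periodic orientation, so that $\sigma_u$ is carried to a gallery in the star of the origin running through $w\fa, ws_{i_1}\fa,\dots, wu\fa$. The sign of the $j$-th crossing (from $wu^{j-1}\fa$ to $wu^j\fa$) is positive with respect to $\phipartialo$ exactly when $wu^j\fa$ lies on the same side of the supporting wall as $\fa$, which by a standard fact for finite Coxeter complexes happens precisely when $\ell(wu^j) < \ell(wu^{j-1})$. Summing these contributions and comparing with Remark~\ref{R:CorrectionTerm} then yields $c(w,u)$.

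For part~(\ref{item2}) the hypothesis gives $\ell(wu) = \ell(w) - \ell(u)$, and the key observation is that $\x\in\Cfs$ forces $\langle\alpha,\lambda\rangle\geq 1$ for every positive root $\alpha$. This in turn forces every alcove at $\lambda$ to lie inside $\Cf$ and gives the standard length identity $\ell(t^\lambda v) = \ell(t^\lambda) - \ell(v)$ for every $v\in\sW$. Combining these identities yields $\ell(xu) = \ell(x)+\ell(u)$, so the non-stuttering gallery $\gamma\star\sigma_u$ of type $\vec{x}\vec{u}$ and length $\ell(x)+\ell(u)$ is in fact minimal.

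For part~(\ref{item3}) the same length identity shows that if some $\ell(wu^{j-1}) - \ell(wu^j) = -1$, then $\ell(xu) < \ell(x) + \ell(u)$; thus the type $\vec{x}\vec{u}$ is not a reduced word for $xu$, so by the deletion condition it is equivalent via braid moves to a word containing two consecutive equal generators, and applying the corresponding braid moves to $\gamma\star\sigma_u$ produces a stuttering gallery. For the explicit shortening when $\gamma$ runs via $t^\lambda w_0\fa$, I would write $\gamma = \gamma'\ast\gamma''$ where $\gamma'':t^\lambda w_0\fa\rightsquigarrow\x$ is contained in the star of $\lambda$; the composite $\gamma''\ast\sigma_u$ then lies entirely in the star of $\lambda$, which is a finite Coxeter complex of type $\sW$, and any minimal gallery inside this star from $t^\lambda w_0\fa$ to $t^\lambda wu\fa$ has strictly smaller length, so concatenating $\gamma'$ with such a replacement shortens the original concatenation. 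The hardest step will be translating the non-reducedness of the type into an honest stuttering presentation, which requires invoking the exchange condition together with Matsumoto's theorem; once this word-theoretic picture is in hand, the geometric shortening inside the star of $\lambda$ is immediate.
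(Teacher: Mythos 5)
Your proposal is correct, and it diverges from the paper's argument in an instructive way for parts (2) and (3). Part (1) is essentially the paper's own proof: the paper likewise reads off the sign of the $j$-th crossing from whether $\ell(wu^j)<\ell(wu^{j-1})$ as recorded in Remark~\ref{R:CorrectionTerm}, with the translation-invariance of the standard periodic orientation (your explicit $t^{-\lambda}$ step, cf.\ Lemmas~\ref{lem:transpos} and~\ref{lem:translations dim}) left implicit. For part (2) the paper argues geometrically: $\gamma$ has only positive crossings because $\x\in\Cfs$, and by part (1) so does $\sigma_u$ when $c(w,u)=\ell(u)$, so the concatenation crosses each hyperplane at most once and is therefore minimal; you instead run a purely length-theoretic argument, using $\langle\alpha_i,\lambda\rangle\geq 1$ to get $\ell(t^\lambda v)=\ell(t^\lambda)-\ell(v)$ and hence $\ell(xu)=\ell(x)+\ell(u)$, so that the non-stuttering gallery of reduced type $\vec{x}\vec{u}$ is minimal. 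Both are sound; the paper's route makes the role of part (1) and of the crossing signs transparent (which is what the later applications actually exploit), while yours is more self-contained modulo the standard length formula and does not need part (1) at all. For part (3) the paper offers only ``follows from the deletion condition,'' whereas you supply the missing content: the same length identity shows $\ell(xu)<\ell(x)+\ell(u)$, so the type is non-reduced, and the explicit shortening inside the star of $\lambda$ uses convexity of the star plus the distance count $\ell(w_0)-\ell(wu)<\ell(w_0)-\ell(w)+\ell(u)$, which is exactly what the second sentence of item (3) asserts and which the paper does not spell out. One caveat, which applies equally to the paper's own proof: the concatenation $\gamma\star\sigma_u$ is literally non-stuttering as a sequence of alcoves (consecutive alcoves are always distinct), so what both arguments actually establish is that its type is non-reduced and the gallery is non-minimal, becoming stuttering only after braid moves, as you say; this is a looseness in the statement of item (3), not a gap in your argument.
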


\begin{proof} 
As noticed in Remark~\ref{R:CorrectionTerm} above, for $1 \leq j \leq k$ the value of $c(w,u)$ increases by one if and only if the crossing from $wu^{j-1}$ to $wu^j$ is positive with respect to the standard orientation.  Item (1) now follows.  

For (2), since $\gamma$ is a minimal gallery from $\fa$ to an alcove $\x$ in $\Cfs$, the gallery $\gamma$ contains only positive crossings (with respect to the standard orientation).  By (1), if $c(w,u) = \ell(u)$ then the gallery $\sigma_u$ also contains only positive crossings.  The concatenation $\gamma\star\sigma_u$ then crosses each hyperplane at most once, since if a gallery crosses a hyperplane more than once at least one crossing must be negative.  Hence $\gamma\star\sigma_u$ is a minimal gallery from $\fa$ to $xu\fa$ of type $\vec{x}\vec{u}$.

Item (3) follows from the deletion condition for Coxeter groups. 
\end{proof}

We now relate the varieties $X_x(1)$ and $X_{u^{-1}xu}(1)$, where $\x$ is in $u\Cfs$ and $u \in \sW$.

\begin{thm}\label{T:Conjugationb=1}
Let $\x$ be an alcove in $u\Cfs$ for some $u \in \sW$, and write $x = ut^{\lambda}w$.  
Suppose that the entire star of $\lambda$ lies in the shrunken dominant Weyl chamber $\Cfs$.
Then 
\begin{equation}
X_x(1)\neq \emptyset  \Longleftrightarrow X_{u^{-1}xu}(1)\neq \emptyset.
\end{equation}
Here, $u^{-1}x u\fa$ lies in the dominant Weyl chamber, and $X_{u^{-1}xu}(1) \neq \emptyset $ if and only if $wu \in \sW \backslash \bigcup\limits_{T \subsetneq S} W_T$.
If both varieties are nonempty, then their dimensions are related by 
\begin{equation}
\dim X_x(1) = \dim X_{u^{-1}xu}(1) + \frac{1}{2}(\ell(u^{-1}xu) - \ell(x)).
\end{equation}
\end{thm}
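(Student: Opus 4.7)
The plan is to prove Theorem~\ref{T:Conjugationb=1} by iterating Proposition~\ref{T:ConjugateXx1}, walking one simple reflection at a time along a reduced expression for $u^{-1}$ from the alcove $(u^{-1}xu)\fa$ in the shrunken dominant Weyl chamber up to the alcove $\x$ in $u\Cfs$. Since $u^{-1}xu = t^\lambda(wu)$, the alcove $(u^{-1}xu)\fa$ sits at the vertex $\lambda$, and the hypothesis that the entire star of $\lambda$ lies in $\Cfs$ guarantees that this alcove is in the shrunken dominant Weyl chamber. Applying Reuman's criterion (Theorem~\ref{idADLV}) to $u^{-1}xu$, which has spherical direction $wu$ and Weyl chamber label $\id$, immediately yields the characterization $X_{u^{-1}xu}(1)\neq\emptyset \iff wu \in \sW\setminus\bigcup_{T\subsetneq S}W_T$.

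To connect $u^{-1}xu$ and $x$, I would fix a reduced expression $u^{-1} = s_{i_1}s_{i_2}\cdots s_{i_k}$ with $s_{i_j}\in S$ and set $v_j\define s_{i_j}s_{i_{j-1}}\cdots s_{i_1}$, so that $v_0 = \id$, $v_k = u$, and $\ell(v_j) = j$ for all $0\le j\le k$. Define $y_0 \define u^{-1}xu$ and recursively $y_j \define s_{i_j} y_{j-1} s_{i_j}$. A short induction using the semidirect-product relation gives $y_j = t^{v_j\lambda}\bigl(v_j wu v_j^{-1}\bigr)$, so that $y_j$ has alcove at $v_j\lambda$ in the Weyl chamber $v_j\Cfs$ and in particular $y_k = x$. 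Since $v_{j-1}\in\sW$, the hypothesis that the star of $\lambda$ is contained in $\Cfs$ propagates under the isometry $v_{j-1}$, so $y_{j-1}$ can be written in the form $v_{j-1}t^\lambda w'$ with $t^\lambda w'$ in the shrunken dominant Weyl chamber; and $s_{i_j}v_{j-1} = v_j$ has strictly greater length than $v_{j-1}$. Thus the hypotheses of Proposition~\ref{T:ConjugateXx1} are met at every step with $s = s_{i_j}$.

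Applying Proposition~\ref{T:ConjugateXx1} $k$ times then yields a chain of biconditionals $X_{y_{j-1}}(1)\neq\emptyset \iff X_{y_j}(1)\neq\emptyset$ and, when nonempty, a chain of step-by-step dimension equalities $\dim X_{y_j}(1) - \dim X_{y_{j-1}}(1) = \tfrac12\bigl(\ell(y_j)-\ell(y_{j-1})\bigr)$; note that the ``forbidden'' case $\ell(s_{i_j}y_{j-1}s_{i_j}) = \ell(y_{j-1})-2$ of Proposition~\ref{T:ConjugateXx1} is excluded by the length inequality $\ell(s_{i_j}v_{j-1}) > \ell(v_{j-1})$. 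Telescoping across $j=1,\dots,k$ gives both the full biconditional $X_x(1)\neq\emptyset \iff X_{u^{-1}xu}(1)\neq\emptyset$ and the claimed equality of dimensions relating $\dim X_x(1)$ and $\dim X_{u^{-1}xu}(1)$ via the length difference between $x$ and $u^{-1}xu$. The same telescoping also shows that $\ell(x)-\ell(u^{-1}xu)$ is a non-negative even integer, since each individual contribution $\ell(y_j)-\ell(y_{j-1})$ belongs to $\{0,2\}$.

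The only step requiring real care is verifying at every stage that the intermediate element $y_{j-1}$ fits the setup of Proposition~\ref{T:ConjugateXx1}, and this hinges on correctly tracking how the pure-translation vertex $v_{j-1}\lambda$ and the spherical direction $v_{j-1}wuv_{j-1}^{-1}$ evolve under the simple conjugation $y\mapsto s_{i_j}ys_{i_j}$. Once the normal form $y_j = t^{v_j\lambda}(v_jwuv_j^{-1})$ is in hand this is a routine check, so I do not expect any serious obstacle beyond notational bookkeeping.
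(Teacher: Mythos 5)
Your proof is correct and is essentially the paper's own argument: iterate Proposition~\ref{T:ConjugateXx1} one simple reflection at a time along a reduced word (the paper walks from $x$ using a reduced expression for $u$, you walk from $u^{-1}xu$ using one for $u^{-1}$, which produces the same chain of intermediate elements), read off the nonemptiness characterization for $u^{-1}xu = t^\lambda wu$ from Theorem~\ref{idADLV}, and telescope the step-by-step length/dimension changes, which the paper packages as the correction term $c(w_0w,u)$. One caveat: your telescoping (correctly) yields $\dim X_x(1)=\dim X_{u^{-1}xu}(1)+\tfrac12\bigl(\ell(x)-\ell(u^{-1}xu)\bigr)$ with $\ell(x)\ge\ell(u^{-1}xu)$, which agrees with the formula $\dim X_y(1)=\tfrac12(\ell(y)+\ell(wu))$ coming from Theorem~\ref{idADLV}, but is opposite in sign to the display in the theorem statement (and to the final assertion $2c(w_0w,u)=\ell(u^{-1}xu)-\ell(x)$ in the paper's proof, which cannot hold as written since $c(w_0w,u)\ge 0$); so you are proving the corrected form of the stated dimension equality, not a different result.
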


\begin{proof}
As $x = ut^\lambda w$, where $t^\lambda w\fa$ is in the dominant Weyl chamber $\Cf$, the element $u$ indexes the Weyl chamber which contains the alcove $\x$.  Consider a reduced expression $u = s_{i_1}\cdots s_{i_k}$.  We now define a sequence of pairs of the form $(s_{i_j}, u_j)$, where $u_j$ is the element of $\sW$ defined by $u_j = s_{i_{j+1}}\cdots s_{i_k}$ for $1 \leq j \leq k$.  That is, $u_j$ is the result of removing the first $j$ letters from the chosen reduced expression for $u$, and $u_k = \id$.  Notice that, by definition, $\ell(s_{i_j}u_j) > \ell(u_j)$ for all $1 \leq j \leq k$.  Therefore, Proposition~\ref{T:ConjugateXx1} applies to each successive pair in the sequence, proving that
\[ 
X_x(1) \neq \emptyset \iff X_{s_{i_1}xs_{i_1}}(1) \neq \emptyset  \iff X_{s_{i_2}s_{i_1}xs_{i_1}s_{i_2}}(1) \neq \emptyset \iff \cdots  \iff X_{u^{-1}xu}(1) \neq \emptyset.
\]

Since we assume that the entire star based at $\lambda$ lies in the shrunken fundamental Weyl chamber, then the alcove corresponding to each of these conjugates of $x$ also lies in the union of the shrunken Weyl chambers.  In addition, since we write $x = ut^\lambda w$ and assume that $\lambda$ is dominant, then $u^{-1}xu\fa = t^{\lambda}wu\fa$ lies in the dominant chamber.
 Moreover, the nonemptiness criterion from Theorem~\ref{idADLV} for alcoves in the shrunken
  dominant chamber requires that each simple reflection occurs in every reduced expression for the finite part. Therefore, under the convexity hypotheses in Theorem~\ref{bDomDim},  $X_{u^{-1}xu}(b) \neq \emptyset$ if and only if the finite part $wu$ lies in $\sW$, but not in any parabolic subgroup $W_T$ for $T \subsetneq S$.

Note that according to our conventions, the minimal length element in any coset $\aW / \sW$ is of the form $t^{\nu}w_0$ when $\nu$ is dominant.  Write $x = ut^{\lambda}w = ut^\lambda w_0w_0w$, and note that any reduced expression for $t^{\lambda}w_0$ necessarily begins and ends with $s_0$ by our hypothesis that $t^\lambda w\fa$ is in $\Cfs$.  Since we are assuming that some word for $x$ begins with the given reduced expression for $u$, then left multiplication by the simple reflections $s_{i_1}, \dots, s_{i_k}$ in that order decreases the length at each step.  Therefore, conjugation of $x$ by the finite simple reflections $s_{i_1}, s_{i_2}, \dots, s_{i_k}$ in that order either keeps the length the same at each step or decreases the length by 2.  Moreover, since $u, w_0w \in \sW$, the length of the entire product decreases at step $1 \leq j \leq k$ if and only if 
\[
 \ell( w_0wu^{j-1}) = \ell(w_0ws_{i_1}\cdots s_{i_j}) > \ell(w_0ws_{i_1}\cdots s_{i_j}s_{i_{j+1}}) = \ell( w_0wu^{j}) .
 \]
 At each step in the conjugation, Proposition~\ref{T:ConjugateXx1} says that the dimension decreases by 1 each time we have $\ell( w_0wu^{j-1}) > \ell( w_0wu^{j})$ and stays the same each time $\ell( w_0wu^{j-1}) < \ell( w_0wu^{j})$.  

Adding up all of these changes until we obtain a folded gallery of type $\vec{u}\vec{x}\vec{u}^{-1}$ in $\Cf$ gives precisely the correction factor $c(w_0w,u)$, which shows that 
$$\dim X_x(1)=\dim X_{u^{-1}xu}(1) + c(w_0w,u).$$  Now by the correspondence between word length staying the same (respectively, decreasing by $2$) and dimension staying the same (respectively, decreasing by $1$), we have that $2c(w_0 w, u) = \ell(u^{-1}xu) - \ell(x)$, and the result follows. 
\end{proof}

In the case where $b\neq 1$ we can prove one of the two directions of the previous theorem. 

\begin{thm}\label{T:Conjugation}
Let $\x$ be an alcove in $\Cfs$, and write $x = t^{\lambda}w$, where $w \in \sW$ and the entire star based at $\lambda$ lies in the shrunken dominant Weyl chamber $\Cfs$. 
Then for every  $u \in \sW$ we have 
\begin{equation}
X_x(b)\neq \emptyset  \Longrightarrow X_{u^{-1}xu}(b)\neq \emptyset.
\end{equation}
Moreover, if both varieties are nonempty, then the dimensions are related by 
\begin{equation}
\dim X_{u^{-1}xu}(b) \geq \dim X_{x}(b) + \frac{1}{2}(\ell(u^{-1}xu) - \ell(x)).
\end{equation}
\end{thm}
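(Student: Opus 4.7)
The plan is to iterate Proposition~\ref{prop:bConjugateOneDirection} along a reduced expression for $u$, in complete parallel to how Theorem~\ref{T:Conjugationb=1} was deduced from Proposition~\ref{T:ConjugateXx1}. Since Proposition~\ref{prop:bConjugateOneDirection} is only a one-way implication (unlike its $b=1$ predecessor), this yields only the single direction claimed in the theorem.

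First, fix a reduced expression $u = s_{i_1} s_{i_2} \cdots s_{i_k}$ with $k = \ell(u)$, and for $0 \leq j \leq k$ set $v_j \define s_{i_j} s_{i_{j-1}} \cdots s_{i_1}$ with $v_0 = \id$. Since $v_j^{-1} = s_{i_1} \cdots s_{i_j}$ is an initial subword of a reduced expression for $u$, it is reduced, and taking inverses preserves length; hence $v_j$ is reduced of length $j$, so $\ell(v_j) = \ell(v_{j-1}) + 1$ for every $j$, with $v_k = u^{-1}$. Now set $x_0 \define x$ and inductively $x_j \define s_{i_j} x_{j-1} s_{i_j}$, so that $x_j = v_j x v_j^{-1} = t^{v_j \lambda}(v_j w v_j^{-1})$ and $x_k = u^{-1} x u$. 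The alcove $\x_j$ sits at the vertex $v_j \lambda$; since $v_j \in \sW$ fixes the origin and permutes alcoves linearly, the hypothesis that the star at $\lambda$ lies in $\Cfs$ transports to the statement that the entire star at $v_j \lambda$ lies in $v_j \Cfs$. In particular $\x_j \in v_j \Cf$.

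At step $j$ I would apply Proposition~\ref{prop:bConjugateOneDirection} with simple reflection $s_{i_j}$, current element $x_{j-1}$, and Weyl chamber label $v_{j-1}$. The required hypothesis $\ell(s_{i_j} v_{j-1}) > \ell(v_{j-1})$ holds because $s_{i_j} v_{j-1} = v_j$ and $\ell(v_j) = \ell(v_{j-1}) + 1$. Consequently, $X_{x_{j-1}}(b) \neq \emptyset$ implies $X_{x_j}(b) \neq \emptyset$, and according to the two cases of the proposition we obtain
\begin{equation*}
\dim X_{x_j}(b) \geq \dim X_{x_{j-1}}(b) + \tfrac{1}{2}(\ell(x_j) - \ell(x_{j-1})),
\end{equation*}
where the potentially excluded third case $\ell(x_j) = \ell(x_{j-1}) - 2$ does not occur, being incompatible with the length hypothesis on $v_{j-1}$ (as in the proof of Proposition~\ref{prop:transformation}). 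Iterating from $j=1$ to $j=k$ and telescoping yields both $X_x(b) \neq \emptyset \Rightarrow X_{u^{-1}xu}(b) \neq \emptyset$ and
\begin{equation*}
\dim X_{u^{-1}xu}(b) \geq \dim X_x(b) + \tfrac{1}{2}\sum_{j=1}^k \bigl(\ell(x_j) - \ell(x_{j-1})\bigr) = \dim X_x(b) + \tfrac{1}{2}\bigl(\ell(u^{-1}xu) - \ell(x)\bigr).
\end{equation*}
Since each step contributes $0$ or $+2$ to the cumulative length change, we also obtain as a byproduct that $\ell(u^{-1}xu) - \ell(x)$ is a nonnegative even integer.

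The only nontrivial point is verifying that the hypotheses of Proposition~\ref{prop:bConjugateOneDirection} remain in force at every intermediate stage. This rests on two elementary but essential facts: the combinatorial fact that the reverse of a reduced expression is reduced (which guarantees $\ell(v_j) > \ell(v_{j-1})$ for all $j$), and the geometric fact that the star-at-$\lambda$ hypothesis propagates under the $\sW$-action, so that each successive $\x_j$ still lies in the shrunken Weyl chamber $v_j \Cfs$ required for the next invocation. Once these are in place, the proof is a matter of bookkeeping along the telescoping sum.
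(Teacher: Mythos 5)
Your proof is correct and follows essentially the same route as the paper: it iterates Proposition~\ref{prop:bConjugateOneDirection} along a reduced word for $u$, with the same intermediate conjugates $v_j x v_j^{-1}$ and chamber labels $v_{j-1}$, using the star-at-$\lambda$ hypothesis to keep each intermediate alcove in $v_j\Cf$ and the length condition $\ell(s_{i_j}v_{j-1})>\ell(v_{j-1})$ to rule out the length-decreasing case. The only cosmetic difference is that you telescope the increments $\tfrac{1}{2}(\ell(x_j)-\ell(x_{j-1}))$ directly, whereas the paper records the same count via the correction term $c(w_0w,u)$ of Definition~\ref{def:correctionterm}.
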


\begin{proof}
Consider a reduced expression $u^{-1} = s_{i_1}\cdots s_{i_k}$.  We now define a sequence of pairs of the form $(s_{i_{k-j}}, u_j)$, where $u_j$ is the element of $\sW$ defined as $u_j = s_{i_{k-j+1}}\cdots s_{i_k}$ for $1 \leq j \leq k$.  That is, $u_j$ is the product of the last $j$ letters from the chosen reduced expression for $u^{-1}$.  Notice that, by definition, $\ell(s_{i_{k-j}}u_j) > \ell(u_j)$ for all $1 \leq j \leq k$.  Since the entire star based at $\lambda$ lies in the shrunken dominant Weyl chamber, we have $s_{i_k}xs_{i_k} \in u_1 \Cf$, $s_{i_{k-1}}s_{i_k}xs_{i_k}s_{i_{k-1}} \in u_2\Cf$, and so on.  Therefore, Theorem~\ref{prop:bConjugateOneDirection} applies to each successive pair in the sequence, proving that
\[
 X_x(b) \neq \emptyset \Longrightarrow X_{s_{i_k}xs_{i_k}}(b) \neq \emptyset  \Longrightarrow X_{s_{i_{k-1}}s_{i_k}xs_{i_k}s_{i_{k-1}}}(b) \neq \emptyset \Longrightarrow \cdots  \Longrightarrow X_{u^{-1}xu}(b) \neq \emptyset.
 \]

Note that according to our conventions, the minimal length element in any coset $\aW / \sW$ is of the form $t^{\nu}w_0$ when $\nu$ is dominant.  By our hypothesis that $t^\lambda w\fa$ is in $\Cfs$ we can write $x = t^{\lambda}w = t^\lambda w_0w_0w$, and note that any reduced expression for $t^{\lambda}w_0$ necessarily begins and ends with $s_0$.  Conjugation of $x$ by the finite simple reflections $s_{i_k}, s_{i_{k-1}}, \dots, s_{i_1}$ in that order either keeps the length the same at each step or increases the length by 2.  Moreover, since $u, w_0w \in \sW$, the length of the entire product increases at step $1 \leq j \leq k$ if and only if
\[ 
\ell( w_0w u^{j-1}) = \ell(w_0ws_{i_k}\cdots s_{i_{k-j+2}}) < \ell(w_0ws_{i_k}\cdots s_{i_{k-j+1}}) = \ell( w_0w u^{j}) .
\]  
Here $u^{j}$ is the product of the first $j$ letters of the minimal presentation for $u$ obtained from the one for $u^{-1}$. At each step in the conjugation, Theorem~\ref{prop:bConjugateOneDirection} says that the dimension increases by 1 each time we have $\ell( w_0w u^{j-1}) < \ell( w_0w u^{j})$ and stays the same each time $\ell( w_0w u^{j-1}) > \ell( w_0w u^{j})$.  Adding up all of these changes until we obtain a folded gallery of type $\vec{u}^{-1}\vec{x}\vec{u}$ in $\Cf$ gives precisely the correction factor $c(w_0w,u)$, which shows that 
$$\dim X_{u^{-1}xu}(b) \geq \dim X_x(b)+ c(w_0w,u).$$
As in the proof of Theorem~\ref{T:Conjugationb=1}, we have that  $2c(w_0w,u)$ equals $\ell(u^{-1}xu) - \ell(u)$, and the result follows.
\end{proof}


\section{Diagram automorphisms}
\label{sec:SymmetriesLabelings}

The main result in this section is Theorem~\ref{DiagramSymmetry} below, which relates the varieties $X_x(b)$ and $X_{g(x)}(g(b))$, where $g$ is a diagram automorphism of $\aW$ and $b$ is a pure translation.  The results in this section hold for arbitrary $x \in W$ and for arbitrary pure translations $b$. 

We first consider the effect on galleries of relabeling an apartment.

\begin{prop}\label{prop:change label}
Let $x \in W$ and let $\gamma_x : \fa \rightsquigarrow x\fa$ be a minimal gallery. Let $g:\App\to\App$ be a simplicial bijection, not necessarily type-preserving.  Fix a labeling $\phi$ and let $\phi_g$ be the induced labeling given by $\phi_g(\y)\define \phi(g^{-1}(\y))$. Then the following are equivalent:
\begin{enumerate}
\item There exists a gallery $\gamma: \fa\rightsquigarrow y\fa$ of the same type as $\gamma_{x}$ that is positively folded with respect to $-\phipartial$ and satisfies $\dim_{-\phi}(\gamma) = d$.
\item There exists a gallery $\gamma': g(\fa) \rightsquigarrow g(y\fa)$ of the same type as type $g(\gamma_{x})$ that is positively folded with respect to $-\phipartialg$ and satisfies $\dim_{-\phi_g}(\gamma') = d$.  
\end{enumerate}
Moreover, $F(\gamma) = F(\gamma')$ and $P_{-\phi}(\gamma) = P_{-\phi_g}(\gamma')$. 
\end{prop}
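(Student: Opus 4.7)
My plan is to use the map $\gamma \mapsto g(\gamma)$ as the desired bijection between the two sets of galleries, and to unwind carefully the definition $\phi_g = \phi \circ g^{-1}$ to show that $g$ intertwines all relevant structures. Since $g:\App \to \App$ is a simplicial bijection, it sends any alcove-to-alcove gallery to an alcove-to-alcove gallery, sends faces to faces, crossings to crossings, and folds to folds. In particular, if $\gamma = (c_0 \supset p_1 \subset c_1 \supset \cdots)$ then $g(\gamma) = (g(c_0) \supset g(p_1) \subset g(c_1) \supset \cdots)$ is a gallery from $g(\fa)$ to $g(y\fa)$ of the same length, and $F(\gamma) = F(g(\gamma))$ as folding occurs at $p_i$ for $\gamma$ exactly when it occurs at $g(p_i)$ for $g(\gamma)$.

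The first step is to verify that $g(\gamma)$ has the same type as $g(\gamma_x)$ whenever $\gamma$ has the same type as $\gamma_x$. The map $g$ need not be type-preserving, but it induces a fixed permutation $\pi$ of the generating set $\tilde S$ such that a panel of $\phi_0$-type $s_i$ is mapped to a panel of $\phi_0$-type $\pi(s_i)$. Applying $\pi$ letter-by-letter to the types of $\gamma$ and of $\gamma_x$ simultaneously preserves their equality.

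The heart of the argument is the translation of the orientation-related notions under $g$. The defining equation $\phi_g(\y) = \phi(g^{-1}(\y))$ implies immediately that $g$ takes the base alcove $\x_\phi$ of $\phi$ to the base alcove $\x_{\phi_g}$ of $\phi_g$, and more generally an alcove with $\phi$-label $w$ to an alcove with $\phi_g$-label $w$. In terms of chambers at infinity (via Lemma~\ref{lem:induced infinity}), this gives $g(C_\phi) = C_{\phi_g}$ and $g(C_{-\phi}) = C_{-\phi_g}$. Comparing with Definition~\ref{def:PeriodicOrientation} of the periodic orientation, I can then check that $g$ takes the positive side of a hyperplane $H$ with respect to $-\phipartial$ to the positive side of $g(H)$ with respect to $-\phipartialg$. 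Hence a positive fold (respectively, positive crossing) of $\gamma$ at $p_i$ with respect to $-\phipartial$ corresponds to a positive fold (respectively, positive crossing) of $g(\gamma)$ at $g(p_i)$ with respect to $-\phipartialg$. This yields the equalities $F(\gamma) = F(g(\gamma))$ and $P_{-\phi}(\gamma) = P_{-\phi_g}(g(\gamma))$, and in particular $\dim_{-\phi}(\gamma) = \dim_{-\phi_g}(g(\gamma))$ by Definition~\ref{def:DimGalleryA}.

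Finally, since $g$ is a bijection on galleries with inverse $g^{-1}$, which itself induces the labeling $(\phi_g)_{g^{-1}} = \phi$, the forward map $\gamma \mapsto g(\gamma)$ and its inverse $\gamma' \mapsto g^{-1}(\gamma')$ establish the equivalence of (1) and (2). The only real subtlety is the bookkeeping around the fact that $g$ need not be type-preserving; the potential obstacle is ensuring that the type-permutation $\pi$ and the relationship between $\phi$ and $\phi_g$ are compatible in the definition of positively folded with respect to $-\phipartial$ versus $-\phipartialg$, but both are built directly from the single relation $\phi_g = \phi\circ g^{-1}$ and so mesh automatically.
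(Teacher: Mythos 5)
Your proof is correct and takes essentially the same route as the paper: both transport galleries via $\gamma\mapsto g(\gamma)$ and observe that folds and positive crossings with respect to $-\phipartial$ correspond exactly to folds and positive crossings of the image with respect to $-\phipartialg$, giving equality of dimensions, with the inverse map supplied by $g^{-1}$. The only difference is cosmetic: the paper first reduces to the case that $g$ fixes the origin by precomposing with a translation (using Lemma~\ref{lem:translations dim}), whereas you treat general $g$ directly through the identification $g(C_{\pm\phi})=C_{\pm\phi_g}$ and make explicit the type permutation induced by $g$, which the paper leaves implicit.
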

\begin{proof}
By precomposing $g$ with a (not necessarily type-preserving) translation we may assume that $g$ fixes the origin.  By Lemma~\ref{lem:translations dim} and its proof, any translation of $\App$ preserves the orientation with respect to which galleries are positively folded, their dimension, and their number of folds and positive crossings.  Thus it suffices to prove the statement for $g$ fixing the origin.

Since $g$ fixes the origin, it is easy to see that $g$ acts on the set of chambers at infinity, and that a gallery $\gamma$ will be positively folded with respect to the orientation $-\phipartial$ if and only if the gallery $\gamma' := g(\gamma)$ is positively folded with respect to the orientation $-\phipartialg$.  The action of $g$ preserves the number of folds, and the positive crossings of $\gamma$ with respect to $-\phipartial$ are exactly the positive crossings of $\gamma'$ with respect to $-\phipartialg$.  Thus $\dim_{-\phi}(\gamma) = \dim_{-\phi_g}(\gamma')$.  This completes the proof.
\end{proof}

We will use Proposition~\ref{prop:change label} to draw several conclusions about nonemptiness and dimensions of pairs of affine Deligne--Lusztig varieties which are related by certain simplicial isometries of the apartment $\App$, namely those induced by diagram automorphisms of $\aW$.  Recall that a \emph{diagram automorphism} of $\aW$ is an automorphism of $\aW$ induced by an automorphism of the Dynkin diagram for $(\aW,\tilde S)$, equivalently an an automorphism of $\aW$ induced by a permutation of the generating set $\tilde S = \{ s_1,\dots,s_n\} \cup \{s_0\}$.  

We will need the following elementary lemma, which we were not able to find in the literature.  For the purposes of this lemma, it is important to make a distinction between the coroot lattice $R^\vee$ and the subgroup $\{ t^\lambda \mid \lambda \in R^\vee \}$ of pure translations in $\aW$. 

\begin{lemma}\label{lem:diagram}  Let $g : \aW \to \aW$ be a diagram automorphism.  Then $g$ preserves the subgroup of pure translations in $\aW$.   
\end{lemma}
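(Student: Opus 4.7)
The plan is to realize the diagram automorphism $g$ geometrically as conjugation by a simplicial isometry of the apartment $\App$, and then use the fact that the conjugate of a translation by any affine isometry is itself a translation.

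First, I would observe that a diagram automorphism $g$ of $\aW$ corresponds to a permutation $\pi$ of the Coxeter generating set $\tilde S = \{s_0, s_1, \dots, s_n\}$ that preserves the Coxeter matrix. Since the fundamental alcove $\fa$ is a simplex whose codimension-one faces (the panels) are labeled by the elements of $\tilde S$, the permutation $\pi$ extends uniquely to a simplicial isometry of $\fa$ onto itself, which in turn extends uniquely to a simplicial isometry $\tilde g : \App \to \App$ of the whole apartment. By construction, conjugation by $\tilde g$ sends each generator $s_i \in \tilde S$ to $\pi(s_i)$, and therefore the automorphism $w \mapsto \tilde g w \tilde g^{-1}$ of $\aW$ agrees with $g$ on a generating set, hence everywhere.

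Next, I would write $\tilde g$ as an affine isometry of $\App$ in the form $\tilde g(v) = L(v) + c$, where $L$ is a linear isometry and $c \in \App$ is a translation vector. A short direct computation gives
\[
\bigl(\tilde g \circ t^\lambda \circ \tilde g^{-1}\bigr)(v)
= \tilde g\bigl(L^{-1}(v - c) + \lambda\bigr)
= (v - c) + L(\lambda) + c
= v + L(\lambda),
\]
so $g(t^\lambda) = \tilde g\, t^\lambda\, \tilde g^{-1} = t^{L(\lambda)}$ is again a pure translation. Since $g(t^\lambda)$ must lie in $\aW$, it follows that $L(\lambda) \in R^\vee$, and therefore $g$ maps the subgroup of pure translations into itself; applying the same argument to $g^{-1}$ then shows that this subgroup is preserved by $g$.

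The main obstacle will be justifying the first step, namely that every diagram automorphism of $\aW$ is realized as conjugation by a simplicial isometry of $\App$. This is a standard structural property of affine Coxeter complexes, but it does rely on the fact that the permutation of $\tilde S$ actually extends to an isometry of $\fa$, equivalently that a Coxeter diagram automorphism yields a symmetry of the fundamental alcove viewed as a labeled simplex. Once that point is in hand, the remaining verification is the one-line calculation with affine isometries above.
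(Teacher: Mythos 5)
Your proof is correct, but it takes a genuinely different route from the paper's. The paper reduces, via a normality-and-index argument, to showing that $g(t^{\alpha_i^\vee})$ is a translation for each simple coroot, and then writes $t^{\alpha_i^\vee} = s_{\alpha_i,1}s_{\alpha_i}$ as a product of two reflections in parallel hyperplanes: since a diagram automorphism sends reflections to reflections, with fixed hyperplanes related by the induced simplicial bijection of $\App$ (which preserves parallelism), the image is again a product of reflections in parallel walls, hence a translation. You instead upgrade the geometric realization: you identify $g$ with conjugation by an affine isometry $\tilde g = L(\cdot)+c$ of $\App$ and compute $\tilde g\, t^\lambda\, \tilde g^{-1} = t^{L(\lambda)}$ directly, obtaining the statement for all $\lambda \in R^\vee$ at once with no reduction to simple coroots and no decomposition into reflections, and getting equality (rather than mere containment) by running the argument for $g^{-1}$ in place of the paper's index count; as a bonus your argument identifies the induced map on the coroot lattice as the linear part $L$. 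The cost is the stronger input you flag yourself, namely that the permutation of $\tilde S$ is realized by an isometry of the alcove $\fa$, hence by an affine isometry of $\App$ conjugating $\aW$ to itself. This is indeed standard in the irreducible affine setting (the dihedral angles determine the simplex up to similarity, and a self-similarity of the compact alcove is an isometry), and it is worth noting that the paper relies on essentially the same unproved geometric fact: its assertion that $g$ induces a simplicial bijection of $\App$ under which $g(r)$ is the reflection fixing $g(H)$ is precisely the statement that $g$ acts on reflections by conjugation by that bijection, so your explicit construction fills in a step the paper's proof also uses implicitly.
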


\begin{proof}  The subgroup of pure translations $\{ t^\lambda \mid \lambda \in R^\vee\}$ is a (normal) subgroup of $\aW$ of index $|\sW|$, hence its image under $g$ is a (normal) subgroup of $\aW$ of index $|\sW|$.  Thus it suffices to prove that $g(t^\lambda)$ is a translation for each $\lambda \in R^\vee$.  A basis for $R^\vee$ is the set of translations by simple coroots $\{ t^{\alpha_i^\vee} \}_{i=1}^n$, and so it is enough to show that $g(t^{\alpha_i^\vee})$ is a translation for each $i = 1,\dots,n$.

Recall that the translation $t^{\alpha_i^\vee}$ is equal to the product of reflections $s_{\alpha_i,1}s_{\alpha_i}$, where $s_{\alpha_i} = s_{\alpha_i,0}$ and for all $k \in \Z$ the fixed set of the reflection $s_{\alpha_i,k}$ is the hyperplane $H_{\alpha_i,k}$.  Since $g$ is a diagram automorphism, $g$ takes reflections to reflections, and so $g(t^{\alpha_i^\vee})$ is the product of reflections $g(s_{\alpha_i,1})g(s_{\alpha_i})$.  This product of reflections will be a translation if and only if the fixed sets of these reflections are parallel hyperplanes.

As $g$ is a diagram automorphism, $g$ induces a simplicial bijection $\App \to \App$, which by abuse of notation we also denote by $g$.  This induced map takes parallel hyperplanes to parallel hyperplanes, and if $r$ is the reflection in $\aW$ with fixed set the hyperplane $H$, then $g(r)$ is the reflection in $\aW$ with fixed set the hyperplane $g(H)$.  It follows that the reflections $g(s_{\alpha_i})$ and $g(s_{\alpha_i,1})$ have fixed sets the parallel hyperplanes $g(H_{\alpha_i})$ and $g(H_{\alpha_i,1})$, respectively.  Thus $g(t^{\alpha_i^\vee})$ is a translation, as required.  This completes the proof.
\end{proof}

We are now ready to prove the main result.

\begin{thm}\label{DiagramSymmetry}
Let $x \in W$ and let $b = t^\mu$ be a pure translation, $\mu \in R^\vee$.  Then for any diagram automorphism $g:\aW \to \aW$ we have
$$ X_x(b) \neq\emptyset \Longleftrightarrow X_{g(x)}(g(b)) \neq\emptyset.$$ 
Moreover if both these varieties are nonempty, then
$$ \dim X_x(b)  = \dim X_{g(x)}(g(b)).$$
\end{thm}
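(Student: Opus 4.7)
The plan is to combine Proposition~\ref{prop:change label} with Theorem~\ref{ADLVvsGal2}. A diagram automorphism $g$ of $\aW$ induces a simplicial bijection of $\App$, which I also denote by $g$, sending $x\fa$ to $g(x)\fa$ for every $x\in\aW$; in particular $g(\fa)=\fa$ and $g$ permutes the panels of the base alcove. By Lemma~\ref{lem:diagram}, $g$ preserves the subgroup of pure translations, so $g(b)=t^{g\mu}$ is itself a pure translation and Theorem~\ref{ADLVvsGal2} applies equally to $X_x(b)$ and $X_{g(x)}(g(b))$.

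Next I would observe that $g$ permutes the chambers at infinity and thereby induces a permutation $w\mapsto g*w$ of $\sW$ at the level of labelings at infinity, with $\phi_g=\phi_{g*w}$ whenever $\phi=\phi_w$. For each $w\in\sW$, Proposition~\ref{prop:change label} provides a bijection
\[
\Gamma^+_{w_0w}(x,b^w)\;\longleftrightarrow\;\Gamma^+_{w_0(g*w)}(g(x),g(b)^{g*w})
\]
via $\gamma\mapsto g(\gamma)$ that preserves both $F(\gamma)$ and $P_{\phi_{w_0w}}(\gamma)$. Since $g*$ is a bijection of $\sW$, the maximum appearing in Equation~\eqref{E:FoldsInnerProd} is invariant under the substitution $(x,b)\mapsto(g(x),g(b))$; this immediately yields the nonemptiness equivalence.

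The remaining step, which I expect to be the main obstacle, is to check that the correction term $\langle\rho_{B^-},\mu+\mu_{B^-}\rangle$ is invariant under $\mu\mapsto g\mu$. Using $\rho_{B^-}=-\rho$ and $\mu_{B^-}=w_0\mu^+$ one rewrites this as $\langle\rho,\mu^+-\mu\rangle$, the defect of $\mu$ from dominance. By Lemma~\ref{lem:length} the length $\ell(t^\mu)=\ell(t^{\mu^+})$ depends only on the $\sW$-orbit of $\mu$ and is preserved by the Coxeter group automorphism $g$; likewise $\langle\rho,\mu\rangle$ may be read as a signed count of $0$-hyperplanes separating $\fa$ from $b\fa$, and such hyperplanes are permuted in a length-preserving way by $g$. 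The hard part will be handling diagram automorphisms that do not preserve the affine node $s_0$, such as the rotational automorphisms in type $\tilde A_n$: in that case $g$ is a genuinely affine isometry of $\App$ not fixing the origin and need not preserve $\sW$ as a subgroup of $\aW$, so completing the verification of the correction-term identity requires carefully tracking how $g$ reshuffles the Weyl chambers at infinity and the associated dominance data. Once this is carried out the dimension equality drops out of Theorem~\ref{ADLVvsGal2}.
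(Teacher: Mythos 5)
Your overall strategy (transport galleries by the simplicial map induced by $g$, control folds and positive crossings via Proposition~\ref{prop:change label}, and feed the result into Theorem~\ref{ADLVvsGal2}) is in the same spirit as the paper's proof, but the step you defer is not a verification that can simply be ``carried out'': the identity you propose is false, and so is the matched bijection that your computation of the maximum rests on. The trouble is exactly the case you flag, namely diagram automorphisms not fixing the node $s_0$ (the rotations in type $\tilde A_n$, and more generally all $\Omega$-type automorphisms). Such a $g$ moves the origin, and its linear part $\bar g$ does not preserve the dominant Weyl chamber; writing $\Cw_{u_0}=\bar g(\Cf)$ and $\theta(w)=\bar g w\bar g^{-1}\in\sW$, the image $g(\gamma)$ of a gallery ending at $b^w\fa=t^{w\mu}\fa$ ends at $t^{\bar g(w\mu)}\fa=g(b)^{\theta(w)}\fa$, whereas the pushed-forward orientation is the one induced by $\phi_{w_0\theta(w)u_0}$ (the basepoint of the labeling $\phi_g$ is $g(v_0)$, which need not lie in $R^\vee$). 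So unless $u_0=\id$ the end-alcove index and the orientation index transform differently, $g(\gamma)$ lands in a mismatched set $\Gamma^+_{w_0\theta(w)u_0}(g(x),g(b)^{\theta(w)})$, and neither the maximum in Equation~\eqref{E:FoldsInnerProd} nor the correction term is separately invariant; only their difference is. Concretely, in type $\tilde A_1$ with $g$ swapping $s_0$ and $s_1$ one has $g(t^{\alpha^\vee})=t^{-\alpha^\vee}$, so for $\mu=\alpha^\vee$ your term $\langle\rho,\mu^+-\mu\rangle$ jumps from $0$ to $2$ while the maximum jumps from $0$ to $2$ as well; the same phenomenon occurs for the $\tilde A_n$ rotations with $\mu$ regular dominant, where the correction becomes $\langle\rho,\mu-\bar g\mu\rangle>0$. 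Your argument as written is therefore complete only for automorphisms fixing the origin (those coming from the finite diagram), where indeed $u_0=\id$, dominance is preserved, and both of your termwise equalities hold.

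The paper never compares the two instances of Equation~\eqref{E:FoldsInnerProd} termwise. It first invokes Lemma~\ref{T:FrobConjIso} to replace $b$ by a dominant $\sigma$-conjugate, so that by Lemma~\ref{DomADLVDim} there is no correction term and a single DL-gallery computes $\dim X_x(b)$; it then pushes that one gallery through $g$, identifies the resulting orientation at infinity, and concludes $X_{g(x)}(g(b))\neq\emptyset$ together with $\dim X_{g(x)}(g(b))\geq\dim X_x(b)$, the reverse inequality coming from running the same argument with $g^{-1}$. If you want to salvage your approach, you must build this $\sigma$-conjugation into it: after applying $g$, conjugate $g(b)=t^{\bar g\mu}$ by $u_0$ to the dominant translation $t^{u_0^{-1}\bar g\mu}$ (legitimate by Lemma~\ref{T:FrobConjIso} since $u_0\in\sW$ is $\sigma$-fixed). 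This simultaneously kills the correction term on the target side and restores the matching, because $g(\gamma)$ ends at $(t^{u_0^{-1}\bar g\mu})^{\theta(w)u_0}\fa$ and is positively folded with respect to $\phi_{w_0\theta(w)u_0}$, a matched pair. Note that this bookkeeping of which $\phi_{w'}$ realizes the pushed-forward orientation is precisely the delicate point in this theorem, so it deserves to be written out in full rather than deferred.
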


\begin{proof}
Any diagram automorphism $g$ induces a simplicial bijection $\App \to \App$, which by abuse of notation we also denote by $g$.  We remark that the induced simplicial bijection $g:\App \to \App$ will not be type-preserving unless $g$ is trivial, and that $g$ does not in general preserve the coroot lattice.  However $g$ does preserve the base alcove $\fa$, and by Lemma~\ref{lem:diagram}, the map $g$ does preserve the set of alcoves which are obtained from the base alcove $\fa$ by applying a pure translation.

Suppose first that $X_x(b)\neq\emptyset$.   By Lemma~\ref{T:FrobConjIso}, we may assume that $b$ is dominant.  Then as $b$ is dominant, Lemma~\ref{DomADLVDim} says that for some $w \in \sW$, there exists a gallery $\gamma:\fa \rightsquigarrow b^w\fa$ which is positively folded with respect to the orientation $\phipartialwow$ and satisfies $\dim X_x(b) = \dim_{\phi_{w_0w}}(\gamma)$.  Let $\gamma_x:\fa \rightsquigarrow x\fa$ be the minimal gallery of the same type as $\gamma$.

Since $g:\App \to \App$ preserves $\fa$, the map $g$ takes the minimal gallery $\gamma_x:\fa \rightsquigarrow x\fa$ to a minimal gallery, say $\gamma_{g(x)}$, from $\fa$ to the alcove $g(x)\fa$.  Then if we apply $g$ to the folded gallery $\gamma$ we obtain a gallery $\gamma' := g(\gamma):\fa\rightsquigarrow g(b^w)\fa$ of the same type as $\gamma_{g(x)}$.  Write $g(w) = t^\lambda w'$ where $\lambda \in R^\vee$ and $w' \in \sW$.  Note that $g$ need not fix the origin, so we could have $\lambda \neq 0$.   Then 
 $$g(b^w) = g(t^{w\mu}) = g(wt^\mu w^{-1}) = g(w)g(b)g(w)^{-1}= t^\lambda w' t^{\nu}(w')^{-1}t^{-\lambda} = t^\lambda t^{w'\nu}t^{-\lambda} = t^{w'\nu}$$ where $g(b) = g(t^\mu)$ is the translation $t^{\nu}$, $
\nu \in R^\vee$. 
Also, putting $\phi = \phi_w$ in Proposition~\ref{prop:change label}, we have that as $\gamma$ is positively folded with respect to the orientation $-\phipartial = -\phipartialw = \phipartialwow$, the gallery $\gamma' = g(\gamma)$ is positively folded with respect to the orientation $-\phipartialg$, and $\dim_{-\phi_g}(\gamma') = \dim_{-\phi_w}(\gamma)$.  

We claim that the orientation at infinity induced by the labeling $\phi_g$ is the same as the orientation at infinity induced by the labeling $\phi_{w'}$.  To see this, observe that $$\phi_g(t^\lambda w') = \phi( g^{-1}(t^\lambda w')) = \phi_w(w) = 1$$
hence $\phi_g(t^\lambda w') = \phi_{w'}(w') = 1$.  It follows that both the orientations at infinity $\phipartialg$ and $\phipartialwp$ label the Weyl chamber $\cC_{w'}$ by the identity element, hence these are the same induced orientation at infinity.  Therefore $\gamma':\fa \rightsquigarrow (g(b))^{w'}\fa$ is positively folded with respect to the orientation $-\phipartialwp = \phipartialwowp$, and satisfies $\dim_{-\phi_{w'}}(\gamma') = \dim_{-\phi_w}(\gamma) = \dim X_x(b)$.

Now by Theorem~\ref{ADLVvsGal2}, the existence of such a $\gamma'$ shows that $X_{g(x)}(g(b)) \neq \emptyset$ and that $\dim X_{g(x)}(g(b)) \geq \dim X_x(b)$.  To show the converse suppose that $X_{g(x)}(g(b))\neq\emptyset$ and argue as above using $g^{-1}$ instead of $g$. 
\end{proof}

\begin{figure}[ht]
\begin{center}
\begin{overpic}[width=0.8\textwidth]{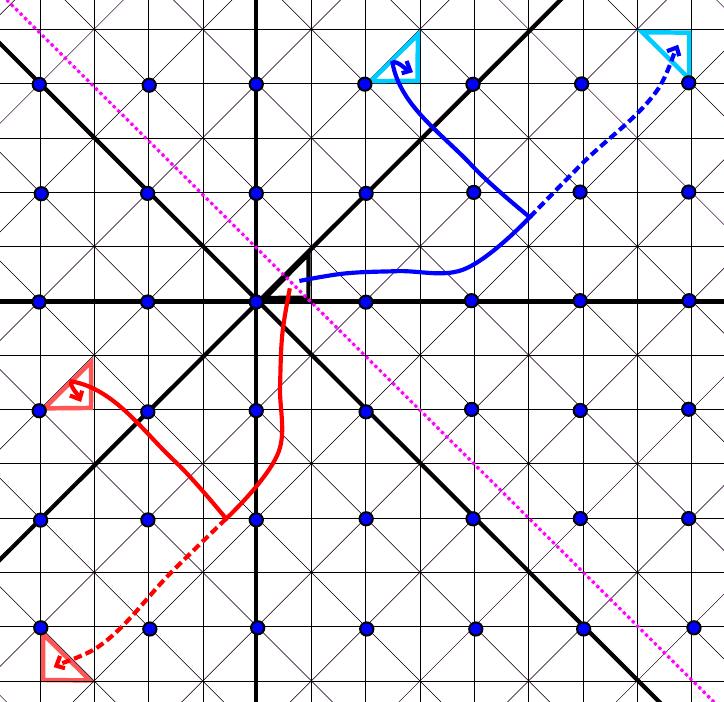}
\put(91,92){$\color{blue} \x$}
\put(55,89){$\color{blue} \bb$}
\put(66,75){$\color{blue} \sigma$}
\put(0,4){$\color{red} g(\x)$}
\put(7,37){$\color{red} g(\bb)$}
\put(18,30){$\color{red} g(\sigma)$}
\end{overpic}
\caption[shrunken]{This example in type $\tilde C_2$ illustrates the symmetry of the pattern of nonemptiness and dimension for affine Deligne--Lusztig varieties induced by the diagram automorphism $g$ switching nodes $s_0$ and $s_1$.  The induced symmetry of $\App$ is the reflection along the dotted (pink) line. Compare Example~\ref{ex:diagram symmetry}.  }
\label{fig:C2symmetry}
\end{center}
\end{figure}

\begin{example}\label{ex:diagram symmetry}
In type $\tilde A_2$, the diagram automorphism which interchanges the two simple roots induces a simplicial isomorphism of the standard apartment which is given by reflection across the line through the origin and $k\rho$, for any $0 \neq k \in \Z$.  Therefore, Theorem~\ref{DiagramSymmetry} proves that the nonemptiness and dimension pattern for varieties $X_x(b)$ with $b = t^{2k\rho}$ has a vertical symmetry along this line.  

In type $\tilde C_2$ there is a symmetry which is induced by a diagram automorphism swapping $s_0$ and $s_1$, and which geometrically is a reflection along the line $H_{\alpha_1^{\vee}+\alpha_2^{\vee},\frac{1}{2}}$.  This line of symmetry is illustrated (in pink) in Figure~\ref{fig:C2symmetry}. In the figure the gallery $\sigma$ is a DL-gallery for $X_x(b)$ and $g(\sigma)$ a DL-gallery for $X_{g(x)}(g(b))$. 

\end{example}

\begin{remark}  Symmetries induced by diagram automorphisms can also be used to prove equalities among certain partially ordered sets of Newton polygons and codimensions of Newton strata, as discussed in the first author's work in \cite{Be1}.
\end{remark}

\section{Applications to affine Hecke algebras and affine reflection length}\label{sec:Applications}

In this section we give two immediate applications of our main results on affine Deligne--Lusztig varieties.  In Section~\ref{sec:Class} we use our results together with the work of He~\cite{HeAnnals} to compute degrees of class polynomials for affine Hecke algebras, and in Section~\ref{sec:Reflection} we consider reflection length in affine Weyl groups.

\subsection{Class polynomials of the affine Hecke algebra}\label{sec:Class}

In this section we closely follow the terminology in \cite{HeAnnals}, in which the connection between affine Deligne--Lusztig varieties and class polynomials for affine Hecke algebras was discovered.  In~\cite{HeAnnals} the setting is the extended affine Weyl group $\eW$, but since our results are for varieties $X_x(b)$ where both $x$ and $b$ are in the affine Weyl group $\aW$, we restrict to $\aW$.

For $x \in \aW$ in the affine Weyl group, we write $[x]$ for the $\sigma$-conjugacy class of $x$.  Recall from \cite{HeAnnals} that $b \in \aW$ is called \emph{$\sigma$-straight} if $\ell(b) = \langle 2\rho, \nu_b\rangle$, where $\nu_b$ is the Newton point of $b$.
Now let $b = t^\mu$ be a pure translation in $\aW$. Then $b$ is $\sigma$-straight since $\ell(b) = \langle 2\rho, \nu_b \rangle = \langle 2\rho, \mu^+ \rangle$.  Here, $\nu_b$ equals the unique dominant element in the $\sW$-orbit of $\mu \in R^{\vee}$, which we have denoted by $\mu^+$.  Since $b$ is $\sigma$-straight,  we say that the the $\sigma$-conjugacy class $[b]$ is $\sigma$-straight.  Recall also that $\lambda \in R^{\vee}$ is \emph{regular} if the stabilizer $\operatorname{Stab}_{\sW}(\lambda)$ is trivial; i.e., $\lambda$ does not lie along the walls of any of the Weyl chambers.  If $b= t^{\mu}$ is a pure translation such that $\mu$ is regular, then $b$ is  \emph{superstraight}.
 
Following 2.2 and 2.3 of~\cite{HeAnnals}, let $A$ be the algebra $\Z[v,v^{-1}]$.  The Hecke algebra associated with the affine Weyl group $\aW$ is the $A$-algebra $H$ with basis $\{ T_x : x\in \aW\}$, and these basis elements satisfying certain relations.  There is an $A$-algebra homomorphism induced by $T_x \mapsto T_{\sigma(x)}$, which is also denoted $\sigma$.  For $h, h' \in H$ the \emph{$\sigma$-commutator} of $h$ and $h'$ is $[h,h']_\sigma = hh' - h'\sigma(h)$.  Write $[H,H]_\sigma$ for the $A$-submodule of $H$ generated by all $\sigma$-commutators.  For any $x \in \aW$ and any $\sigma$-conjugacy class $[y]$, there is a unique $f_{x,[y]} \in A$ such that the following equation holds:
\begin{equation}\label{E:class}
T_x \equiv \sum_{[y]} f_{x,[y]} T_{[y]_{\min}} \mod [H,H]_\sigma,
\end{equation}
where $[y]_{\min}$ denotes a minimal length representative of the $\sigma$-conjugacy class $[y]$. The $f_{x,[y]}$ are polynomials in $\Z[v - v^{-1}]$ and are called the \emph{class polynomials} of the affine Hecke algebra $H$. 

Assume that $b = t^\mu$ is a pure translation with $\mu \in R^{\vee}$ regular.  Then for all $x \in \aW$, if $X_x(b)$ is nonempty, by Corollary 6.2 in~\cite{HeAnnals} we have that $f_{x,[b]} \neq 0$, and by Corollary 6.3 in~\cite{HeAnnals} we have that
\begin{equation}\label{E:He} \dim X_x(b) = \frac{1}{2}\left( \ell(x) + \deg(f_{x,[b]}) \right) - \langle \rho, \mu^+ \rangle. \end{equation}

We first combine this with the dimension equality in our Theorem~\ref{Shrunken} to compute degrees of class polynomials $f_{x,[b]}$,  where $x$ is in the shrunken dominant Weyl chamber $\Cfs$, in the following result.

\begin{thm}\label{T:ClassShrunken}  Let $x = t^\lambda w$ be such that every alcove at the vertex $\lambda$ lies in the shrunken dominant Weyl chamber $\Cfs$.  Let $b = t^\mu$ be a regular dominant pure translation.  Assume that $w$ satisfies Reuman's condition in Theorem \ref{idADLV}, $\bb \in \conv(\fa, \x)$, $t^{-\mu}\x$ lies in $\Cfs$, and $\mu$ lies in the negative cone based at $\lambda - 2\rho$.  Then $f_{x,[b]} \neq 0$ and 
\begin{equation} \deg(f_{x,[b]}) = \ell(w). \end{equation}
\end{thm}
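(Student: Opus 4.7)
The plan is to assemble the main dimension equality of Theorem~\ref{Shrunken} with the formula of He from Equation~\eqref{E:He} and Reuman's criterion (Theorem~\ref{idADLV}). Since $\mu$ is dominant we have $\mu^+=\mu$, and since $\mu$ is regular the element $b=t^\mu$ is superstraight, so the hypotheses of Corollaries~6.2 and~6.3 of~\cite{HeAnnals} will apply at every alcove $x$ for which $X_x(b)\neq\emptyset$.

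First I would verify the two nonemptiness statements. The hypothesis that $w$ satisfies Reuman's condition combined with the fact that the alcove $\x$ lies in $\Cfs$ (so $\eta_1(x)=w$ and $\eta_2(x)=1$) gives $X_x(1)\neq\emptyset$ via Theorem~\ref{idADLV}, and moreover the dimension is $\dim X_x(1)=\tfrac12(\ell(x)+\ell(w))$. The remaining convexity hypotheses in the theorem (namely $\bb\in\conv(\fa,\x)$, $t^{-\mu}\x\in\Cfs$, and $\mu\in(\lambda-2\rho)+\Cneg$) are exactly those appearing in Theorem~\ref{Shrunken}, whose implication \eqref{E:ShrunkenDim} then yields $X_x(b)\neq\emptyset$ together with the dimension identity
\[
\dim X_x(b)=\dim X_x(1)-\langle\rho,\mu\rangle=\tfrac12(\ell(x)+\ell(w))-\langle\rho,\mu\rangle.
\]
Because $X_x(b)\neq\emptyset$ and $b$ is superstraight, Corollary~6.2 of~\cite{HeAnnals} gives $f_{x,[b]}\neq 0$ as claimed.

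Next I would invoke He's degree formula~\eqref{E:He}: as $\mu^+=\mu$,
\[
\dim X_x(b)=\tfrac12\bigl(\ell(x)+\deg(f_{x,[b]})\bigr)-\langle\rho,\mu\rangle.
\]
Comparing this with the expression for $\dim X_x(b)$ obtained above, the term $\langle\rho,\mu\rangle$ cancels on both sides, and the equation $\tfrac12(\ell(x)+\ell(w))=\tfrac12(\ell(x)+\deg(f_{x,[b]}))$ yields $\deg(f_{x,[b]})=\ell(w)$.

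All the heavy lifting has been done in the body of the paper, so no step here is a genuine obstacle; the only place where care is required is bookkeeping the hypotheses, ensuring that Theorem~\ref{Shrunken} is applicable (in particular that $\mu^+=\mu$ so that the $\rho_{B^-}$ correction does not intervene) and that the regularity of $\mu$ is used precisely to make He's class-polynomial identity available. The role of regularity is the subtlest point: it is needed to guarantee both the nonvanishing of $f_{x,[b]}$ and the precise form of~\eqref{E:He}, without which the degree would only be bounded rather than determined exactly.
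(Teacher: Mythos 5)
Your proposal is correct and follows essentially the same route as the paper: nonemptiness of $X_x(1)$ via Reuman's criterion, nonemptiness of $X_x(b)$ and the dimension identity $\dim X_x(b)=\dim X_x(1)-\langle\rho,\mu\rangle$ from Theorem~\ref{Shrunken}, then He's Corollaries 6.2 and 6.3 (using regularity of $\mu$) to cancel terms and extract $\deg(f_{x,[b]})=\ell(w)$. The only cosmetic difference is that you express $\dim X_x(1)$ as $\tfrac12(\ell(x)+\ell(w))$ directly, while the paper writes it as $\langle\rho,\lambda\rangle$ and uses $\ell(x)=\langle 2\rho,\lambda\rangle-\ell(w)$; these are the same bookkeeping.
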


\begin{proof}  Since $w$ satisfies Reuman's condition we have $X_x(1) \neq \emptyset$ by Theorem~\ref{idADLV}.  Thus by Theorem~\ref{ArbitraryNonemptyDim}, the variety $X_x(b)$ is nonempty, and so by Corollary 6.2 of~\cite{HeAnnals}, the polynomial $f_{x,[b]}$ is nonzero.  

Note that as $x$ is in $\Cfs$ its length satisfies
\begin{equation}\label{E:length}
\ell(x) = \ell(t^\lambda) - \ell(w) = \langle 2\rho, \lambda \rangle - \ell(w).
\end{equation} 
Since $X_x(b)$ is nonempty, by Equation~\eqref{E:ShrunkenDim} from Theorem~\ref{Shrunken}, we have that
\begin{equation}\label{E:bDomDimClass} \dim X_x(b) = \langle \rho, \lambda - \mu \rangle. \end{equation} Now $\mu^+ = \mu$ since $b$ is dominant.  Hence using the formula for $\ell(x)$ given by~\eqref{E:length}, together with He's result~\eqref{E:He} above, we get
\begin{equation}
\dim X_x(b) = \langle \rho, \lambda \rangle - \frac{1}{2}\ell(w) + \frac{1}{2} \deg(f_{x,[b]}) - \langle \rho, \mu \rangle.
\end{equation}
Putting this equality together with~\eqref{E:bDomDimClass} yields
\begin{equation}
0 = - \frac{1}{2}\ell(w) + \frac{1}{2} \deg(f_{x,[b]})
\end{equation}
from which we obtain the desired result.
\end{proof}

We also obtain information about class polynomials using the statements concerning dimensions of affine Deligne--Lusztig varieties in our other main results, Theorems~\ref{ForwardShift},~\ref{NonemptyDim}, and~\ref{OtherWeyl}, as follows.  

\begin{thm}\label{T:ClassNonShrunken}  Let $x = t^\lambda w \in \aW$ and let $b = t^\mu$ be a regular pure translation.
\begin{enumerate}
\item\label{forward-shift} Suppose $\bb  \in \conv(\fa,t^\mu \x)$.  If $X_x(1) \neq \emptyset$ then $f_{t^\mu x,[t^\mu]} \neq 0$ and 
\begin{equation}
\deg(f_{t^\mu x,[t^\mu]}) \geq \ell(w).
\end{equation}
\item\label{shrunken} Suppose $\bb \in \conv(\fa,\x)$ and the alcoves $\x$ and $t^{-\mu}\x$ lie in the same shrunken Weyl chamber $\Cus$, where $u \in \sW$.  If $X_x(1) \neq \emptyset$ then $f_{x,[b]} \neq 0$ and 
\begin{equation}
\deg(f_{x,[b]}) \geq \ell(u^{-1}wu) - 2\langle \rho_{B^-}, \mu + \mu_{B^-} \rangle
\end{equation}
with $\deg(f_{x,[b]}) \geq \ell(u^{-1}wu)$ if $\mu$ is dominant.
\item\label{strips} Suppose $\bb \in \conv(\fa,\x)$ and the alcoves $\x$ and $t^{-\mu}\x$ both lie in $\Cu \setminus \Cus$, where $u \in \sW$.  If $X_x(1) \neq \emptyset$ then $f_{x,[b]} \neq 0$ and 
\begin{equation}
\deg(f_{x,[b]}) \geq \ell(\eta_\sigma(x)) - 2\langle \rho_{B^-}, \mu + \mu_{B^-} \rangle
\end{equation}
with $\deg(f_{x,[b]}) \geq \ell(\eta_\sigma(x))$ if $\mu$ is dominant.
\item\label{conj} Suppose every alcove at the vertex $\lambda$ lies in the shrunken dominant Weyl chamber $\Cfs$ and $X_x(b) \neq \emptyset$.  Then for every $u \in \sW$ we have $f_{u^{-1}xu,[b]} \neq 0$ and 
\begin{equation}
\deg(f_{u^{-1}xu,[b]}) \geq \deg(f_x,[b]).
\end{equation}
\end{enumerate}
\end{thm}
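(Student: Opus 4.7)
All four parts follow a common skeleton: combine nonemptiness and dimension bounds from our main theorems on affine Deligne--Lusztig varieties with He's formula~\eqref{E:He}, which rearranged (using $\ell(t^\mu) = 2\langle \rho, \mu^+\rangle$ from Lemma~\ref{lem:length}) reads
$$
\deg f_{x,[b]} = 2\dim X_x(b) + \ell(t^\mu) - \ell(x),
$$
valid whenever $X_x(b)$ is nonempty; Corollary 6.2 of \cite{HeAnnals} then ensures $f_{x,[b]} \neq 0$. For part~\eqref{forward-shift}, Theorem~\ref{ForwardShift} gives both $X_{t^\mu x}(t^\mu) \neq \emptyset$ and $\dim X_{t^\mu x}(t^\mu) \geq \dim X_x(1) - \langle \rho_{B^-}, \mu + \mu_{B^-}\rangle$. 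The convexity hypothesis combined with Lemma~\ref{xby} yields $\ell(t^\mu x) = \ell(x) + \ell(t^\mu)$, which reduces the key identity for $(t^\mu x, t^\mu)$ to $\deg f_{t^\mu x,[t^\mu]} = 2\dim X_{t^\mu x}(t^\mu) - \ell(x)$. Applying He's formula also to $X_x(1)$, the dimension bound from Theorem~\ref{ForwardShift} translates to $\deg f_{t^\mu x,[t^\mu]} \geq \deg f_{x,[1]} - 2\langle \rho_{B^-}, \mu+\mu_{B^-}\rangle$, and I conclude by appealing to Reuman's criterion (Theorem~\ref{idADLV}) applied in the dominant setting forced by our convexity hypothesis to see that $\deg f_{x,[1]} \geq \ell(w)$.

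Parts~\eqref{shrunken} and~\eqref{strips} run in parallel. Theorem~\ref{NonemptyDim} supplies nonemptiness and the inequality $\dim X_x(b) \geq \dim X_x(1) - \langle \rho, \mu^+\rangle - \langle \rho_{B^-}, \mu+\mu_{B^-}\rangle$; substituting into the key identity (the $\langle \rho, \mu^+\rangle$ term cancels against half of $\ell(t^\mu) = 2\langle \rho, \mu^+\rangle$) simplifies to $\deg f_{x,[b]} \geq \deg f_{x,[1]} - 2\langle \rho_{B^-}, \mu+\mu_{B^-}\rangle$. For~\eqref{shrunken}, the shrunken hypothesis combined with Reuman's formula identifies $\deg f_{x,[1]} = \ell(u^{-1}wu)$. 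For~\eqref{strips}, I instead invoke the virtual dimension formula used in the proof of Proposition~\ref{Xx1Xy1} (Corollary 12.2 of~\cite{HeAnnals}) to obtain $\deg f_{x,[1]} = \ell(\eta_\sigma(x))$. In both cases the refined bounds for $\mu$ dominant follow immediately since then $\mu + \mu_{B^-} = 0$.

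Part~\eqref{conj} is the cleanest: Theorem~\ref{OtherWeyl} provides $X_{u^{-1}xu}(b) \neq \emptyset$ together with $\dim X_{u^{-1}xu}(b) \geq \dim X_x(b) + \tfrac{1}{2}(\ell(u^{-1}xu) - \ell(x))$. Writing the key identity for both $(u^{-1}xu, b)$ and $(x, b)$ and subtracting, the $\ell(t^\mu)$ terms cancel and the length and dimension terms combine to give exactly $\deg f_{u^{-1}xu,[b]} \geq \deg f_{x,[b]}$. The main obstacle will be the case distinction in~\eqref{shrunken} versus~\eqref{strips}, which requires invoking two distinct dimension formulas for $\dim X_x(1)$ inside and outside the shrunken Weyl chambers, drawing on~\cite{GoertzHeDim} and~\cite{HeAnnals}; the arithmetic in the remaining parts is mere bookkeeping once the correct dimension estimates and length identities are in hand.
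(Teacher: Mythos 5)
Your skeleton---rearranging He's formula~\eqref{E:He} into $\deg f_{x,[b]} = 2\dim X_x(b) + \ell(t^\mu) - \ell(x)$ and feeding in the nonemptiness statements and dimension inequalities of Theorems~\ref{ForwardShift}, \ref{NonemptyDim} and \ref{OtherWeyl} together with the known formulas for $\dim X_x(1)$---is exactly the paper's route, and your parts (2), (3) and (4) reproduce it correctly, with one caveat. The intermediate quantity $\deg f_{x,[1]}$ is not licensed by what is quoted: \eqref{E:He} (Corollary 6.3 of \cite{HeAnnals}) is stated only for $b=t^\mu$ with $\mu$ \emph{regular}, and $b=1$ is not such an element, so ``applying He's formula to $X_x(1)$'' is a misapplication. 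The detour is also unnecessary: replace $\deg f_{x,[1]}$ everywhere by $2\dim X_x(1)-\ell(x)$ and quote Theorem~\ref{idADLV} in the shrunken case and Equation~\eqref{E:virtualdim} (Corollary 12.2 of \cite{HeAnnals}, as in the proof of Proposition~\ref{Xx1Xy1}) in the strips; this is precisely what the paper does, and then your arithmetic for (2), (3), (4) goes through verbatim.

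Part (1), however, has a genuine gap. Your concluding step asserts $\deg f_{x,[1]} \geq \ell(w)$ ``by Reuman's criterion applied in the dominant setting forced by our convexity hypothesis,'' but the hypothesis $\bb \in \conv(\fa, t^\mu\x)$ forces nothing about which Weyl chamber $\x$ lies in: both $\mu$ and $\lambda$ may, for instance, be antidominant while the convexity condition holds. What the known dimension formulas actually give is $2\dim X_x(1)-\ell(x) = \ell(u^{-1}wu)$ when $\x \in \Cus$ and $\ell(\eta_\sigma(x))$ when $\x \in \Cw_u\setminus\Cus$, with $u$ the chamber of $\x$; neither quantity is bounded below by $\ell(w)$ in general, so Theorem~\ref{idADLV} cannot be invoked ``in the dominant setting.'' Moreover, even granting that step, your chain only yields $\deg(f_{t^\mu x,[t^\mu]}) \geq \ell(w) - 2\langle \rho_{B^-}, \mu+\mu_{B^-}\rangle$, and since $\langle \rho_{B^-}, \mu+\mu_{B^-}\rangle = \langle \rho, \mu^+ - \mu\rangle \geq 0$ this is strictly weaker than the asserted clean bound $\geq \ell(w)$ whenever $\mu$ is not dominant; you never explain how this deficit is removed. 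So as written part (1) is not established: you need either an argument that produces the uncorrected bound (for example a sharper estimate on $\dim X_{t^\mu x}(t^\mu)$ than the inequality recorded in Theorem~\ref{ForwardShift}, exploiting the term $\dim_\phi(b)$ discarded in its proof), or an honest statement of the weaker, corrected inequality your method actually delivers.
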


\begin{proof}  For parts~\eqref{forward-shift},~\eqref{shrunken}, and~\eqref{strips}, we need to know the dimension of the variety $X_x(1)$, and we use the same results as in the proof of Proposition~\ref{Xx1Xy1}.  In summary, if $\x$ is in the shrunken Weyl chamber $\Cus$, where $u \in \sW$, then 
\begin{equation} \dim X_x(1) = \frac{1}{2}(\ell(x) + \ell(u^{-1}wu)),  \end{equation}
and if $\x$ is in the lowest two-sided cell then
\begin{equation} \dim X_x(1) = \frac{1}{2}(\ell(x) + \ell(\eta_\sigma(x))). \end{equation}

The proofs are then immediate upon combining He's result~\eqref{E:He} above with Theorem~\ref{ForwardShift} for parts \eqref{forward-shift}, Theorem~\ref{NonemptyDim} for parts \eqref{shrunken} and \eqref{strips}, and Theorem~\ref{OtherWeyl} for part \eqref{conj}.
\end{proof}

\subsection{Reflection length in affine Weyl groups}\label{sec:Reflection}
 
In this section we prove the following theorem concerning reflection length in affine Weyl groups.  The shrunken Weyl chambers in the statement of part (1) below are defined at~\ref{def:shrunken}.    In part (2) below we assume for simplicity that the affine Deligne--Lusztig variety $X_x(1)$ is nonempty.  The conditions under which $X_x(1)$ is nonempty were established in~\cite{GHN} and we recall these conditions in the proof of Proposition~\ref{Xx1Xy1}.   The definition of the map $\eta_\sigma:\aW \to \sW$ in part (2) is recalled in the proof of Proposition~\ref{Xx1Xy1} as well.

\begin{thm}\label{CoxeterElements}  Let $\aW$ be an irreducible affine Weyl group of rank $n$, with set of reflections~$\tilde R$, and let $\sW$ be the associated finite Weyl group, with set of reflections $R$.  Let $x = t^\lambda w \in \aW$, where $\lambda \in R^\vee$ and $w \in \sW$.  Suppose that the alcove $\x = x\fa$ lies in the Weyl chamber $\Cu$ where $u \in \sW$. 
\begin{enumerate}
\item If $\x$ lies in the shrunken Weyl chamber $\Cus$, assume that 
\begin{equation}\label{E:w}
u^{-1}wu \in \sW \backslash \bigcup\limits_{T \subsetneq S} W_T.
\end{equation}
Then for all $g \in \Aut(\aW)$ 
\begin{equation} \ell_R(w) \leq \ell_{\tilde R}(g(x)) \leq \ell(u^{-1}wu). \end{equation}
Moreover if $w$ is a Coxeter element of $\sW$, then for all $g \in \Aut(W)$ \begin{equation} \ell_{\tilde R}(g(x)) = n. \end{equation} 
\item If $\x$ lies in $\Cu \setminus \Cus$, assume that $X_x(1)$ is nonempty.  Then for all $g \in \Aut(\aW)$ 
\begin{equation} \ell_R(w) \leq \ell_{\tilde R}(g(x)) \leq \ell(\eta_\sigma(x)). \end{equation}
\end{enumerate}
\end{thm}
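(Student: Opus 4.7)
The plan is to reduce to the case $g=\mathrm{id}$ and then establish the upper and lower bounds separately, using the machinery of positively folded galleries for the upper bound and a natural projection for the lower bound.

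First, as noted in the paragraph following the statement, reflection length is invariant under all of $\Aut(\aW)$: an automorphism preserves the set $\tilde R$ of reflections, so $\ell_{\tilde R}(g(x)) = \ell_{\tilde R}(x)$ for every $g \in \Aut(\aW)$. I would verify this briefly and then assume $g = \id$ throughout.

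For the lower bound $\ell_R(w) \leq \ell_{\tilde R}(x)$, I would use the split projection $\pi\colon\aW = R^\vee \rtimes \sW \to \sW$, $t^\nu v \mapsto v$. Since each affine reflection has the form $s_{\alpha,k} = t^{k\alpha^\vee} s_\alpha$, one has $\pi(s_{\alpha,k}) = s_\alpha \in R$. Thus $\pi$ is a group homomorphism sending $\tilde R$ into $R$. If $x = r_1 \cdots r_k$ is a minimal expression in $\tilde R$, applying $\pi$ gives $w = \pi(r_1)\cdots \pi(r_k)$, an expression for $w$ as a product of $k$ elements of $R$, so $\ell_R(w) \leq k = \ell_{\tilde R}(x)$.

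For the upper bound, in case (1) the hypothesis \eqref{E:w} is precisely Reuman's criterion, so by Theorem~\ref{idADLV} we have $X_x(1) \neq \emptyset$ and
\begin{equation*}
  \dim X_x(1) = \tfrac12\bigl(\ell(x) + \ell(u^{-1}wu)\bigr);
\end{equation*}
in case (2) nonemptiness is part of the hypothesis, and the virtual dimension formula used in the proof of Proposition~\ref{Xx1Xy1} gives
\begin{equation*}
  \dim X_x(1) = \tfrac12\bigl(\ell(x) + \ell(\eta_\sigma(x))\bigr).
\end{equation*}
By Proposition~\ref{ADLVvsGalleries}, there exists an orientation $\phipartial$ and a positively folded gallery $\gamma\colon \fa \rightsquigarrow \fa$ of type $\vec x$ with $\dim_\phi(\gamma) = \dim X_x(1)$. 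Lemma~\ref{FoldsLower} then yields $\ell_{\tilde R}(x) \leq F_\phi(\gamma)$, while the $y=1$ case of Lemma~\ref{lem:gallery dim} gives $F_\phi(\gamma) = 2\dim_\phi(\gamma) - \ell(x)$. Substituting the dimension formula in each case produces $F_\phi(\gamma) = \ell(u^{-1}wu)$ or $F_\phi(\gamma) = \ell(\eta_\sigma(x))$ respectively, which are the desired upper bounds.

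Finally, for the Coxeter element statement, any Coxeter element $c$ of $\sW$ satisfies $\ell_R(c) = \ell(c) = n$, and since all Coxeter elements form a single $\sW$-conjugacy class, $u^{-1}wu$ is again a Coxeter element and hence also has length $n$. The general inequalities then collapse to $n \leq \ell_{\tilde R}(g(x)) \leq n$, giving equality. The main obstacle I anticipate is not conceptual but bookkeeping: aligning the folding/dimension identities of Sections~\ref{sec:GalleryDim}--\ref{sec:ADLVGalleries} with the two different dimension formulas for $X_x(1)$ inside and outside the shrunken Weyl chambers, and being careful that the projection argument for the lower bound really passes through $\Aut(\aW)$-invariance for all $g$, not just inner automorphisms.
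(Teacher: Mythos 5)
Your proposal follows essentially the same route as the paper's proof: reduce to $g=\id$ via $\Aut(\aW)$-invariance of reflection length, obtain the upper bound by taking a positively folded gallery $\fa \rightsquigarrow \fa$ of type $\vec x$ realizing $\dim X_x(1)$, computing its number of folds from Lemma~\ref{lem:gallery dim} together with the two dimension formulas for $X_x(1)$ (Theorem~\ref{idADLV} inside the shrunken chambers, the virtual dimension formula outside), and applying Lemma~\ref{FoldsLower}, then sandwiching in the Coxeter case exactly as the paper does. The only real difference is that you prove the lower bound $\ell_R(w)\leq\ell_{\tilde R}(x)$ directly via the projection $\aW = R^\vee\rtimes\sW \to \sW$, which sends $\tilde R$ into $R$, whereas the paper simply cites Proposition~2.4 of McCammond--Petersen; your argument is a correct, self-contained version of the same fact.
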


\begin{proof}  The automorphism group $\Aut(\aW)$ splits as the semidirect product of its subgroups of inner and diagram automorphisms; see, for instance, Proposition 4.5 of Franzsen~\cite{Franzsen}.  Since both inner and diagram automorphisms preserve the set of reflections~$\tilde R$, it follows that reflection length is $\Aut(\aW)$-invariant.  Hence it suffices to prove the claims for $g(x) = x$.  The lower bound $\ell_R(w) \leq \ell_{\tilde R}(x)$ is Proposition 2.4 of McCammond and Petersen~\cite{McCammondPetersen}.

We now consider the affine Deligne--Lusztig variety $X_x(1)$.  In part (1), since $\x$ lies in the shrunken Weyl chamber $\Cus$ and \eqref{E:w} holds, Theorem~\ref{idADLV} tells us that $X_x(1) \neq \emptyset$.  In part (2) we assumed that $X_x(1) \neq \emptyset$.  The key result is then the following.

\begin{prop}\label{Folds}  Let $x$ be as in the statement of Theorem~\ref{CoxeterElements}.
\begin{enumerate}
\item If $\x$ is in $\Cus$ then every DL-gallery for $X_x(1)$ has $\ell(u^{-1}wu)$ folds.
\item If $\x$ is in $\Cu \setminus \Cus$ then every DL-gallery for $X_x(1)$ has $\eta_\sigma(x)$ folds.
\end{enumerate}
\end{prop}

\begin{proof}  We first note that by Lemma~\ref{lem:ConstantFolds}, every DL-gallery for $X_x(1)$ has the same number of folds.  So let  $\sigma$ be a DL-gallery for $X_x(1)$ which is positively folded with respect to $\phipartial$.  That is, $\dim X_x(1) = \dim_\phi(\sigma)$.  We will compute $F_\phi(\sigma)$, the number of folds in $\sigma$.

The gallery $\sigma$ begins and end at $\fa$, so by Lemma~\ref{lem:gallery dim}
$$
\dim_{\phi}(\sigma)=\frac{1}{2}\left(\ell(x)+F_{\phi}(\sigma)\right).
$$
Now if $\x$ is in $\Cus$ then by Theorem~\ref{idADLV} we have 
$$
\dim X_x(1) = \frac{1}{2}\left(\ell(x) + \ell(u^{-1}wu)\right).
$$
Thus in this case $F_\phi(\sigma) = \ell(u^{-1}wu)$ as required.  If $\x$ is in $\Cu \setminus \Cus$ then by Equation~\eqref{E:virtualdim} in the proof of Proposition~\ref{Xx1Xy1}, 
$$
\dim X_x(1) = \frac{1}{2}\left(\ell(x) + \ell(\eta_\sigma(x))\right).
$$
Hence in this case $F_\phi(\sigma) = \ell(\eta_\sigma(x))$. 
\end{proof}

To finish the proof of Theorem~\ref{CoxeterElements}, let $\sigma$ be a DL-gallery for $X_x(1)$, which is positively folded with respect to the orientation at infinity $\phipartial$.   Lemma~\ref{FoldsLower} gives the following lower bound on the number of folds: 
\[ \ell_{\tilde R}(x) \leq F_\phi(\sigma). \]

Assume now that $\x$ is in $\Cus$.  Then by Proposition~\ref{Folds} we have $F_\phi(\sigma) = \ell(u^{-1}wu)$, and so
\[ \ell_{\tilde R}(x) \leq \ell(u^{-1}wu)  \]
as desired.   Now if $w$ is a Coxeter element of $\sW$, it has reflection length $\ell_R(w) = n$.  Since the conjugate $u^{-1}wu$ is also a Coxeter element of $\sW$, it satisfies $\ell(u^{-1}wu) = n$.  Thus 
\[ n = \ell_R(w) \leq \ell_{\tilde R}(x) \leq \ell(u^{-1}wu) = n\]
and so $\ell_{\tilde R}(x) = n$ in this case.  

If $\x$ is in $\Cu \setminus \Cus$ then by Proposition~\ref{Folds} we have $F_\phi(\sigma) = \ell(\eta_\sigma(x))$, and the result follows by similar arguments.
\end{proof}

\renewcommand{\refname}{Bibliography}
\bibliography{bibliography}
\bibliographystyle{alphaurl}

\end{document}